\documentclass[a4paper,10pt,reqno]{amsart} %
\usepackage{latexsym,amssymb}
\usepackage{hyperref}
\usepackage{amsmath,amsthm,amsxtra}
\usepackage{amsfonts}
\usepackage[american]{babel}
\usepackage{mathrsfs}
\usepackage{psfrag}
\usepackage{epsfig,inputenc,eucal}
\usepackage{graphpap,latexsym,epsf}
\usepackage{color}
\usepackage{upgreek}
\usepackage{todonotes}
\usepackage[normalem]{ulem}
\usepackage{amssymb,eucal,paralist,color,enumitem}
\usepackage{graphicx}

\usepackage{mathtools}
\mathtoolsset{showonlyrefs}

\newcommand{\ep}{\varepsilon}
\newcommand{\R}{\mathbb{R}}
\newcommand{\N}{\mathbb{N}}

\mathchardef\emptyset="001F

\newtheorem{maintheorem}{Theorem}
\newtheorem{theorem}{Theorem}[section]
\newtheorem{maindefinition}[theorem]{Definition}

\newtheorem{lemma}[theorem]{Lemma}
\newtheorem{remark}[theorem]{Remark}

\newtheorem{definition}[theorem]{Definition}
\newtheorem{proposition}[theorem]{Proposition}
\newtheorem{example}[theorem]{Example}
\newtheorem{notation}[theorem]{Notation}
\newtheorem{corollary}[theorem]{Corollary}

\numberwithin{equation}{section}

\newcommand{\up}{\uparrow}

\newcommand{\eps}{\varepsilon}

\newcommand{\weakto}{\rightharpoonup} 

\newcommand{\DSolution}{Dissipative Viscosity solution}
\newcommand{\DSolutions}{Dissipative Viscosity solutions}
\newcommand{\BSolution}{Balanced Viscosity solution}
\newcommand{\BSolutions}{Balanced Viscosity solutions}

\newcommand{\down}{\downarrow}
\newcommand{\weaksto}{\rightharpoonup^*}

\newcommand{\AC}{\mathrm{AC}}

\newcommand{\Ene}{\mathcal{E}}

\newcommand{\Hilbert}{\mathsf{H}}

 \def\calE{{\mathcal E}} \def\calF{{\mathcal F}}

 \def\rme{{\mathrm e}}

\def\rmA{{\mathrm A}}  \def\rmC{{\mathrm C}}
\def\rmD{{\mathrm D}}  
  \def\rmI{{\mathrm I}}
\def\rmJ{{\mathrm J}}

  \def\rmU{{\mathrm U}}
\def\rmV{{\mathrm V}} \def\rmW{{\mathrm W}}

\def\dd{\;\!\mathrm{d}} 

\newcommand{\pairing}[4]{ \sideset{_{ #1 }}{_{ #2 }}  {\mathop{\langle #3 , #4
\rangle}}}

\newcommand{\teta}{\vartheta}

\newcommand{\nchi}{{\raise.2ex\hbox{$\chi$}}}

\definecolor{ddcyan}{rgb}{0,0.1,0.9}
\definecolor{ddmagenta}{rgb}{0.8,0,0.8}
\definecolor{orange}{rgb}{0.6,0.2,0}

\newcommand{\piecewiseConstant}[2]{\overline{#1}_{\kern-1pt#2}}

\newcommand{\foraa}{\text{for a.a.}}

\newcommand{\cE}{\mathcal{E}}

\newcommand{\ene}[2]{\mathcal{E}_{#1}(#2)}

\newcommand{\eneb}[2]{\widetilde{\mathcal{E}}_{#1}(#2)}
\newcommand{\pt}[2]{\mathcal{P}_{#1}(#2)}

\newcommand{\admis}[3]{\mathscr{A}(#3;#1,#2)}
\newcommand{\la}{\langle}
\newcommand{\ra}{\rangle}
\newcommand{\cost}[3]{\mathsf{c}_{#1}(#2;#3)}
\newcommand{\costname}[1]{\mathsf{c}_{#1}}

\newcommand{\limen}{\mathscr{E}}
\newcommand{\limp}{\mathscr{P}}
\newcommand{\BV}{\mathrm{BV}}

 \def\trait #1 #2 #3 {\vrule width #1pt height #2pt depth #3pt}

 \def\fin{\hfill
         \trait .3 5 0
         \trait 5 .3 0
         \kern-5pt
         \trait 5 5 -4.7
         \trait 0.3 5 0
 \medskip}
 
\newcommand{\QED}{\mbox{}\hfill\rule{5pt}{5pt}\medskip\par}

\newcommand{\minsub}[2]{\partial\cE^\circ_{#1}(#2)}

\newcommand{\mdt}[3]{\|{#1}'_{#2}(#3)\|}
\newcommand{\mdtq}[4]{\|{#1}'_{#2}(#3)\|^{#4}}

\newcommand{\minpartial}[3]{\|\partial{#1}^{\circ}_{#2}(#3)\|}

\newcommand{\minpartialq}[3]{\|\partial{#1}^{\circ}_{#2}(#3)\|^2}
\newcommand{\argminpartial}[3]{\partial{#1}^{\circ}_{#2}(#3)}

\newcommand{\karrow}{\stackrel{\mathrm{K}}{\longrightarrow}}
\newcommand{\kliminf}{\mathop{\mathrm{Li}}}
\newcommand{\klimsup}{\mathop{\mathrm{Ls}}}
\newcommand{\pij}[2]{\critset(#2,#1)}

\newcommand{\ulim}{\mathrm{U}}
\newcommand{\vlim}{\mathrm{V}}

\newcommand{\gder}[4]{\mathrm{D}^{#1} {#2}_{#3}(#4)}

\newcommand{\opx}{O}

\newcommand{\NN}{\mathrm{N}}
\newcommand{\RR}{\mathrm{R}}
\newcommand{\codim}{\mathrm{codim}}
\newcommand{\ind}{\mathrm{ind}}
\newcommand{\parcur}{\gamma}
\newcommand{\curv}[1]{\mathfrak{#1}}

\newcommand{\Hausdorff}{{\mathrm H}}

\newenvironment{rcomm}{\color{red}}{\color{black}}

\newenvironment{rnew}{\color{ddmagenta}}{\color{black}}

\newcommand{\berin}{\begin{rnew}}
\newcommand{\erin}{\end{rnew}}

\newcommand{\beroc}{\begin{rcomm}}
\newcommand{\eroc}{\end{rcomm}}

\newenvironment{newricky}{\color{ddcyan}}{\color{black}}
\newcommand{\bnr}{\begin{newricky}}
\newcommand{\enr}{\end{newricky}}

\newcommand{\RRR}{\color{black}}

\newcommand{\GG}{\color{red}}

\newcommand{\nc}{\normalcolor}

\newcommand{\EEE}{\color{black}}

\definecolor{vgreen}{rgb}{0.1,0.5,0.2}

\definecolor{dcyan}{rgb}{0,0.3,0.8}

\newcommand{\bE}{{\boldsymbol{\mathsf E}}}
\newcommand{\domainenergy}{{\mathsf E}}
\newcommand{\EK}{\domainenergy}
\newcommand{\sublevel}[1]{\domainenergy[#1]}
\newcommand{\subl}[1]{\domainenergy[#1]}
\newcommand{\Sublevel}[1]{\bE[#1]}
\newcommand{\Subl}[1]{\bE[#1]}

\newcommand{\sfd}{\mathsf d}

\newcommand{\comp}[2]{\critset(#1,#2)}
\newcommand{\cU}{U}
\newcommand{\crit}{\mathbf C}
\newcommand{\sing}{\mathbf S}
\newcommand{\critset}{\mathrm C}
\newcommand{\singset}{\mathrm S}
\newcommand{\necrit}{\mathbf G}
\newcommand{\necritset}[1]{\mathrm{G}(#1)}
\newcommand{\necri}[2]{G_{#1}[#2]}
\newcommand{\totaldisc}{{\mathrm{TD}_0}}
\newcommand{\Honezero}{{\mathrm{TD}_1}}
\newcommand{\noHonezero}{{\mathrm{TD}_1^c}}
\newcommand{\nototaldisc}{{\mathrm{TD}_0^c}}
\newcommand{\res}{\mathop{\hbox{\vrule height 7pt width .5pt depth 0pt
\vrule height .5pt width 6pt depth 0pt}}\nolimits}
\newcommand{\sft}{\mathsf t}
\newcommand{\sfs}{\mathsf s}
\newcommand{\sfr}{\mathsf r}

\newcommand{\sfv}{\mathsf v}
\newcommand{\disc}[1]{\operatorname{Disc}(#1)}
\newcommand{\SV}[1]{\mathsf{SV}(#1)}
\newcommand{\jump}[1]{\operatorname{J}(#1)}
\newcommand{\bfTheta}{\boldsymbol{\Theta}}
\newcommand{\bfXi}{\boldsymbol{\Xi}}
\newcommand{\sfe}{e}
\renewcommand{\d}{\mathrm d}

\newcommand{\unn}{u_{\eps_n}}

\newcommand{\Gunn}{\mathbf{U}_{\eps_n}}
\newcommand{\Gu}{\mathbf{U}}
\newcommand{\Gv}{\mathbf{V}}

\newcommand{\Gvf}{\mathrm{V}}

\newcommand{\newclass}[1]{\mathfrak{E}_{#1}(\Hilbert)} 
\newcommand{\AS}[2]{\mathfrak{S}_{#1}(#2)}
\newcommand{\AG}[3]{\mathfrak{A}(#1;#2;#3)}
\newcommand{\slo}{S}
\newcommand{\Conn}{\Gamma}
\newcommand{\gmetr}{\boldsymbol{\mathsf{X}}}
\newcommand{\gdist}{\mathsf{d}}
\newcommand{\ti}{{\times}}
\newcommand{\jumpname}{\mathrm{J}}

\newcommand{\imsetG}{\mathsf{G}}

\newcommand{\Xsp}{\mathsf{X}}
\newcommand{\Ysp}{\mathsf{Y}}

\title[Singularly Perturbed Gradient Flows and Evolutions of Critical Points]{Singularly Perturbed Gradient Flows and\\  Evolution  of Critical Points in Infinite Dimensions}
\author{Virginia Agostiniani}
\address{V.\ Agostiniani, Dipartimento di Matematica, Universit\`a degli studi di Trento, via Sommarive 14, 38123 Povo (Trento) - Italy   }
\email{\ttfamily virginia.agostiniani\,@\,unitn.it}

\author{Riccarda Rossi}
\address{R.\ Rossi, Dipartimento di Ingegneria Meccanica e Industriale, Universit\`a degli studi di Brescia, via Valotti 9, 25133 Brescia - Italy}
\email{riccarda.rossi\,@\,unibs.it}

\author{Giuseppe Savar\'e}
\address{G.\ Savar\'e, Dipartimento di Scienze delle Decisioni  and BIDSA, Universit\`a Bocconi, via  R\"ontgen 1, 20136 Milano - Italy}
\email{giuseppe.savare\,@\,unibocconi.it}

\thanks{
The authors acknowledge
support from the 
Gruppo Nazionale per l’Analisi Matematica, la Probabilit\`a e le loro Applicazioni (GNAMPA-INDAM).
V.A.\
has been partially supported by the PRIN project ``Contemporary perspectives on geometry and gravity''.
R.R.\ and G.S.\  acknowledge
  support from IMATI (CNR), Pavia.
G.S.\ has been partially supported by funding from the European
Research Council (ERC) under the European Union’s Horizon Europe
research and innovation programme (grant agreement No. 101200514,
project acronym OPTiMiSE).
 Views and opinions expressed are however those of the author(s) only and do not necessarily reflect those of the European Union or the European Research Council Executive Agency. Neither the European Union nor the granting authority can be held responsible for them. \\
 During the final revision of this work, the authors used Claude (Anthropic) in a limited, auxiliary capacity: to improve the language and readability of the authors' own text, to help word a few brief routine passages from material supplied by the authors, and to generate TikZ code for the figures whose content and structure were specified by the authors. All mathematical ideas, results, and proofs are the authors' own; all AI-assisted material was reviewed, revised where necessary, and independently verified by the authors, who take full responsibility for the content of the manuscript.
 }

\begin{document}

\begin{abstract}

 We consider singularly perturbed gradient flows in Hilbert spaces,
  driven by a time-dependent, nonconvex, and nonsmooth   energy, and 
address the convergence of their solutions to curves of critical points of the driving energy functional. 
The degenerating nature of the estimates along the gradient-flow curves calls for novel compactness arguments, which we carefully develop by combining 
tools from the variational approach to  Hilbert and metric gradient   flows \cite{RossiSavare06,AGS08}, with fine  requirements on the   set of critical points of the energy. 
\par
This leads us to prove that  subsequential limits of singularly perturbed gradient flows are \emph{Dissipative Viscosity}  solutions of the limiting problem, i.e., 
 curves of critical points satisfying a suitable  balance between the energy and a defect measure, encoding dissipation.
 This energy-dissipation balance
 encompasses information on the dynamics of the process at  jump  
 times, recording, in particular, the re-emergence of viscous behavior.
 \par
 Under  a suitable  rectifiability condition on the critical set, we show that \DSolutions\ improve to \emph{Balanced Viscosity} solutions, which have the key property that the 
 dissipation measure is purely atomic. 
 \par
In the second part of the paper we show that, for smooth energies
whose second differential is a Fredholm operator, the condition that
the kernel of the Hessian has dimension at most one at every critical
point already implies our measure-theoretic assumptions. We further
relate them to the transversality conditions from bifurcation theory
and show that they have a \emph{generic} character
 with respect  to a wide family of linear perturbations of the energy.

The results are new also in finite dimension. Applications to various
PDE models are presented.
\end{abstract}

\noindent

\maketitle

\centerline{\today}

\tableofcontents

\section{Introduction}

Several evolution processes in  science are driven by an energy
landscape that  is modulated by external factors on a time scale much
slower than the intrinsic relaxation time of the system. When the
evolution is observed on the external slow time scale, one is
led to the singularly perturbed gradient flow
 \begin{equation}
\label{eq:GFmodel-intro}
-\eps  u'(t)\in 
 \partial \ene t{u(t)} ,\quad t \in (0,T),
\end{equation}
where the parameter $\eps>0$ represents the ratio between the
relaxation time of the system and the characteristic time of the
variation of the time-dependent energy $\ene t\cdot$, defined in a Hilbert space $\Hilbert$.
Here and throughout, 
the map $u\mapsto \ene tu$ 
is
allowed to be \emph{nonsmooth} and \emph{nonconvex}, and $\partial\calE_t$
stands for its \emph{Fr\'echet subdifferential}, a natural localization to the
nonconvex setting of the subdifferential of convex analysis, whose definition
is recalled in the next section; when $\calE_t$ is differentiable,
$\partial\calE_t$ reduces to the single-valued differential $\rmD\calE_t$, and
\eqref{eq:GFmodel-intro} to the classical gradient flow
$\eps u'(t)=-\rmD\ene t{u(t)}$. 
The limit $\eps \down 0$ describes the \emph{quasistatic} regime: at
almost every time the state of the system is at equilibrium, i.e.\ it
is a critical point of the energy $\mathcal E_t(\cdot)$ at the current
time, a solution of the limit equation
\begin{equation}
\label{eq:critical-intro}
\partial \ene t{u(t)} \ni 0, \quad t \in (0,T);
\end{equation}
still, the slow modulation of the landscape may destabilize the
equilibrium,
thereby activating the fast internal dynamics, which drives the system
towards a new equilibrium.

On the slow time scale such fast transients appear as \emph{jumps}, at
some $t\in (0,T)$, between two values $u(t\pm)$,
and the energy they dissipate is not seen by the formal limit equation
\eqref{eq:critical-intro}: it has to be recorded by an additional
\emph{defect measure}, which captures the trajectory followed along
the jump and the corresponding energy dissipation.
In the simplest
cases, this is a
heteroclinic orbit for the ``frozen energy'' $\ene t\cdot$ 
\begin{equation}
\label{eq:heteroclinic-intro}
 \teta'(s)\in -
 \partial \ene t{\teta(s)} ,\quad s \in (-\infty,+\infty),\quad
 \teta(-\infty)=u(t-),\quad
 \teta(+\infty)=u(t+).
\end{equation}
Under cyclic driving this mechanism dissipates energy: over a closed loading cycle the dissipated energy equals the work expended by the loading along the cycle. For a scalar loading this is the area of the classical hysteresis loop (as in the stress–strain or magnetization–field diagrams).
The emergence of
rate-independent hysteresis from viscous dynamics under slow loading
is a classical theme in continuum mechanics, from ferromagnetism to
elasto-plasticity, phase transformations, damage, and fracture. The
mathematical derivation of such rate-independent hysteresis as a
\emph{vanishing-viscosity} limit of viscous gradient dynamics, the
viewpoint closest to the present paper,
goes back to \cite{EfeMie06RILS,MieThe04RIHM}, cf.\ also
\cite{MRS2016,MiRo23} and the monographs \cite{Visintin94,BrokateSprekels96,MieRouBOOK}.

\par
The two-scale structure of \eqref{eq:GFmodel-intro}
also arises
naturally in nonconvex optimization. For small learning rates, the 
gradient-descent training of a model is a discretization of the
gradient flow of the loss functional; in several algorithmic
frameworks of current interest the loss itself varies slowly during
the training process: this is the case for continuation and homotopy
strategies, for the annealing of regularization or penalization
parameters, for curriculum-learning protocols, and for online learning
under a drifting data distribution (see \cite{Simonetto20} for a survey of such \emph{time-varying optimization} problems, mostly in the convex setting). 
In all of  these situations, 
the parameter $\eps$
measures the ratio between the response time of the optimizer and the
time scale on which the objective is modulated, and the singular limit
$\eps\down0$ captures the effective dynamics of the training process:
the trainable parameters follow a branch of critical points of the
evolving loss and, when the branch degenerates, they undergo a fast
transition towards a new equilibrium, a picture close to the \emph{saddle-to-saddle} training dynamics of deep linear networks \cite{JacotSaddle21}. We refer to \cite{CiFoSca25} for
a description of deep learning architectures in terms of a closely
related (mean-field) control and gradient-flow formalism, and to
\cite{AlForKleSca25} for a constructive approach to quasistatic
evolutions of critical points with an emphasis on computational
implementability. Let us also mention that evolutions of critical
points of time-dependent energies play a role in data science also as
\emph{forward models} of inverse problems: in \cite{AlFoHu20} the
driving energy is \emph{learned} from the observation of evolutions of
its critical points originating from random initial equilibria.
A
precise description of the class of limit evolutions, which is the
object of the present paper, thus provides a natural and powerful framework
for the analysis of such identification problems.

\par
The rigorous analysis of
the limit $\eps\down0$
faces two intertwined
difficulties. On the one hand, the natural a priori estimates along
\eqref{eq:GFmodel-intro}
degenerate as $\eps \down 0$, so that
compactness of the trajectories fails in every classical sense. On the
other hand, beyond the convex, or non-degenerate, regime, the set of
equilibria of $\calE_t$ may exhibit a complicated structure.
For instance, due to overparametrization and to the intrinsic symmetries of the
parametrization (e.g., invariance under permutations or rescalings of
the parameters), the critical points of machine-learning loss
functionals are typically \emph{non-isolated} and \emph{degenerate}:
they occur in manifolds, or in even more complicated continua (cf.\
the two guiding examples and Remark \ref{cex:rmk-symmetry} below).
Approaches requiring isolated or non-degenerate critical points are
therefore ruled out from the outset, whereas the measure-theoretic
conditions at the core of our analysis remain compatible with this
degeneracy, to an extent that we shall make precise.

\par

The aim of this paper is to provide a rigorous description of the
limit evolution, and corresponding solution concepts for the limit
problem \eqref{eq:critical-intro}, under assumptions on the
\emph{critical set}
\begin{equation}
  \label{eq:47}
\crit:=\big\{(t,u)\in [0,T]\ti \Hilbert:\, \partial\ene tu\ni 0\big\},
\qquad
\critset(t):=\big\{u\in \Hilbert:\,(t,u)\in \crit\big\}
\end{equation}
(the fibers $\critset(t)$ collect the solutions of
\eqref{eq:critical-intro} at time $t$)
that are measure-theoretic in nature, robust, and generic, and
that in particular dispense with any non-degeneracy or isolatedness
requirement on critical points. Our results have a graded structure,
in which finer descriptions of the limit are matched by stronger
conditions on the critical set. The weakest condition --- the
Sard-type property $\mathrm{(C)}$ below, requiring the
$\mathscr L^1$-negligibility of the sets $\ene t{\critset(t)}$ of critical values of the
energy --- allows for continua of critical points in every fiber
$\critset(t)$, and already supports a basic solution concept (that of
\emph{Dissipative Viscosity} solutions), for which we prove existence
and consistency results. The finer properties of the limit evolution
--- the sequential compactness of the families of gradient flows with
respect to pointwise convergence on $[0,T]$, and the purely
atomic structure of the dissipation --- rely on stronger conditions. 
 The validity of such properties is still compatible with the presence of 
continua in the fibers, but only at exceptional
(typically at
most countably many) times. Such stronger conditions, while not
automatic, are \emph{generic}: for $\rmC^3$ energies with Fredholm
differential (see Sections \ref{sec:6} and \ref{s:8} for the precise
setting), they can be enforced by arbitrarily small \emph{linear}
perturbations of the energy, forming a residual set (of full
Lebesgue measure in finite dimension). It is precisely the weakness of
our measure-theoretic requirements that leads to such a strong
genericity result: since they follow from the \emph{first-order}
transversality conditions alone, merely linear perturbations (rather
than the quadratic corrections needed for the full set of
transversality conditions of bifurcation theory) and a moderate
smoothness of the energy suffice.

\subsection*{Two guiding examples}
Before describing our results, we pick two elementary
one-dimensional examples to illustrate  the phenomena that any notion of limit
evolution for \eqref{eq:GFmodel-intro} has to deal with, and
the role that our assumptions will play.
Despite their simplicity,
they capture the main features of the more complex and involved infinite-dimensional applications of the theory, cf.\ Theorem \ref{cex:th-AC-short} below and Section \ref{ss:9.2}. Singularly perturbed gradient flows in finite dimension have been analyzed in detail in \cite{Zanini,agostiniani2012,AgoRos17,ScillaSolombrino18-1}, to which we refer for further examples.

\begin{example}[Dissipative jumps at fold points]
\label{cex:ex-fold}
\upshape
Consider the double-well energy $W$ in $\Hilbert=\R$ perturbed by a time-dependent loading
\begin{equation}
\label{cex:fold-energy}
\ene tu:= W(u)-\ell(t)\,u,\quad
W(u):=\frac14\,(u^2-1)^2 , \quad \ell:[0,T]\to \R\text{ increasing,}\quad
\ell(0)<-\ell^*, \ \ell(T)>\ell^*:=\tfrac2{3\sqrt3}.
\end{equation}
The fibers of the critical set are $\critset(t)=\{u \in \R:\,
u^3-u=\ell(t)\}$: for $|\ell(t)|<\ell^*$ they consist of three points
(two local minimizers $m_\pm(t)$, separated by a local maximizer),
which merge in pairs at the \emph{fold} times $t^*_\pm$ when
$\ell(t^*_\pm)=\pm\ell^*$, cf.\ Figure \ref{cex:fig-fold}.
As $\eps\down0$, the solutions of \eqref{eq:GFmodel-intro} starting
from the negative well converge pointwise to a limit $u$, which tracks
the branch $m_-$
well beyond the time $t_0$ at which $\ell(t_0)=0$ and $u(t_0)$ 
ceases to be the global minimizer,
and in fact up to the fold time
$t_+^*$, consistently with Theorem~\ref{rmk:localmin-nojump} below.
There, $m_-$ collides with the unstable branch at the degenerate critical
point $u^-=-1/\sqrt3$.
Metastability is thus an essential
feature of the vanishing-viscosity limit, distinguishing it from the
so-called \emph{energetic} evolutions built on \emph{global} minimization, which would jump already at $t_0$, where the two wells exchange global stability.
\par
The existence of a Dissipative Viscosity solution on the whole interval
$[0,T]$ is guaranteed by Theorem \ref{mainth:1}. Now,
since the loading crosses the threshold $\ell^*$, for $t>t_+^*$ the critical fiber reduces to $\critset(t)=\{m_+(t)\}$.
No such time can belong to the jump set $\mathrm J$ (see Definition \ref{def:sols-notions-1}), since a jump requires two
distinct critical components with positive transition cost. 
Thus $u$ is the curve of critical points
\[
u(t)=m_-(t) \ \text{ for } t<t^*_+,
\qquad
u(t)=m_+(t)\ \text{ for } t> t^*_+,
\]
whose jump at $t^*_+$, from $u^-$ to $u^+=2/\sqrt3$, is realized by (a reparametrization of) the heteroclinic solution of the gradient flow of the \emph{frozen-time} energy, $\teta'=-\rmD \ene{t^*_+}{\teta}$, connecting $u^-$ to $u^+$: the internal viscous time scale, neglected by the formal limit, re-emerges and governs the jump regime. The energy dissipated in the transition is recorded by  a  defect measure $\mu$:   with the terminology introduced below, $u$ is a \BSolution, with
\[
\mu=\cost{t^*_+}{u^-}{u^+}\,\delta_{t^*_+}\nc,
\qquad
\cost{t^*_+}{u^-}{u^+}\nc=\int_{u^-}^{u^+} |W'(\teta)-\ell^*|\,\d\teta=\ene{t^*_+}{u^-}-\ene{t^*_+}{u^+}>0 .
\]
Under a cyclic loading that first increases beyond the positive fold
threshold $\ell^*$ before reversing direction, and then decreases
beyond the negative threshold $-\ell^*$,
the
same mechanism produces two jumps per cycle, and the trajectory
encloses a hysteresis loop whose area is the total dissipated energy,
regardless of the speed of the cycle: the limit evolution is
\emph{rate-independent}
and provides the elementary mechanism of magnetic hysteresis, see e.g.~\cite{Bertotti98,Visintin94,Brokate00}, which frames the switching as a fold (saddle--node) catastrophe of the free energy; the slow passage through such a fold, and the ensuing hysteresis loop, are analyzed in dynamical-systems terms in \cite{Berglund01}.
\end{example}

\begin{figure}[ht]
\centering
\begin{tikzpicture}[xscale=1.5,yscale=2.5,baseline=(current bounding box.center)]
  \draw[->,gray] (-1.65,0) -- (1.8,0) node[below,black] {$u$};
  \draw[->,gray] (0,-0.06) -- (0,0.5) node[left,black] {\small $W$};
  \draw[very thick] plot[domain=-1.45:1.45,samples=90,variable=\u] ({\u},{0.25*(\u*\u-1)*(\u*\u-1)});
  \node[circle,fill,inner sep=1.2pt] at (-1,0) {};
  \node[circle,fill,inner sep=1.2pt] at (1,0) {};
  \node[circle,draw=red,inner sep=1.1pt] at (0,0.25) {};
  \node[below] at (-1,-0.015) {\small $-1$};
  \node[below] at (1,-0.015) {\small $1$};
  \node[above right] at (0.02,0.25) {\small $0$};
\end{tikzpicture}
\hfil
\begin{tikzpicture}[xscale=3.6,yscale=1.0,baseline=(current bounding box.center)]
  \draw[->,gray] (-0.85,0) -- (0.85,0) node[below,black] {$\ell$};
  \draw[->,gray] (0,-1.75) -- (0,1.85) node[left,black] {$u$};
  \draw[very thick] plot[domain=0.5774:1.26,samples=80,variable=\u] ({\u*\u*\u-\u},{\u});
  \draw[very thick] plot[domain=-1.26:-0.5774,samples=80,variable=\u] ({\u*\u*\u-\u},{\u});
  \draw[thick,densely dashed] plot[domain=-0.5774:0.5774,samples=60,variable=\u] ({\u*\u*\u-\u},{\u});
  \node[circle,fill,inner sep=1.2pt] at (0.3849,-0.5774) {};
  \node[circle,fill,inner sep=1.2pt] at (-0.3849,0.5774) {};
  \draw[->,thick] (0.3849,-0.50) -- (0.3849,1.06);
  \node[circle,draw,inner sep=1.1pt] at (0.3849,1.1547) {};
  \draw[->,thick] (-0.3849,0.50) -- (-0.3849,-1.06);
  \node[circle,draw,inner sep=1.1pt] at (-0.3849,-1.1547) {};
  \node[below right] at (0.39,-0.02) {$\ell^*$};
  \node[above left] at (-0.40,0.02) {$-\ell^*$};
  \node[below] at (-0.10,-1.05) {$m_-$};
  \node[above] at (0.10,1.15) {$m_+$};
  \node[right] at (0.41,0.30) {\small jump at $t^*_+$};
  \node[left] at (-0.41,-0.30) {\small jump at $t^*_-$};
  \node[below right] at (0.36,-0.62) {$u^-$};
  \node[above] at (0.40,1.15) {$u^+$};
\end{tikzpicture}
{\caption{\emph{Left}: the double well $W(u)=\tfrac14(u^2-1)^2$ (the frozen energy at $\ell=0$), with its two minima at $\pm1$ --- the two phases --- and the barrier at $0$. \emph{Right}: the critical set of Example \ref{cex:ex-fold} in the $(\ell,u)$-plane. Solid: branches of local minimizers $m_\pm$; dashed: local maximizers; the fold points are marked. Under a loading cycle the limit evolution tracks $m_-$ and jumps up to $m_+$ at $t^*_+$ (where $\ell=\ell^*$), then it tracks $m_+$ and jumps back down at $t^*_-$ (where $\ell=-\ell^*$), each jump along a heteroclinic of the frozen-time gradient flow; the two jumps enclose the hysteresis loop.}\label{cex:fig-fold}}
\end{figure}
\begin{example}[Jumps through a continuum of critical points]
\label{cex:ex-plateau}
\upshape
Let $\Hilbert=\R$, let $\ell$ be a continuous loading which increases
from $\ell_0=\ell(0)<-1/2$ to a positive maximum and then
decreases to $\ell(T)=\ell_0$, and consider
\begin{equation}
\label{cex:plateau-energy}
\ene tu:= F(u)-\ell(t)\,u,
\qquad
F(u):=
\begin{cases}
\tfrac12\,u^2 & \text{if } u \le 0,\\
-\tfrac12\,u^2 & \text{if } 0\le u\le \tfrac12,\\
\tfrac12\,(u-1)^2 -\tfrac14 & \text{if } \tfrac12 \le u\le 1,\\
-\tfrac14 & \text{if } 1\le u \le 2,\\
\tfrac12\,(u-2)^2-\tfrac14 & \text{if } u \ge 2,
\end{cases}
\end{equation}
a piecewise quadratic $\rmC^{1,1}$ potential, cf.\ Figure
\ref{cex:fig-plateau}. The
critical fibers are computed by inspection: for $\ell(t)\in(-\tfrac12,0)$ the fiber
$\critset(t)$ consists of the three points
$\ell(t)<-\ell(t)<1+\ell(t)$, two stable critical points separated by
an unstable one, which ranges in the concavity interval
$(0,\tfrac12)$ of $F$; for $\ell(t)>0$ it reduces to the single point
$2+\ell(t)$, and for $\ell(t)<-\tfrac12$ to the single point
$\ell(t)$; at the times where $\ell(t)=0$, one has
\[
\critset(t)=\{0\}\cup[1,2],
\]
the union of a degenerate critical point and of a \emph{connected
  component which is a continuum}.
\par
As $\eps\down0$, the solutions starting from $u_0=\ell_0$ converge
pointwise to the limit
\[
u(t)=
\begin{cases}
\ell(t) & \text{for } t<t_1,\\
2+\ell(t) & \text{for } t_1<t<t_2,\\
1+\ell(t) & \text{for } t_2<t<t_3,\\
\ell(t) & \text{for } t>t_3,
\end{cases}
\]
where $t_1<t_2$ are the two times at which $\ell$ vanishes and
$t_3\in (t_2,T)$ is the (last) time at which $\ell(t_3)=-\tfrac12$.
The three
discontinuities of $u$ have different natures.
\par
At $t_1$ the evolution jumps from the destabilized fold point
$u(t_1-)=0$ to $u(t_1+)=2$, \emph{passing through the continuum}, and concatenates
the heteroclinic descent
from $0$ to the left endpoint of the plateau with a free slide along
$[1,2]$, crossed at zero cost.
Transitions of this kind are the reason why, in Section
\ref{ss:2.2-added}, the cost $\mathsf c_t$ is defined between
\emph{connected components} of the critical set, and not merely
between critical points.
\par
At $t_2$, on the return, the branch $2+\ell(t)$ terminates
\emph{into} the right endpoint of the plateau, and the limit slides
back across it, jumping from $u(t_2-)=2$ to $u(t_2+)=1$ at
\emph{zero cost}: no energy is dissipated, and this discontinuity is
invisible to the energy balance.
Moreover, the trajectories $u_\eps$
cross the plateau within a vanishing time window around $t_2$: their
graphs converge in the Kuratowski sense to a limit set
whose fiber at $t_2$ is the \emph{whole segment} $[1,2]$, and a
solution can only be recovered from the limit graph through a
measurable \emph{selection} --- non-uniquely, since any value in
$[1,2]$ may be selected at $t_2$.
\par
Finally, at $t_3$ the branch $1+\ell(t)$ is annihilated against
the unstable branch at the second fold $(-\tfrac12,\tfrac12)$, and
the limit jumps from $u(t_3-)=\tfrac12$ to
$u(t_3+)=-\tfrac12$ along a heteroclinic of the frozen energy,
as in Example \ref{cex:ex-fold}, with
$\cost{t_3}{\tfrac12}{-\tfrac12}=
\ene{t_3}{\frac12}-\ene{t_3}{-\frac12}
=
\tfrac14$: the state is back on
the initial branch, and the trajectory encloses a hysteresis loop.

Summarizing,
$u$ is a \BSolution\ with defect measure
$\mu=\tfrac14\,\delta_{t_1}+\tfrac14\,\delta_{t_3}$, and  its discontinuity set 
$\disc u=\{t_1,t_2,t_3\}$ \emph{strictly} contains the set
$\jumpname=\{t_1,t_3\}$ of atoms of $\mu$ (whence the careful
distinction between the two sets in Definition
\ref{def:sols-notions-1} below).
\par
None of the features displayed by  Example  \ref{cex:ex-plateau} is compatible with the
assumptions of \cite{Zanini,AgoRos17},
where the fibers $\critset(t)$ consist of isolated points. On the
other hand, in both of the above guiding examples the conditions of Theorems
\ref{mainth:1} and \ref{mainth:2} hold by inspection: the critical
sets are unions of finitely many 
Lipschitz arcs,
hence
countably $\mathscr H^1$-rectifiable. Thus, condition
$\mathrm{(C)}$ and the countability of the set of times at which
$\critset(t)$ fails to be totally disconnected follow, e.g.,
from Lemma \ref{le:criterion} ahead. Note, finally, that $\calE$ is merely
$\rmC^{1,1}$: this is immaterial for our approach, which never
differentiates the energy twice, while techniques based on
bifurcation theory require $\rmC^3$-smoothness.
\end{example}
\nc

\GG
\begin{figure}[ht]
\centering
\begin{tikzpicture}[xscale=1.25,yscale=2.3,baseline=(current bounding box.center)]
  \draw[->,gray] (-1.15,0) -- (3.35,0) node[below,black] {$u$};
  \draw[very thick] plot[domain=-1.05:0,samples=40,variable=\u] ({\u},{0.5*\u*\u});
  \draw[very thick,densely dashed] plot[domain=0:0.5,samples=25,variable=\u] ({\u},{-0.5*\u*\u});
  \draw[very thick] plot[domain=0.5:1,samples=25,variable=\u] ({\u},{0.5*(\u-1)*(\u-1)-0.25});
  \draw[line width=2.4pt] (1,-0.25) -- (2,-0.25);
  \draw[very thick] plot[domain=2:3.15,samples=40,variable=\u] ({\u},{0.5*(\u-2)*(\u-2)-0.25});
  \node[circle,fill,inner sep=1.2pt] at (0,0) {};
  \node[circle,fill,inner sep=1.2pt] at (0.5,-0.125) {};
  \draw[densely dotted] (0.5,0) -- (0.5,-0.125);
  \draw[densely dotted] (1,0) -- (1,-0.25);
  \draw[densely dotted] (2,0) -- (2,-0.25);
  \node[above] at (0.5,0.015) {\small $\tfrac12$};
  \node[above] at (1,0.015) {\small $1$};
  \node[above] at (2,0.015) {\small $2$};
  \node[above left] at (-0.02,0.02) {\small $0$};
  \node[above right] at (-1.02,0.44) {\small $F$};
  \node[below] at (1.5,-0.27) {\small $-\tfrac14$};
\end{tikzpicture}
\hfil
\begin{tikzpicture}[xscale=2,yscale=1.5,baseline=(current bounding box.center)]
  \draw[->,gray] (-0.85,0) -- (0.75,0) node[below,black] {$\ell$};
  \draw[->,gray] (0,-0.85) -- (0,2.75) node[left,black] {$u$};
  \draw[very thick] (-0.75,-0.75) -- (0,0);
  \draw[very thick] (-0.5,0.5) -- (0,1);
  \draw[very thick] (0,2) -- (0.55,2.55);
  \draw[thick,densely dashed] (-0.5,0.5) -- (0,0);
  \draw[line width=2.4pt] (0,1) -- (0,2);
  \node[circle,fill,inner sep=1.2pt] at (0,0) {};
  \node[circle,fill,inner sep=1.2pt] at (-0.5,0.5) {};
  \draw[->,thick] (0.05,0.06) -- (0.05,1.94);
  \draw[->,thick,dotted] (-0.05,1.94) -- (-0.05,1.06);
  \draw[->,thick] (-0.5,0.44) -- (-0.5,-0.44);
  \node[circle,draw,inner sep=1.1pt] at (-0.5,-0.5) {};
  \draw[densely dotted] (-0.5,0.5) -- (0,0.5);
  \node[left] at (-0.0,0.5) {\small $\tfrac12$};
  \node[right] at (-0.0,-0.5) {\small $-\tfrac12$};
  \node[below right] at (-0.4,-0.30) {\small $u=\ell$};
  \node[above left] at (-0.2,0.7) {\small $u=1+\ell$};
  \node[above left] at (0.50,2.4) {\small $u=2+\ell$};
  \node[left] at (-0.02,1) {\small $1$};
  \node[left] at (-0.02,2) {\small $2$};
  \node[right] at (0.08,1.55) {\small jump at $t_1$};
  \node[left] at (-0.08,1.50) {\small zero cost at $t_2$};
  \node[left] at (-0.48,0.20) {\small jump at $t_3$};
\end{tikzpicture}
\caption{Left: the potential $F$ of Example \ref{cex:ex-plateau}, i.e.\ the frozen energy at the times where $\ell=0$: its critical set $\{0\}\cup[1,2]$ is the union of the degenerate point $0$ and of the flat piece (thick); the dashed arc is the concavity region $(0,\tfrac12)$, which hosts the unstable critical points. Right: the critical set in the $(\ell,u)$-plane (solid: stable branches; dashed: unstable branch; thick: the continuum component $[1,2]$ of the fibers at $\ell=0$) and the three discontinuities of the limit evolution: the jump at $t_1$ from the fold point $0$ to $2$, \emph{through the continuum} (upward arrow, cost $\tfrac14$); the return slide across the component, at zero cost, at $t_2$ (dotted arrow); the return jump at $t_3$ from the second fold $(-\tfrac12,\tfrac12)$ (downward arrow, cost $\tfrac14$). The trajectory encloses a hysteresis loop of area $\tfrac12$, the total dissipation of the cycle.}
\label{cex:fig-plateau}
\end{figure}
\nc

\begin{remark}[Symmetries, 
continua in the critical fibers,
and the Sard property]
\label{cex:rmk-symmetry}
\slshape
Continua of critical points within a fixed fiber $\critset(t)$,
as in Example \ref{cex:ex-plateau}, are by no means exotic: they arise systematically in the presence of continuous symmetries. If $\calE_t$ is invariant under the action of a continuous group of isometries, critical points occur in orbits of positive dimension, and every point of such an orbit is a \emph{degenerate} critical point. The simplest instance is the ``mexican hat'' energy in $\Hilbert=\R^2$,
\[
\ene tu = \frac 14\big(|u|^2-1\big)^2-\la \ell(t),u\ra,
\]
at the times where the loading $\ell$ vanishes: there, the fiber of the critical set contains the whole unit circle. Loss functionals of overparametrized machine-learning models provide high-dimensional examples of the same mechanism: rescaling and permutation invariances of the parametrization produce manifolds of critical points. Observe that in all these situations the energy is constant along each orbit, so that 
such symmetry-induced continua in the fibers
are perfectly compatible with condition $\mathrm{(C)}$. In this respect, condition $\mathrm{(C)}$ has the nature of a \emph{Sard-type} property: for energies of class $\rmC^d$ on $\R^d$ it is automatically satisfied by the Morse--Sard theorem \cite{Sard42}, whereas in infinite dimensions it is a genuine structural assumption, since Sard's theorem may fail even for infinitely differentiable functionals \cite{Kupka65}. Sufficient conditions of a functional-analytic nature (Fredholmness of the differential, together with $\dim \NN(\rmD^2\calE_t(u))\leq 1$ at critical points), as well as the generic character of $\mathrm{(C)}$, are discussed in Sections \ref{sec:6} and \ref{s:8}.
\end{remark}

\subsection*{The results in brief}
Throughout the paper we consider time-dependent energies
$\calE\colon[0,T]\ti\Hilbert \to (-\infty,+\infty]$ on a separable
Hilbert space $\Hilbert$, possibly \emph{nonsmooth} and
\emph{nonconvex}, satisfying three structural conditions:
$\lambda$-convexity of $u\mapsto \ene tu$, compactness of the energy
sublevels, and a power control on $\partial_t \calE$ (which forces
$\calE_t$ to have constant finiteness domain).
Our standing assumption on the critical set $\critset(t)$ is
measure-theoretic:
\begin{itemize}
\item[$\mathrm{(C)}$] for every $t\in[0,T]$, the set of critical values $\ene t{\critset(t)}$ is Lebesgue-negligible.
\end{itemize}
A distinguished role will be played by the set $\totaldisc$ of
times $t$ at which the fiber $\critset(t)$ is \emph{totally
  disconnected} (i.e., all its connected components are singletons):
its complement $\nototaldisc:=[0,T]\setminus \totaldisc$ collects the
\emph{topologically singular} times, at which $\critset(t)$ contains a
nontrivial continuum. In the two guiding examples,
$\nototaldisc=\emptyset$ and $\nototaldisc=\{t_1,t_2\}$,
respectively.
Our main results can then be summarized as follows.
\begin{itemize}
\item[-] \emph{Existence and consistency} (Theorem \ref{mainth:1}, claims (1)--(3), together with
Proposition~\ref{prop:improved-C}): under $\mathrm{(C)}$ alone, for every initial datum there exists a \DSolution\ of the limit problem \eqref{eq:critical-intro}, namely
a curve $u$, critical outside an at most countable set $\rmJ$,
continuous at every time of $\totaldisc\setminus \rmJ$, and satisfying
the energy-dissipation balance
  \begin{equation}
    \label{EDbal-DISS-intro}
\mu([s,t]) + \ene t{u(t)} = \ene s{u(s)} + \int_s^t \partial_t \ene
r{u(r)}\dd r ,
\quad \text{for all }
s,t\in [0,T]
\setminus \rmJ,\quad \text{ with } s \leq t,
\end{equation}
where the \emph{defect measure} $\mu$, whose set of atoms is
precisely $\rmJ$, records the dissipated energy and satisfies, at each of its atoms, the jump relation
\[
\mu(\{t\}) \,=\, \ene t{u(t-)}-\ene t{u(t+)} \,=\, \cost t{u(t-)}{u(t+)} .
\]
The \emph{energy-dissipation cost} $\mathsf c$ is obtained by minimizing energy-dissipation integrals over transitions connecting the two states in the frozen landscape of $\calE_t$; optimal transitions are concatenations of (reparametrized) gradient flows of $\calE_t$: at the jumps, the internal viscous scale neglected by the limit re-emerges and resolves the discontinuity. Moreover, \emph{every} pointwise limit of gradient flows extends to a \DSolution.
\item[-] \emph{Compactness} (Theorem \ref{mainth:1}, claims (4)--(5)):
if, in addition, the set $\nototaldisc$ of topologically singular
times is at most countable, then every family of gradient flows admits
a subsequence converging pointwise on $[0,T]$ to a \DSolution.
\item[-] \emph{Balanced Viscosity solutions} (Theorem \ref{mainth:2}): if the critical set $\crit$ is countably $\mathscr H^1$-rectifiable, then the defect measure of any \DSolution\ is \emph{purely atomic}: the energy balance involves only the (at most countably many) jump costs.
\item[-] \emph{Verifiable conditions, and genericity} (Theorems \ref{thm:dim-kernel-ass1}, \ref{prop:7.1}, and \ref{thm:weak-perturb}): for smooth energies whose differential is a Fredholm map, the sole bound $\dim \NN(\rmD^2\calE_t(u))\leq 1$ at critical points implies $\mathrm{(C)}$; 
some first-order
transversality conditions imply the rectifiability of $\crit$; finally,  the transversality conditions are \emph{generic}: they can be enforced by a residual set of linear perturbations,
and the set of such perturbations has  full Lebesgue measure in finite dimension
(see Remark \ref{better-in-finite-dimension}).
\end{itemize}
This hierarchy is strict, and no level of it is redundant: as we
will show in Section \ref{ss:9.2}, the gradient flow of the (nonsmooth,
nonconvex) double-obstacle energy realizes its different levels as the
length of the underlying space domain varies. In particular, in the
large-domain regime, continua of critical points occur at \emph{every}
time and only condition $\mathrm{(C)}$ survives: the existence and
consistency of \DSolutions\ persist, whereas the finer conclusions ---
compactness, and the atomic structure of the dissipation --- are no
longer guaranteed.

\paragraph{\bf The finite-dimensional case.}
Although our analysis is developed in an infinite-dimensional Hilbertian setting, our results appear to be new even in finite dimension. Indeed, for $\Hilbert=\R^d$ the available results on the singular perturbation \eqref{eq:GFmodel-intro} \cite{Zanini,agostiniani2012,AgoRos17,ScillaSolombrino18-1} require the smoothness of the energy, as well as structural conditions on its critical points (finitely many degenerate critical points and the transversality conditions in \cite{Zanini}; fibers $\critset(t)$ consisting of isolated points in \cite{AgoRos17}). Both restrictions are removed here: our results cover nonsmooth, $\lambda$-convex energies --- such as those arising in constrained, or $\ell^1$-regularized, nonconvex minimization --- as well as energies whose critical points occur in continua, as it systematically happens in the presence of symmetries. Moreover, for smooth energies in $\R^d$ condition $\mathrm{(C)}$ holds automatically, by the Morse--Sard theorem \cite{Sard42}: the existence and consistency of \DSolutions\ (claims (1)--(3) of Theorem \ref{mainth:1}) are thus guaranteed \emph{without any assumption on the critical set}. Finally, since in finite dimension the perturbations of Theorem \ref{thm:weak-perturb} can be taken in a set of full Lebesgue measure (cf.\ Remark \ref{better-in-finite-dimension}), for almost every linear perturbation of the energy the limit evolution improves to a \BSolution.

\begin{corollary}[The smooth finite-dimensional case]
\label{cex:cor-findim}
Let $\Hilbert=\R^d$ and let $\calE$ comply with the power control
\eqref{power-control-intro}, have compact sublevels, and be
$\lambda$-convex.
Then:
\begin{itemize}
\item[(1)] If $\calE \in \rmC^d([0,T]\ti\R^d) $
  then $\mathrm{(C)}$ holds and therefore, for every initial datum $u_0$, \DSolutions\ starting from $u_0$ exist, and every pointwise limit of the gradient flows \eqref{eq:GFmodel-intro} extends to a \DSolution\ (Theorem \ref{mainth:1}, claims (1)--(2)), \emph{without any further assumption on the critical set};
\item[(2)]
  If $\calE \in \rmC^3([0,T]\ti\R^d) $
  then 
  there exists a Lebesgue negligible set $N\subset \R^d$
  such that for every $y \in \R^d\setminus N$ the perturbed energy
  $\calE^y_t(u):=\ene tu + \la y,u\ra$ satisfies the first order
  transversality conditions, 
  its critical set is countably $\mathscr H^1$-rectifiable and
  \emph{all} the conclusions of Theorem
  \ref{mainth:2}  hold: every sequence of $\eps_n$-gradient flows of
  $\calE^y$, $\eps_n\downarrow0$,
  admits a subsequence converging pointwise on $[0,T]$ to a limit that  is a \BSolution.
\end{itemize}
\end{corollary}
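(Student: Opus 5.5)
The corollary is a specialization of the main theorems to $\Hilbert=\R^d$, so the proof reduces to checking their hypotheses. In finite dimension compact sublevels follow from coercivity, and the three structural conditions ($\lambda$-convexity, compact sublevels, power control) are assumed. What remains is to verify $\mathrm{(C)}$ in claim (1), and the transversality and rectifiability hypotheses in claim (2).

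\emph{Claim (1).} For a differentiable energy, $\partial\ene tu=\{\rmD\ene tu\}$, so $\critset(t)$ is the set of critical points of the $\rmC^d$ function $\calE_t\colon\R^d\to\R$. The Morse--Sard theorem \cite{Sard42} requires regularity $\rmC^k$ with $k\ge\max\{d-1+1,1\}=d$ for maps $\R^d\to\R$. Hence $\ene t{\critset(t)}$ is $\mathscr L^1$-negligible for every $t\in[0,T]$, which is exactly $\mathrm{(C)}$. Claims (1)--(2) of Theorem \ref{mainth:1} then give existence from any $u_0$ and the extension of every pointwise limit to a \DSolution. No hypothesis on $\totaldisc$ is needed, since countability of $\nototaldisc$ enters only in claims (4)--(5).

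\emph{Claim (2).} For $\calE\in\rmC^3$ I would apply Theorem \ref{thm:weak-perturb} to the family $\calE^y$. In finite dimension Remark \ref{better-in-finite-dimension} upgrades the residual set of good perturbations to one of full Lebesgue measure, and we let $N$ be its complement. A few checks are needed here:
\begin{itemize}
\item $\rmD\calE_t$ is trivially Fredholm of index $0$.
\item Adding $\la y,u\ra$ preserves $\lambda$-convexity, since the term is linear.
\item It preserves the power control, since $\partial_t\calE^y=\partial_t\calE$.
\item It preserves compactness of sublevels, since $\lambda$-convexity together with compact sublevels of $\calE_t$ gives superlinear or at least coercive growth that absorbs a linear term. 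I expect this coercivity check to need a short argument, or possibly to require restricting to sublevels reached by the flow.
\end{itemize}
For $y\notin N$, Theorem \ref{prop:7.1} turns the first-order transversality conditions into countable $\mathscr H^1$-rectifiability of $\crit^y$. Then Lemma \ref{le:criterion} gives both $\mathrm{(C)}$ and the countability of $\nototaldisc$, so claims (4)--(5) of Theorem \ref{mainth:1} provide pointwise convergent subsequences whose limits are \DSolutions. Theorem \ref{mainth:2} then makes their defect measure purely atomic, i.e.\ the limits are \BSolutions.

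The main obstacle is the precise matching with Theorem \ref{thm:weak-perturb}. I must check that its smoothness and Fredholm requirements are met by a $\rmC^3$ energy jointly in $(t,u)$ on $[0,T]\ti\R^d$, that the exceptional set is indeed Lebesgue-null in $\R^d$ as stated in Remark \ref{better-in-finite-dimension}, and that the perturbed energies remain in the admissible class.
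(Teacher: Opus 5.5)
Your route is the paper's own: Morse--Sard for (1), and for (2) Theorem \ref{thm:weak-perturb} with Remark \ref{better-in-finite-dimension}, followed by Theorem \ref{prop:7.1}, Lemma \ref{le:criterion} and Theorems \ref{mainth:1}--\ref{mainth:2}. Claim (1) and your Fredholm and smoothness checks are fine.

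The gap is in your third and fourth bullets, and it cannot be closed under the stated hypotheses. $\lambda$-convexity together with compact sublevels does \emph{not} yield growth that absorbs a linear term. Take $\ene tu=(1+t)\log(1+|u|^2)$ on $\R^d$. It is $\rmC^\infty$, has compact sublevels, and satisfies \eqref{P_t} with $C_P=1$. It is also $\lambda$-convex with $\lambda=(1+T)/4$, since the radial Hessian eigenvalue is $2(1+t)(1-r^2)/(1+r^2)^2\ge-(1+t)/4$ with $r=|u|$.

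Yet for every $y\neq0$ the energy $\calE^y_t$ is unbounded below along $u=-s\,y/|y|$. The gradient flow started at $u_0=-R\,y/|y|$, with $R$ large, stays on that ray and satisfies $\eps s'=|y|-2(1+t)s/(1+s^2)\ge|y|/2$. Hence $|u_\eps(t)|\ge R+|y|t/(2\eps)\to\infty$ for every $t>0$, and no pointwise convergent subsequence exists. This happens even though the transversality conditions hold for a.e.\ $y$ by Sard's theorem.

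So the conclusion of (2) fails for all $y\neq0$, and restricting to ``sublevels reached by the flow'' cannot rescue it. Even the convex energy $(1+t)\sqrt{1+|u|^2}$ fails on the positive-measure set $|y|>1+T$: there $\calE^y$ has no critical points and every trajectory escapes. Likewise, $\partial_t\calE^y=\partial_t\calE$ does not give the power control, because for $\calE^y$ it is measured against $\calE^y-\min\calE^y+1$.

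The paper's proof does not address this either. It relies on the ``Moreover'' sentence of Theorem \ref{thm:weak-perturb}, which asserts without proof that $\mathrm{(E)}$ survives affine perturbations; the example above contradicts that assertion. A correct version needs an extra growth assumption, for instance:
\begin{itemize}
\item If $\ene tu\ge a|u|^p-b$ for some $p>1$, uniformly in $t$, then $|\la y,u\ra|\le\frac a2|u|^p+C_y$ gives $\calE^y_t\ge\frac12\calE_t-C_y'$. This yields compact sublevels and $\calE^y_t-\min\calE^y+1\ge c_y\,\eneb tu$, i.e.\ \eqref{P_t} for $\calE^y$ with constant $C_P/c_y$, for every $y$.
\item With only linear growth $\ene tu\ge a|u|-b$ the same argument works for $|y|<a$, and $N$ must then be read as a negligible subset of that ball. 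This growth is automatic for convex $\calE$, uniformly in $t$ by \eqref{gronw-conseq}.
\end{itemize}
Under such an assumption the rest of your argument goes through as you wrote it.
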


\paragraph{\bf A nonsmooth finite-dimensional example in hysteresis.}
In the smooth finite-dimensional
setting, Corollary \ref{cex:cor-findim} shows that the structural assumptions underlying
our main results are satisfied generically. No analogous genericity
statement is available for nonsmooth energies; nevertheless, the required
structural properties can be checked directly, as the following
example shows.

\begin{example}[The nonconvex double obstacle in $\R$: relay hysteresis]
\label{cex:ex-relay}
\upshape
The simplest energy displaying the whole structure of the theory is the scalar, nonsmooth, nonconvex
\begin{equation}
\label{cex:relay-energy}
\ene tu:= \rmI_{[-1,1]}(u)-\frac{u^2}2 - \ell(t)\,u, \qquad u \in \R,
\end{equation}
where $\rmI_{[-1,1]}$ denotes the indicator function of the interval $[-1,1]$.
It is $\lambda$-convex ($\lambda=1$) and complies with  our standing conditions on the driving energy, cf.\ $\mathrm{(E)}$
in Section \ref{ss:2.1} ahead. 
 For $|\ell(t)|<1$ the fiber is $\critset(t)=\{-1,\,{-\ell(t)},\,1\}$, i.e., it consists of  two local minimizers (the endpoints, i.e.\ the two ``phases'') separated by the interior maximizer; the endpoint $u=\mp1$ ceases to be critical as soon as $\pm\ell(t)>1$ (a \emph{nonsmooth fold}: the interior maximizer collides with the constrained minimizer). The critical set $\crit$ is the union of three Lipschitz curves, so that \eqref{rectifiable-intro} holds and Theorems \ref{mainth:1} and \ref{mainth:2} apply: for a loading oscillating beyond the thresholds $\pm1$, the \BSolution\ switches between the two phases at the thresholds, dissipating the cost $\cost{t^*}{\mp1}{\pm1}=2$ at each switch. The limit evolution is thus precisely the \emph{non-ideal relay}, so  called because it switches at \emph{two} distinct thresholds $\ell=\pm1$, enclosing a hysteresis loop, unlike the single-threshold, 
 memoryless \emph{ideal} relay; see Figure \ref{cex:fig-relay}. 
 \par
 We point out  the relay operator is the  elementary building block of  Preisach-type models of hysteresis, cf.\ e.g.\ \cite{Visintin94}: the present theory derives it rigorously as the singular limit of a nonsmooth gradient flow, a statement out of reach of the smooth finite-dimensional results of \cite{Zanini,AgoRos17}. 
Its genuinely infinite-dimensional, spatially extended counterpart ---the double-obstacle energy of Section \ref{ss:9.2} ---exhibits analogous
mechanisms in a richer PDE setting, involving the sliding \emph{droplets}
and \emph{kinks} introduced and discussed there.
\end{example}

\begin{figure}[ht]
\centering
\begin{tikzpicture}[xscale=1.5,yscale=1.6,baseline=(current bounding box.center)]
  \draw[->,gray] (-1.55,0) -- (1.7,0) node[below,black] {$u$};
  \draw[->,gray] (0,-0.12) -- (0,0.86) node[left,black] {\small $F$};
  \draw[very thick] plot[domain=-1:1,samples=60,variable=\u] ({\u},{0.5*(1-\u*\u)});
  \draw[very thick] (-1,0) -- (-1,0.62);
  \draw[very thick] (1,0) -- (1,0.62);
  \draw[densely dotted] (-1.5,0.62) -- (-1,0.62);
  \draw[densely dotted] (1,0.62) -- (1.5,0.62);
  \node[above] at (-1.25,0.6) {\small $+\infty$};
  \node[above] at (1.25,0.6) {\small $+\infty$};
  \node[circle,fill,inner sep=1.2pt] at (-1,0) {};
  \node[circle,fill,inner sep=1.2pt] at (1,0) {};
  \node[circle,draw=red,inner sep=1.1pt] at (0,0.5) {};
  \node[below] at (-1,-0.02) {\small $-1$};
  \node[below] at (1,-0.02) {\small $1$};
\end{tikzpicture}
\hfil
\begin{tikzpicture}[xscale=1.5,yscale=1.15,baseline=(current bounding box.center)]
  \draw[->,gray] (-2.0,0) -- (2.05,0) node[below,black] {$\ell$};
  \draw[->,gray] (0,-1.55) -- (0,1.7) node[left,black] {$u$};
  \draw[very thick] (-1,1) -- (1.8,1);
  \draw[very thick] (-1.8,-1) -- (1,-1);
  \draw[thick,densely dashed] (-1,1) -- (1,-1);
  \node[circle,fill,inner sep=1.2pt] at (1,-1) {};
  \node[circle,fill,inner sep=1.2pt] at (-1,1) {};
  \draw[->,thick] (1,-0.9) -- (1,0.9);
  \draw[->,thick] (-1,0.9) -- (-1,-0.9);
  \node[circle,draw,inner sep=1.1pt] at (1,1) {};
  \node[circle,draw,inner sep=1.1pt] at (-1,-1) {};
  \node[below right] at (1,-0.03) {\small $1$};
  \node[above left] at (-1,0.03) {\small $-1$};
  \node[above] at (1.4,1) {\small $u=1$};
  \node[below] at (-1.4,-1) {\small $u=-1$};
  \node[above right] at (-0,0) {\small $u=-\ell$};
\end{tikzpicture}
{\caption{\emph{Left}: the nonsmooth double-well potential $F(u)=\tfrac12(1-u^2)+\rmI_{[-1,1]}(u)$ (the frozen energy at $\ell=0$, shifted by the immaterial constant $\tfrac12$ so that $F\geq0$, with $F=+\infty$ off $[-1,1]$) --- an arch pinned between the obstacle walls at $u=\pm1$, so that the two \emph{phases} are the constrained minima at $u=\pm1$ (dots, at height $0$), while the interior maximizer sits at $u=0$. \emph{Right}: the resulting \emph{non-ideal relay} of Example \ref{cex:ex-relay} in the $(\ell,u)$-plane. Solid: the two stable phases $u=\pm1$; dashed: the maximizer $u=-\ell$. The limit evolution rests on a phase until the loading reaches a threshold, then it switches: it jumps up from $-1$ to $+1$ at $\ell=+1$ and back down at $\ell=-1$ (arrows), tracing the rectangular hysteresis loop. The two distinct thresholds $\pm1$ --- as opposed to the single threshold of a memoryless \emph{ideal} relay --- make it ``non-ideal''.}\label{cex:fig-relay}}
\end{figure}

 \paragraph{\bf Outlook: mechanisms for the emergence of rate-independent hysteresis.}
In general, rate-independent hysteresis emerges from viscous gradient dynamics through 
several 
distinct singular-limit mechanisms
which may be arranged according to the scale at which the nonconvexity acts. At the \emph{macroscopic} scale --- the setting of the present paper --- a single (possibly infinite-dimensional) nonconvex energy is driven slowly, and hysteresis originates in the metastability of its equilibria and in the jumps triggered when a branch of critical points is annihilated; the small parameter is the viscosity, and the limit is a vanishing-viscosity limit in the sense of \cite{EfeMie06RILS,MRS2016}, of which our results are an infinite-dimensional and nonsmooth instance. At an intermediate, \emph{mesoscopic} scale, the same snap-through (fold) mechanism acts on the many wells of a chain of bistable elements: in \cite{MieTru12} the joint discrete-to-continuum and vanishing-viscosity limit of a viscoelastic chain of bistable springs is shown to yield continuum rate-independent plasticity. At the \emph{microscopic} scale, hysteresis arises instead from the homogenization of an energy carrying fast spatial oscillations (``wiggly energies'', in the kinetics of Abeyaratne--Chu--James \cite{AbeChuJames96}): there the dissipation is of \emph{dry-friction} type and originates in the pinning of the state in the corrugated landscape, the small parameter being the oscillation wavelength, and the rigorous averaging analysis is due to \cite{Menon02}. While all three routes yield, in the limit, a rate-independent evolution, the underlying mechanisms --- macroscale metastability, mesoscale snap-through, microscale pinning --- and the corresponding mathematical techniques are genuinely different; the present paper concerns the first approach.

\paragraph{\bf From one-dimensional models to PDEs.}
The double-obstacle system of Section \ref{ss:9.2} also supports the faithfulness
of the two guiding examples as blueprints of the behavior of
infinite-dimensional gradient flows: its branch of metastable states
terminates at an explicitly computable threshold, where 
it switches to the stable phase
through a dissipative jump --- the fold
mechanism of Example \ref{cex:ex-fold} --- while its sliding `kinks'
and `droplets' form continua of critical points, which are crossed at
zero cost --- the plateau mechanism of Example \ref{cex:ex-plateau}.
For smooth energies, whose critical set is in general beyond the reach
of an explicit classification, the verification of our conditions
relies instead on the genericity results of Section \ref{s:8}: a
showcase statement, proved at the end of Section \ref{ss:9.1}, is the following quasistatic limit
of the Allen--Cahn equation.
\begin{theorem}[Quasistatic limit of the Allen--Cahn equation]
\label{cex:th-AC-short}
Let $\Omega\subset\R^d$, $d\leq3$, be a bounded connected open set, $\rmW(u)=\frac14(u^2-1)^2$, and $\ell\in\rmC^3([0,T];L^2(\Omega))$. Then every neighborhood of $0$ in $L^2(\Omega)$ contains a residual set of perturbations $y$ such that, for every initial datum $u_0 \in H^1_0(\Omega)$, the solutions of
\[
\eps\,\partial_t u_\eps = \Delta u_\eps - \rmW'(u_\eps)+\ell(t)-y \ \text{ in } \Omega, \qquad u_\eps=0 \ \text{ on } \partial\Omega, \qquad u_\eps(0)=u_0,
\]
admit a subsequence converging pointwise in time, strongly in $L^2(\Omega)$, to a \BSolution\ of the limit problem, namely a curve solving the stationary Allen--Cahn equation $-\Delta u + \rmW'(u)=\ell(t)-y$ at all but countably many times, whose dissipation measure is purely atomic and quantified, at each jump, by the energy-dissipation cost.
\end{theorem}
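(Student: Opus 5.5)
The plan is to cast the Allen--Cahn problem in the abstract framework and then chain the general results quoted in the introduction. Take $\Hilbert=L^2(\Omega)$ and define
\[
\ene tu:=\int_\Omega\Big(\tfrac12|\nabla u|^2+\rmW(u)-\ell(t)u\Big)\dd x \quad\text{for } u\in H^1_0(\Omega),
\]
with $\ene tu=+\infty$ otherwise. The perturbed energy is $\calE^y_t(u):=\ene tu+\la y,u\ra$. The first step is to check the standing assumptions $\mathrm{(E)}$.

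\begin{itemize}
\item $\lambda$-convexity holds with $\lambda=-1$, since $\rmW''\geq -1$.
\item Sublevels are compact in $L^2$. The quartic term controls the linear one, so sublevels are bounded in $H^1_0$, and Rellich's theorem gives compactness.
\item For the power control, $\partial_t\ene tu=-\int_\Omega\ell'(t)u$ is bounded by $C(1+\ene tu)$, because $\|u\|_{L^2}$ is controlled by the energy.
\end{itemize}

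With these in hand, the gradient flow $\eps u'\in-\partial\calE^y_t(u)$ is exactly the stated PDE with $u_\eps(0)=u_0\in H^1_0$, which has finite energy.

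The second step moves to the smooth Fredholm setting of Sections \ref{sec:6} and \ref{s:8}. Critical points lie in $H^2\cap H^1_0$ by elliptic regularity, and hence in $L^\infty$ since $d\leq 3$. It is therefore natural to view the reduced functional on $H^1_0(\Omega)$, or on $\Xsp=H^2\cap H^1_0$ with values in $L^2$. There the differential is
\[
\rmD\calE_t(u)=-\Delta u+\rmW'(u)-\ell(t).
\]
The Sobolev embeddings valid for $d\leq3$ make the cubic nonlinearity a $\rmC^\infty$ map that is compact from $H^2$ to $L^2$. Since $\ell\in\rmC^3$, the map $(t,u)\mapsto\rmD\calE_t(u)$ is $\rmC^3$. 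Its $u$-derivative $-\Delta+\rmW''(u)$ is a compact perturbation of the isomorphism $-\Delta\colon H^2\cap H^1_0\to L^2$, so it is Fredholm of index $0$. The time variable adds one dimension, giving the index-one map on $[0,T]\times\Xsp$ required in the genericity theorem.

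The third step applies Theorem \ref{thm:weak-perturb}. The perturbation $y\in L^2$ enters the equation as the linear term $-y$. The first-order transversality condition asks that $(t,u)\mapsto\rmD\calE_t(u)+y$ have $0$ as a regular value, with surjectivity of the full differential in $(t,u)$. A parametric transversality (Sard--Smale) argument on the map $(t,u,y)\mapsto -\Delta u+\rmW'(u)-\ell(t)+y$ shows this holds for a residual set of $y$. That map is trivially a submersion in $y$, and Sard--Smale needs smoothness $\rmC^k$ with $k>\max(1,\text{index})=1$, so $\rmC^3$ suffices. The residual set is dense in every neighbourhood of $0$. The transversality conditions then give two things:
\begin{itemize}
\item by Theorem \ref{prop:7.1}, the critical set $\crit$ is locally a $\rmC^1$ curve, hence countably $\mathscr H^1$-rectifiable;
\item via Theorem \ref{thm:dim-kernel-ass1} and Lemma \ref{le:criterion}, condition $\mathrm{(C)}$ holds and $\nototaldisc$ is at most countable.
\end{itemize}
For the second point, $\dim\NN(\rmD^2\calE_t(u))\leq1$ at critical points, because surjectivity with one extra parameter forces a kernel of dimension at most one.

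The final step invokes Theorem \ref{mainth:1}, claims (4)--(5), which gives pointwise-in-$[0,T]$ subsequential convergence in $L^2$ to a \DSolution. Theorem \ref{mainth:2} then upgrades the limit to a \BSolution\ with purely atomic dissipation and jump costs $\costname t$. Criticality for $\calE^y_t$ means precisely the stationary equation $-\Delta u+\rmW'(u)=\ell(t)-y$ outside the countable set $\rmJ$.

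I expect the main obstacle to be the second step. One has to reconcile the $L^2$-Hilbert framework of the gradient flow with the Banach space $H^2\cap H^1_0$ on which the energy is $\rmC^3$ and Fredholm, and check that the abstract Sections \ref{sec:6}--\ref{s:8} allow this pair of spaces. One must also ensure that rectifiability of $\crit$ in the stronger topology transfers to $L^2$. My first attempt would be to note that the embedding is Lipschitz and that Lipschitz images of rectifiable sets are rectifiable. The dimension restriction $d\leq3$ is exactly what makes $u\mapsto u^3$ smooth from $H^2$ into $L^2$.
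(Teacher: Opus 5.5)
Your proposal is correct and follows essentially the paper's route: you verify $\mathrm{(E)}$ from $\rmW''\geq-1$ and Rellich, place the energy in the Fredholm setting $\Xsp=H^2(\Omega)\cap H^1_0(\Omega)$, $\Ysp=\Hilbert=L^2(\Omega)$ of Example \ref{ex:revisited}, obtain a residual set of perturbations $y$ satisfying the transversality conditions via Theorem \ref{thm:weak-perturb}, and conclude through Theorem \ref{prop:7.1} and Theorem \ref{mainth:2}. Two cosmetic remarks: in the paper's convention \eqref{lambda-convex} the constant is $\lambda=1$, not $-1$, and the detour through Theorem \ref{thm:dim-kernel-ass1} is redundant, since Lemma \ref{le:criterion}(3) already gives $\mathrm{(C)}$ and the countability of $\nototaldisc$ from rectifiability.
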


\paragraph{\bf Structural character  of the assumptions.}
All the conditions on the critical set entering
our main results (condition $\mathrm{(C)}$, the countability of
$\nototaldisc$, and the rectifiability of $\crit$) are designed to
be verifiable \emph{at a structural level}, i.e.\ without any detailed
knowledge of the bifurcation diagram of the energy, which is usually
much harder for the functionals of interest in applications: in the
smooth setting,  our assumptions indeed follow from the criteria and the genericity
results summarized above.
This robustness is, in our view, one of the main assets of the
variational approach developed here: it makes the theory applicable to
concrete infinite-dimensional systems, such as those discussed in
Section \ref{s:8}.

\paragraph{\bf Perspectives: slow--fast coupled systems.}
The analysis of this paper concerns systems whose state is described by a single variable undergoing viscous relaxation. A further motivation for the generality pursued here stems from \emph{coupled} systems, in which the energy
depends, in addition to time,
on a pair $(u,\chi)$: an ``elastic'' variable $u$, adjusting to equilibrium instantaneously, coupled with an internal variable $\chi$ (a phase field, or a damage, or plastic variable) relaxing on its own viscous time scale, i.e., in the simplest scenario,
\begin{equation}
\label{cex:coupled}
\rmD_u\calE(t,u(t),\chi(t))=0,
\qquad
\eps\, \chi'(t)+ \rmD_\chi \calE\big(t,u(t),\chi(t)\big)\ni 0
\qquad\text{in } (0,T).
\end{equation}
At least formally, minimizing out the variable $u$ leads to the reduced functional
$\calF_t(\chi):=\min_u \calE(t,u,\chi)$, which drives a singularly perturbed gradient flow of the form \eqref{eq:GFmodel-intro} for $\chi$ alone (we refer to \cite{RossiSavare06} for a study of
gradient flows driven precisely by reduced functionals of this
type). One should not expect, however, the reduction to be exact: the
first relation in \eqref{cex:coupled} only constrains $u(t)$ to be a
\emph{critical point} of $\calE(t,\cdot,\chi(t))$, whereas the
marginal functional $\calF_t$ is built on \emph{minimizers}; along
branches of non-minimal critical points the coupled system
\eqref{cex:coupled} and the gradient flow of $\calF$ evolve
differently, and their limits as $\eps\down0$ differ in general.
Now, the reduced functional $\calF$ is in general nonsmooth, and nonconvex, even when $\calE$ is smooth;  above all, no robust structural property of 
the
critical set of $\calE$
---isolatedness or non-degeneracy of the critical points, uniform separation of the connected components---can be expected to survive the reduction procedure, which makes the passage to the limit $\eps\down0$, in general examples,  a delicate task. This is precisely the situation that the measure-theoretic conditions of this paper, being generic and blind to the internal structure of the critical set, are designed to address. The program of deriving along these lines the jump dynamics of quasistatic phase-field systems---cf.\ \cite{MiRo23} for the vanishing-viscosity analysis of multi-rate systems featuring viscous dissipation in both variables, and \cite{ArtinaCagnettiFornasierSolombrino,AlForKleSca25} for constructions of critical-point evolutions motivated by cohesive fracture---goes beyond the scope of the present work, and will be addressed in the future.

\paragraph{\bf Plan of the paper}
In \underline{Section \ref{s:context}} we place our results in
their mathematical context: we review the related literature, and we
present in an informal way the variational approach at the core of the
paper, the structure of the proofs of our main theorems, and the role
of the transversality conditions.\nc\
\par
In \underline{Section \ref{s:2}} we will
first of all recall some preliminary notions, and results, concerning multivalued mappings; some of the proofs are postponed to a final Appendix. 
We will  then
dwell on  our standing assumptions $\mathrm{(E)}$+$\mathrm{(C)}$   on the energy $\calE$. On the one hand, we will explore the consequences of 
(E), in particular highlighting 
 the chain rule.  On the other hand, 
in Lemma \ref{le:criterion}
we will show
that the countable $\mathscr H^1$-rectifiability of the critical
 set $\crit$ does guarantee the validity of condition $\mathrm{(C)}$,
 as well as the countability of the set $\nototaldisc$ of
 topologically singular times.\nc
 \par
 Hence, in \underline{Section \ref{ss:2.2-added}} we will introduce the concepts of admissible transition curve 
 and the energy-dissipation cost 
 at the core of the notions of \emph{Dissipative Viscosity} and \emph{Balanced Viscosity} solutions, thereafter defined. We will then state our
 main results, Theorem \ref{mainth:1} and \ref{mainth:2}.  
\par
\underline{Section \ref{s:energy_diss_cost}} will  entirely revolve around the properties of the energy-dissipation cost. Its central result will be  Theorem
\ref{le:main}: on the one hand, it  guarantees that  the infimum in the definition of cost is indeed attained; on the other hand, it provides a compactness criterion which will 
play a crucial role in the construction of limiting graphs for the vanishing-viscosity analysis of  \eqref{eq:GFmodel-intro}. 
\par
With these tools at hand,  in \underline{Section \ref{s:4}} we will carry out the proof of Theorem \ref{mainth:1}.
The proof of Theorem \ref{mainth:2} will be  developed  in \underline{Section \ref{s:5}}.   
\par
In \underline{Section \ref{sec:6}} we will revert to the realm of differentiable energies. We first show that condition $\mathrm{(C)}$ is guaranteed by a bound on the dimension of the kernel of $\rmD^2\calE_t$ at critical points (Theorem \ref{thm:dim-kernel-ass1}). We will  then introduce the transversality conditions and show that, if they  are satisfied,
 the critical set $\crit$ is countably $\mathscr H^1$-rectifiable. We  will also investigate further consequences of the transversality conditions. 
\par
In \underline{Section \ref{s:8}} we will address the
 generic character of condition $\mathrm{(C)}$, of the
 countability of $\nototaldisc$, and of the rectifiability of
 $\crit$\nc:
in fact,
 we will show
 that the transversality conditions, which imply them, are generic, see Theorem \ref{thm:weak-perturb}. 
  \par
    Eventually, in Sec.\ \ref{ss:9.2} we will closely inspect the evolution by critical points of a double obstacle energy and discuss the applicability
  of our results in that setup. 

\section{Mathematical context, main results, and structure of the proofs}
\label{s:context}

Gradient flows in Hilbert spaces have been intensively studied since the late '60s: starting from the pioneering works 
\cite{Komura67, Crandall-Pazy69, Brezis71, Crandall-Liggett71},   existence and approximation of solutions, in the case
of \emph{convex} driving energy functionals, 
 have been well established in the realm of the theory of maximal monotone operators, cf.\ \cite{Brezis73}. 
 A new impulse to the theory was given in the '80s by the seminal contribution by \textsc{E.\ De Giorgi} and coworkers   \cite{DeGiorgi-Marino-Tosques80,Marino-Saccon-Tosques89, DG93}, who advanced the theory of \emph{Minimizing Movements} and \emph{Curves of Maximal Slope}, setting  up the basis for the study of gradient flows in metric spaces,  \cite{Ambrosio95, AGS08}. 
 \par
 In what follows, we 
 will stay with  gradient flows in a  (separable)  Hilbert space $\Hilbert$
 (with norm $\|\cdot \|$ and inner product
$\langle \cdot,\cdot \rangle$),
  but focus on the case in which the  driving energy functional
  \[
  \begin{gathered}
   \text{$\mathcal{E} : [0,T]\ti \Hilbert\to (-\infty,+\infty]  $ is 
  time-dependent,}
  \\
   \text{with 
   $u\mapsto \ene tu$ 
   (possibly) \emph{nonconvex} and \emph{nonsmooth}}
   \end{gathered}
   \]
 Accordingly, in 
  the gradient flow equation
 \begin{equation}
\label{g-flow-intro}
 u'(t) + 
 \partial \ene t{u(t)} 
 \ni 0 \quad \text{in } \Hilbert \quad \foraa\, t \in (0,T),
\end{equation}
we will consider  the \emph{Fr\'echet subdifferential} of
$\calE$ w.r.t.\ the variable $u$, i.e., the multivalued operator $\partial \calE \colon [0,T]{\times}\Hilbert  \rightrightarrows \Hilbert$ 
defined at every $(t,u) $ in
the domain of $\cE$ by
\begin{equation}
\label{def-fr-subdif} \xi \in \partial
\ene tu
\ \ \text{if and only
if} \ \ 
\ene tv - \ene tu 
\geq \langle \xi, v-u \rangle + o(\|
v-u\|) \quad\text{as } v \to u,
\end{equation}
and by $\partial
\ene tu
= \emptyset $ if
$\ene tu=+\infty$.
We refer the reader to \cite{Kruger_Frechet}
for a survey paper on the 
Fr\'echet subdifferential.
In  \cite{RossiSavare06} existence and approximation results for (the Cauchy problem associated with) \eqref{g-flow-intro} 
were proved, in the autonomous case (cf.\ \cite{MRS2013} for the extension to time-dependent energies), in the mainstream of the  
     \emph{variational approach} to metric gradient flows 
 \cite{Ambrosio95, AGS08}. 
\par
 In this paper, also for the  \emph{singular perturbation} problem
 \begin{equation}
\label{e:sing-perturb}
\eps u'(t) + \partial \ene t{u(t)} \ni 0 \quad \text{in } \Hilbert  \quad \foraa\, t \in (0,T),  \qquad \text{as } \eps\downarrow 0
\end{equation}
we will  develop a 
\emph{variational approach}, inspired by the theory of \emph{\BSolutions} to rate-independent systems  \cite{MRS2016}, in order  to  study  in which sense
limits of 
 solutions  to \eqref{e:sing-perturb} can be interpreted as   solutions of the limit problem
 \begin{equation}
\label{limit-problem}
 \partial  \ene t{u(t)} \ni 0 \quad \text{in } \Hilbert \quad \foraa\, t \in (0,T),
 \end{equation}
 i.e.\ as  curves of critical points for the energy $\calE$.
 \par
 This variational method notwithstanding, the gradient-flow nature
of \eqref{e:sing-perturb} sets the present problem apart from the
rate-independent theory that inspires it. In the energetic
\cite{MieThe04RIHM, MieRouBOOK} and balanced-viscosity \cite{MRS2016}
solutions of \emph{rate-independent} systems, the energy--dissipation
balance features a rate-independent, positively $1$-homogeneous
dissipation, which \emph{remains} in the limit:
it provides an a priori bound on the total variation in time of the
solutions, and, for energetic solutions, drives the state onto
\emph{global} minimizers. Here, instead, \eqref{e:sing-perturb} is a
\emph{purely viscous} gradient flow:
no rate-independent dissipation is present, the viscous dissipation itself vanishes as $\eps\down0$, and \emph{no} a priori bounded-variation control in time is available; the associated estimates degenerate, which is the very source of the analytical difficulties discussed next.
\par
 Before illustrating our results, let us  hint at the main analytical difficulties attached to the asymptotic analysis of \eqref{e:sing-perturb} as $\eps \down 0$.
 \paragraph{\bf Preliminary considerations.}
 The results from  \cite{MRS2013} guarantee
that
  for every fixed $\eps>0$ and
   for every $u_0 \in \Hilbert$  there exists at least a solution
  $u_\eps\in H^1 (0,T;\Hilbert)$ to the gradient flow \eqref{e:sing-perturb}, fulfilling
  the Cauchy condition $u_\eps(0)=u_0$. Testing \eqref{e:sing-perturb} by $u_\eps'$, integrating in time, and exploiting the chain rule for
  $\mathcal{E}$, which is one of the key assumptions in   \cite{MRS2013},   
  it is immediate to check that
  $u_\eps$ complies with the \emph{energy identity}
\begin{equation}
\label{enid-intro}
\int_s^t \eps \|u_\eps'(r)\|^2 \dd r  + \ene t{u_\eps(t)} = \ene s{u_\eps(s)}+\int_s^t \partial_t
\ene r{u_\eps(r)}   \dd  r \quad \text{for all $ 0 \leq s \leq t \leq T$},
\end{equation}
from which
all the a priori estimates on a family $(u_\eps)_{\eps}$ of solutions are deduced. 
More specifically, under the  \emph{power control} condition 
\begin{equation}
\label{power-control-intro}
|\partial_t \ene tu|
\leq 
C_1\big( 
\ene tu +C_2\big) 
\qquad \text{ for some $C_1,\, C_2>0$,}
\end{equation}
which we will adopt  
throughout this paper,
via the Gronwall Lemma
one obtains
\begin{equation}
\label{prelim-estimates-intro}
\begin{aligned}
&
\text{\emph{(i)}}  && \text{
 the energy bound $\sup_{t\in (0,T)} 
 \ene t{u_\eps(t)} \leq C $};
 \\
 &
\text{\emph{(ii)}}  && \text{
 the  
 estimate $\int_0^T \eps \|u_\eps'(t) \|^2 \dd t\leq C'$,} 
\end{aligned}
\end{equation}
for positive constants $C, \, C'>0$ independent of $\eps>0$. 
While 
\begin{itemize}
\item[-]
\emph{(i)}, joint with some suitable \emph{coercivity} condition (typically, compactness of the energy sublevels) on $\calE$, 
  yields   that there exists a \emph{compact} set  $K\subset\Hilbert$  s.t.\ 
$u_\eps(t) \in K$ for all $t\in [0,T]$ and $\eps>0$,  
\smallskip
\item[-]
the \emph{equicontinuity} estimate provided
  by \emph{(ii)} degenerates as $\eps\down 0$. Thus, no Arzel\`a-Ascoli type result applies to deduce compactness for $(u_\eps)_\eps$. 
  \end{itemize}
 This is the  major difficulty 
  in the asymptotic analysis of \eqref{e:sing-perturb}. We emphasize that 
 it
 is not related to the present infinite-dimensional setting, but   also arises in finite dimension.
 \par
  Likewise,
  in finite and infinite dimension this obstacle can be circumvented  by convexity arguments.
  For example, if $\Hilbert = \R^d$ it is possible to show that, if
  $\mathcal{E} \in \mathrm{C}^2 ([0,T]\ti \R^d)$
  (with Fr\'echet differential w.r.t.\ $u$
  at time $t$
  denoted by $\rmD \mathcal E_t$),
   and 
   $u\mapsto\ene tu$
is uniformly convex, 
  then, starting from any  
   $u_0 \in \R^d$ with $\mathrm{D} \ene 0{u_0}=0$  and $\mathrm{D}^2  \ene 0{u_0}$ positive definite (i.e., $u_0$ is a \emph{non-degenerate} critical point of $\mathcal E_0$), 
   there exists a unique curve $u \in \mathrm{C}^1([0,T];\R^d)$
of critical points, namely
 fulfilling $\mathrm{D}
\ene t{u(t)} = 0$ for every $t\in [0,T]$,
   to which the \emph{whole} family $(u_\eps)_\eps$ converges as $\eps\down 0$, uniformly on $[0,T]$.
\par
  This observation  highlights  that   it is indeed significant to focus  on the case in which
the energy  $u \mapsto \ene tu$   is allowed to be
\emph{nonconvex}.  In this context, two problems arise:

\begin{enumerate}
\item Proving that, up to the extraction of a subsequence,
the gradient 
flow curves  
$(u_\eps)_\eps$ converge in $\Hilbert$  as $\eps \down 0$ to some limit
curve $u$ (pointwise a.e.\  in $(0,T))$;
\smallskip
\item
 Describing the evolution of $u$. Namely, one expects $u$
to be a curve of critical points, 
jumping at   times $t\in (0,T)$ such that $u(t)$ is a \emph{degenerate}  critical point  for $\calE_t$. 
In this connection, one  aims to provide a thorough  description of the energetic behavior of $u$ at jump points.
\end{enumerate}
\paragraph{\bf Results for \emph{smooth} energies in finite dimension: the approach via the \emph{transversality conditions}.}
For the singular perturbation limit
 \eqref{e:sing-perturb}, a first answer to problems (1)\&(2)   was
 provided,  in the finite-dimensional case $\Hilbert = \R^d$,  in
 \cite{Zanini}
 (whose results were later extended to second order systems in \cite{agostiniani2012}),
  under the assumption that the energy $\mathcal{E} \in \mathrm{C}^3 ([0,T]\ti \R^d)$,
  has finitely many degenerate critical points, at which the vector
  field $[0,T]\ti \R^d\ni (t,u)\mapsto\mathrm D\ene tu$ complies with
  the (full set of the) \emph{transversality conditions},
 and 
  a further technical condition.
  While postponing to Section \ref{sec:6}
 (cf., in particular, Remark \ref{rmk:full-tc}), 
more details on the 
transversality conditions, well-known in the realm of  bifurcation theory  (cf., e.g., 
 \cite{GuckHolm83}), we may mention here that, essentially, they 
 prevent degenerate  critical points from being ``too singular".
  \par
  Then, in \cite[Thm.\ 3.7]{Zanini} it was shown that, starting from a ``well-prepared'' datum $u_0$,
  there exists a
 unique piecewise $\mathrm{C}^2$-curve
$u:[0,T] \to \R^d$ with a finite jump set $\mathrm{J}= \{t_1,\, \hdots, \,
t_k\}$, such that (1) 
 $\mathrm{D} \ene t{u(t)} = 0$ with
$\mathrm{D}^2\ene t{u(t)}$ positive definite
 for all
$t \in [0,T]\setminus \rmJ$;
(2)
 at every jump point $t_i\in \mathrm{J}$,
 $u(t_i-)$   is a  degenerate critical point for
$\mathcal E_{t_i}$ and there exists a  unique curve $v\in
\mathrm{C}^2(\R;\R^d)$ such that
$  \lim_{s \to -\infty}   v(s) = u(t_i-),$   $  \lim_{s \to +\infty} v(s) =
    u(t_i+)$   and 
   \begin{equation}
   \label{heterocline}
   v'(s) + \mathrm{D} \ene{t_i}{v(s)} =0 \quad \text{for all } s\in \R;
   \end{equation}
(3)
 the \emph{whole} sequence $(u_\eps)_\eps$ converges to $u$ uniformly
 on the  compact sets of $[0,T] {\setminus} \mathrm{J}$, and suitable
 rescalings of $(u_\eps)_\eps$ converge to $v$.
\par
 This is the mechanical picture of jumps already recalled in the
Introduction: at each $t_i$ the internal viscous scale, neglected by
the singular limit, takes over and governs a fast transition between
two metastable states along the heteroclinic $v$.
\par
 In the proof of \cite{Zanini}, 
 while the unique limit curve is \emph{a priori}
constructed via the Implicit Function Theorem, 
the core argument for obtaining the heteroclines connecting the left and right limits at a jump point
hinges on the transversality conditions. 
In this connection, let us mention that, in \cite{ScillaSolombrino18-1}, an attempt was made to address the singularly perturbed, finite-dimensional, gradient flow \eqref{e:sing-perturb}, 
without resorting to the transversality conditions, but nonetheless with techniques from  bifurcation theory. 
\par
Indeed, the recourse to the (full set of the) transversality conditions
 hinders the extension of the analysis to an infinite-dimensional framework and/or to nonsmooth
energies.
A preliminary step in this direction was made in  \cite{AgoRos17}. 
\paragraph{\bf A variational approach, still  in finite dimension}
In \cite{AgoRos17}, an alternative strategy for the vanishing-viscosity analysis of the singularly perturbed gradient flow
\eqref{e:sing-perturb} was advanced, still in the finite-dimensional setup $\Hilbert = \R^d$, and for  energy functionals $\calE \in \rmC^1([0,T]\ti \R^d)$,
but unhampered by the transversality conditions. In fact, \cite{AgoRos17} combined ideas from the \emph{variational approach} to gradient flows with  nonsmooth and nonconvex  energies,
\cite{AGS08, RossiSavare06, MRS2013}, with techniques for the vanishing-viscosity analysis of  rate-independent systems,  \cite{MRS2016}.
\par
The starting point, both    in \cite{AgoRos17} and in the forthcoming analysis, 
is the key observation that the energy identity \eqref{enid-intro} indeed rewrites, 
for differentiable energies,  as 
\begin{equation}
\label{enid-intro-better}
\int_s^t \left( \frac\eps 2 \|u_\eps' (r)\|^2 {+}\frac1{2\eps}
\|\mathrm{D} \ene r{u_\eps(r)}\|^2\right) \dd  r +
\ene t{u_\eps(t)} = \ene s{u_\eps(s)}+\int_s^t
\partial_t
\ene r{u_\eps(r)}   \dd  r
\end{equation}
for all $0\leq s \leq t \leq T$.
 In addition to 
estimates \eqref{prelim-estimates-intro}, 
from \eqref{enid-intro-better}  it is possible to deduce that
\begin{equation}
\label{en-diss-bound} \int_0^T \|u_\eps'(r)\| \|\mathrm{D}
\ene r{u_\eps (r)}\| \dd r \leq C.
\end{equation}
 Thus, while no (uniform w.r.t.\ $\eps>0$) bound is available on
$\int_0^T \|u_\eps'\|\dd r$, estimate \eqref{en-diss-bound} suggests
that the limit of the \emph{energy-dissipation integrals} $\int_s^t
\|u_\eps'(r)\|\|\mathrm D\ene r{u_\eps(r)}\|\dd r$ may describe the
dissipation of energy at jumps, provided the weight $\|\mathrm
D\ene{(\cdot)}{u_\eps(\cdot)}\|$ keeps such points ``well separated''
from one another.
In fact,  the latter observation prompted the recourse, in \cite{AgoRos17}, to a specific requirement on the set 
$\crit \subset [0,T]\ti \R^d$
of  \emph{critical points}
 of $\calE$,
namely that 
 for every $t\in [0,T] $
 its fiber
 \begin{equation}
  \label{non-variat-gen}
 \critset(t) := 
\big\{ u \in \R^d : \,
\mathrm{D}\ene tu =0  
\big\}  
\text{ consists of
\emph{isolated} points}.
\end{equation}
(Equivalently, by the coercivity assumed in \cite{AgoRos17}, that 
$\critset(t)\cap\sublevel\rho$ is \emph{finite} for every $t\in[0,T]$,
$\rho>0$; it is not restrictive to confine the analysis to a sublevel,
since the curves $(u_\eps)_\eps$ all lie in one by the energy bound
\eqref{prelim-estimates-intro}(i).)
\nc
\par
Now, \eqref{non-variat-gen} is at the core of the properties of a central object for the study of the  singularly perturbed gradient flow, namely 
the energy-dissipation cost   $c(t; u(t-), u(t+) )$,   recording the energy dissipated at each jump point $t$ of the solution, when it jumps from its left
 limit   $u(t-)$   to its right limit   $u(t+)$.  
In \cite{AgoRos17}, 
 $c(t; u(t-), u(t+) )$ 
 has been defined by infimizing  the energy-dissipation integrals $  \int_0^1
\|\teta' (r)\| \|\mathrm{D} \ene t{\teta(r)}\| \dd r $ over a suitable class of curves $\teta\colon [0,1]\to \R^d$, with 
 $\teta(0)=u(t-) $ and $ \teta(1)=u(t+)$. Strongly relying on \eqref{non-variat-gen}, the authors were able to prove that
for every $t\in[0,T]$
 the $
\inf$ in the definition of cost is attained, and that optimal curves consist of 
\emph{finitely many} 
locally Lipschitz pieces connecting  (finitely many)  points in the critical set $\critset(t)$.
These properties of $c$ 
have in turn played a key role in the compactness argument leading to the (pointwise in time) convergence,
 up to a subsequence,  of the gradient flows $(u_\eps)_\eps$  to a curve $u$ of critical points for $\calE$,
fulfilling the energy-dissipation  identity
\begin{subequations}
\label{diss-visc-intro}
\begin{align}
 \label{lim-enid-intro}
\mu([s,t])+ \ene t{u(t)} = \ene s{u(s)}
+\int_s^t
\partial_t \ene r{u(r)} 
\dd  r
\end{align}
with $\mu$ a positive finite measure such that  the  set of its 
atoms, $\mathrm J$,
coincides with the (at most countable) jump set of $u$,  
and satisfying the  jump relations
\begin{align}
\label{jump-relation-intro}   \mu(\{t\}) = 
 \ene t{u(t-)} -
\ene t{u(t+)} 
= 
  c(t; u(t-), u(t+) ) 
\quad \text{for all }
t \in \mathrm{J}\,.
\end{align}
\end{subequations}
It has been argued in \cite{AgoRos17} that  relations \eqref{jump-relation-intro}
do provide a description of the behavior of the limit curves at jumps:  in fact, in particular they encompass the information that 
 every optimal jump transition (i.e.,  every curve attaining
the infimum defining $c(t;u(t-),u(t+))$) can be 
reparameterized to be  a gradient flow of the frozen energy $\calE_t$,
at fixed process time $t$. 
\par
The notion of  evolution of critical points delineated by \eqref{diss-visc-intro} goes under the name of \emph{Dissipative Viscosity} solution to \eqref{limit-problem}. In 
\cite{AgoRos17}, it was also shown that, under a suitable condition on $\calE$ akin to the  \emph{{\L}ojasiewicz inequality},
Dissipative Viscosity improve to \BSolutions, defined by \eqref{diss-visc-intro} joint with the additional property that the measure $\mu$ be only atomic.
\par
Within the above setup, 
the authors of 
\cite{ScillaSolombrino18-2} 
addressed \eqref{e:sing-perturb}
from a multiscale viewpoint, analyzing the convergence of the singularly perturbed Minimizing Movement scheme for the gradient flow as \emph{both} the viscosity parameter $\eps$ \emph{and} the time step $\tau$ vanish, with a suitable reciprocal scaling.  In the same framework, 
the vanishing-viscosity \emph{and} inertia approximation of  \eqref{limit-problem} is treated in \cite{ScillaSolombrino19}.

\subsection*{Main results}
In this paper, we aim to address the singular perturbation problem \eqref{e:sing-perturb}
in  the  \emph{infinite-dimensional}  context of a Hilbert space $\Hilbert$, and for \emph{nonsmooth} energies.
\par
As for  the Hilbertian setup, we may mention that, because of the variational character of our approach, our study of  \eqref{e:sing-perturb}
could be extended, with suitable modifications, 
 to singularly perturbed gradient flows in metric spaces. Nonetheless, we have refrained from 
working in that more general
framework in order to better highlight the core of 
our arguments, and avoid
 overburdening the analysis  with
 metric-space techniques. \EEE
  \par
  As for 
   `nonsmoothness', this  concerns the dependence $u\mapsto \ene tu$. That is why, from now we will work
  with the Fr\'echet subdifferential
   $\partial \mathcal E \colon [0,T]{\times} \Hilbert
   \rightrightarrows \Hilbert$ from \eqref{def-fr-subdif}
   and
   with the critical set $\crit$ (with fibers $\critset(t)$) defined in \eqref{eq:47}
   in 
   the Introduction.
   \nc\
     Now,
  this more general setup is still compatible with the following conditions on the driving energy:
\begin{itemize}
\item[$\mathrm{(E)}$] 
$\calE$
complies with  the power control \eqref{power-control-intro},  it  has compact sublevels,  and  
there exists $\lambda \geq 0$ such that for all $t\in [0,T]$ the mapping $u\mapsto  \cE (t,u)+\tfrac\lambda2 \|u\|^2$ is convex,
\end{itemize}
  which will be assumed throughout the paper. 
Instead,  the  discreteness condition \eqref{non-variat-gen} on  the fibers $\critset(t)$  
can in fact be bypassed.
\par
Recalling the analysis carried out in  \cite{AgoRos17}, it is to be expected that giving up  \eqref{non-variat-gen} has a major impact on the compactness argument for the curves $(u_\eps)_\eps$. Indeed, it turns out that, in
 order to tackle it, 
 it is necessary to change viewpoint, and address the compactness properties of the graphs
 \[
 \Gu_\eps = \big\{ (t,u_\eps(t)) : \, t \in [0,T]\big\},
 \]
 rather than of the curves $(u_\eps)_\eps$. 
Due to the
energy 
bound \eqref{prelim-estimates-intro} (i)
and 
 condition $\mathrm{(E)}$, the graphs $( \Gu_\eps )_\eps$ are contained in an energy sublevel  $[0,T]\ti \sublevel{\rho} $, for some $\rho$, that is compactly contained in
 $ [0,T]\ti \Hilbert$. Therefore, there exists a compact subset 
 and a vanishing subsequence $n\mapsto \eps(n)$
 such that 
 \[
 \Gu \Subset [0,T]\ti \Hilbert \text{ such that }  \Gu_{\eps(n)} \to \Gu \text{ in the Kuratowski sense, with fibers } \mathrm{U}(t) {\cap} \critset(t) \neq \emptyset \text{ for all } t \in [0,T]\,.
 \]
 We emphasize that, a priori, the sets $ \mathrm{U}(t) $ need not be  singletons. 
 The immediate  intuition  would be that   a limit \emph{curve}  
 $u$ can be obtained from 
this argument,  essentially by showing 
 that   $\mathrm{U}(t) $ is a singleton for $t$  in 
a `sufficiently big' 
set (i.e., such that its complement in $[0,T]$ is at most countable).
\par
Nonetheless, we will develop this idea by disentangling
\begin{itemize}
\item[-] the construction of a curve $v $ as a \emph{Borel measurable selection} in the critical set, such that \emph{(i)} $v$ is a curve of critical points (outside an at most countable set);
\emph{(ii)} $v$ satisfies suitable analogues of  \eqref{lim-enid-intro} and \eqref{jump-relation-intro}; we shall qualify such a curve as a \emph{Dissipative Viscosity} solution;
\item[-] the compactness argument for the gradient flows  $(u_\eps)_\eps$, i.e., the proof that, up to a subsequence, the curves $u_\eps$ actually do converge to a \DSolution. 
\end{itemize}

\par
An appropriate generalization of the energy-dissipation cost $c$ from \cite{AgoRos17}, in this paper denoted by $\mathsf{c}$,    plays a key role
\emph{both} in 
 the study of the limit set $\Gu$ (in particular, in  the `extraction' of a \DSolution\ therefrom), 
 \emph{and} for the compactness argument.
  In accord with the
present multivalued approach, for fixed $t\in [0,T]$
it is expedient to define the cost 
$\mathsf{c}_t$
not only between points in $\Hilbert$ (viewed as singletons), but also between connected components of $\critset(t)$. Denoting by $\newclass t$ the class of such sets, we again define
$\mathsf{c}_t$ by infimizing 
the energy-dissipation
integrals
\begin{equation}
\label{new-cost-intro}
\cost t{\cU_0}{\cU_1}:=
\inf\left\{\int_{0}^1 \minpartial \calE t {\teta(s)} \| \teta'(s)\|\dd
s \,:\,\teta\in
 \admis{\cU_0}{\cU_1}{t} \right\} \qquad \text{for } \cU_0,\, \cU_1 \in \newclass t,
\end{equation}
where the class of admissible transitions  $\admis{\cU_0}{\cU_1}{t} $ consists of curves $\teta \colon [0,1]\to \Hilbert $ departing from $\cU_0$, ending up in $\cU_1$, locally Lipschitz 
on the set $\{ s \in [0,1]: \, \teta(s) \notin \critset(t)\}$ and such that the energy function $[0,1]\ni s \mapsto \ene t{\teta(s)}$ is globally Lipschitz.
Here, $\argminpartial {\cE}t{\teta(s)}$
is the minimal norm element of $\partial \ene t {\teta(s)}$.
\par
The soundness of the construction in \eqref{new-cost-intro} (starting from the fact that
the $\inf$ in the above definition is attained), rests on
 the
Sard-type condition $\mathrm{(C)}$ stated in the Introduction.
 Condition $\mathrm{(C)}$
for all intents and purposes replaces the discreteness condition \eqref{non-variat-gen} in this infinite-dimensional context.
\par
Under condition $\mathrm{(C)}$,
our first main result, \underline{\textbf{Theorem \ref{mainth:1}}},
has two parts.
\par
In the first part (Claims (1)--(3) of Theorem \ref{mainth:1}),
we show that condition $\mathrm{(C)}$ \emph{alone} suffices to:
\begin{itemize}
\item[-]  prove the \emph{existence} of a \DSolution\ $v:[0,T]\to \Hilbert$ 
  with $v(0)=u_0$, constructed via a Borel selection argument
  from the limit set $\Gu$;
\item[-] as a \emph{consistency} result, show that \emph{any} pointwise limit of gradient flows,
  on whatever Borel set $D\subset [0,T]$ where convergence holds,
  can be extended to a \DSolution\ on the whole of $[0,T]$.
  In particular, if $D=[0,T]$, then the limit itself is a \DSolution.
\end{itemize}
In the second part (Claims (4) and (5) of Theorem \ref{mainth:1}),
we additionally assume that
the set of \emph{topologically singular} times
\begin{equation}
  \label{eq:nototaldisc-intro}
  \nototaldisc:=\Big\{t\in [0,T]:\critset(t)\text{
    is not
    totally disconnected}\Big\}
  \quad\mbox{is at most countable.}
\end{equation}
Under these conditions, we prove that
every sequence of gradient flows $(u_\eps)_\eps$
admits a subsequence converging \emph{pointwise everywhere} in $[0,T]$
to a \DSolution.
Thus,
the countability of $\nototaldisc$
plays a crucial role
 in the  \emph{compactness  argument} for  the sequence $(u_\eps)_\eps$.
\par
 The defining properties of \DSolutions, namely the criticality outside
the at most countable atomic set $\rmJ$ of the defect measure $\mu$,
the energy-dissipation balance, and the jump relations expressed
through the cost $\mathsf c$ from \eqref{new-cost-intro}, have been
listed in ``The results in brief'' of the Introduction, and are
detailed in Definition \ref{def:sols-notions-1} ahead.
\nc
 \par
Supplying sufficient, and possibly handier, conditions for the
validity of $\mathrm{(C)}$  and for the countability of
$\nototaldisc$
is thus a relevant issue.
In fact, we have shown that    the  property that  
\begin{equation}
\label{rectifiable-intro}
 \text{$\crit$ is countably
$\mathscr H^1$-rectifiable}
\end{equation}
(i.e., that  $\crit$ can be covered, up to a set with null Hausdorff measure, by  the images of countably many Lipschitz curves),
does indeed imply $\mathrm{(C)}$, as well as the countability of
$\nototaldisc$. 
 In turn,  the measure-theoretic condition \eqref{rectifiable-intro} has proven to be interesting on its own for a two-fold reason.
\par
First of all, 
 with our second main result, 
\underline{\textbf{Theorem \ref{mainth:2}}},  
we have shown that, if  \eqref{rectifiable-intro} holds, then  any \DSolution\  is in fact a \emph{Balanced Viscosity} solution: this means that  the measure $\mu$
 recording the dissipation of energy is purely atomic, and the
energy-dissipation balance thus reformulates as
\[
\sum_{r\in \jumpname {\cap} (s,t)} \kern-12pt \cost r{u(r-)}{u(r+)} 
+ \ene t{u(t)} = \ene s {u(s)}
+\int_s^t \partial_t \ene r{u(r)} \dd r\quad \text{for all }
s,t\in [0,T]
 \setminus 
\rmJ  
\text{ with } s \leq t\]
(with the above sum extending over a \emph{countable} set). 
The proof of Theorem \ref{mainth:2}
 hinges on  measure-theoretic 
 arguments 
 that in particular include an extension of the 
\emph{Lusin property} to  functions of bounded variation on an interval $I\subset \R$.
\par
Secondly, we have proved that   the validity  of condition \eqref{rectifiable-intro} is, in its turn, guaranteed by the  transversality conditions. More precisely,
the transversality involving the sole first-order derivatives of
the energy
suffices.
This key fact allows us to conclude the \emph{generic} character of
 \eqref{rectifiable-intro}.
 
\paragraph{\bf Revisiting the transversality conditions}
As we have mentioned, the transversality conditions have played a major role in the approach to the vanishing-viscosity analysis based on bifurcation theory.
In Section \ref{sec:6} we will in some sense bridge this viewpoint, and the variational one.
\par
In fact, we shall
revert to a functional-analytic setup in which the energy functional $\calE$  is suitably smooth, with Fr\'echet differential that is a Fredholm map of index $0$.
In this setting we establish the three results already announced
in ``The results in brief'': the sole kernel bound
$\mathrm{dim}(\NN(\rmD^2\calE_t(u))) \leq 1$ at critical points
guarantees condition $\mathrm{(C)}$
(\textbf{\underline{Theorem \ref{thm:dim-kernel-ass1}}}); the
transversality conditions --- required here only in their
\emph{first-order} part, dropping the second-order condition assumed
in \cite{Zanini}, cf.\ Remark \ref{rmk:full-tc} ahead --- guarantee
the countable $\mathscr H^1$-rectifiability \eqref{rectifiable-intro}
of the critical set $\crit$ (\textbf{\underline{Theorem \ref{prop:7.1}}}); and they can
be enforced by an explicit class of arbitrarily small perturbations
(\textbf{\underline{Theorem \ref{thm:weak-perturb}}}).

\paragraph{\bf A different construction in quotient spaces}
Let us mention that an alternative   construction of \DSolutions\  to the evolution of critical points 
\eqref{limit-problem},  in the setting of a locally compact metric space $(\mathsf{X},\gdist)$,
has been recently advanced in 
\cite{AlForKleSca25},
 assuming that $\critset(t)$ can be written as a disjoint union of well-separated compact sets, together with additional  conditions on the time regularity of the  slope.
 Therein, \DSolutions\ have been obtained as limits of 
\emph{discrete quasistatic evolutions},  constructed by a refined  time-incremental approximation procedure  (cf.\ also \cite{ArtinaCagnettiFornasierSolombrino}). The idea is 
to let
the energy evolve on a small time interval, while keeping frozen the system state, and then, in a subsequent time interval freeze the 
   energy, while letting the system transit via gradient flow,  or some discrete
approximation of it, like Minimizing Movements or backward differentiation schemes. 
The \DSolution\  obtained in the limit upon sending the time step to zero does not, however,  correspond to that obtained in 
Theorem \ref{mainth:1}: indeed, it is a curve $u\colon [0,T]\to \boldsymbol{\mathcal{X}}$, 
where $\boldsymbol{\mathcal{X}}$ is the quotient of the space $[0,T]\ti\mathsf{X}$ with respect to  the equivalence relation that identifies two points in the same connected component of 
$\critset(t)$. 
\par
 Thus, this notion of evolution is, in some sense, in the same spirit of our compactness approach, which relies on a cost recording the dissipation of energy attached to a transition between one connected component of $\critset(t)$ to the other. However, our results have a different character to those in  \cite{AlForKleSca25} and are independent on them. 
\par
Indeed, in this paper
we have focused on 
revealing conditions
that would allow us to construct  an `actual' 
curve (i.e., a curve valued in $\Hilbert$, instead of its quotient w.r.t.\ the above mentioned equivalence relation) 
of critical points in the vanishing-viscosity limit, moving from structural topological assumptions on the connected components of $\critset(t)$, to 
 measure-theoretic conditions on the behaviour of $\calE$ on $\critset(t)$. In this connection, we believe condition $\mathrm{(C)}$ and the stronger \eqref{rectifiable-intro} to be of some interest, since they allow for great generality and the results of Section \ref{sec:6} show that they are generic in an appropriate sense.

\section{Basic assumptions and preliminary results}
\label{s:2}
In this section, 
first of all we will provide some preliminaries  on multi-valued maps, in particular recalling their appropriate notions of continuity and convergence. Next, 
we will settle   the basic
coercivity/$\lambda$-convexity conditions on the energy functional
$\calE$ which are required for our analysis. 
 We will then introduce the  notions of evolutions of critical points
 that shall be obtained  by singularly perturbing the gradient flow of $\calE$, \EEE
and summarize  our main results
in the forthcoming  Theorems
\ref{mainth:1} and \ref{mainth:2}.
  \EEE
\subsection{Preliminaries on multivalued maps}
\label{subsec:preliminaries}
\paragraph{\emph{Multivalued maps: Kuratowski
  convergence.}}
Let $(\gmetr,\gdist)$   be a metric space.  We will denote by
$\mathfrak{K}(\gmetr)$
(resp.~$\mathfrak{K}_{\mathrm{c}}(\gmetr)$)
the collection of the
 nonempty 
compact
(resp.~compact and connected)
subsets of $\gmetr$.
Whenever $x\in \gmetr$ and $F,F_1,F_2\in \mathfrak{K}(\gmetr)$
we will set
\begin{equation}
  \label{eq:10}
  \gdist(x,F):=\min_{y\in F} \gdist(x,y),\quad
  \mathsf{e}(F_1,F_2):=\max_{x\in F_1}\gdist(x,F_2),\quad
  \mathsf d_{\Hausdorff}(F_1,F_2)
  :=\mathsf{e}(F_1,F_2){\lor} \mathsf{e}(F_2,F_1);
\end{equation}
$(\mathfrak K(\gmetr),
\sfd_\Hausdorff)$ is a metric space, $\mathsf d_{\Hausdorff}$
is called the Hausdorff metric.
If $\gmetr$ is complete, also $\mathfrak{K}(\gmetr)$ is complete.
The class
$\mathfrak{K}_{\mathrm{c}}(\gmetr)$ is a closed subset of $\mathfrak
K(\gmetr)$:
if a sequence  $(F_n)_n \subset \mathfrak{K}_{\mathrm{c}}(\gmetr)$
of  connected compact sets converges to some $F$
 w.r.t.\ $\mathsf d_{\Hausdorff}$,  
then also $F$ is connected \cite[Thm.\ 4.4.17]{Ambrosio-Tilli}.
\par
 We say that a collection of compact sets 
$\mathcal K\subset \mathfrak K(\gmetr)$ is \emph{tight} if there exists a
common compact set $K\subset \gmetr$ such that $F\subset K$
for every $F\in \mathcal K$.
An application of the 
Blaschke
Theorem \cite[Thm.\
4.4.15]{Ambrosio-Tilli} shows that 
\begin{equation}
  \label{eq:34}
  \mathcal K\subset \mathfrak K(\gmetr)\quad\text{is relatively
    compact}
  \quad
  \Leftrightarrow
  \quad
  \mathcal K\text{ is tight.}
\end{equation}
In what follows,  we will also make use of a second notion of
convergence between sets, due to  \textsc{K.\ Kuratowski}. Again
following \cite[Def.\ 4.4.13]{Ambrosio-Tilli}, we say that a
sequence $(F_n)_n$ of compact subsets of $\gmetr$ converges in the
sense of Kuratowski to a compact set $F $,
and write $F_n \karrow F$, if
\begin{equation}
\label{kur-conv}
{\kliminf}_{n\to \infty} F_n = {\klimsup}_{n\to\infty} F_n = F,
\end{equation}
where
\begin{equation}
\label{Lliminf}  {\kliminf}_{n\to\infty} F_n  := \{ x \in 
\gmetr\,
  : \ \limsup_{n \to \infty}  \gdist(x,F_n)  =0\}
\quad \text{and} \quad
  {\klimsup}_{n\to\infty} F_n  := \{ x \in 
\gmetr\,
  : \
\liminf_{n \to \infty}  \gdist(x,F_n)  =0\}.
\end{equation}
Observe that $\kliminf_{n\to \infty} F_n \subset \klimsup_{n\to \infty} F_n $.
Therefore,
\eqref{kur-conv} is equivalent to the converse inclusion. Hence,
$F_n \karrow F$ if and only if
\begin{enumerate}
\item for every sequence $(x_n)_n$ with $x_n \in F_n$ for every $n \in \N$, and every convergent subsequence $x_{n_k}\to x$, we have $x\in F$; 
\item if $x \in F$, then there exists a \emph{whole} sequence
$(x_n)_n$ with $x_n \in F_n$ for every $n\in \N$, such that $x_n \to
x$.
\end{enumerate}
In general, 
$\mathsf d_{\Hausdorff}(F_n,F)\to0$
implies $F_n \karrow F$. If
 the sequence $(F_n)_n$ is tight, 
then  the
converse implication holds \cite[Prop.\ 4.4.14]{Ambrosio-Tilli};
moreover for every tight sequence of compact sets $(F_n)_n$ in
$\mathfrak K(\gmetr)$
\begin{equation}
  \label{eq:35}
  \klimsup_{n\to\infty}F_n\subset F\quad
  \Leftrightarrow\quad
  \text{every Hausdorff accumulation point of $(F_n)_n$ is a subset of $F$}.
\end{equation}
 The above definition of $\klimsup$ and $\kliminf$ can be obviously extended
to maps defined in an interval of $\R$. 
\paragraph{\emph {Multivalued maps: continuity properties.}} 
\newcommand{\Tsp}{\mathsf T}
\newcommand{\Zsp}{\Xsp}
Let now
$\Tsp$ be a compact interval $[a,b]\subset \R$ 
and $(\Zsp,\sfd_{\Zsp})$ be a metric space. We set $\gmetr:=\Tsp\times \Zsp$ with the usual max-product
metric. 

A multivalued map with
compact nonempty values
$\mathrm{A}:\Tsp\rightrightarrows \Zsp$ is a map from $\Tsp$ to $\mathfrak{K}(\Zsp)$; 
its graph is
\begin{equation}
 \label{eq:6}
\mathbf A=\operatorname{Graph} \mathrm A:=\big\{(t,x):t\in\Tsp,\,
x\in\rmA (t)\big\}
\subset \gmetr=\Tsp\times\Zsp.
\end{equation}
 Conversely, given a set
$\mathbf A\subset \Tsp\times \Zsp$
we can always define its sections (or fibers)
\begin{equation}
  \label{eq:45}
  \rmA(t):=\big\{x\in \Zsp:(t,x)\in \mathbf A\big\},\quad t\in \Tsp.
\end{equation}
When all the sections $\rmA(t)$ are nonempty and compact,
they define a compact-valued map $\rmA:\Tsp\to \mathfrak K(\Zsp)$
whose graph is precisely $\mathbf A$: 
in this case we say that $\mathbf A$ is the \emph{graph of a compact-valued
  map} (or just graph when no ambiguity arises).
Notice in particular that
$\pi^\Tsp\mathbf A=\Tsp$,
where $\pi^\Tsp : \Tsp \ti \Zsp \to \Tsp$ denotes the projection
operator.
%
\par
We say that $\mathrm A$ is \emph{tight} if its image is tight in $\mathfrak
K(\Zsp)$ (or, equivalently, if $\mathbf A$ is relatively compact in
$\gmetr$). 
We say that  a tight map
$\rmA$ is upper semicontinuous if for every $t\in \Tsp$ there holds
\begin{equation}
  \label{eq:9}
  \klimsup_{s\to t}\mathrm A(s)\subset \rmA(t).
\end{equation}
This property is equivalent to the closedness (thus the compactness)
of $\mathbf A$ in $\Tsp\ti
\Zsp$.
 Conversely,
if 
  \begin{gather}
    \label{closed2usc0}
\text{$\mathbf A\subset \Tsp\ti \Zsp$ is  compact  and 
  $\pi^\Tsp(\mathbf A)=\Tsp$}
\intertext{then its section map}
\label{closed2usc}
\text{$\rmA$  defined as in \eqref{eq:45} is tight and upper semicontinuous}.
\end{gather}
%
cf.\ Lemma \ref{l:A.1} ahead.
\par
\begin{definition}
  \label{def:fiberwise}
  We say that a set $\mathbf A\subset \Tsp\times \Zsp$ is
  \emph{fiberwise connected}
  if it is connected and all its sections
  \eqref{eq:45} 
  are connected.
\end{definition}
 The next result
(whose proof is postponed
to Lemma \ref{le:connection}  in Appendix \ref{s:app-1})
collects useful properties relating
connectedness and upper semicontinuity.
\begin{lemma}
  \label{le:recap}
  \ 
  \begin{enumerate}
  \item If $\mathbf A\subset \Tsp\times\Zsp$ satisfies
    \eqref{closed2usc0}
    and its sections $\rmA(t)$ are connected, then
    for every interval $I\subset \Tsp$ we have  that 
    $\mathbf A\cap (I\times \Zsp)$ is connected (in particular, $\mathbf A$ is connected and, thus, it is fiberwise connected). 
    Equivalently, the graph of
    a tight, upper semicontinuous multivalued map
    $\rmA:\Tsp\to\mathfrak K_c(\Zsp)$ is connected.
  \item The collection of the fiberwise connected compact
    graphs is closed in $\mathfrak K(\Tsp\times \Zsp)$.
  \end{enumerate}
\end{lemma}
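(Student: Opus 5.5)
For claim (1), I would argue by contradiction and use only compactness of the graph and connectedness of the fibers. By \eqref{closed2usc0}--\eqref{closed2usc} (Lemma \ref{l:A.1}), $\mathbf A$ is compact and $\rmA$ is tight and upper semicontinuous. Fix an interval $I\subset\Tsp$ and set $\mathbf A_I:=\mathbf A\cap(I\times\Zsp)$. Suppose $\mathbf A_I=\mathbf B_1\cup\mathbf B_2$, with $\mathbf B_1,\mathbf B_2$ nonempty, disjoint and relatively closed in $\mathbf A_I$. Each fiber $\{t\}\times\rmA(t)$, $t\in I$, is connected, so it lies entirely in $\mathbf B_1$ or entirely in $\mathbf B_2$. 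This gives a partition $I=I_1\cup I_2$ with $I_j:=\pi^\Tsp(\mathbf B_j)$, both nonempty and disjoint. Since $I$ is connected, it is enough to show that each $I_j$ is relatively closed in $I$.

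Take $t_n\in I_1$ with $t_n\to t\in I$, and pick $x_n\in\rmA(t_n)$. Tightness yields a subsequence $x_n\to x$, and upper semicontinuity \eqref{eq:9} gives $x\in\rmA(t)$. Then $(t_n,x_n)\in\mathbf B_1$ converges to $(t,x)\in\mathbf A_I$, and since $\mathbf B_1$ is relatively closed in $\mathbf A_I$, we get $(t,x)\in\mathbf B_1$. Hence $t\in I_1$. The same argument applies to $I_2$, which is the desired contradiction. Choosing $I=\Tsp$ shows that $\mathbf A$ is connected and therefore fiberwise connected. The reformulation for tight u.s.c.\ maps $\rmA:\Tsp\to\mathfrak K_c(\Zsp)$ is immediate, because such maps have compact graphs with full projection.

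For claim (2), let $\mathbf A_n$ be fiberwise connected compact graphs converging in $\sfd_\Hausdorff$ to $\mathbf A\in\mathfrak K(\Tsp\times\Zsp)$. Three things need checking.
\begin{enumerate}
\item $\pi^\Tsp\mathbf A=\Tsp$. For each $t$, choose $x_n\in\rmA_n(t)$; the points $(t,x_n)$ lie in a common compact set, so a subsequence converges to some $(t,x)$, and Kuratowski convergence puts it in $\mathbf A$.
\item $\mathbf A$ is connected. This follows from closedness of $\mathfrak K_c$ under Hausdorff limits, recalled above.
\item Each fiber $\rmA(t)$ is connected. This is the main obstacle: the fiber of the limit is not the limit of the fibers $\rmA_n(t)$. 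It only contains $\klimsup_n\rmA_n(t)$, and it may be strictly larger, since it also collects points from nearby times.
\end{enumerate}

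For step (3), I would consider the slabs $\mathbf A_n\cap([t-\delta,t+\delta]\times\Zsp)$. By claim (1) these are connected compact sets. Pass first to a Hausdorff limit $\mathbf L_\delta$ as $n\to\infty$; then $\mathbf L_\delta$ is connected and satisfies
\[
\mathbf A\cap((t-\delta,t+\delta)\times\Zsp)\subset\mathbf L_\delta\subset\mathbf A\cap([t-\delta,t+\delta]\times\Zsp).
\]
Next let $\delta\downarrow0$. The sets $\mathbf L_\delta$ are decreasing up to this squeeze, and any Hausdorff accumulation point is contained in $\{t\}\times\rmA(t)$ and contains $\bigcap_\delta\big(\mathbf A\cap([t-\delta,t+\delta]\times\Zsp)\big)=\{t\}\times\rmA(t)$. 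So the limit is exactly $\{t\}\times\rmA(t)$, and it is connected by closedness of $\mathfrak K_c$. The lower inclusion $\mathbf A\cap(\cdots)\subset\mathbf L_\delta$ requires care: it uses Kuratowski approximation of points $(s,x)\in\mathbf A$ with $|s-t|<\delta$ by points $(s_n,x_n)\in\mathbf A_n$, with $s_n$ eventually inside the slab.
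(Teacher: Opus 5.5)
Your proposal is correct. For claim (1) you follow essentially the paper's route. You split the slab $\mathbf A_I$ into two relatively closed pieces, note that each connected fiber falls into one of them, and contradict the connectedness of $I$. Your version is in one respect more careful than the paper's. The paper encloses the pieces in disjoint open sets $\mathbf G_1,\mathbf G_2$ and picks $t\in\pi(\mathbf G_1)\cap\pi(\mathbf G_2)$. That choice by itself does not force the fiber over $t$ to meet both $\mathbf G_i$. Your projections $\pi(\mathbf B_1),\pi(\mathbf B_2)$ of the closed pieces do genuinely partition $I$, and their relative closedness is exactly what compactness of $\mathbf A$ provides. You also handle arbitrary intervals directly instead of exhausting them by compact ones.

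For claim (2) your route is genuinely different. The paper argues by contradiction in four steps:
\begin{enumerate}
\item It separates a disconnected fiber $\mathrm A(t)$ by disjoint open sets $G_1,G_2$.
\item It uses upper semicontinuity of the limit to confine $\mathbf A$, over a neighbourhood $I$ of $t$, inside $I\times(G_1\cup G_2)$.
\item It transfers this confinement, via Hausdorff convergence, to the slabs of $\mathbf A_n$ over a smaller interval $J$.
\item These slabs are connected by claim (1), yet they must meet both $J\times G_1$ and $J\times G_2$, because the two separated points of $\mathrm A(t)$ are Kuratowski limits of points of $\mathbf A_n$.
\end{enumerate}
You instead exhibit $\{t\}\times\mathrm A(t)$ directly as an iterated Hausdorff limit of connected sets: first $n\to\infty$ on the slabs, then $\delta\downarrow0$. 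This needs only claim (1), Blaschke compactness, and the closedness of $\mathfrak K_{\mathrm c}$; no separation argument or semicontinuity is used. The price is two nested subsequence extractions and the two-sided squeeze for $\mathbf L_\delta$. You carry the squeeze out correctly: the lower inclusion uses the open slab and the upper inclusion the closed slab. You also verify explicitly that the limit has full projection and is connected, which the paper's appendix leaves implicit.
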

 A map $a: \Tsp\to \Zsp$ induces a multivalued map $\rmA: \Tsp\rightrightarrows \Zsp$ 
obtained by defining $\mathbf A$ as
the graph of $a$
in $\Tsp\ti \Zsp$ and
$\rmA(\bar x):=\{a(\bar x)\}$ for every $\bar x 
\in \Tsp$.
We can then say that $a$ is \emph{tight} if it takes values in a compact
subset of $\Zsp$.
In this case its
closure $\bar\rmA$ at  $\bar x\in \Tsp$
can be expressed by 
$\bar \rmA(\bar x):=\{a(\bar x)\}\cup \klimsup_{x\to \bar x}a(x)$, where, 
with a slight abuse of notation, 
we have  written 
\begin{equation}
\label{abuse-limsup}
 \klimsup_{x\to \bar x}a(x) \text{ in place of  } 
\klimsup_{x\to \bar x}\{a(x)\}.
\end{equation}
\EEE%
If $a$ is continuous at $\bar x$ then 
$\rmA(\bar x)= \bar \rmA(\bar x) =\{a(\bar x)\}$.
In turn, 
\begin{equation}
\label{fromGR2FUNZ}
\begin{gathered}
\text{a multivalued map $\rmA$ induces a map $a:\mathrm{D}(a)\to \Zsp$
defined in the set $\rmD(a):=\{x\in \Tsp:\#\rmA(x)=1\}$;}
\\
\text{if its graph $\mathbf A$ is compact,
then $a$ is continuous in $\rmD(a)$.}
\end{gathered}
\end{equation}
  Conversely the graph of a continuous map $a: \Tsp\to \Zsp$ is compact.
\par
Notice that
any  tight 
sequence of continuous maps $a_n:\Tsp\to
\Zsp$ admits a subsequence $k\mapsto n(k)$ and a limit
 compact
and fiberwise connected graph
$\mathbf
A\subset \Tsp\ti \Zsp$ such that the graphs $\mathbf A_n:=\{(x,a_n(x)):x\in \Tsp\}$
satisfy
\begin{equation}
  \label{eq:11}
  \mathbf A_{n(k)}\karrow \mathbf A\quad\text{as }k\to\infty.
\end{equation}
 In what follows, we will also use the fact  that, 
  if the graphs  $(\mathbf{A}_n)_n \subset \Tsp \ti \Zsp$  are
  associated with a
 tight 
  sequence of uniformly Lipschitz  functions 
 $\teta_n : \Tsp \to \Zsp$ and converge in the sense of Kuratowski to
 some $\mathbf{A} \subset \Tsp \ti \Zsp$, then the functions
 $(\teta_n)_n$ uniformly converge to some $\teta:\Tsp \to \Zsp$
 and $\mathbf{A}$ is the graph of $\teta$;
 we postpone the precise statement to  Lemma \ref{l:compactness-graphs} ahead.

\paragraph{\emph{Hausdorff measures.}}
If  $(\gmetr,\gdist)$   is a metric space, 
$k=1,2$, the $k$-dimensional 
 pre-Hausdorff measure $\mathscr
H_\delta^k(A)$ and the Hausdorff measure $\mathscr H^k(A)$ are defined by
\begin{equation}
  \label{eq:12}
  \mathscr H_\delta^k(A):=
  \frac {\omega_k}{2^k}\inf\Big\{\sum_n \operatorname{diam}(A_n)^k:
  \operatorname{diam}(A_n)\le \delta,\quad
  A\subset \cup_n A_n\Big\},\quad 
  \delta>0,
  \quad 
  \omega_1:=2,\ \omega_2:=\pi,
\end{equation}
\begin{equation}
  \label{eq:13}
  \mathscr H^k(A)=\sup_{\delta>0}\mathscr H_\delta^k(A),
\end{equation}
 c.f.\ e.g.\ \cite{Ambrosio-Tilli}. We recall that 
$\mathscr H^k$ is a regular Borel measure. When $\gmetr=\R^k$ with the usual Euclidean distance, we have $\mathscr
H^k_\delta(A)=
\mathscr H^k(A)=\mathscr L^k(A)$ for every Borel set $A\subset \R^k$.

We call a  set $A$ \emph{countably 
$\mathscr H^1$-rectifiable}
if 
\begin{equation}
\label{rectifiable}
 \text{there exists a
sequence of Lipschitz curves  } \gamma_n:[0,1]\to \gmetr
\text{ such that } \mathscr H^1\big(A{\setminus} \cup_n\gamma_n([0,1])\big)=0.
\end{equation}
(up to reparameterization, we may in fact suppose all curves to be defined on $[0,1]$). 
\subsection{Time-dependent energies and their gradient flows}
\label{ss:2.1}
We now  focus on the setting discussed in the Introduction and
consider
a separable Hilbert space $\Hilbert$, a final time $T>0$, and we still keep
the convention to use boldface capital letters to
denote subsets of $[0,T]\times \Hilbert$.
Let
 \[
   \domainenergy \text{ be a convex subset of } \Hilbert,\quad
   \bE:=[0,T]\times \domainenergy,\quad
   \cE: \bE\to \R
 \]
 be a time-dependent energy functional.
 In  what follows, we will write $\ene tu$ in place of $\cE (t,u)$. 
Throughout the paper, we shall always suppose that $\cE$ satisfies the following properties:
%
\begin{description}
\item[Coercivity]
\begin{equation}
  \label{coercivita}
  \tag{${\mathrm{E}.1}$}
  \cE \text{ has compact sublevels in $\bE$;}
\end{equation}
in particular, $\cE$ is lower semicontinuous and bounded from below 
and, by 
operating 
the 
translation
\begin{equation}
  \label{eq:1}
  \eneb tu:=\ene tu-\min \cE+1
\end{equation}
we have that 
$\eneb tu\ge 1$.
Throughout
we will denote by $\sublevel\rho,\Sublevel\rho$, $\rho>0$, the sets
\begin{align}
\label{sublevel} & \sublevel{\rho} :=
                   \big\{ u \in \domainenergy\, : \ \eneb tu\leq\rho\quad\text{for every }t\in
                   [0,T]\big\},\quad
                   \Sublevel\rho:=[0,T]\ti\sublevel\rho.
\end{align}
\item[Time dependence] 
For every $(t,u) \in \bE$ 
 the partial derivative 
with respect to time
\[
\pt tu\,:=\,\partial_t \ene tu
\]
exists and satisfies 
\begin{equation}
    \label{P_t}
\tag{${\mathrm{E}.2}$}
\begin{aligned}
  |\pt tu|\leq C_P \, \eneb tu ,\qquad
 \left(
 u_n \to u, \ \
 \sup_{n \in \N} \ene t{u_n}<\infty
\right) \ \ \Rightarrow
\ \ \lim_{n \to \infty}\pt t{u_n} = \pt tu,
    \end{aligned}
  \end{equation}
  for a suitable nonnegative constant $C_P\ge 0$ 
  and every $t\in [0,T],$ $(u_n)_n, u\in \domainenergy$.
  \item[$\lambda$-convexity]
    There exists $\lambda\geq 0$ such that for every $t\in [0,T]$ 
  \begin{equation}
  \label{lambda-convex}
  \tag{${\mathrm{E}.3}$}
  \begin{aligned}
    \text{the map}\quad 
    u \mapsto \ene tu \quad \text{is \ $(-\lambda)$-convex in $\domainenergy$}
    \end{aligned}
  \end{equation}
  (namely, the functional $u\mapsto \ene tu +\frac{\lambda}2 |u|^2$
 is convex).   
  \end{description}
\par
The following lemma gathers
a series of consequences 
 of assumptions
\eqref{coercivita}--\eqref{lambda-convex} that will play a crucial role
for 
our analysis; 
recall 
that, for every $t\in [0,T]$, $\partial\cE_t: \Hilbert \rightrightarrows \Hilbert$  denotes the Fr\'echet subdifferential of
  the functional $u\mapsto \ene tu$,  cf.\ \eqref{def-fr-subdif}. We will denote by $\mathrm{dom}(\partial\cE):=\big\{(t,u)\in
[0,T]\ti\domainenergy:
\partial\ene tu\neq\emptyset\big\}$ its proper domain. 

\begin{lemma}
\label{l:conseqs-assumpts}
Let $\calE: \bE \to \R$ fulfill \eqref{coercivita}--\eqref{lambda-convex}. Then, $\calE$ enjoys the following properties:
\begin{subequations}
\label{consequences-main-ass}
\begin{description}
\item[Uniform in time energy growth]
\begin{equation}
\label{gronw-conseq}
\mathrm e^{-C_P|t-s|}\eneb su\le 
{\eneb tu}{}\le \mathrm
e^{C_P|t-s|}\eneb su
\qquad \text{for all } u \in \domainenergy,\ s,t\in [0,T].
\end{equation}
 In particular,  whenever 
$\ene tu\le C$ 
for some $t\in [0,T]$, then $u\in \sublevel\rho$
with $\rho=\rme^{C_P T}(C+1-\min \calE)$. 
\EEE
\smallskip
\item[Characterization of the Fr\'echet subdifferential]
for every $t\in [0,T], u\in \domainenergy$ and $\xi\in \Hilbert$,
\begin{equation}
\label{l-convex-charact}
\xi \in \partial \ene tu \ \ \text{if and
only if} \ \ \ene tv-\ene tu
\geq
\langle \xi, v{-}u \rangle -
 \frac{\lambda}{2} \| v{-}u\|^2 
 \quad \text{for all } v \in  \domainenergy.
\end{equation}
\item[Closedness]
For every $(t,u)\in [0,T]\ti\domainenergy$ and for all sequences
$(t_n)_n\subset[0,T]$, $(u_n)_n\subset\domainenergy$ and
$(\xi_n)_n \subset\Hilbert$ such that
$\xi_n \in\partial \ene {t_n}{u_n}$ for every $n \in \N$,
there holds 
\begin{equation}
\label{closedness}
\begin{gathered}
\left(t_n \to t,\ \ u_n \weakto u, \ \ \xi_n \weakto \xi,
\ \ \ene{t_n}{u_n} \to \mathscr E<\infty\right)
\ \ \Longrightarrow \ \ \xi \in \partial \ene tu
\text{ and }
\mathscr E= \ene tu.
\end{gathered}
\end{equation}
\item[Minimal selection of the subdifferential]
   $\partial\ene tu$ is a closed convex set; if it is not empty, its
  element of minimal norm will be denoted by  $\argminpartial {\cE}tu
  $;  we set   
  \[
  \minpartial \calE tu :=
  \begin{cases}
  \min \{\| \xi \|\, :
 \ \xi \in \partial \ene tu\}  &   \text{if }
 (t,u)\in\mathrm{dom}(\partial\cE),
 \\
 +\infty & \text{otherwise}.
 \end{cases}
 \] 
\item[Chain rule] If  $w\in\AC([a,b];\domainenergy)$,
  $\sft\in\AC([a,b];[0,T])$
and 
$\xi:(a,b)\to \Hilbert$ is a Borel map 
with 
\begin{displaymath}
\begin{gathered}
\xi(s)
\in \partial \ene {\sft (s)}{w(s)} \text{ for $\mathscr{L}^1$-a.a.\ $s \in (a,b)$}  \quad \text{ and } \quad \int_a^b \| \xi(s)\|\,\|w'(s)\| \dd s <\infty,
\end{gathered}
\end{displaymath}
then the map $s \mapsto \ene {\sft(s)}{w(s)}$  is absolutely continuous
  in $[a,b]$ and
\begin{equation}
\label{ch-rule}
\begin{gathered}
\frac{\mathrm{d}}{\mathrm{d}s}\ene {\sft(s)}{w(s)}
=
\langle \xi(s), w'(s) \rangle + \pt {\sft(s)}{w(s)}\sft'(s) \ 
 \text{ for $\mathscr{L}^1$-a.a.\ $s \in (a,b)$.}
\end{gathered}
\end{equation}
\end{description}
\end{subequations}
\end{lemma}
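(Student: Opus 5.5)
We will prove the five properties of Lemma \ref{l:conseqs-assumpts} in the order listed, since each relies only on \eqref{coercivita}--\eqref{lambda-convex} and, at most, on the preceding items.

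\emph{Energy growth.} Fix $u\in\domainenergy$. By \eqref{P_t} the map $t\mapsto\eneb tu$ is differentiable with $|\partial_t\eneb tu|\le C_P\eneb tu$, and $\eneb tu\ge1>0$. Hence $|\tfrac{\d}{\d t}\log\eneb tu|\le C_P$, and integrating between $s$ and $t$ gives \eqref{gronw-conseq}. For the ``in particular'' part: if $\ene tu\le C$, then $\eneb tu\le C+1-\min\calE$, and \eqref{gronw-conseq} gives $\eneb ru\le \rme^{C_PT}(C+1-\min\calE)$ for every $r\in[0,T]$.

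\emph{Characterization of $\partial\calE_t$.} The ``if'' direction is immediate, since $-\tfrac\lambda2\|v-u\|^2=o(\|v-u\|)$. For ``only if'', set $\phi:=\calE_t+\tfrac\lambda2\|\cdot\|^2$, which is convex on $\domainenergy$. If $\xi\in\partial\ene tu$, then $\xi+\lambda u$ belongs to the Fréchet subdifferential of $\phi$ at $u$. For a convex function this Fréchet subdifferential coincides with the convex subdifferential: one tests along $u+h(v-u)$ and uses $\phi(u+h(v-u))\le(1-h)\phi(u)+h\phi(v)$, divides by $h$, and lets $h\downarrow0$. Expanding the resulting inequality $\phi(v)-\phi(u)\ge\langle\xi+\lambda u,v-u\rangle$ then yields \eqref{l-convex-charact}.

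\emph{Closedness.} Write \eqref{l-convex-charact} at $(t_n,u_n,\xi_n)$ for a fixed $v$:
\[
\ene{t_n}v\ge\ene{t_n}{u_n}+\langle\xi_n,v-u_n\rangle-\tfrac\lambda2\|v-u_n\|^2 .
\]
Since $\ene{t_n}{u_n}$ is bounded, \eqref{gronw-conseq} places $(u_n)$ in a compact sublevel, so $u_n\to u$ strongly. This is the key point, because it makes $\langle\xi_n,v-u_n\rangle\to\langle\xi,v-u\rangle$ despite the merely weak convergence of $\xi_n$. Moreover $\ene{t_n}v\to\ene tv$, because \eqref{P_t} gives time-Lipschitz continuity on sublevels. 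Passing to the limit gives the inequality with $\mathscr E$ in place of $\ene tu$.

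To identify $\mathscr E$, we argue as follows. Lower semicontinuity, combined with the time continuity from \eqref{gronw-conseq}, gives $\ene tu\le\mathscr E$. Conversely, choosing $v=u$ in the limiting inequality gives $\mathscr E\le\ene tu$. Therefore $\mathscr E=\ene tu$ and $\xi\in\partial\ene tu$.

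\emph{Minimal selection.} By \eqref{l-convex-charact}, $\partial\ene tu$ is an intersection of closed half-spaces, hence closed and convex. In the Hilbert space $\Hilbert$ a nonempty closed convex set has a unique element of minimal norm, which defines $\argminpartial{\cE}tu$.

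\emph{Chain rule.} This is the main obstacle. The plan is to follow \cite{RossiSavare06,MRS2013}. The first step is to derive from \eqref{l-convex-charact} two-sided difference estimates for $f(s):=\ene{\sft(s)}{w(s)}$. Testing the characterization at time $\sft(s)$ with $u=w(s)$, $v=w(r)$, and conversely, and handling the time increments through \eqref{P_t} and \eqref{gronw-conseq}, we aim at
\[
|f(r)-f(s)|\le \Big|\int_s^r\big(\|\xi\|\|w'\|+C|\sft'|\big)\Big|+\tfrac\lambda2\|w(r)-w(s)\|^2 .
\]
The difficulty is that the subgradient $\xi$ is only available at a.e.\ time, so the naive estimate involves $\|\xi(s)\|\,\|w(r)-w(s)\|$ at a single point. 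To overcome this, we first show that $f$ is bounded, using lower semicontinuity and the same inequalities. We then use a Lebesgue-point and averaging argument in $s$ (as in \cite[Prop.~1.4.4]{AGS08}) to obtain absolute continuity of $f$, exploiting the integrability of $\|\xi\|\|w'\|$. Finally, at a.e.\ $s$ where $w$, $\sft$ and $f$ are differentiable and $\xi(s)\in\partial\ene{\sft(s)}{w(s)}$, we take left and right difference quotients in the characterization. The $\tfrac\lambda2\|\cdot\|^2$ term is quadratic and vanishes in the limit, and the time term converges by the continuity of $\pt{\cdot}{\cdot}$ in \eqref{P_t}. The two one-sided inequalities then combine into \eqref{ch-rule}.
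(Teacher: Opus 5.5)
Your arguments for the energy growth \eqref{gronw-conseq}, the characterization \eqref{l-convex-charact} (via the convex function $\ene t\cdot+\frac\lambda2\|\cdot\|^2$), the closedness \eqref{closedness} and the minimal selection are correct and coincide with the paper's. For the closedness, the paper uses exactly your ingredients: strong convergence of $u_n$ from the compact sublevel, the weak--strong pairing, continuity of $t\mapsto\ene tv$, and lower semicontinuity. Your test with $v=u$ makes explicit the identification $\mathscr E=\ene tu$, which the paper leaves to ``an analogous argument''. The paper does not prove the chain rule; it refers to \cite[Prop.~2.4]{MRS2013} and \cite[Prop.~A.1]{MiRo23}. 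Your sketch of it, however, has a gap in its last step.

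\emph{The gap.} Difference quotients of \eqref{l-convex-charact} at time $\sft(s)$ control $\ene{\sft(s)}{w(s+h)}-f(s)$. To reach $f(s+h)$ you must add
\[
A_h(s):=\ene{\sft(s+h)}{w(s+h)}-\ene{\sft(s)}{w(s+h)}=\int_s^{s+h}\pt{\sft(r)}{w(s+h)}\,\sft'(r)\,\mathrm{d}r .
\]
The limit $A_h(s)/h\to\pt{\sft(s)}{w(s)}\sft'(s)$ would need continuity of $(\tau,v)\mapsto\pt\tau v$ \emph{jointly} at $(\sft(s),w(s))$. Assumption \eqref{P_t} only gives continuity in $u$ at fixed time. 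Putting the time increment at the fixed point $w(s)$ instead does not help, since then the space increment is taken at time $\sft(s+h)$, where no subgradient is available.

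\emph{The fix.} Freeze the time by averaging in $s$. By Fubini,
\[
\int_a^{b-h}\Big|\frac{A_h(s)}{h}-\frac1h\int_s^{s+h}\pt{\sft(r)}{w(r)}\,\sft'(r)\,\mathrm{d}r\Big|\,\mathrm{d}s\le\int_a^b|\sft'(r)|\,\frac1h\int_{(r-h,r)\cap(a,b-h)}\big|\pt{\sft(r)}{w(s+h)}-\pt{\sft(r)}{w(r)}\big|\,\mathrm{d}s\,\mathrm{d}r .
\]
Here the inner integrand involves the single time $\sft(r)$, and $\sup_{s\in(r-h,r)}\|w(s+h)-w(r)\|\to0$. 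So, once $f$ is known to be bounded, \eqref{P_t} and dominated convergence (with domination by $2C|\sft'(r)|$) give $A_h/h\to\pt{\sft}{w}\sft'$ in $L^1$. Hence the convergence holds a.e.\ along suitable sequences $h_k\downarrow0$ and $h_k\uparrow0$, and your one-sided quotient argument closes along these sequences.

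\emph{A related point on boundedness.} Boundedness of $f$ cannot be obtained ``first'' from lower semicontinuity and the pointwise inequalities. These only give $f(s)\le f(r)+\|\xi(s)\|\,\|w(s)-w(r)\|+\dots$ for a.e.\ $s$, i.e.\ an a.e.\ bound in terms of $\|\xi\|$; indeed a convex lsc functional may be unbounded along a compact curve in its domain. Boundedness comes out only together with absolute continuity, from the averaging argument.
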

\begin{proof}
Estimate \eqref{gronw-conseq}
immediately follows from the power control condition
\eqref{P_t}  via  the Gronwall Lemma.
The characterization in 
\eqref{l-convex-charact} ensues from \eqref{lambda-convex} via
 a direct calculation, cf.\ e.g.\ \cite[Rmk.\ 2.5]{MRS2013}.
 We use it to prove the closedness property \eqref{closedness}. Indeed, 
 for all sequences 
$(t_n)_n$, $(u_n)_n$, and $(\xi_n)_n$ as in \eqref{closedness},
we find for every $v\in\domainenergy$
\[
\langle \xi, v{-}u \rangle -
\frac{\lambda}{2} \| v{-}u\|^2
\leq
\liminf_{n\to\infty}
\big[\,
\ene{t_n}v{-}\ene{t_n}{u_n}
\,\big]
\leq
\limsup_{n\to\infty}\ene{t_n}v-\liminf_{n\to\infty}\ene{t_n}{u_n}
\leq
\ene tv-\ene tu,
\]
whence $\xi \in \partial \ene tu$ by  the
characterization   \eqref{l-convex-charact}.
Here, the first inequality comes from \eqref{l-convex-charact} and from
the convergences $u_n\to u$ 
(note that the weak convergence $u_n\weakto u$ improves to a strong one thanks to the bound for $(\ene{t_n}{u_n})_n$ and the coercivity 
\eqref{coercivita}), 
and $\xi_n\weakto\xi$,
and the third inequality from the upper semicontinuity
of $t\mapsto\ene tv$,
which is implied by the differentiability of the same map,
and from the lower semicontinuity of
$(t,v)\mapsto \ene tv$ on $\mathrm{dom}(\cE)$.
 With an analogous argument one can also conclude the energy convergence $\ene{t_n}{u_n} \to \ene tu$ as $n\to\infty$.  Finally, the chain-rule property \eqref{ch-rule} can be proved as in \cite[Prop.\ 2.4]{MRS2013},  cf.\  also \cite[Prop.\ A.1]{MiRo23}. 
\end{proof}
\begin{remark}
\slshape
\label{rmk:other-closedness}
The very same arguments for the proof of  \eqref{closedness}, 
 based on the characterization 
\eqref{l-convex-charact},   yield 
that a functional $\calE$ as in \eqref{coercivita}--\eqref{lambda-convex} also enjoys
the following variant of the closedness property
\begin{equation}
\label{closedness-alter}
\begin{gathered}
\left(t_n \to t,\ \ u_n \to u, \ \ \xi_n \weakto \xi \right)
\ \ \Longrightarrow \ \ \xi \in \partial \ene tu
\text{ and }
\ene{t_n}{u_n} \to \ene tu,
\end{gathered}
\end{equation}
where the energy bound requirement in \eqref{closedness} is dropped, at the price of enforcing \emph{strong} convergence of $(u_n)_n$; the energy convergence
$\ene{t_n}{u_n} \to \ene tu$
 is still guaranteed. 
\end{remark}
\par
We now  recall the following  well known existence result  for the gradient flow  \eqref{g-flow-intro}.
\begin{theorem}[Existence of solutions to the gradient flow \eqref{g-flow-intro}]
\label{thm:exist-g-flow} Let $\cE: [0,T]\ti \domainenergy \to
\R$ comply with \eqref{coercivita}--\eqref{lambda-convex}.
Then,  for every $\eps>0$ and 
every $u_{0,\eps} \in \domainenergy$ there exists a solution 
$u_\eps\in H^1(0,T;\Hilbert)$,  taking values in $\domainenergy$,
solving the
Cauchy problem
\begin{equation}
\label{Cauchy-gflow}
\eps u_\eps'(t) + \minsub t{u_\eps(t)}= 0
\quad  
\text{a.e.~in}\  (0,T),\qquad
u_\eps(0)=u_{0,\eps}.
\end{equation}
The curve $u_\eps $ fulfils the energy-dissipation  identity
\begin{equation}
\label{eqn_lemma}
\int_s^t\left(\frac{\ep}2\|u_\eps'(r)\|^2
  {+}\frac1{2\eps}{\|\minsub r{u_\eps(r)}\|^2}\right)\dd r
+\mathcal E_t(u_\eps(t))=
\mathcal E_s(u_\eps(s))
+\int_s^t\pt r{u_\eps(r)}\dd r \ \ \text{for all $0\leq s \leq t \leq T$,}
\end{equation}
 and the uniform bounds 
\begin{align}
& \label{bound-energie}
  \sup_{t \in [0,T]} \eneb t{u_\eps(t)}\le \eneb
  0{u_{0,\eps}}\,\rme^{C_P T},\quad
  \sup_{t
  \in [0,T]} |\pt  t{u_\eps(t)}| \leq C_P\eneb
  0{u_{0,\eps}}\,\rme^{C_P T} ,
\\
& \label{bound-en-diss}
\int_0^T \left(\frac{\ep}2 \mdtq u\eps r
2+\frac1{2\ep} \minpartialq {\cE}{r}{u_\eps(r)}\right) \dd r
  \leq  (1{+}C_PT\,\rme^{C_P T})\eneb 0{u_{0,\eps}}.
\end{align}
\end{theorem}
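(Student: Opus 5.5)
The plan is to derive Theorem \ref{thm:exist-g-flow} from the known existence theory for gradient flows of time-dependent, $\lambda$-convex energies \cite{MRS2013}, after reducing to the case $\eps=1$ by a time rescaling. I would set $\sigma=t/\eps$ and $w(\sigma):=u_\eps(\eps\sigma)$ on $[0,T/\eps]$, with rescaled energy $\mathcal F_\sigma(w):=\ene{\eps\sigma}{w}$. Then $\partial_\sigma\mathcal F_\sigma=\eps\,\pt{\eps\sigma}{\cdot}$, so the power control \eqref{P_t} still holds (with constant $\eps C_P$). Coercivity \eqref{coercivita} and $\lambda$-convexity \eqref{lambda-convex} are unchanged, since they are fiberwise in time.

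Assumptions \eqref{coercivita}--\eqref{lambda-convex}, together with the consequences gathered in Lemma \ref{l:conseqs-assumpts}, are exactly the hypotheses of \cite{MRS2013}. The relevant consequences are the $\lambda$-convex characterization \eqref{l-convex-charact}, the closedness \eqref{closedness}, and the chain rule \eqref{ch-rule}. From \cite{MRS2013} I would obtain $w\in H^1$ solving $w'+\minsub{\eps\sigma}{w}=0$ a.e. If one prefers to be self-contained, the construction is via minimizing movements: build the scheme $U^n\in\operatorname{argmin}\{\ene{t_n}{U}+\frac{\eps}{2\tau}\|U-U^{n-1}\|^2\}$, which is well posed for $\tau<\eps/\lambda$ by coercivity. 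One then derives the discrete energy estimate using De Giorgi's variational interpolant, passes to the limit $\tau\downarrow0$ using compactness of sublevels and \eqref{closedness}, and identifies the limit inclusion through the chain rule.

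The main obstacle, which is also the step I expect to be hardest, is that passing to the limit in the scheme yields only an energy-dissipation \emph{inequality}, and one needs two further facts: that the velocity is the \emph{minimal} section $\minsub t{u_\eps(t)}$, and that the energy-dissipation relation holds as an identity. Both come from the chain rule \eqref{ch-rule}. Since the limit satisfies
\[
\int_s^t\Big(\tfrac\eps2\|u_\eps'\|^2+\tfrac1{2\eps}\minpartialq\cE r{u_\eps}\Big)\dd r+\ene t{u_\eps(t)}\le \ene s{u_\eps(s)}+\int_s^t\pt r{u_\eps}\dd r,
\]
the integrability $\int\|\xi\|\|u_\eps'\|<\infty$ follows. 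The chain rule then gives $\frac{\d}{\d r}\ene r{u_\eps}=\langle\xi,u_\eps'\rangle+\pt r{u_\eps}\geq -\frac\eps2\|u_\eps'\|^2-\frac1{2\eps}\|\xi\|^2+\pt r{u_\eps}$. This forces equality in Young's inequality, hence $\eps u_\eps'=-\xi$ a.e. with $\xi=\argminpartial\cE r{u_\eps(r)}$, and it yields the identity \eqref{eqn_lemma} for all $s\le t$.

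Finally, the bounds follow from \eqref{eqn_lemma}. Dropping the nonnegative dissipation terms and using \eqref{P_t} gives $\eneb t{u_\eps(t)}\le\eneb0{u_{0,\eps}}+C_P\int_0^t\eneb r{u_\eps(r)}\dd r$, and Gronwall's lemma yields the first estimate in \eqref{bound-energie}. The second estimate in \eqref{bound-energie} is again \eqref{P_t}. Plugging these into \eqref{eqn_lemma} with $s=0$, $t=T$, and using $\eneb T{u_\eps(T)}\ge1>0$, gives
\[
\text{dissipation}\le\eneb0{u_{0,\eps}}+C_PT\,\rme^{C_PT}\eneb0{u_{0,\eps}},
\]
which is \eqref{bound-en-diss}.
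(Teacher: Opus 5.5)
Your proposal is correct and follows essentially the same route as the paper. The paper invokes the existence theory of \cite{MRS2013} (cf.\ Remark \ref{rmk:on-exist-gflow}), obtains the energy-dissipation identity through the chain rule \eqref{ch-rule}, and derives \eqref{bound-energie}--\eqref{bound-en-diss} from \eqref{eqn_lemma} with $s=0$, the power control \eqref{P_t}, and Gronwall's lemma; your time rescaling and the Young-equality identification of the minimal section are routine details that the paper leaves implicit.
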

Let us only mention that, indeed, estimates \eqref{bound-energie} and 
\eqref{bound-en-diss} follow easily by 
the energy identity \eqref{eqn_lemma} with $s=0$, 
the  estimate for the power functional $\mathcal{P}$ in \eqref{P_t},  and the  Gronwall Lemma.
\begin{remark}
\slshape
\label{rmk:on-exist-gflow}
We refer, e.g., to
 \cite[Thm.\ 4.4]{MRS2013} for a more general existence result for the Cauchy problem for  the gradient flow \eqref{g-flow-intro}
(cf.\ also the existence results in 
\cite{RossiSavare06} for the autonomous case). Indeed, in \cite{MRS2013}
the $(-\lambda)$-convexity assumption \eqref{lambda-convex} was directly 
replaced by conditions \eqref{closedness}--\eqref{ch-rule} for
the Fr\'echet subdifferential of the energy functional $\cE$.
\end{remark}
\subsection{Critical points}
\label{ss:2.2-RADDED}	
As mentioned in the Introduction, in addition to conditions 
\eqref{coercivita}--\eqref{lambda-convex}, typical of the variational approach to (generalized) gradient flows and rate-independent systems, our
analysis of the singular perturbation problem \eqref{e:sing-perturb} will rely on suitable assumptions on the critical points of the energy functional $\calE_t$. 

\begin{definition}[Critical points]
\label{not:critical}
For every $t\in [0,T]$ we will denote by $\critset(t)\subset \domainenergy$ the set
of critical points of $\ene t\cdot$, i.e.~those points
\begin{equation}
\label{def-critical}
 \text{$x\in \Hilbert$ such that $\partial \calE_t(x) \ni 0$, or,
   equivalently, $\minpartial \calE tx=0$};
\end{equation}
we will set 
\[
\crit:=\big\{(t,x)\in [0,T]\ti \domainenergy:x\in
\critset(t)\big\}.
\]
We will also use the notation 
\[
\necrit: = ([0,T]\ti \domainenergy)\setminus \crit
\]
 and $\necritset t $ for the section of $\necrit$ at the process time $t\in [0,T]$. 
\par
Whenever $x\in \critset(t)$, 
the connected component of $\critset(t)$
containing $x$
will
be denoted by $\comp tx$. For a connected subset $V\subset \critset(t)$ we shall use the notation $\comp tV$ for the connected component of 
$\critset (t)$ containing $V$.  
\end{definition}
%

Indeed, throughout the paper we 
 will also treat 
the time-dependent set of critical
points of $\cE$ as a multifunction $\critset: [0,T] \rightrightarrows
\Hilbert$. Observe that 
the closedness property \eqref{closedness-alter} ensures that 
the sets $\crit \cap \Sublevel\rho$ and $\critset(t) \cap \sublevel\rho$ are
compact.   

\par
The following result settles some 
estimates for 
$\calE$ on the critical set $\crit$ that will be crucial for our analysis.
In its statement, we omit to explicitly invoke our standing assumptions \eqref{coercivita}--\eqref{lambda-convex}.
\begin{lemma}
\label{l:E-Lip}
There holds
\begin{equation}
  \label{eq:16}
  \big|\ene tv-\ene tu\big|\le \frac \lambda 2 \|v-u\|^2\quad\text{for
    every }u,v\in \critset(t) \text{ and  every } t \in [0,T]. 
\end{equation}
Furthermore, for every $(t,u),\, (s,v) \in \crit$ the following estimates hold with $L: = C_P \rme^{C_P T}$, $C_P$ from \eqref{P_t}, and $\widetilde\calE$ from \eqref{eq:1}:
\begin{subequations}
\label{GESTIMATES}
\begin{align}
&
\label{GEST-1}
\int_s^t \pt ru \dd r -\frac{\lambda}{2} \|v-u\|^2 \leq \ene tu - \ene sv \leq \frac{\lambda}{2} \|v-u\|^2+\int_s^t \pt rv \dd r,
\\
& 
\label{GEST-2}
-L   \eneb tu |t-s| - \frac{\lambda}{2} \|v-u\|^2 \leq \ene tu - \ene sv \leq L  \eneb sv |t-s| +  \frac{\lambda}{2} \|v-u\|^2\,.
\end{align}
\end{subequations}
In particular, 
$\calE$ is Lipschitz in   $\crit  \cap 
\Subl\rho$   for all $\rho>0$. 
\end{lemma}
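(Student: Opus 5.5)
The whole argument rests on the characterization \eqref{l-convex-charact} of the Fréchet subdifferential, applied with $\xi=0$ at critical points. This gives lower bounds on energy differences relative to a critical point. The time dependence is then handled by integrating $\pt r\cdot$ along a fixed state. Finally, \eqref{gronw-conseq} turns the integrals of the power into a linear-in-time bound.

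First, \eqref{eq:16}. Fix $t$ and $u,v\in\critset(t)$. Since $0\in\partial\ene tu$, \eqref{l-convex-charact} with $\xi=0$ yields $\ene tv-\ene tu\ge -\frac\lambda2\|v-u\|^2$. Exchanging the roles of $u$ and $v$, since $0\in\partial\ene tv$ as well, gives $\ene tu-\ene tv\ge-\frac\lambda2\|v-u\|^2$. Together these give \eqref{eq:16}.

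Next, \eqref{GEST-1}, for $(t,u),(s,v)\in\crit$. Write $\ene tu-\ene sv=[\ene tu-\ene su]+[\ene su-\ene sv]$. Since $r\mapsto\ene ru$ is differentiable with derivative $\pt ru$, which is bounded by $C_P\eneb ru$ and hence integrable by \eqref{gronw-conseq}, we get $\ene tu-\ene su=\int_s^t\pt ru\dd r$. Because $v\in\critset(s)$, the second bracket is $\ge-\frac\lambda2\|v-u\|^2$. This is the lower bound. For the upper bound, split instead as $[\ene tu-\ene tv]+[\ene tv-\ene sv]$. Criticality of $u$ at time $t$ gives $\ene tu-\ene tv\le\frac\lambda2\|v-u\|^2$, and the second bracket equals $\int_s^t\pt rv\dd r$.

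Then \eqref{GEST-2} follows from \eqref{GEST-1} by bounding the power integrals. Using $|\pt rv|\le C_P\eneb rv\le C_P\rme^{C_P|r-s|}\eneb sv\le L\,\eneb sv$ from \eqref{P_t} and \eqref{gronw-conseq}, we obtain $\int_s^t\pt rv\dd r\le L\,\eneb sv\,|t-s|$. In the same way, $\int_s^t\pt ru\dd r\ge-L\,\eneb tu\,|t-s|$. Both bounds hold for either ordering of $s$ and $t$, since the signed integral is controlled by the integral of the absolute value over the interval between them.

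For the Lipschitz claim on $\crit\cap\Subl\rho$, we have $\eneb tu,\eneb sv\le\rho$, so \eqref{GEST-2} gives $|\ene tu-\ene sv|\le L\rho|t-s|+\frac\lambda2\|u-v\|^2$. The set $\crit\cap\Subl\rho$ is compact, hence bounded, say with diameter at most $D$. Then $\|u-v\|^2\le D\|u-v\|$, and the Lipschitz bound follows with constant $L\rho+\lambda D/2$ with respect to the product metric.

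The only mildly delicate point is the bookkeeping of which critical point the criticality inequality is applied at, so that the signs come out right. The remaining steps are routine.
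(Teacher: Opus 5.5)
Your proposal is correct and follows the paper's proof essentially step by step: \eqref{eq:16} and \eqref{GEST-1} come from \eqref{l-convex-charact} with $\xi=0$ at the relevant critical point, combined with the same two splittings through $\ene su$ and $\ene tv$, and \eqref{GEST-2} comes from \eqref{P_t} and \eqref{gronw-conseq}. The differences are only cosmetic: you bound the integrand by $L\,\eneb tu$ or $L\,\eneb sv$ directly instead of integrating the exponential and using $\rme^{C_P|t-s|}-1\le L|t-s|$; in \eqref{GEST-1} you invoke the one-sided criticality inequality at $v\in\critset(s)$, which is the precise justification, whereas the paper cites \eqref{eq:16} even though $u$ need not be critical at time $s$; and you make the final Lipschitz claim explicit via $\|u-v\|^2\le D\|u-v\|$ on the bounded set $\crit\cap\Subl\rho$.
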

\begin{proof}
Estimate \eqref{eq:16} is an immediate consequence of \eqref{l-convex-charact}. 
In order to show \eqref{GEST-1}, we observe that 
\[
\ene tu - \ene sv  = \int_s^t \pt ru \dd r + \ene su -\ene sv   \geq  \int_s^t \pt ru \dd r  - \frac{\lambda}2 \|v-u\|^2 
\]
by \eqref{eq:16}. 
Analogously, we have 
\[
\ene tu - \ene sv = \ene tu - \ene tv + \int_s^t \pt rv \dd r \leq \frac{\lambda}2 \|v-u\|^2 
+ \int_s^t \pt rv \dd r\,,
\]
and  \eqref{GEST-1} ensues.
\par
As for   \eqref{GEST-2}, we notice that 
\[
\ene tu - \ene sv = \ene tu -\ene su +\ene su -\ene sv 
\stackrel{(1)}{\geq} \eneb tu \left(1{-} \rme^{C_P|t-s|} \right) - \frac{\lambda}{2} \|v-u\|^2
\stackrel{(2)}{\geq}  - L |t-s| \eneb tu - \frac{\lambda}{2} \|v-u\|^2 
 \]
 where for  (1) we have used that 
 \[
 \ene tu -\ene su  = \int_s^t \pt ru \dd r \geq -\int_s^t C_P \eneb r u \dd r \geq - \int_s^t C_P  \rme^{C_P|t-r|} \eneb tu \dd r 
 \]
by \eqref{P_t} and   \eqref{gronw-conseq}, while the term 
$\ene su -\ene sv $ has been estimated by \eqref{eq:16}. 
 Next, (2) is due to  the estimate
 $|\rme^{C_P|t-s|} -1| \leq L |t-s|$ for all $s,t \in [0,T]$. 
 Writing $\ene tu - \ene sv$ as $ \ene tu -\ene tv +\ene tv -\ene sv$   and arguing 
 as in the above lines 
 gives the upper estimate $\leq$  in \eqref{GEST-2}.  \end{proof}

Our main results on the singularly perturbed gradient flow will hinge
on the following
 property
of
 $\crit$.
\newcommand{\ass}[1]{\eqref{ass1}}
\begin{definition}[Fibered Sard property]
  \label{ass:crit}
  Let $\crit$ be defined as
  the set of critical points of
  $\Ene$
  according to Definition \ref{not:critical}. We will
  say that $\Ene$ satisfies
  a \emph{fibered Sard property} on $\crit$
  if
   \begin{equation}
  \label{ass1}
  \mathscr L^1\big(\ene t{\critset(t)}\big)=0
  \quad\text{for every $t\in
    [0,T]$.}
  \tag{C}
\end{equation}
\end{definition}
\begin{remark}[Clean critical set]
  \label{rem:E-constant}
  \slshape
  We say that $\mathcal E$ has a \emph{clean} critical set if
  \begin{equation}
  \label{eq:16bis}
  \ene t\cdot \text{ is constant on every connected component of
  }\critset (t).
\end{equation}
 By \eqref{coercivita}, such a property in particular implies that
\begin{equation}
  \label{eq:31}
  \text{for every $t\in [0,T]$
    every connected component of $\critset (t)$ is compact.}
\end{equation}
Notice that \eqref{eq:16bis} is surely satisfied if 
\ass1\ holds,
since $\ene t{\cdot}$ is continuous on $\critset(t)$ by
\eqref{eq:16}.
\end{remark}
Recall that a set is totally disconnected if  
its connected components are singletons.
\begin{definition}[Topologically singular times]
  We call $\totaldisc$ the subset of times $t\in [0,T]$
  for which $\critset(t)$ is totally disconnected:
  \begin{equation}
    \label{eq:46}
    \totaldisc:=\Big\{t\in [0,T]: \comp tx=\{x\}
    \quad\text{for every }x\in \critset(t)\Big\}.
  \end{equation}
  The complement of $\totaldisc$ is the 
  set $\nototaldisc\subset [0,T]$ of topologically singular 
  times, defined by
    \begin{equation}
      \label{eq:3}
      \nototaldisc:=\Big\{t\in [0,T]:\critset(t)\text{
        is not
       totally disconnected}\Big\}.
   \end{equation}
\end{definition}
If
\ass1\ holds, then by \eqref{eq:16bis} the definition of $\nototaldisc$ can be somehow localized to
the sublevels $\sublevel\rho$, i.e.
\begin{equation}
  \label{eq:3bis}
        \nototaldisc=\bigcup_{\rho>0}\Big\{t\in [0,T]:\critset(t)\cap\sublevel\rho\text{
        is not
       totally disconnected}\Big\}.
\end{equation}
It is easy to check that if
\begin{equation}
  \label{eq:2pre}
  \text{for every $t\in [0,T]$}\quad \critset(t)\quad\text{is at most countable}
\end{equation}
then  \ass1~holds
and  $\nototaldisc=\emptyset$.
In fact, 
$\ene t{\critset(t)}$ is at most countable and therefore $\mathscr
L^1$-negligible and
every countable set in a metric space is totally disconnected. We remark that
\eqref{eq:2pre} surely holds if 
\begin{equation}
  \label{eq:2}
  \text{for every $t\in [0,T]$ and $\rho>0$ the set }\critset(t)\cap\sublevel\rho\quad\text{is discrete}.
\end{equation}
Another simple case ensuring the validity of \eqref{ass1} occurs when 
$\cE$ has a clean critical set 
and every section $\critset(t)$
has countably many connected component.

The next, crucial, lemma establishes further sufficient conditions under which \eqref{ass1} holds and, also, the set $\nototaldisc$ is at most countable.

\begin{lemma}
\label{le:criterion}
  Let $\Honezero$ be the set of times
  for which $\critset(t)$ is $\mathscr H^1$-negligible.
  \begin{enumerate}[label=\rm \arabic*.]
 \item 
   $\Honezero\subset \totaldisc$, so that 
   the set of topologically singular times $\nototaldisc$ is contained in
   the set $\noHonezero$ of $\mathscr H^1$-singular times,
   defined by
  \begin{equation}
    \label{eq:32}
    \noHonezero:=\Big\{t\in [0,T]:\mathscr H^1(\critset(t))>0\Big\}.
  \end{equation}
\item If
  \begin{equation}
    \label{ass1'}
    \mathscr
    H^2\big(\critset(t)\big)=0
    \qquad
    \text{for every $t\in [0,T]$},
    \tag{$\mathrm C_1$}
  \end{equation}
  then condition {\em \ass1} is satisfied.
  \item 
In particular, if
\begin{equation}
\label{eq:15pre}
\text{$\mathscr H^1\res \crit$ is $\sigma$-finite,}
\end{equation}
e.g.~if $\crit$ is  countably 
$\mathscr H^1$-rectifiable 
in the sense of  \eqref{rectifiable},
then \emph{\ass1} holds
and, moreover,  $\nototaldisc,\ \noHonezero$ are at most countable.
\end{enumerate}
\end{lemma}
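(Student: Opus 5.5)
For claim 1, fix $t\in\Honezero$ and a connected component $\comp tx$ of $\critset(t)$. If it contained a second point $y\neq x$, I would use the standard fact that for a connected set $K$ in a metric space and $x,y\in K$ we have $\mathscr H^1(K)\ge \|y-x\|$. To see this, the $1$-Lipschitz map $z\mapsto \|z-x\|$ sends $K$ onto a connected subset of $\R$ containing $0$ and $\|y-x\|$, hence onto $[0,\|y-x\|]$. Since Lipschitz maps do not increase $\mathscr H^1$ (by \eqref{eq:12}--\eqref{eq:13} with $\omega_1/2=1$), we get $\mathscr H^1(K)\ge\mathscr L^1([0,\|y-x\|])>0$. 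This contradicts $\mathscr H^1(\critset(t))=0$, so $\Honezero\subset\totaldisc$, and passing to complements gives $\nototaldisc\subset\noHonezero$.

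For claim 2, I would use that $\calE_t$ is locally ``$\frac12$-Hölder'' on the critical fiber in the sense of \eqref{eq:16}: $|\ene tv-\ene tu|\le\frac\lambda2\|v-u\|^2$ for $u,v\in\critset(t)$. Cover $\critset(t)$ by sets $A_n\subset\critset(t)$ with $\operatorname{diam}A_n\le\delta$ and $\sum_n \operatorname{diam}(A_n)^2\le \frac4\pi\,\mathscr H^2_\delta(\critset(t))+\eta$; we may intersect the cover with $\critset(t)$ without increasing diameters. Then $\ene t{A_n}$ is a subset of $\R$ of diameter at most $\frac\lambda2\operatorname{diam}(A_n)^2$, so
\[
\mathscr L^1\big(\ene t{\critset(t)}\big)\le \tfrac\lambda2\sum_n\operatorname{diam}(A_n)^2 ,
\]
using outer measure. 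Since $\mathscr H^2(\critset(t))=0$ forces $\mathscr H^2_\delta(\critset(t))=0$, and $\eta$ is arbitrary, \eqref{ass1} follows. If $\lambda=0$ the bound is trivial, since $\calE_t$ is then constant on $\critset(t)$. Measurability is not an issue, because we are bounding Lebesgue outer measure of a null set.

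For claim 3, I would write $\crit=\bigcup_k \mathbf K_k$ with $\mathscr H^1(\mathbf K_k)<\infty$, using $\sigma$-finiteness; in the rectifiable case take $\mathbf K_k=\crit\cap\gamma_k([0,1])$ together with the null remainder. The key tool is an Eilenberg-type (coarea) inequality for the $1$-Lipschitz projection $\pi(t,x)=t$ on $[0,T]\times\Hilbert$ with the max-product metric:
\[
\int^*_{[0,T]}\mathscr H^0(\mathbf K_k\cap\pi^{-1}(t))\dd t\le C\,\mathscr H^1(\mathbf K_k)<\infty .
\]
Hence for each $k$ the fiber $\mathbf K_k(t)$ is finite for a.e.\ $t$. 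More to the point, the set of $t$ where $\mathscr H^1(\mathbf K_k(t))>0$ is at most countable. Fibers are disjoint slices $\{t\}\times\mathbf K_k(t)$, isometric to $\mathbf K_k(t)$, and a finite measure $\mathscr H^1\res\mathbf K_k$ can give positive mass to at most countably many disjoint sets. Since $\mathscr H^1(\critset(t))\le\sum_k\mathscr H^1(\mathbf K_k(t))$, the set $\noHonezero$ is a countable union of countable sets, hence countable, and by claim 1 so is $\nototaldisc$. For \eqref{ass1}: every fiber with $\mathscr H^1(\critset(t))$ $\sigma$-finite has $\mathscr H^2(\critset(t))=0$, because $\mathscr H^1(A)<\infty$ implies $\mathscr H^2(A)=0$ by the standard comparison $\mathscr H^2_\delta\le C\delta\,\mathscr H^1_\delta$. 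Claim 2 then applies.

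The only delicate point is the disjoint-slices argument, which I expect to go through cleanly, so the Eilenberg inequality may not even be needed. The rest of the proof is routine comparison of Hausdorff measures under Lipschitz and squared-Lipschitz maps.
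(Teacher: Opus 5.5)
Your proof is correct, and for Claims 2 and 3 it is the paper's argument. The squared-Lipschitz bound \eqref{eq:16} turns an $\mathscr H^2$-null covering of $\critset(t)$ into an $\mathscr L^1$-null covering of the critical values, and sets of finite $\mathscr H^1$-measure are $\mathscr H^2$-null. Countability of $\noHonezero$ follows because the disjoint slices $\{t\}\times\critset(t)$ carry positive mass of each finite measure $\mathscr H^1\res\mathbf K_k$ for at most countably many $t$; these are the paper's sets $R_n$. As you suspected, the Eilenberg inequality can be dropped: it only yields finite fibers for a.e.\ $t$, which is not what an every-$t$ statement requires.

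The genuine difference is in Claim 1. The paper invokes the structure theorem for continua of finite length \cite[Thm.\ 4.4.7]{Ambrosio-Tilli}, namely connectedness by injective rectifiable arcs, so a nondegenerate component would contain an arc of positive length. You instead use the elementary inequality $\mathscr H^1(K)\ge\operatorname{diam}(K)$ for connected $K$, obtained by pushing $K$ through the $1$-Lipschitz map $z\mapsto\|z-x\|$. Your route is self-contained and avoids the structure theory altogether; it is in fact the basic lemma on which that theory rests.

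A minor wording point: the image of $K$ is a connected set \emph{containing} $[0,\|y-x\|]$, not necessarily equal to it. Monotonicity of $\mathscr H^1$ makes this harmless.
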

\begin{proof}
  \upshape
   As for Claim 1,
observe that,
 whenever $\mathscr
      H^1(\critset(t))=0$, 
      then every connected component of $\critset(t)$   (which has
      finite
      $\mathscr H^1$-measure
      and thus  is connected by injective
      rectifiable paths, cf.\ \cite[Thm.\ 4.4.7]{Ambrosio-Tilli})
      should contain only one point.  Thus
      \EEE $\Honezero \subset \totaldisc$.

      Claim 2:  Suppose that $\mathscr H^2(\critset(t))=0$. 
     Then, for every $\eps>0$ we may find 
      a collection of sets $B_n\subset\critset(t)$ with 
      diameter less than $\eps$ such that $\critset(t)\subset \cup_n B_n$ and
      $\sum_n \operatorname{diam}(B_n)^2\le \eps $. 
      Thus by  estimate \eqref{eq:16} the sets $B_n':=\ene t{B_n}$ satisfy $\operatorname{diam}(B_n')\le  \lambda \eps^2 $
      and induce a covering of   $\ene t{\critset(t)}$ \EEE with 
      \begin{displaymath}
        \mathscr H^1_{\lambda \eps^2}(\ene t{\critset(t)})\le \lambda \eps.\end{displaymath}
      Passing to the limit as $\eps\down0$ we get
       $\mathscr L^1(\ene t{\critset(t)})=\mathscr H^1(\ene t{\critset(t)})=0$, which yields
       \ass1. 
       \par
       Claim 3.
      If \eqref{eq:15pre} holds,
      $\crit$ can be covered by
      a countable family of 
      Borel sets $(\Gamma_n)_n$ with finite $\mathscr H^1$-measure,
and therefore null $\mathscr H^2$-measure,
      so that \ass{1} is obviously satisfied by the previous claim.
      
      By the
      $\sigma$-additivity of $\mathscr H^1$,
      the set $\noHonezero$ defined by \eqref{eq:32}
      is countable, since for every $n \in \N$ the set of
      times 
      $R_n:=\{t\in [0,T]:
      \mathscr H^1\big(\{t\}\ti \critset(t)\cap\Gamma_n\big)>0\}$ is at most countable.
\end{proof}
 With our following result we show that the energy is constant on Lipschitz-connected components of $\crit$, and use this to provide a sufficient condition for \emph{\ass1}. 
\begin{lemma}
\label{le:lip-connected}
%
If for every $t\in[0,T]$ and every $\rho>0$ the set $\critset(t)\cap\sublevel\rho$ is covered by \emph{countably many} subsets of $\critset(t)$ that are Lipschitz-path-connected (i.e., any two of their points are joined by a Lipschitz curve valued in $\critset(t)$), then $\ene t{\critset(t)}$ is countable for every $t$, and condition \emph{\ass1} holds.
\end{lemma}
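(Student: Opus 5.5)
The plan is to show first that $\ene t\cdot$ is constant along every Lipschitz curve $\gamma:[0,1]\to\Hilbert$ with values in $\critset(t)$. Then each of the countably many Lipschitz-path-connected pieces covering $\critset(t)\cap\sublevel\rho$ is mapped by $\ene t\cdot$ to a single value. Taking the union over $\rho\in\N$ shows that $\ene t{\critset(t)}$ is countable, because $\critset(t)=\bigcup_{\rho\in\N}\critset(t)\cap\sublevel\rho$, every point of $\domainenergy$ having finite energy. A countable set is $\mathscr L^1$-negligible, so \ass1 follows at once.

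For the constancy I will use the chain rule \eqref{ch-rule}. Fix $t$ and a Lipschitz $\gamma$ with $\gamma(s)\in\critset(t)$ for all $s\in[0,1]$. I take $w:=\gamma$, which is absolutely continuous, the constant time map $\sft\equiv t$, and the selection $\xi\equiv0$. This is Borel, and $0\in\partial\ene t{\gamma(s)}$ for every $s$ by the definition of $\critset(t)$. The integrability condition $\int_0^1\|\xi\|\|\gamma'\|\dd s=0<\infty$ is trivial. Lemma \ref{l:conseqs-assumpts} then gives that $s\mapsto\ene t{\gamma(s)}$ is absolutely continuous with derivative $\langle0,\gamma'(s)\rangle+\pt t{\gamma(s)}\cdot0=0$ a.e. Hence it is constant, and $\ene t{\gamma(0)}=\ene t{\gamma(1)}$.

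An alternative, fully elementary route avoids the chain rule and uses \eqref{eq:16}. For a partition $0=s_0<\dots<s_N=1$ with mesh $h$ and Lipschitz constant $L$,
\[
|\ene t{\gamma(1)}-\ene t{\gamma(0)}|\le\sum_{i}\frac\lambda2\|\gamma(s_{i})-\gamma(s_{i-1})\|^2\le\frac\lambda2 L^2 h\to0 .
\]
I would present this version, since it is self-contained and needs no measurability discussion.

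No step seems to be a real obstacle. The only point requiring care is the bookkeeping that the covering pieces sit inside $\critset(t)$ and are joined by curves \emph{valued in} $\critset(t)$, which is exactly what the hypothesis gives. One more check is needed: $\gamma$ must take values in $\domainenergy$, so that \eqref{eq:16} applies; this holds because $\critset(t)\subset\domainenergy$.
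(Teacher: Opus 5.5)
Your proposal is correct. The chain-rule argument, with $\sft\equiv t$ and the selection $\xi\equiv0$, is exactly the paper's proof. The telescoping argument you say you would present is a genuinely different route.

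It rests only on the quadratic bound \eqref{eq:16}, i.e.\ on \eqref{l-convex-charact} evaluated at $\xi=0$, and so bypasses the chain rule imported from \cite{MRS2013}. Besides being self-contained, it is more general. All it uses is that $\sum_i\|\gamma(s_i)-\gamma(s_{i-1})\|^2\to0$ as the mesh vanishes. It therefore applies verbatim to, e.g., $\alpha$-H\"older curves in $\critset(t)$ with $\alpha>\frac12$, which need not be rectifiable and to which \eqref{ch-rule} does not apply.

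The telescoping route uses the same mechanism as Claim 2 of Lemma \ref{le:criterion}, where \eqref{eq:16} turns $\mathscr H^2$-negligibility of $\critset(t)$ into $\mathscr L^1$-negligibility of its energy image. Here it gives the sharper conclusion that $\ene t\cdot$ is constant on each covering piece, which is what the countability claim needs. The chain rule is genuinely indispensable only along curves that leave $\critset(t)$, as in Lemma \ref{rmk:admiss-curves}.

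One small precision about your union over $\rho\in\N$. Finiteness of $\ene tu$ at one time gives $u\in\sublevel\rho$ for some $\rho$ only via the uniform-in-time bound \eqref{gronw-conseq}, since $\sublevel\rho$ requires $\eneb su\le\rho$ for every $s$. With that cited, your bookkeeping makes explicit a step the paper leaves implicit.
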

\begin{proof}  Fix $t\in [0,T]$ and let $(O_i)_{i\in I}$ be a countable covering of $\critset(t)\cap\sublevel\rho$ consisting of Lipschitz path-connected sets. Pick $x,y\in O_i$ and let $\teta$ be a Lipschitz curve connecting them. 
By the chain rule \eqref{ch-rule} (which applies thanks to the $\lambda$-convexity of the energy), $s\mapsto\ene t{\teta(s)}$ is absolutely continuous with $\frac{\dd}{\dd s}\ene t{\teta(s)}=\langle\xi,\teta'(s)\rangle$ for every $\xi\in\partial\ene t{\teta(s)}$, for a.a.\ $s$; since $\teta(s)\in\critset(t)$ for every $s$, we may choose $\xi=0$, which gives  that $\ene tx = \ene ty$. Thus, $\calE$ is constant on $O_i$. The claim follows. 
\end{proof}
\begin{remark}
\slshape
\label{rmk:suff-subl}
 Since we can express $\crit$ as the countable union
$\crit=\cup_{n\in \N} \crit \cap\Sublevel n$,
it is equivalent \
to
impose \ass 1 (and, analogously, \eqref{ass1'})
for $\crit \cap \Sublevel\rho$
for any $\rho>0$.
Likewise, \eqref{eq:15pre} is equivalent to 
\begin{equation}
\label{intersection-sublevels}
  \text{$\mathscr H^1\res 
  \crit \cap \Sublevel \rho $ is 
  $\sigma$-finite 
  for any $\rho>0$,}
\end{equation}
  and $\crit$ is  countably 
$\mathscr H^1$-rectifiable if and only if $\crit\cap \Sublevel \rho$ is  countably 
$\mathscr H^1$-rectifiable for every $\rho>0$.
\end{remark}

\section{Solution concepts and main results}
\label{ss:2.2-added}
In what follows we will confine the discussion to   the case of energies with a clean critical
set,
according to Remark \ref{rem:E-constant}, so that
connected components of $\critset(t)$ are compact. 
Our solution notions for 
\eqref{limit-problem}
hinge on the following 
definitions of \emph{admissible transition} and of
\emph{energy-dissipation cost}. As we mentioned earlier,  our approach to 
the asymptotic analysis of \eqref{e:sing-perturb} will be based on  Kuratowski compactness arguments for \emph{graphs} 
  and, accordingly, 
  we will obtain as
vanishing-viscosity
 limit a multivalued graph, rather than a curve.  That is  why, 
it is expedient  to introduce the concepts of transitions and cost 
 between two \emph{compact connected sets}
$\cU_0,\cU_1$ of $\Hilbert$, rather than between two points.  

In fact, we shall restrict to a special class of compact connected
sets: 
for  fixed $t\in [0,T]$, we shall either consider the connected components of $\critset (t) $, or  the points of
$\necritset t = \Hilbert \setminus \critset(t)$.
For this class of sets, we will use the following notation (where the letter $\mathfrak{E}$ emphasizes the relation  of these sets to the energy functional $\calE$): 
\begin{equation}
\label{new-class}
\newclass t: = \{ U \in \mathfrak{K}_{\mathrm{c}}(\Hilbert)\, : \, U \text{ is a connected component of } \critset (t) \text{ or } U = \{x\} \text{ for } x \in \necritset t \}\,.
\end{equation}
\begin{definition}[Lifting of maps]
  \label{def:lifting}
  Let us suppose that $\mathcal E$ has a clean critical set, according
  to \eqref{eq:16bis}.
  Given a map $u:[0,T]\to \Hilbert$ we define its
  lifting $\rmU:[0,T]\to \mathfrak K_c(\Hilbert)$ by
  \begin{equation}
    \label{eq:lifting}
    \rmU(t):=\
    \begin{cases}
      \comp t{u(t)}&\text{if }u(t)\in \critset (t),\\
      \{u(t)\}&\text{otherwise},
    \end{cases}
  \end{equation}
  recalling that  $ \comp t{u(t)}$ denotes the connected component of
  $\critset (t) $ containing $u(t)$.

   We also in troduce the \emph{single valued} set
  \begin{equation}
    \label{eq:48}
    \SV u=\SV \rmU=\Big\{t\in [0,T]:\rmU(t)=\{u(t)\}\Big\}.
  \end{equation}
\end{definition}
Notice that $\rmU(t)\in \newclass t$ for every $t\in [0,T]$.

We now introduce the concept of admissible transition \emph{curve} between  two sets in the  class $\newclass t$.  
Nonetheless, later on, with Lemma \ref{le:graph} and Definition \ref{def:graph-trans}, we will provide a \emph{graph} characterization of such curves, in  tune with our subsequent compactness arguments. 
\begin{maindefinition}[Admissible transitions]
  \label{main-def-1}
    Let us suppose that $\mathcal E$ has a clean critical set, according
  to \eqref{eq:16bis}.
Let $t \in [0,T]$ be fixed and
let $\cU_0,\, \cU_1 \in \newclass t $.
An {\em admissible transition} between $\cU_0,\, \cU_1$
is a map $\teta: [0,1]\to \domainenergy$ 
such that 
$\teta(i)\in U_i$, $i=0,1$, and 
\begin{enumerate}[label=\alph*), nolistsep]
\item 
  the  set 
  \begin{equation}
  \label{necri-teta}
  \necri t {\teta}:= \{ s\in [0,1]\, : \teta(s) \in \necritset t\}
 = 
  \{s\in
  [0,1]:\minpartial \calE t{\teta(s)}>0\}
  \end{equation}
 is relatively open, $\teta$ is locally Lipschitz in   $\necri t {\teta}$,  and there exists a constant
  $L\ge 0$ such that
  \begin{equation}
    \label{eq:20}
    \minpartial \calE t {\teta(s)} \,\|
  \teta'(s)\|\le L\quad\text{for $\mathscr L^1$-a.e.~$s\in \necri t {\teta}$;}
  \end{equation}
  %
\item the function $s\mapsto \ene t{\teta(s)}$ is Lipschitz in $[0,1]$;
\item for every $r\in [0,1]\setminus \necri t {\teta}$ 
  we have 
  $\klimsup\limits_{s\to r}\teta(s)\subset \comp t{\teta(r)}$ (where $ \comp t{\teta(r)}$ denotes the connected component of the critical set $\critset(t)$ containing $\teta(r)$). 
\end{enumerate}
We will denote by $ \admis{\cU_0}{\cU_1}{t} $ the class of
admissible transitions connecting $\cU_0$ and $\cU_1$.
An admissible transition $\teta\in \admis{\cU_0}{\cU_1}{t} $ 
is \emph{sharp} if $\necri t {\teta}$ has full measure in $[0,1]$ (in particular it is dense).
%
\end{maindefinition}
The following chain-rule estimate
 along admissible transitions will play a crucial role for our analysis.
\begin{lemma}
\label{rmk:admiss-curves}
Let $t \in [0,T]$ be fixed and
let $\cU_0,\, \cU_1 \in \newclass t$. 
 Then, for every admissible transition 
$\teta \in  \admis{\cU_0}{\cU_1}{t}$ there holds
\begin{equation}
  \label{eq:17}
  \big|\ene t{\teta(s_1)}-\ene t{\teta(s_0)}\big|
  \le 
  \int_{\necri t {\teta}\cap (s_0,s_1)} \minpartial \calE t {\teta(r)} \| \teta'(r)\|\dd
r \qquad \text{for all } 0 \leq s_0 \leq s_1 \leq 1. 
\end{equation}
\end{lemma}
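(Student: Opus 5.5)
The plan is to split $[0,1]$ into the open set $\necri t{\teta}$, where the chain rule applies, and its closed complement $K:=[0,1]\setminus\necri t{\teta}$, which consists of critical times. Fix $0\le s_0\le s_1\le 1$ and set $f(s):=\ene t{\teta(s)}$. By property b) of Definition \ref{main-def-1}, $f$ is Lipschitz, hence absolutely continuous. This gives
\[
f(s_1)-f(s_0)=\int_{s_0}^{s_1} f'(r)\dd r .
\]
It therefore suffices to show two pointwise bounds:
\[
|f'(r)|\le \minpartial \calE t{\teta(r)}\|\teta'(r)\| \quad \text{for a.e. } r\in \necri t{\teta},
\qquad
f'(r)=0 \quad \text{for a.e. } r\in K .
\]

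On the open set $\necri t{\teta}\cap(s_0,s_1)$ we write it as a countable union of disjoint open intervals. On each compact subinterval $[a,b]$ of one of them, $\teta$ is Lipschitz by property a). We apply the chain rule \eqref{ch-rule} with $\sft\equiv t$, $w=\teta|_{[a,b]}$ and $\xi(s)=\argminpartial{\cE}t{\teta(s)}$. This choice of $\xi$ needs two checks.
\begin{itemize}
\item Borel measurability of $\xi$. One route is to note that $s\mapsto\argminpartial{\cE}t{\teta(s)}$ is a limit of Yosida-type approximations and is therefore Borel. Alternatively, one can avoid selecting $\xi$ altogether by computing $f'$ through difference quotients at differentiability points, using \eqref{l-convex-charact}.
\item Integrability. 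By \eqref{eq:20}, $\|\xi\|\|\teta'\|\le L$, so the integrability condition holds.
\end{itemize}
The chain rule then gives $f'(r)=\langle \xi(r),\teta'(r)\rangle$ a.e., so the Cauchy--Schwarz inequality yields the first pointwise bound.

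A more elementary alternative avoids the measurability issue entirely. At every $r$ where both $\teta$ and $f$ are differentiable, apply \eqref{l-convex-charact} with $u=\teta(r)$, $v=\teta(r\pm h)$ and $\xi$ any element of $\partial\ene t{\teta(r)}$. This gives
\[
f(r\pm h)-f(r)\ \ge\ \langle \xi,\teta(r\pm h)-\teta(r)\rangle-\tfrac\lambda2\|\teta(r\pm h)-\teta(r)\|^2 .
\]
Dividing by $\pm h$ and letting $h\downarrow0$ yields $f'(r)\ge\langle\xi,\teta'(r)\rangle$ and $f'(r)\le\langle\xi,\teta'(r)\rangle$. Hence $f'(r)=\langle\xi,\teta'(r)\rangle$ for every such $\xi$, and choosing the minimal-norm element gives the bound. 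I would use this argument.

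On $K$, every $r$ is critical, so $0\in\partial\ene t{\teta(r)}$ and \eqref{l-convex-charact} gives
\[
f(s)-f(r)\ge-\tfrac\lambda2\|\teta(s)-\teta(r)\|^2 .
\]
We cannot simply pass to derivatives here, because $\teta$ need not be Lipschitz (or even continuous) near $K$. Instead we use the fact that $f$ is differentiable at a.e. point of $K$. At density points $r$ of $K$ we can take $s\in K$ with $s\to r$. By \eqref{eq:16}, $f(s)-f(r)=O(\|\teta(s)-\teta(r)\|^2)$, but without continuity of $\teta$ this is still not obviously $o(|s-r|)$.

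The better approach uses the clean-critical-set structure. By property c) of Definition \ref{main-def-1} and the upper semicontinuity it provides, $\teta(s)$ stays near $\comp t{\teta(r)}$ as $s\to r$. The key claim I would establish is that $f$ takes only values in the critical-value set $\ene t{\critset(t)}$ on $K$. Under \eqref{ass1}, the standard fact that $f'=0$ a.e. on $f^{-1}(N)$ for $\mathscr L^1(N)=0$ (valid for absolutely continuous $f$) then gives $f'=0$ a.e. on $K$. If \eqref{ass1} is not among the standing hypotheses of the lemma, I would instead argue with the clean property \eqref{eq:16bis}: near $r\in K$, the values $f(s)$ for $s\in K$ lie in energies of critical points close to $\comp t{\teta(r)}$, on which $\calE_t$ is constant, and I would try to show that the density-point difference quotients vanish.

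I expect this last step, $f'=0$ a.e. on the critical set $K$ in the absence of continuity of $\teta$, to be the main obstacle. I would first try the $f^{-1}(N)$ argument, since it is clean and rests on the Sard property \eqref{ass1}. Combining the two pieces then gives \eqref{eq:17}.
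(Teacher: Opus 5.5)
Your proof is correct and is essentially the paper's: $f=\ene t{\teta(\cdot)}$ is Lipschitz, $f'=0$ a.e.\ on $K$ because $f(K)\subset\ene t{\critset(t)}$, which is $\mathscr L^1$-negligible by \eqref{ass1}, and on $\necri t{\teta}$ the derivative is controlled by the minimal-norm subgradient. The paper uses (C) in exactly this way, through the fact that $g'=0$ a.e.\ on $g^{-1}(A)$ for absolutely continuous $g$ and null $A$.

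Two small remarks. First, the inclusion $f(K)\subset\ene t{\critset(t)}$ follows directly from the definition of $K$, with no need for property c) or the clean structure, so the step you flag as the main obstacle is a one-liner. Second, your difference-quotient computation via \eqref{l-convex-charact} is simply an elementary substitute for the paper's appeal to the chain rule \eqref{ch-rule}.
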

\begin{proof}
Let $s_0 \leq s_1 \in [0,1]$ be fixed.
We observe
that  the image set $\ene t{\teta([s_0,s_1])}$ decomposes as 
$\ene t{\teta([s_0,s_1])} = \ene t{\teta\big([s_0,s_1]{\setminus}\necri t {\teta}\big)} \cup  \ene t{\necri t {\teta}}$;
using that  $\teta\big([s_0,s_1]\setminus
\necri t {\teta}\big)\subset \critset(t)$, we conclude with  \ass1  that 
\begin{equation}
\label{negligible}
\mathscr{L}^1(\ene t{\teta([0,1])} {\setminus} \ene t{\necri t {\teta}} ) \leq  \mathscr{L}^1(\ene t{\critset(t)}) =0 \,.
\end{equation}
Therefore,  we have
\[
\begin{aligned}
  \big|\ene t{\teta(s_1)}-\ene t{\teta(s_0)}\big|
\stackrel{(1)}{=}
  \left|\int_{s_0}^{s_1} \frac\d{\d r}\ene t{\teta(r)}\dd r \right|  & \stackrel{(2)}{=}
  \left|\int_{\necri t {\teta} \cap (s_0,s_1)} \frac\d{\d r}\ene t{\teta(r)}\dd  r \right| \\ &  \stackrel{(3)}{\leq}   
  \int_{\necri t {\teta}\cap (s_0,s_1)} \minpartial \calE t {\teta(r)} \| \teta'(r)\|\dd r \,.
  \end{aligned}
\]
Here, 
 (1) is  due to the Lipschitz continuity of $r\in [0,1]\mapsto \ene t{\teta(r)}$, while   (2)  relies on \eqref{negligible}
 and on the property that for every absolutely continuous function $g: [0,1]\to \R$ and for every $\mathscr{L}^1$-negligible set $A \subset \R$ one has $g' \equiv 0$
 a.e.\ in $g^{-1}(A)$. Finally, estimate
 (3) follows from the chain-rule estimate \eqref{ch-rule}, taking into account the local Lipschitz continuity of $\teta$ on $\necri t {\teta}$.  
\end{proof}

\begin{maindefinition}[Energy-Dissipation cost]
For every $t \in [0,T]$ the \emph{energy-dissipation cost} between
two sets $\cU_0,\, \cU_1 \in \newclass t$
is defined by
\begin{equation}
\label{costo-simpler}
\cost t{\cU_0}{\cU_1}:=
\inf\left\{\int_{\necri t {\teta}} \minpartial \calE t {\teta(s)} \| \teta'(s)\|\dd
s \,:\,\teta\in
 \admis{\cU_0}{\cU_1}{t} \right\}
\end{equation}
with the usual convention 
$\cost t{\cU_0}{\cU_1}:=+\infty$ if $\admis{\cU_0}{\cU_1}{t} $ is empty.
\end{maindefinition}
%
Such cost enjoys properties that one expects, such as symmetry (cf.\
Theorem \ref{prop:cost}).

\subsection{Dissipative Viscosity solutions}
We are now  in a  position to give the definitions of \emph{Dissipative Viscosity}  solution
to  the evolution of critical points \eqref{limit-problem}.
%
%
\begin{maindefinition}[{\DSolution}]
\label{def:sols-notions-1}
Let us assume 
\eqref{coercivita}--\eqref{lambda-convex}
and let us suppose that $\mathcal E$ has a clean critical set, according
  to \eqref{eq:16bis}. \\
A
 Borel map $u:[0,T]\to \domainenergy$ 
with lifting $\rmU$ (according to definition \ref{def:lifting}) 
is a  
\emph{Dissipative Viscosity} 
solution to \eqref{limit-problem}  
if it 
satisfies the following
properties:
\begin{enumerate}
\begin{subequations}
\item The map
$t \mapsto \sfe(t):=\ene t{u(t)} $ has 
bounded variation 
in $[0,T]$ and 
it induces   a nonnegative finite Borel measure $\mu$ on $[0,T]$ 
satisfying
\begin{equation}
\label{to-save-eneq}
\mu([s,t])  +\ene t{u(t)}=\ene s{u(s)}+\int_s^t \pt r{u(r)} \dd r\quad \text{for all }
s,t\in [0,T]\setminus 
\mathrm{J}
\text{ with }  s \leq t,
\end{equation}
where 
$\mathrm J:=\big\{t\in
[0,T]:\mu(\{t\})>0\big\}$ 
is the (countable) atomic set of $\mu$;  $\mathrm{J}$ coincides 
with the jump set $\jump \sfe$ of $e$.
\item
  For every $t\in [0,T]\setminus \rmJ$
  we have
  %
  \begin{equation}
    \label{eq:8}
    u(t) \in   \critset(t),\qquad
    \klimsup_{s\to t}u(s)\subset \rmU(t).
  \end{equation}
\item
For every 
$t\in\rmJ$ 
there exists a unique pair  of connected
  components $\ulim(t-), \ulim(t+)$ of $\critset(t)$ 
  such that
\begin{gather}
\label{to-save-2}
\begin{aligned}
\klimsup_{s\uparrow t 
} u(s)&\subset \ulim(t-),  &\quad  
\sfe(t-)=\lim_{s\up t}\ene s{u(s)}
&=
\ene t{\rmU(t-)},
\\
\klimsup_{s\downarrow t
} u(s)&\subset \ulim(t+),
&\quad
\sfe(t+)=\lim_{s\down t}\ene s{u(s)}
&=
\ene t{\rmU(t+)},
\end{aligned}
\\
\label{to-save-4}
 0 < \cost t {\ulim(t-)}{\ulim(t+)}   = 
  \cost t{\ulim(t-)}{\rmU(t)}+
  \cost t{\rmU(t)}{\ulim(t+)} = e(t-)-e(t+)  =\mu (\{ t\}).
\end{gather}
\end{subequations}
\end{enumerate}
\end{maindefinition}
\begin{remark}
\label{energy-bound}
\slshape
From the energy-dissipation identity \eqref{to-save-eneq}, via the power estimate \eqref{P_t} and the Gronwall Lemma, we easily deduce that any \DSolution\  
$u$ satisfies
$u(t) \in  \rmU(t)\subset \ \sublevel \rho$ for all $t\in [0,T]$ and some $\rho>0$. 
\end{remark}
It is useful to point out a simple regularity property of
\DSolutions.

\begin{proposition}[Improved continuity of 
\DSolutions]
  \label{prop:improved-C}
  Every \DSolution\ $u$ has left and right limits
  at every time $t\in \totaldisc$
  (with obvious modification in case $t\in \{0,T\}$).

 Moreover,  $\totaldisc$ is a subset of the single valued set $\SV u$ and 
  at every $t\in \SV u\setminus \rmJ$ the curve 
  $u$ is continuous at $t$ (in particular $u$ is continuous at
  $\totaldisc\setminus \rmJ$).
\end{proposition}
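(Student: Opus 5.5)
We argue directly from Definition \ref{def:sols-notions-1}, combined with the compactness supplied by Remark \ref{energy-bound}: every value $u(t)$, and every fiber $\rmU(t)$, lies in the sublevel $\sublevel\rho$, which is compact. Consequently the curve $u$ is tight. For every $t$ the Kuratowski limsups $\klimsup_{s\uparrow t}u(s)$ and $\klimsup_{s\downarrow t}u(s)$ are then nonempty compact sets, and one-sided convergence at $t$ is equivalent to the corresponding limsup being a singleton.

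\textbf{Step 1: $\totaldisc\subset \SV u$.} Let $t\in\totaldisc$. If $u(t)\notin\critset(t)$, then $\rmU(t)=\{u(t)\}$ by the very definition of the lifting \eqref{eq:lifting}. If $u(t)\in\critset(t)$, then $\rmU(t)=\comp t{u(t)}$, and this component is the singleton $\{u(t)\}$ because $\critset(t)$ is totally disconnected. In both cases $t\in\SV u$.

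\textbf{Step 2: continuity at $\SV u\setminus\rmJ$.} Let $t\in\SV u\setminus\rmJ$. By \eqref{eq:8} we have
\[
\klimsup_{s\to t}u(s)\subset\rmU(t)=\{u(t)\}.
\]
We argue by contradiction. Suppose there are $s_n\to t$ with $\|u(s_n)-u(t)\|\ge\delta>0$. By tightness, a subsequence of $(u(s_n))_n$ converges to some $x$ with $\|x-u(t)\|\ge\delta$, and $x$ belongs to the limsup. This contradicts the inclusion above, so $u$ is continuous at $t$. Combining with Step 1 gives continuity at every point of $\totaldisc\setminus\rmJ$.

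\textbf{Step 3: one-sided limits at $t\in\totaldisc$.} If $t\notin\rmJ$, Step 2 already gives full continuity at $t$. If $t\in\rmJ$, then \eqref{to-save-2} gives
\[
\klimsup_{s\uparrow t}u(s)\subset\ulim(t-),\qquad \klimsup_{s\downarrow t}u(s)\subset\ulim(t+),
\]
where $\ulim(t\pm)$ are connected components of $\critset(t)$. Since $t\in\totaldisc$, these components are singletons $\{u_\pm\}$. The one-sided limsups are nonempty by tightness, so they equal $\{u_\pm\}$. The same contradiction argument as in Step 2, applied on one side only, shows $u(s)\to u_\pm$ as $s\to t^\pm$. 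At the endpoints $t\in\{0,T\}$ only the relevant one-sided statement is needed, and it follows in the same way.

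The only genuinely delicate point is the nonemptiness of the limsups together with the passage from ``the limsup is a singleton'' to actual convergence. Both rely on the tightness coming from Remark \ref{energy-bound} and the compact sublevels in \eqref{coercivita}. The rest is bookkeeping with the lifting definition.
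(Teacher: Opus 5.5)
Your proof is correct and follows the paper's own argument. You obtain $\totaldisc\subset\SV u$ from the definition of the lifting, continuity at $\SV u\setminus\rmJ$ from \eqref{eq:8}, and one-sided limits at $\totaldisc\cap\rmJ$ from \eqref{to-save-2}, using that the components $\ulim(t\pm)$ are singletons. Compared with the paper, you also make explicit two points it leaves implicit: the tightness (from Remark~\ref{energy-bound} and the compact sublevels) that turns the Kuratowski limsup inclusions into actual convergence, and the fact that $\totaldisc\subset\SV u$ holds also at times in $\rmJ$.
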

\begin{proof}
  If $t\in \totaldisc\setminus \rmJ$,
  then $\rmU(t)$ is a singleton and therefore $\rmU(t)=\{u(t)\}$
 and $t\in \SV u$.
  When $t\in \SV u$, continuity of $u$ \nc
  follows by \eqref{eq:8}.

  If $t\in \totaldisc\cap \rmJ$, then $\rmU(t-)$
  and $\rmU(t+)$ are singletons as well; by \eqref{to-save-2}
  they respectively coincide with the left and right limits $u(t-)$,
  $u(t+)$ of $u$.
\end{proof}
The next result shows that the variational cost $\costname{t}$
\eqref{costo-simpler}
and the jump conditions \eqref{to-save-4} capture a crucial property
of \DSolutions:
a jump may leave $u(t_0-)$ only if $u(t_0-)$ is not a strict local minimizer of $\ene{t_0}\cdot$.
\begin{theorem}[Local minimality forbids jumps]
\label{rmk:localmin-nojump}
Let $u$ be a \DSolution\ and let $t_0\in (0,T]$ be such that
$\rmU(t_0-)$ contains a point
$\bar u$ which is a \emph{strict local minimizer} of the frozen energy
$\ene{t_0}\cdot$.
Then $\rmU(t_0-)=\rmU(t_0)=\rmU(t_0+)=\{\bar u\}$ and
$t_0\in \SV u\setminus \rmJ$. In particular, $u(t_0)=\bar u$ and $u$ is continuous at $t_0$.
\end{theorem}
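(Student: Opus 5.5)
The plan is to first show that $\rmU(t_0-)$ collapses to $\{\bar u\}$, then to rule out $t_0\in\rmJ$ by showing that every optimal transition starting at $\bar u$ is stationary, and finally to identify $\rmU(t_0)$ and $\rmU(t_0+)$.

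\emph{Step 1: the component is a singleton.} By the clean critical set property \eqref{eq:16bis}, $\ene{t_0}{\cdot}$ is constant on the connected component $\rmU(t_0-)=\comp{t_0}{\bar u}$. Suppose this component contained a point other than $\bar u$. A connected set with more than one point has no isolated points, so it would contain points $v\neq\bar u$ arbitrarily close to $\bar u$ with $\ene{t_0}v=\ene{t_0}{\bar u}$. This contradicts strict local minimality. Hence $\rmU(t_0-)=\{\bar u\}$, and $\bar u$ is in fact an isolated point of $\critset(t_0)$.

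\emph{Step 2: a descent lemma for transitions.} I will prove that every transition $\teta\in\admis{\{\bar u\}}{\cU_1}{t_0}$ with
\[
\int_{\necri{t_0}\teta}\minpartial\calE{t_0}{\teta(s)}\,\|\teta'(s)\|\dd s=\ene{t_0}{\bar u}-\ene{t_0}{\cU_1}
\]
is constant, namely $\teta\equiv\bar u$.
\begin{itemize}
\item Monotonicity: applying \eqref{eq:17} on $[0,s]$ and on $[s,1]$ and adding, equality in the total forces $s\mapsto\ene{t_0}{\teta(s)}$ to be nonincreasing.
\item Continuity at critical times: the set $Z:=\{s:\teta(s)=\bar u\}$ contains $0$. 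At any $r\in Z$, the point $\teta(r)=\bar u$ is critical, so condition c) of Definition \ref{main-def-1} gives $\klimsup_{s\to r}\teta(s)\subset\comp{t_0}{\bar u}=\{\bar u\}$. The values of $\teta$ lie in a sublevel by the Lipschitz bound b), which is compact by \eqref{coercivita}. Together these make $\teta$ continuous at every point of $Z$, so $Z$ is closed.
\item Openness of $Z$: near any $r\in Z$, $\teta(s)$ stays in a small ball around $\bar u$ where $\bar u$ is the strict minimizer. Since the energy is nonincreasing and $\ene{t_0}{\teta(r)}=\ene{t_0}{\bar u}$, we get $\ene{t_0}{\teta(s)}\le\ene{t_0}{\bar u}$ for $s\ge r$ nearby, hence $\teta(s)=\bar u$. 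This gives relative openness to the right. For $s<r$, monotonicity together with $\ene{t_0}{\teta(0)}=\ene{t_0}{\bar u}$ pins the energy to the constant value on $[0,r]$, and the same argument applies.
\end{itemize}
By connectedness of $[0,1]$, $Z=[0,1]$.

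\emph{Step 3: no jump at $t_0$.} Assume by contradiction $t_0\in\rmJ$. I will use that the infimum in \eqref{costo-simpler} is attained, which is Theorem \ref{le:main} stated ahead, and that $\mathsf c_{t_0}\ge$ the energy difference, by \eqref{eq:17}. Then \eqref{to-save-4} forces
\[
\cost{t_0}{\{\bar u\}}{\rmU(t_0)}=\ene{t_0}{\bar u}-\ene{t_0}{\rmU(t_0)}.
\]
An optimal transition realizing this is constant by Step 2, so $\rmU(t_0)=\{\bar u\}$ and this cost is $0$. Applying Step 2 again to an optimal transition from $\rmU(t_0)=\{\bar u\}$ to $\rmU(t_0+)$ gives $\rmU(t_0+)=\{\bar u\}$ as well. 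Then $\cost{t_0}{\rmU(t_0-)}{\rmU(t_0+)}=0$, contradicting the strict positivity in \eqref{to-save-4}.

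\emph{Step 4: conclusion.} Since $t_0\notin\rmJ$, \eqref{eq:8} gives $\klimsup_{s\to t_0}u(s)\subset\rmU(t_0)$. The left limit set is nonempty by compactness (Remark \ref{energy-bound}) and contained in $\rmU(t_0-)=\{\bar u\}$. Hence $\bar u\in\rmU(t_0)$, which is therefore the component $\{\bar u\}$; the right limit set is likewise forced into $\{\bar u\}$. So $t_0\in\SV u\setminus\rmJ$, $u(t_0)=\bar u$, and continuity follows from Proposition \ref{prop:improved-C}. For $t_0=T$ the argument only uses the left side.

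The main obstacle is Step 2. The transition $\teta$ is only locally Lipschitz off the critical set, so continuity at critical parameters must be extracted from condition c) together with the isolatedness of $\bar u$ from Step 1. The argument also depends on the existence of optimal transitions; if attainment were unavailable, I would instead run Step 2 on minimizing sequences, using the compactness criterion of Theorem \ref{le:main}.
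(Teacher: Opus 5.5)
Your proof is correct and follows the paper's route. Step~1 is identical, and Steps~2--3 are the paper's argument: an optimal transition realizing the energy drop out of $\{\bar u\}$ has nonincreasing energy, which strict minimality forbids unless it is constant. Your open--closed argument on $Z$ in fact supplies the detail behind the paper's one-line ``$\teta$ nonconstant, hence $\ene{t_0}{\teta(s)}>\ene{t_0}{\bar u}$ for some $s>0$'', since a transition may rest at $\bar u$ for a while before leaving. Passing through $\rmU(t_0)$ instead of going directly from $\rmU(t_0-)$ to $\rmU(t_0+)$ is harmless.

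Two small repairs are needed. First, continuity of $\teta$ at the points of $Z$ does not by itself make $Z$ closed. At a limit point $s^*$ of $Z$, note that $s^*\notin\necri{t_0}\teta$, because this set is relatively open and disjoint from $Z$. So $\teta(s^*)$ is critical, and condition c) at $s^*$ gives $\bar u\in\comp{t_0}{\teta(s^*)}$, hence $\teta(s^*)=\bar u$. Second, your aside that $\bar u$ is an isolated point of $\critset(t_0)$ does not follow: critical points in other components, with higher energy, can accumulate at a strict minimizer. This is why Proposition~\ref{prop:stable-branch} assumes isolation separately. You never actually use this aside, though; you only use $\comp{t_0}{\bar u}=\{\bar u\}$.
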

\begin{proof}
  Being a local minimizer, $\bar u\in\critset(t_0)$.
Since
$\ulim(t_0-)$ is a connected component of $\critset(t_0)$ and the critical
set is clean, $\ene {t_0}\cdot$ is constant on $\ulim(t_0-)$. Hence, by the
strict local minimality of $\bar u$, there exists $r_0>0$ such that
$B_r(\bar u)\cap\ulim(t_0-)=\{\bar u\}$
for every $r\in(0,r_0)$.
Again by the connectedness of $\ulim(t_0-)$, we thus conclude that
$\ulim(t_0-)=\{\bar u\}$.

  If $t_0\in\rmJ$, 
  the jump condition
  \eqref{to-save-4} yields
  $\cost{t_0}{\ulim(t_0-)}{\ulim(t_0+)}=e(t_0-)-e(t_0+)=\mu(\{t_0\})>0$; since this cost equals
  the energy drop $\ene{t_0}{\ulim(t_0-)}-\ene{t_0}{\ulim(t_0+)}$, by Theorem \ref{prop:cost}
  ahead, see (1)
  and (6), there is a nonconstant optimal transition
  $\teta\in\admis{\ulim(t_0-)}{\ulim(t_0+)}{t_0}$ along which $s\mapsto\ene{t_0}{\teta(s)}$ is
  nonincreasing. As $\ulim(t_0-)=\{\bar u\}$, condition (c) of Definition \ref{main-def-1} gives
  $\klimsup_{s\to0}\teta(s)\subset\comp{t_0}{\bar u}=\{\bar u\}$, i.e.\ $\teta(s)\to\bar u$ as
  $s\downarrow0$; since $\teta$ is nonconstant and $\bar u$ is a strict local minimizer, we have
  $\ene{t_0}{\teta(s)}>\ene{t_0}{\bar u}=\ene{t_0}{\teta(0)}$ for some $s>0$, contradicting the
  monotonicity. Hence $t_0\in\SV u\setminus \rmJ$. Continuity follows by
  Proposition \ref{prop:improved-C}.
\end{proof}
\nc

\EEE We can now state our first main result,
 whose proof will be carried out in Section 
\ref{s:4}: 
if the energy satisfies the fibered Sard property \ass1
and the initial datum $u_0$ belongs to $\domainenergy$,
we are able to prove
the existence of a \DSolution\
 $u$
 with $u(0)=u_0$.
Moreover, any pointwise limit of a sequence of singularly perturbed gradient flows along a
vanishing sequence $(\eps_n)_n$ is a
\DSolution. Finally, 
any sequence of the singularly perturbed gradient flow curves 
has a subsequence pointwise converging at every $t\in \totaldisc$. If in addition 
the complement $\nototaldisc$ is countable or negligible, the limit induces
a \DSolution. 
\begin{maintheorem}
\label{mainth:1}
Assume \eqref{coercivita}--\eqref{lambda-convex}  and  
{\em \ass1}, and let $u_0\in \domainenergy$,
 let $(u_\eps)_\eps$ be a family of   solutions to \eqref{Cauchy-gflow}, 
satisfying
\begin{equation}
  \label{energy-convergence-initial}
  u_{\eps}(0) \to
    u_0,\quad  \ene 0{u_{\eps}(0)} \to \ene 0{u_0},
\end{equation}
and let $\mathbf U_
\eps:=\{(t,u_\eps(t)):t\in [0,T]\}$  be their graphs in $[0,T]\times
\domainenergy$. \nc 
Then,
\begin{enumerate}
\item
For every infinitesimal sequence $(\eps_n)_n$ there exists a further
subsequence $(\eps_n')_n$, a limit graph $\mathbf V\subset [0,T]\times
\domainenergy$, and a
  \DSolution\ $u$, such that
  $\mathbf U_{\eps_n'}\karrow \mathbf V$ and  $u(t)\in \mathrm V(t)\subset
  \rmU(t)$ for every $t\in [0,T]$.
  In particular, $u_{\eps_n'}(t)
  \to u(t)$ for every $t\in \SV u$.
\item
  There exist a \DSolution\ $u:[0,T]\to \domainenergy$, with
  $u(0)=u_0$,
   and a decreasing infinitesimal sequence $(\eps_n)_n$ such that
  the corresponding sequence of gradient flows $(u_{\eps_n})_n$ satisfies
  \begin{align}
    \label{eq:52}
    \lim_{n\to\infty}\gdist(u_{\eps_n}(t),\rmU(t))=0\quad\text{for
      every }t\in [0,T].
  \end{align}
  In particular, $u_{\eps_n}(t)
  \to u(t)$ for every $t\in \SV u$.
\item
  If $(\eps_n)_n$ is a decreasing infinitesimal sequence,
    $D\subset [0,T]$ is a Borel set 
    and
    the sequence of gradient flows
    $(u_{\eps_n})_n$ satisfies  the well-preparedness conditions \eqref{energy-convergence-initial}, as well as,  
  \begin{align}
    \label{eq:convergence}
    u_{\eps_n}(t)\to u(t)\text{ for every }t\in D \text{ as } n \to\infty,
  \end{align}
  then there exists a \DSolution\  $\tilde u:[0,T]\to \domainenergy$
  such that $\tilde u(t)=u(t)$ for every $t\in D$.
  In particular, if $D=[0,T]$, then $u$ is a \DSolution.
\item
      If $(\eps_n)_n$ is a decreasing vanishing sequence, $D\subset
      [0,T]$ is a subset such that $D\setminus \totaldisc$ is countable, 
      then every sequence of gradient flows $(u_{\eps_n})_n$
      satisfying \eqref{energy-convergence-initial}
      has a subsequence
      pointwise converging at every $t\in D$.
      
  \item    In particular, if the set of topologically singular times $\nototaldisc$ is countable
   (resp.~negligible), 
      every sequence $(u_{\eps_n})_n$
      has a subsequence pointwise converging everywhere 
      (resp.~almost everywhere)
       to a
      \DSolution.
\end{enumerate}
\end{maintheorem}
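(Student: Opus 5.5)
The plan is to work with graphs rather than curves. By the uniform bounds \eqref{bound-energie}, \eqref{gronw-conseq} and the coercivity \eqref{coercivita}, all graphs $\mathbf U_\eps$ lie in a compact set $\Sublevel\rho$. Hence any sequence $\eps_n\downarrow0$ has a subsequence with $\mathbf U_{\eps_n'}\karrow\mathbf V$, a compact, fiberwise connected graph, by \eqref{eq:11} and Lemma \ref{le:recap}.

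Alongside this I would extract the energy-dissipation information. By Helly's theorem, the nondecreasing functions $t\mapsto\int_0^t\|u'_{\eps}\|\,\minpartial\calE r{u_\eps(r)}\dd r$, which are bounded via \eqref{bound-en-diss} and $ab\le \frac\eps2a^2+\frac1{2\eps}b^2$, converge to a measure $\mu$. Further subsequences give convergence of the energies $e_n(t)=\ene t{u_{\eps_n}(t)}$ (these are BV, by \eqref{eqn_lemma}) and of the power terms. The defect measure is then identified through the energy identity \eqref{eqn_lemma}.

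Next I would show that $\rmV(t)\subset\critset(t)$ for every $t$ outside the countable atom set $\rmJ$ of $\mu$. The reason is that $\frac1\eps\int\minpartial\calE{}{u_\eps}^2$ is bounded, so $\minpartial\calE r{u_\eps(r)}\to0$ in $L^2$. Combined with the closedness \eqref{closedness-alter} and the lower semicontinuity of $\minpartial\calE{}{}$, this yields criticality on a set of full measure. Upper semicontinuity of $\mathbf V$, together with the fact that outside atoms the fibers cannot travel at positive cost, extends criticality to all of $[0,T]\setminus\rmJ$.

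The central tool is a compactness lemma for transitions in the style of Theorem \ref{le:main}. Rescaling $u_{\eps_n}$ near a time $t$ by arclength in the $(t,u)$ variables produces curves $\teta_n$ with $\int\minpartial\calE{}{\teta_n}\|\teta_n'\|$ bounded. Their limit graphs are admissible transitions in $\admis{\cU_0}{\cU_1}{t}$, and the lower semicontinuity of the cost gives $\mu(\{t\})\ge\cost t{\rmU(t-)}{\rmU(t+)}$. The reverse inequality follows from the chain-rule estimate \eqref{eq:17}, which is where \ass1 is used. This establishes \eqref{to-save-4}. For $t\notin\rmJ$, the same argument shows that $\rmV(t)$ lies in a single connected component of $\critset(t)$, because crossing between distinct components carries positive cost by Lemma \ref{rmk:admiss-curves} and the clean property \eqref{eq:16bis}.

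For claim (1) I would take a Borel selection $u(t)\in\rmV(t)$ (Kuratowski--Ryll-Nardzewski, using upper semicontinuity) and verify \eqref{to-save-eneq} from the limit energy identity. The point to check is that $\ene t{u(t)}$ does not depend on the selection off $\rmJ$, which holds since $\calE_t$ is constant on components. For claim (2) I would choose the selection with $u(0)=u_0$; this is allowed by \eqref{energy-convergence-initial}, which gives $\rmV(0)\ni u_0$ and no atom at $0$. Claim (3) follows from (1) by selecting $u(t)$ on $D$, since $u(t)\in\rmV(t)$ there. For claim (4), at $t\in\totaldisc\setminus\rmJ$ the fiber $\rmV(t)$ is a singleton, so pointwise convergence is automatic. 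A diagonal extraction handles the countably many remaining points of $D$ and of $\rmJ$, using the compactness of $\sublevel\rho$. Claim (5) then follows by combining (4) with (3).

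The main obstacle is the compactness and lower semicontinuity of the cost in the third paragraph. The rescaled curves are only locally Lipschitz away from $\critset(t)$, and the admissibility condition (c) of Definition \ref{main-def-1} must survive the limit. My first attempt would be to parametrize by $s\mapsto s+\int\minpartial\calE{}{}\|u'\|+|e_n|$-variation and pass to the limit in the graph sense using Lemma \ref{l:compactness-graphs}.
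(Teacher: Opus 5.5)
Your architecture matches the paper's: Kuratowski compactness of the graphs, the defect measure from \eqref{eqn_lemma}, lower semicontinuity of the cost along rescaled pieces of $u_{\eps_n}$ (giving criticality off $\rmJ$ and the jump relation), a selection in $\rmV(t)$, and a diagonal extraction for claim (4). The genuine gap is in the selection step.

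Passing to the limit in \eqref{eqn_lemma} yields $\mu([s,t])+\limen(t)=\limen(s)+\int_s^t\limp(r)\dd r$, where $\limp$ is the weak$^*$ limit of $r\mapsto\pt r{u_{\eps_n}(r)}$. By contrast, \eqref{to-save-eneq} for the selected curve requires $\int_s^t\pt r{u(r)}\dd r$. You only check that $\ene t{u(t)}$ is selection-independent off $\rmJ$. However, $\pt t\cdot$, unlike $\ene t\cdot$, is not constant on connected components of $\critset(t)$: in Example \ref{cex:ex-plateau}, $\pt tu=-\dot\ell(t)\,u$ varies along $[1,2]$ whenever $\dot\ell(t)\neq0$. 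Moreover, under $\mathrm{(C)}$ alone $\nototaldisc$ may have positive measure (regime (iv) in Section \ref{ss:9.2}). So nothing prevents $\rmV(t)$ from being a nontrivial continuum on a non-negligible set of times, and nothing in your argument forces $\pt r{u(r)}=\limp(r)$ a.e.\ for a Kuratowski--Ryll-Nardzewski selection. Replacing $\mu$ by $-\frac{\mathrm d}{\mathrm dt}e+\pt{\cdot}{u(\cdot)}\mathscr L^1=\mu+(\pt{\cdot}{u(\cdot)}-\limp)\mathscr L^1$, with $e(t)=\ene t{u(t)}$, does not help either, since this measure need not be nonnegative.

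The missing idea is to build the power into the selection:
\begin{enumerate}
\item Take a representative of $\limp$ with $\liminf_n\pt t{u_{\eps_n}(t)}\le\limp(t)\le\limsup_n\pt t{u_{\eps_n}(t)}$ for every $t$.
\item All limit points of $u_{\eps_n}(t)$ lie in $\rmV(t)$, $\pt t\cdot$ is continuous on $\rmV(t)$ by \eqref{P_t}, and $\rmV(t)$ is connected. The intermediate value theorem then makes $\{v\in\rmV(t):\pt tv=\limp(t)\}$ nonempty and compact.
\item This constrained set has a Borel graph but is no longer upper semicontinuous in $t$. So you need a uniformization theorem for Borel sets with compact sections (Arsenin--Kunugui) instead of Kuratowski--Ryll-Nardzewski.
\end{enumerate}
The same issue affects claim (3). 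There you must use $\pt t{u_{\eps_n}(t)}\to\pt t{u(t)}$ for $t\in D$ (again by \eqref{P_t}) to get $\limp=\pt{\cdot}{u(\cdot)}$ a.e.\ on $D$. You must then extend $u|_D$ by a selection of the constrained set, not of $\rmV$.

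There are also two smaller points.
\begin{itemize}
\item \eqref{energy-convergence-initial} gives $u_0\in\rmV(0)$ and $\limen(0)=\ene0{u_0}$, but not the absence of an atom at $0$. If $u_0\notin\critset(0)$ there is an instantaneous jump (Remark \ref{rmk:salto-istantaneo}), and $0\in\rmJ$ with a one-sided jump relation. The choice $u(0)=u_0$ remains admissible, since the power constraint is only imposed a.e.
\item \eqref{to-save-4} requires the existence of unique components $\ulim(t\pm)$ containing the one-sided limit points; this follows from a zero-cost argument on shrinking one-sided windows. It also requires the splitting $\cost t{\ulim(t-)}{\ulim(t+)}=\cost t{\ulim(t-)}{\rmU(t)}+\cost t{\rmU(t)}{\ulim(t+)}$ through the lifting of the selected value. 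That needs the cost lower bound along three ordered time sequences $t_k^-<t_k<t_k^+$ together with the triangle inequality. Your two-endpoint estimate does not yet give either of these.
\end{itemize}
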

\EEE%
\begin{remark}
\label{rmk:salto-istantaneo} \slshape
For the definition of \DSolution, as well as for our existence results, Thm.\ \ref{mainth:1} and Thm.\ \ref{mainth:2} below,
it is sufficient to start from an initial datum $u_0$ with finite
energy.
 If, however, $u_0\not\in \critset(0)$, 
an instantaneous jump occurs at $t=0$.
\end{remark} 

\par
Our next results shows that the notion of \DSolution\ is rich enough to
select branches of isolated local minimizers in $\crit$.
\begin{proposition}[Stable branches]
  \label{prop:stable-branch}
  Let $[a,b]\subset [0,T]$, $r>0$, and let $v:[a,b]\to \domainenergy$ be 
  a continuous path satisfying
  \begin{equation}
    \label{eq:54}
    v(t) \text{ is a strict local minimizer of $\calE_t$,}\quad
    \critset(t)\cap B_r(v(t))=\{v(t)\}
    \quad\text{for every
      $t\in [a,b]$}.
  \end{equation}
  If $u_{\eps_n}(a)\to v(a)$ as $n\to\infty$ then
  $u_{\eps_n}\to v$ uniformly in $[a,b]$. In particular, there
  exists a \DSolution\ $u$ such that $u(t)=v(t)$ for every $t\in [a,b]$.  
\end{proposition}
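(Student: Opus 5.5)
The plan is a \emph{trapping argument}. The flows $u_{\eps_n}$ start near the isolated strict local minimizer $v(a)$. The fast relaxation $\eps_n u'=-\argminpartial{\calE}{t}{u}$ should then keep them inside a shrinking tube around the branch $v$. We work throughout in a fixed sublevel $\Sublevel\rho$, which contains all the graphs by \eqref{bound-energie}. There everything is compact by \eqref{coercivita}, and by \eqref{closedness-alter} the slope $(t,x)\mapsto\minpartial\calE tx$ is lower semicontinuous. Fix $\delta\in(0,r)$.

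\textbf{Step 1 (uniform quantities).} Set
\[
\mathbf K_\delta:=\{(t,x)\in\Sublevel\rho:\ t\in[a,b],\ \delta/2\le\|x-v(t)\|\le\delta\}.
\]
This set is compact because $v$ is continuous. By \eqref{eq:54} it contains no critical point, so lower semicontinuity gives
\[
\eta_\delta:=\min_{\mathbf K_\delta}\minpartial\calE tx>0 .
\]
We also want a uniform energy barrier
\[
m_\delta:=\inf\{\ene tx-\ene t{v(t)}:\ t\in[a,b],\ x\in\sublevel\rho,\ \|x-v(t)\|=\delta\}>0 .
\]
This is the step I expect to be the main obstacle, since strict local minimality is only pointwise in $t$. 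The argument I would try first is a contradiction one. Suppose $(t_k,x_k)$ is a minimizing sequence with value $\le 0$; by compactness it has a limit $(t,x)$ with $\ene tx\le\ene t{v(t)}$ and $\|x-v(t)\|=\delta$. Now minimize $\ene t\cdot$ on the closed ball $\overline B_\delta(v(t))$. An interior minimizer other than $v(t)$ would be critical, which \eqref{eq:54} forbids. Hence the values near $v(t)$ exceed $\ene t{v(t)}$ (strict minimality), the boundary values are no smaller, and running the frozen gradient flow of $\ene t\cdot$ from $x$ gives a contradiction: its energy decreases, so it cannot enter a small ball around $v(t)$; the annulus slope $\eta_\delta$ forces it to leave the ball; and its limit points in $B_r(v(t))$ would be critical points other than $v(t)$. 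If this route fails, I would replace $m_\delta$ by the weaker dissipation barrier $\eta_\delta\,\delta/2$ from Step 2, which is enough for the escape estimate.

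\textbf{Step 2 (no escape).} Suppose $u_{\eps_n}$ leaves $B_\delta(v(\cdot))$ at a first time $\tau\in(a,b]$. Before leaving it crosses the annulus during some interval $[\sigma,\tau]$, and on that interval $\mathbf K_\delta$ gives
\[
\int_\sigma^\tau\|u'_{\eps_n}\|\,\minpartial\calE r{u_{\eps_n}}\dd r\ge \eta_\delta\,\delta/2 .
\]
Since $\|u'_{\eps_n}\|\ge\eta_\delta/\eps_n$ there, the crossing also takes time at most $\eps_n\delta/\eta_\delta$. Next compare the energy identity \eqref{eqn_lemma} with the energy along the branch: $t\mapsto\ene t{v(t)}$ is controlled by \eqref{GEST-2}, and the power terms $\pt r{u_{\eps_n}(r)}$ and $\pt r{v(r)}$ are close while $u_{\eps_n}$ stays near $v$, by the continuity in \eqref{P_t}. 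This yields
\[
\ene\tau{u_{\eps_n}(\tau)}-\ene\tau{v(\tau)}\le \ene a{u_{\eps_n}(a)}-\ene a{v(a)}+o(1)-\eta_\delta\,\delta/2 ,
\]
where $o(1)$ is small once $[a,b]$ is subdivided into short intervals and $\delta$ is small. The initial energy gap tends to $0$, using energy convergence at $a$ (from \eqref{closedness-alter} and $u_{\eps_n}(a)\to v(a)$). The right-hand side is therefore negative, contradicting $m_\delta>0$; alternatively, the energy cannot fall below $\min_{\overline B_\delta}\ene\tau\cdot=\ene\tau{v(\tau)}$. Hence $\sup_{[a,b]}\|u_{\eps_n}-v\|\le\delta$ for $n$ large, and since $\delta$ was arbitrary, $u_{\eps_n}\to v$ uniformly on $[a,b]$.

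\textbf{Step 3 (Dissipative Viscosity solution).} Take flows on $[0,T]$ with well-prepared data. Run them on $[a,T]$ starting from $u_{\eps_n}(a)=v(a)$; on $[0,a]$, use either the time-shifted version of Theorem \ref{mainth:1} or a concatenation with a Dissipative Viscosity solution ending at $v(a)$. Then apply Theorem \ref{mainth:1}(3) with $D=[a,b]$, where convergence holds by Step 2. This produces a \DSolution\ $u$ with $u=v$ on $[a,b]$.
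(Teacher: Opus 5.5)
Your route is genuinely different from the paper's. The paper never estimates the flows quantitatively. It takes the Kuratowski limit $\mathbf V$ of the graphs and a \DSolution\ $u$ with $u(t)\in\mathrm V(t)\subset\rmU(t)$ from Theorem~\ref{mainth:1}. It then proves $\rmU(t)=\{v(t)\}$ on $[a,b]$ by a continuation argument on $D=\{t\in[a,b]:v(t)\in\rmU(t)\}$, using Theorem~\ref{rmk:localmin-nojump} (an optimal energy-decreasing transition cannot leave a strict local minimizer) together with the isolation in \eqref{eq:54}. Kuratowski convergence of the graphs to the graph of $v$ then gives uniform convergence, and Theorem~\ref{mainth:1}(3) with $D=[a,b]$ gives the \DSolution. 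A direct trapping argument would have the merit of not needing (C), but yours has two genuine gaps.

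\emph{Step~1.} The barrier $m_\delta>0$ is false for general $\delta\in(0,r)$. Condition \eqref{eq:54} only excludes critical points \emph{inside} $B_r(v(t))$, and a descending valley can enter that ball without creating any. For example, take a time-independent energy on $\R^2$ equal to $x^2+y^2(1-x)^3$ on $B_{10}(0)$ and made coercive outside. In $B_5(0)$ its only critical point is the strict minimum $0$, yet it equals $-5$ at $(2,3)$, which is at distance $\sqrt{13}<5$. Your frozen-flow argument breaks exactly there: the flow from the bad point may leave $B_r(v(t))$, so its limit points yield no contradiction.

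What is true, and all you need, is the barrier for \emph{small} $\delta$, uniformly in $t$, and it needs a different proof.
\begin{itemize}
\item Fix $t_0$ and choose $R<r$ below the strict-minimality radius of $v(t_0)$.
\item The strict inequality $\ene tx>\ene t{v(t)}$ for $\|x-v(t)\|=R$ persists for $t$ near $t_0$. A violating sequence has bounded energy, so by compact sublevels, joint lower semicontinuity of $\calE$ and continuity of $t\mapsto\ene t{v(t)}$ (\eqref{closedness-alter} with $\xi_n=0$), it would produce a violation at $t_0$.
\item Minimizing $\ene t\cdot$ over $\overline B_R(v(t))$ then forces every minimizer to be interior, hence critical, hence equal to $v(t)$. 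This propagates the inequality to all radii $\le R$.
\item Conclude by compactness of $[a,b]$.
\end{itemize}
Your fallback does not avoid this, since contradicting your final inequality still requires $\ene\tau{u_{\eps_n}(\tau)}\ge\ene\tau{v(\tau)}$ at the exit point.

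\emph{Step~2.} The comparison does not close, because the error you call $o(1)$ is not small relative to the barriers.
\begin{itemize}
\item The power mismatch $\int_a^\tau|\pt r{u_{\eps_n}(r)}-\pt r{v(r)}|\dd r$ is only bounded by $(\tau-a)$ times the modulus of continuity of $\mathcal P$ at scale $\delta$. This is independent of $n$ and typically of order $\delta$, while near a nondegenerate minimum $m_\delta$ and $\eta_\delta\delta$ are of order $\delta^2$.
\item Subdividing $[a,b]$ helps only if you restart each subinterval with a small \emph{energy} gap. Knowing $\|u_{\eps_n}(t_i)-v(t_i)\|\le\delta$ does not give this for a lower semicontinuous energy.
\item For the same reason, \eqref{closedness-alter} does not give energy convergence at $a$: it needs bounded subgradients at $u_{\eps_n}(a)$.
\item \eqref{GEST-2} carries the term $\frac\lambda2\|v(t)-v(s)\|^2$, whose sum over partitions need not vanish for a merely continuous $v$. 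Local minimality of $v(s)$ and $v(t)$ is what removes it.
\end{itemize}

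The missing idea is to compare from a time $\sigma'\in[\tau-\sqrt{\eps_n},\tau]$ at which $\minpartial\calE{\sigma'}{u_{\eps_n}(\sigma')}\to0$; such times exist by \eqref{bound-en-diss}. Compactness, closedness and \eqref{eq:54} force $u_{\eps_n}(\sigma')$ close to $v(\sigma')$, and $\lambda$-convexity then makes the energy gap small. Over $[\sigma',\tau]$ the gap grows only by $o(1)$, which contradicts $m_\delta>0$; the annulus slope is not even needed. The initial layer $\tau<a+\sqrt{\eps_n}$ still needs a separate argument (for instance an EVI-type estimate on the fast time scale) unless energy convergence at $a$ is assumed. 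Finally, Step~3 is needlessly convoluted: the given well-prepared flows already satisfy Theorem~\ref{mainth:1}(3) with $D=[a,b]$.
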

\begin{proof}
  Applying Theorem \ref{mainth:1}, up to extracting a suitable (not relabeled) subsequence, we can
  suppose that
  $\mathbf U_{\eps_n}\karrow \mathbf V$ and we find a \DSolution\ $u$
  such that
  $u(t)\in \rmV(t)\subset \rmU(t)$.
  
  \noindent\underline{Claim:}
  \emph{$v(t)\in \rmU(t)$ for every $t\in [a,b]$.}
  Indeed, 
  let $D:=\{t\in [a,b]: v(t)\in \rmU(t)\}$. The set 
  $D$ contains $a$ and it is closed (since $v$ is continuous).
  Let $D_0=[a,a']$ be the (closed)
  connected component of $D$ containing $a$ (possibly $a'=a$).
  By Theorem \ref{rmk:localmin-nojump}
  $v(t)=u(t)$ and $\rmU(t)=\{v(t)\}$ for every $t\in D_0$, in
  particular for $t=a'$. In addition $a'\not\in \rmJ$.
  If $a'<b$, by the continuity of $v$ and \eqref{eq:8}
  we can find $a''\in (a',b]$ such that
  $|v(a')-v(t)|<r/2$ and 
  $|v(a')-u(t)|<r/2$ in $[a',a'']$.
  It follows that $u(t)\in B_r(v(t))$ so that
  $u(t)=v(t)$ by \eqref{eq:54}, a contradiction.

  \noindent\underline{Conclusion.}
  By Theorem \ref{rmk:localmin-nojump} $\rmU(t)=\{v(t)\}$ for every
  $t\in [a,b]$ so that the graphs of $u_{\eps_n}$ converge to
  the graph of $v$ in $[a,b]\times \domainenergy$
  in the sense of Kuratovski. The last statement follows by
  Claim (3) of Theorem \ref{mainth:1}. 
\end{proof}
\nc%
\subsection{Balanced Viscosity solutions}
If  we only consider the jump contribution to the measure $\mu$, concentrated on the jump set  of the energy $\jump\sfe $, from \eqref{to-save-eneq} we deduce that 
any \DSolution\ 
satisfies
\begin{equation}
\label{to-save-enineq-BV}
\sum_{r\in \jump\sfe \cap (s,t)} \kern-12pt\cost
r{\ulim(r-)}{\ulim(r+)}
+ \ene t{u(t)} \le \ene s {u(s)} + \int_s^t \pt r{u(r)} \dd r
\quad \text{for all } s,t\in [0,T]\setminus \jump\sfe,\text{ with } s\le t.
\end{equation}
Now, our  second notion of evolution of critical points, namely the concept of 
 \emph{Balanced Viscosity} solution, 
brings into the picture  the additional information that the measure $\mu$  is purely
atomic
and thus it is concentrated on its jump set $\jumpname = \jump \sfe$. 
Therefore, for \BSolutions\ \eqref{to-save-enineq-BV} holds as an equality. 
\begin{maindefinition}[\BSolution]
 \label{def:sols-notions-2}
Assume 
\eqref{coercivita}--\eqref{lambda-convex}
 and that $\mathcal E$ has a clean critical set. 
We call \emph{Balanced Viscosity  solution} to \eqref{limit-problem}  
a \DSolution\  $u$
with the additional property that  \EEE
the measure $\mu$ of \eqref{to-save-eneq} is
concentrated on $\jump \sfe$. 
In this
case there holds
\begin{equation}
\label{eq:19}
\sum_{r\in \jump\sfe \cap (s,t)} \kern-12pt\cost r{\ulim(r-)}{\ulim(r+)}   
+ \ene t{u(t)} = \ene s {u(s)} + \int_s^t \pt r{u(r)} \dd r
\quad \text{for all } s,t\in [0,T]\setminus \jump\sfe,\ s\le t.
\end{equation}
\end{maindefinition}
A first justification of Definition
\ref{def:sols-notions-2}
is provided by
the following result, 
which shows that
every \DSolution\ with bounded variation is also a \BSolution.
Its proof will be a consequence
of the discussion of Section
\ref{s:5}, see Corollary
\ref{cor:null-Cantor}.

\begin{proposition}\label{prop:BV}
  Let $u:[0,T]\to\domainenergy$ be a \DSolution.
  If $u$ has bounded variation then it is a \BSolution.
\end{proposition}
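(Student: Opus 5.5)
The plan is to show that the diffuse part $\mu^{\rm d}:=\mu-\sum_{r\in\rmJ}\mu(\{r\})\delta_r$ vanishes. Throughout, the only input I plan to use is the Lipschitz-type estimate of Lemma \ref{l:E-Lip} on the critical set, combined with the bounded variation of $u$. The proof in the paper goes through Section \ref{s:5} (Corollary \ref{cor:null-Cantor}); the route below is a direct one, but it should play the same role.

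\emph{Step 1: a local defect estimate.} By \eqref{to-save-eneq}, for $s\le t$ in $[0,T]\setminus\rmJ$ we have
\[
\mu([s,t]) = e(s)-e(t)+\int_s^t \pt r{u(r)}\dd r .
\]
By \eqref{eq:8}, $u(s),u(t)$ are critical, so \eqref{GEST-1} applies with $(t,u(t))$ and $(s,u(s))$. Subtracting $\int_s^t\pt r{u(r)}\dd r$ from both sides, it yields
\[
\Big|e(t)-e(s)-\int_s^t\pt r{u(r)}\dd r\Big|\le \frac\lambda2\|u(t)-u(s)\|^2+\int_s^t\big(|\pt r{u(s)}-\pt r{u(r)}|+|\pt r{u(t)}-\pt r{u(r)}|\big)\dd r .
\]
Hence
\[
\mu([s,t])\le \frac\lambda2\|u(t)-u(s)\|^2+\omega(s,t),
\]
where $\omega(s,t)$ denotes the last integral.

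\emph{Step 2: summing over fine partitions.} Since $u$ has bounded variation, its variation measure $\nu:=|Du|$ is finite, and $\|u(t)-u(s)\|\le\nu([s,t])$. Fix $\delta>0$. Only finitely many atoms of $\nu$ have mass $\ge\delta$; call this set $A_\delta$, and note that $\rmJ$ is countable. Since $\mu^{\rm d}$ charges no countable set, it suffices to show $\mu^{\rm d}(I)=0$ for closed intervals $I$ whose interior avoids $A_\delta$, with endpoints outside $\rmJ$.

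Partition $I$ into intervals $[s_{i-1},s_i]$ with nodes $s_i\notin\rmJ$ and mesh $h$. Two estimates then control the sum.
\begin{itemize}
\item By the choice of $I$, $\limsup_{h\to0}\max_i\nu([s_{i-1},s_i])\le\delta$. Therefore
\[
\sum_i\|u(s_i)-u(s_{i-1})\|^2\le\max_i\nu([s_{i-1},s_i])\,\nu(I)\lesssim\delta\,\nu(I).
\]
\item For the term $\sum_i\omega(s_{i-1},s_i)$: $u$ takes values in a sublevel (Remark \ref{energy-bound}) and is regulated, being BV. Its one-sided limits exist at every $r$, and they are attained outside a countable set. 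The continuity in \eqref{P_t} together with the uniform bound on $\mathcal P$ from \eqref{P_t} then let dominated convergence give $\sum_i\omega\to0$ as $h\to0$. Here I intend to choose the nodes on a set of full measure, so that $u$ is continuous there.
\end{itemize}
Summing the Step 1 inequalities gives $\mu(I\setminus\rmJ)\le\mu(I)-\sum_{r\in \rmJ\cap I}\mu(\{r\})\le C\lambda\,\delta\,\nu(I)+o(1)$. Here the atoms of $\mu$ at interior nodes are avoided by the choice of nodes, while the atoms inside the subintervals are exactly what separates $\mu$ from $\mu^{\rm d}$. Letting $h\to0$ and then $\delta\downarrow0$ gives $\mu^{\rm d}(I)=0$.

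\emph{Main obstacle.} The delicate point is Step 2. The jumps of $u$ need not lie in $\rmJ$: zero-cost jumps through a continuum, as in Example \ref{cex:ex-plateau}, still produce atoms of $\nu$. This is why the small-square argument has to be organized around the atoms of $\nu$ rather than those of $\mu$. It also requires checking that removing the finitely many large atoms of $\nu$ (which $\mu^{\rm d}$ does not see) loses nothing. If the partition argument becomes messy, I would instead compare the measures $\mu^{\rm d}$ and $\lambda\,\nu\res\{\text{small atoms}\}\cdot\delta$ through a Vitali-type covering of the diffuse part.

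Once $\mu^{\rm d}=0$, the measure $\mu$ is concentrated on $\rmJ=\jump\sfe$, and \eqref{eq:19} follows from \eqref{to-save-eneq} and \eqref{to-save-4}. This is exactly Definition \ref{def:sols-notions-2}.
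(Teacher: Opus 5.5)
Your argument is correct after one bookkeeping repair, described below. It takes a genuinely different route from the paper.

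\textbf{The paper's route.} The paper derives Proposition \ref{prop:BV} from the machinery of Section \ref{s:5}. The graph of a BV map lies in the image of a single Lipschitz curve, so the proofs of Corollary \ref{cor:null-Cantor} and Theorem \ref{mainth:2} run with this one curve in place of the countable Lipschitz covering of $\crit$. The BV version of Banach--Zaretskii (Proposition \ref{prop:extLus}), together with the area formula and the chain rule on $\crit$ (Lemma \ref{l:chain-rule-critset}), kills the Cantor part of the derivative of $e$. The same chain rule gives $e'(t)=\pt t{u(t)}$ a.e., so $\mu$ has no diffuse part.

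\textbf{Your route.} You apply \eqref{GEST-1} between two critical times to get $\mu([s,t])\le\frac\lambda2\|u(t)-u(s)\|^2+\omega(s,t)$. You then sum over fine partitions, using that squared increments of a BV path survive only at its large jumps.

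\textbf{Comparison.} Both proofs rest on the same fact: the error in \eqref{GEST-1} is quadratic. The paper uses it infinitesimally, where it becomes $o(|r-s|)$ along a Lipschitz parametrization. You use it in summed form. Your route is more elementary and self-contained: no area formula, no Lipschitz extension $\widehat\calE$, no parametrization of the graph. It also bounds $\mu$ locally by the squared increments of $u$. The paper's route is the one that also gives Theorem \ref{mainth:2}, where $u$ is not known to be BV and only a countable Lipschitz covering of $\crit$ is available. Your summation argument does not reach that setting.

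\textbf{The repair.} It concerns the final limit. As written, you fix $I$ avoiding $A_\delta$ and then let $\delta\downarrow0$. But the admissible intervals change with $\delta$: for a fixed $I$ your estimate only gives $\mu(I)\le\frac\lambda2\max_{x\in I}\nu(\{x\})\,\nu(I)$, which need not vanish. Proceed instead as follows.
\begin{enumerate}
\item Fix $\delta$ and take closed intervals with $I\cap A_\delta=\emptyset$, endpoints included. Otherwise the first or last cell may carry a large atom of $\nu$. Keep the endpoints outside $\rmJ$.
\item Sum $\mu^{\mathrm d}(I)\le\mu(I)\le C\lambda\delta\,\nu(I)$ over a nonoverlapping family of such intervals covering $[0,T]$ up to countably many points. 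This gives $\mu^{\mathrm d}([0,T])\le C\lambda\delta\,\nu([0,T])$.
\item Only then let $\delta\downarrow0$.
\end{enumerate}
Also take $\nu$ to be the measure of the pointwise variation $t\mapsto\mathrm{Var}(u;[0,t])$ rather than the distributional $|Du|$, or put all nodes at continuity points of $u$ off the atoms of $\nu$. This makes $\|u(t)-u(s)\|\le\nu([s,t])$ hold at every node, and no atom is counted twice in $\sum_i\nu([s_{i-1},s_i])$.
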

\par With our second main result we show that, if $\crit$ is countably
$\mathscr H^1$-rectifiable
(which implies the $\sigma$-finiteness of 
$\mathscr H^1\res \crit$  and, a fortiori,  \ass1\ and the fact that
$\nototaldisc$ is at most countable),
all \DSolutions\  are indeed \BSolutions. 
\begin{maintheorem}
\label{mainth:2}
In addition to  \eqref{coercivita}--\eqref{lambda-convex},  
assume that
the following 
condition is satisfied:
\begin{equation}
\label{eq:15}
\text{$\crit$ is countably $\mathscr H^1$-rectifiable}
\end{equation}
in the sense of \eqref{rectifiable}. Then
\begin{enumerate}
\item
  Every sequence $(u_{\eps_n})_n$ of
  gradient flows satisfying
  \eqref{energy-convergence-initial}
  for a vanishing $\eps_n\downarrow0$
has subsequence pointwise converging to a
\DSolution.
\item 
Every \DSolution\  $u$
 to \eqref{limit-problem} 
 is also a \BSolution\ and its
 set of discontinuity  $\disc u$ is at most countable. 
 \item 
   In particular, for every $u_0 \in \domainenergy$ there exists a
   Balanced Viscosity
   solution $u$ emanating from $u_0$. \EEE
\end{enumerate}
\end{maintheorem}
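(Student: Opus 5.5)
The plan is to reduce everything to one measure-theoretic fact: under \eqref{eq:15}, the diffuse part of the defect measure $\mu$ of any \DSolution\ vanishes. By Lemma \ref{le:criterion}, \eqref{eq:15} yields \ass1\ and the countability of $\nototaldisc$ and of $\noHonezero$. Claim (1) then follows at once from Theorem \ref{mainth:1}(5). Claim (3) follows from Claim (2) combined with Theorem \ref{mainth:1}(2), which provides a \DSolution\ with $u(0)=u_0$. So the substance lies in Claim (2).

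For Claim (2), fix a \DSolution\ $u$ with energy $\sfe(t)=\ene t{u(t)}$. By \eqref{to-save-eneq}, $\mu=-\d\sfe+\mathcal P\,\d t$ on $[0,T]\setminus\rmJ$. Since $\mathcal P_t(u(t))$ is bounded, $\mu^d:=\mu-\mu\res\rmJ$ is the diffuse part of the distributional derivative $-\sfe'$ plus an absolutely continuous term. I will decompose $\mu^d=\mu^{a}+\mu^{C}$ and show that both pieces vanish.

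\emph{Absolutely continuous part.} Outside $\rmJ$ we have $u(t)\in\critset(t)$, so the curve $\Gamma(t)=(t,u(t))$ takes values in $\crit\cap\Sublevel\rho$ (Remark \ref{energy-bound}). On this set $\calE$ is Lipschitz (Lemma \ref{l:E-Lip}), with the sharp two-sided bound \eqref{GEST-1}. At a.e.\ $t$ I will use \eqref{GEST-1} with $s\to t$. This gives $\sfe(t)-\sfe(s)=\int_s^t\mathcal P_r(u(r))\,\d r+O(\|u(t)-u(s)\|^2)+o(|t-s|)$, using the continuity of $\mathcal P$ from \eqref{P_t}. The difficulty is that $u$ need not be Lipschitz. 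Rather than deriving approximate differentiability of $u$, I will argue on the image side. Let $\pi:\crit\to\R$, $(t,x)\mapsto\ene tx-\int_0^t\mathcal P_r(x)\,\d r$. Then \eqref{GEST-1}, together with the Lipschitz continuity of $\mathcal P$ in time along fixed $x$, shows that $\pi$ is locally ``Lipschitz with quadratic modulus in $x$''. Hence $\pi(A)$ is $\mathscr L^1$-null whenever $A\subset\crit$ has $\mathscr H^1$-null $t$-projection and $\sigma$-finite $\mathscr H^1$ measure. The function $f(t):=\sfe(t)-\int_0^t\mathcal P_r(u(r))\,\d r=-\mu([0,t])$ is monotone, and $\mu^d$ equals $-Df$ off $\rmJ$. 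The key step is a Lusin-type property for BV functions: if $f=g\circ\Gamma$ with $\Gamma$ valued in a countably $\mathscr H^1$-rectifiable set and $g$ satisfying this quadratic-in-space, Lipschitz-in-time bound, then the diffuse part of $Df$ is absolutely continuous, with density $0$ a.e.

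\emph{Cantor and a.c.\ part via Lusin.} I would cover $\crit$ by Lipschitz curves $\gamma_n$ up to an $\mathscr H^1$-null set $N$. Each $\gamma_n=(\sft_n,x_n)$ is a Lipschitz curve in $\crit$, so the chain rule \eqref{ch-rule} with $\xi=0$ gives $\frac{\d}{\d s}\ene{\sft_n}{x_n}=\mathcal P\,\sft_n'$. Hence $\pi\circ\gamma_n$ is constant, and $\pi(\gamma_n([0,1]))$ is a single point. The image $\pi(N)$ is null by the quadratic estimate, arguing as in Lemma \ref{le:criterion}, Claim 2: a $\delta$-cover of $N$ with $\sum\operatorname{diam}\le\delta$ maps to sets of diameter $\lesssim\operatorname{diam}^2+L\operatorname{diam}$. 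I will have to check this carefully, since the time-Lipschitz term is not quadratic; this suggests handling $N$ by splitting off the part with positive time projection. Thus $f([0,T]\setminus\rmJ)\subset\pi(\crit)$ is $\mathscr L^1$-negligible, up to the countable set of values $\pi(\gamma_n)$. For a monotone function $f$, the diffuse part of $|Df|$ is concentrated on a set whose image under $f$ has positive measure equal to its mass, since $\mathscr L^1(f(B))\ge|D^df|(B)$ for monotone $f$ and Borel $B$ avoiding jumps. Therefore $\mu^d=0$, and $\mu$ is purely atomic; \eqref{eq:19} follows.

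Finally, $\disc u$ is countable: $u$ is continuous on $\SV u\setminus\rmJ\supset\totaldisc\setminus\rmJ$ by Proposition \ref{prop:improved-C}, and both $\nototaldisc$ and $\rmJ$ are countable. The main obstacle I expect is the image-nullity of $\pi(N)$ for the $\mathscr H^1$-null remainder: the time derivative of $\calE$ only gives linear, not quadratic, control. I would first try to absorb it by working with $\pi$, which subtracts the power along fixed $x$, and by exploiting \eqref{GEST-1} so that only the $\|x-y\|^2$ term remains up to $\int|\mathcal P_r(u)-\mathcal P_r(v)|$. That last term is small by the continuity in \eqref{P_t}, uniformly on the compact set $\crit\cap\Sublevel\rho$.
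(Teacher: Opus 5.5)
Your reductions of Claims (1) and (3), the countability of $\disc u$, and the monotone-image step (for monotone $f$, $|D^d f|(B)\le\mathscr L^1(f(B))$ on Borel sets $B$ avoiding jumps) are fine. The gap is the map $\pi$. By the fundamental theorem of calculus in $r$ at fixed $x$, $\pi(t,x)=\ene tx-\int_0^t\mathcal P_r(x)\dd r=\ene{0}{x}$. This causes three failures.

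(i) $f(t)=\sfe(t)-\int_0^t\mathcal P_r(u(r))\dd r$ is \emph{not} $\pi(t,u(t))$. In $f$ the power is integrated along the trajectory; in $\pi$ it is integrated along the horizontal segment $r\mapsto(r,u(t))$, which leaves $\crit$. In fact $f(t)$ is not a function of the point $(t,u(t))$ at all, so your Lusin step ``$f=g\circ\Gamma$'' has no admissible $g$.

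(ii) $\pi\circ\gamma_n$ is not constant along a Lipschitz curve in $\crit$. Equation \eqref{ch-rule} with $\xi=0$ only says that $s\mapsto\calE(\gamma_n(s))-\int_0^s\mathcal P(\gamma_n(\sigma))\,\sft_n'(\sigma)\dd\sigma$ is constant, which is a path-dependent quantity. Along the branch $m_-$ of Example \ref{cex:ex-fold}, for instance, $\frac{\mathrm d}{\mathrm dt}\ene{0}{m_-(t)}=(\ell(t)-\ell(0))\,m_-'(t)\not\equiv0$.

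(iii) Consequently $\pi(\crit)$ is not negligible. Your claim that $\pi(A)$ is null when $A$ has null time projection already fails for the circle $\{t_0\}\times\{|x|=1\}$ of the Mexican hat in Remark \ref{cex:rmk-symmetry}: there $\ene{0}{x}=-\langle\ell(0),x\rangle$ sweeps an interval when $\ell(0)\neq0$.

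There is a separate problem: the curves in \eqref{rectifiable} need not lie in $\crit$, so \eqref{ch-rule} cannot be applied along them. This is why the paper extends $\calE|_{\crit\cap\Sublevel\rho}$ to a Lipschitz $\widehat{\calE}$. It then proves \eqref{ch-rule-gamma} only a.e.\ on $D=\gamma^{-1}(\crit\cap\Sublevel\rho)$, via \eqref{GEST-1} at accumulation points (Lemma \ref{l:chain-rule-critset}). Your worry about the $\mathscr H^1$-null remainder is also an artifact of $\pi$. $\calE$ itself is Lipschitz on $\crit\cap\Sublevel\rho$ (Lemma \ref{l:E-Lip}), and Lipschitz maps send $\mathscr H^1$-null sets to $\mathscr L^1$-null sets, so no quadratic control is needed.

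Your image strategy can be repaired by subtracting the solution's \emph{own} power primitive $P(t):=\int_0^t\mathcal P_r(u(r))\dd r$, which is Lipschitz in time, composed with the time coordinate. Set $D_k:=\{s:\gamma_k(s)\in\crit\cap\Sublevel\rho,\ u(\gamma_{k,0}(s))=\gamma_{k,1}(s)\}$ and $h_k(s):=\widehat{\calE}(\gamma_k(s))-P(\gamma_{k,0}(s))$. Then $f(\gamma_{k,0}(s))=h_k(s)$ for $s\in D_k$. Moreover $h_k'=0$ a.e.\ on $D_k$, by \eqref{ch-rule-gamma} together with the one-dimensional chain rule for $P\circ\gamma_{k,0}$: for a.e.\ $s$, either $\gamma_{k,0}'(s)=0$ or $\gamma_{k,0}(s)$ is a Lebesgue point of $r\mapsto\mathcal P_r(u(r))$. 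The area formula then gives $\mathscr L^1(h_k(D_k))=0$. The remaining values lie in the image of the $\mathscr H^1$-null set $C_0$ under the map $(t,x)\mapsto\ene tx-P(t)$, which is Lipschitz on $\crit\cap\Sublevel\rho$.

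This would dispose of the whole diffuse part of $\mu$ at once, which is organized differently from the paper. The paper kills the Cantor part of the derivative of $\sfe$ by the BV Banach--Zaretskii argument (Proposition \ref{prop:extLus}, Corollary \ref{cor:null-Cantor}), applied to $\sfe$ on null sets. It then computes the density $\sfe'(t)=\mathcal P_t(u(t))$ separately, at density points of the time sets swept by the curves where $\gamma_{k,0}'\neq0$. As written, however, your argument does not go through.
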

 Section \ref{s:5} will be devoted to the proof of the above Theorem.

\EEE

\section{Preliminary results on the energy-dissipation cost(s)}
\label{s:energy_diss_cost}
Throughout this section and the remainder of the paper,  we will
always suppose that
\[
\calE \text{ and $\crit$ comply with 
  \eqref{coercivita}--\eqref{lambda-convex} and with
 \ass1\ (in particular $\cE$ has a clean critical set),}
\]
therefore we will omit to invoke these conditions in the forthcoming
results. 
 \par
In what follows, 
 the symbols $c,\,C, \, C',\hdots$ will be used
to denote a generic positive constant depending on given data, and possibly
varying from line to line.  Furthermore, 
for a given interval $(a,b)\subset \R$, 
we will often simply write  {\em for a.a.\ $ t \in (a,b)$} in place of  {\em for $\mathscr{L}^1$-a.a.\ $t\in (a,b)$}. 
\subsection{Admissible transitions: from curves to graphs}
\label{subsec:admissible}
  With our first result,
  Lemma \ref{le:graph} below,  we will provide a graph characterization of the
admissible transition curves introduced in Definition \ref{main-def-1}. For this, we shall
   rely  on the relations between maps and their (extended) 
graphs  that we expounded at the beginning of Section \ref{ss:2.1}. Indeed,
 in what follows the \emph{graph} viewpoint will be more emphasized,
 in accordance with our approach to the asymptotic analysis for the singularly perturbed gradient flows. Thus, 
   admissible transitions
in the sense of Definition  \ref{main-def-1} will have to be regarded as \emph{selections} of `graph transitions'.  In this connection, observe that the properties of 
the 
function $\mathsf e$
from \eqref{eq:29} below
(not to be confused with the mapping $[0,T]\ni t \mapsto \sfe(t) := \ene t{u(t)}$),
 are \emph{independent} of such selections.
\begin{lemma}[Graph characterization of admissible transitions]
  \label{le:graph}
  Let $t\in [0,T]$ and $\teta\in \admis{U_0}{U_1}t$, for
  $U_0,U_1 \in \newclass t$. 
 Then, there exists $\rho>0$ 
  and a compact fiberwise connected set
  $\bfTheta\subset [0,1]\ti \sublevel\rho$ such that 
  \begin{equation}
    \label{eq:23}
    \Theta(s)=\{\teta(s)\}\quad\text{if }s\in \necri t {\teta},\quad
    \teta(s)\in\Theta(s)\subset \critset(t)\quad\text{if }s\not\in \necri t {\teta},
  \end{equation}
  with $\necri t {\teta}$ from \eqref{necri-teta}. 
  \par
  Conversely, let $\bfTheta$ be a
  \begin{equation}
  \label{basic-properties-graph}
  \text{ compact and fiberwise connected subset of $[0,1]\ti
  \sublevel \rho$ with  projection   $\pi_{[0,1]}(\bfTheta)=[0,1]$,}
  \end{equation}
   and let 
  $
  \necri t{\bfTheta}:= 
  \big\{s\in [0,1]: \Theta(s)\cap \necritset t\neq
  \emptyset\big\}$.   Suppose that 
   $\bfTheta$ enjoys the following property: 
  \begin{equation}
    \label{eq:22}
    s\in \necri t{\bfTheta} 
    \quad\Longrightarrow\quad
    \Theta(s)\text{ is a singleton, which we denote by } \{\teta(s)\}.
  \end{equation}
Then,
  \begin{enumerate}
   \item
    there is a unique map 
    $\mathsf e:[0,1]\to\R$,
    defined by
    \begin{equation}
    \label{eq:29}
    \mathsf e(s)
    :=\ene tx\quad\text{for every }(s,x)\in \bfTheta,
  \end{equation}
  such that  
    $\mathsf e$ is constant
    in each connected component of $[0,1]\setminus
  \necri t {\bf\Theta}$. In particular, 
  $
    \mathsf e(s)
     = \ene t{\tilde\teta(s)}$
  for any selection $[0,1] \ni s \mapsto \tilde{\teta}(s) \in \Theta(s)$;
    \item $  \necri t{\bfTheta}$ is relatively open in $[0,1]$;
     \item
   the map $\teta: \necri t{\bfTheta}\to \Hilbert$ is continuous and  satisfies
  \begin{equation}
  \label{newly-added}
    \klimsup_{s\to r} \teta(s) \subset  \critset(t,\Theta(r)) \quad \text{for all } r \in [0,1]\setminus \necri t {\bfTheta}.
  \end{equation}
\end{enumerate}
  Suppose, in addition, that the function
  $\teta$  from \eqref{eq:22} is locally Lipschitz in $\necri t {\bfTheta}$ and  that
  \begin{equation}
    \label{eq:28}
    \int_{\necri t {\bfTheta}}\minpartial\calE t{\teta(s)}\, \|\teta'(s)\|\,\d s<\infty;
  \end{equation}
then,
    \begin{enumerate} \setcounter{enumi}{3}
    \item
 the function 
    $\mathsf e$ from
 \eqref{eq:29} is continuous. 
  \end{enumerate}
\end{lemma}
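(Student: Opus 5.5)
For the direct implication, I will define $\bfTheta$ as the closure of the graph of $\teta$ and set $\Theta(s):=\{\teta(s)\}\cup\klimsup_{r\to s}\teta(r)$. Lipschitz continuity of $s\mapsto\ene t{\teta(s)}$ bounds the energy along $\teta$, so by \eqref{gronw-conseq} $\teta$ takes values in some $\sublevel\rho$. This makes $\bfTheta$ compact, and it is a graph over $[0,1]$. On the relatively open set $\necri t{\teta}$ the curve $\teta$ is continuous, so $\Theta(s)=\{\teta(s)\}$ there.

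For $s\notin\necri t\teta$, property c) of Definition \ref{main-def-1} gives $\Theta(s)\subset\comp t{\teta(s)}\subset\critset(t)$. Fiberwise connectedness does not hold automatically for the closure of the graph, so I will instead replace $\Theta(s)$ by the whole component $\comp t{\teta(s)}$ when $s\notin\necri t\teta$. That component is compact and connected by \eqref{eq:31}. The resulting graph is still closed: at limit points the sections satisfy $\klimsup$ inclusions in these components, using c) together with the closedness of $\crit\cap\Sublevel\rho$. Lemma \ref{le:recap}(1) then yields fiberwise connectedness.

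For the converse, I take a selection $\tilde\teta(s)\in\Theta(s)$.

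Claim (2): if $\Theta(s)$ meets $\necritset t$, its point lies off the closed set $\crit\cap\Sublevel\rho$. Upper semicontinuity of the section map (Lemma \ref{l:A.1}) then forces $\Theta(r)$ to lie near $\teta(s)$, hence outside $\critset(t)$, for $r$ close to $s$. So $\necri t\bfTheta$ is open.

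Claim (3): continuity of $\teta$ on $\necri t\bfTheta$ follows from \eqref{fromGR2FUNZ}. The inclusion \eqref{newly-added} follows from upper semicontinuity, which gives $\klimsup\subset\Theta(r)$, together with $\Theta(r)\subset\critset(t)$ connected, so $\Theta(r)\subset\critset(t,\Theta(r))$.

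Claim (1): on $\necri t\bfTheta$ the fiber is a singleton, so $\mathsf e$ is well defined there. Off this set, $\Theta(s)$ is a connected subset of $\critset(t)$, so by cleanness \eqref{eq:16bis} the energy is constant on it. It remains to show $\mathsf e$ is constant on each component $I$ of $[0,1]\setminus\necri t\bfTheta$, which is a closed interval. By Lemma \ref{le:recap}(1) the set $\bfTheta\cap(I\times\Hilbert)$ is connected and contained in $\crit$, so its projection onto $\Hilbert$ is a connected subset of $\critset(t)$. Cleanness then gives the constancy.

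Claim (4) is the main obstacle. On $\necri t\bfTheta$, continuity of $\mathsf e$ comes from the chain rule \eqref{ch-rule} and \eqref{eq:28}: on each component $(a,b)$ the map $\mathsf e$ is absolutely continuous with total variation bounded by the integral in \eqref{eq:28}. Hence the one-sided limits $\lim_{s\downarrow a}\mathsf e(s)$ exist.

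At a point $r\notin\necri t\bfTheta$ I must show that these limits equal $\mathsf e(r)$. Cluster points of $\teta(s)$ as $s\to r$ lie in $\Theta(r)$, and I would use the lower semicontinuity of $\calE_t$ together with \eqref{closedness-alter}. However, \eqref{closedness-alter} requires weak convergence of the subgradients, which may fail, so I need a different argument.

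My plan is to use \eqref{eq:16} instead. Take $x\in\Theta(r)$ with $\teta(s_k)\to x$. The $\lambda$-convexity characterization \eqref{l-convex-charact} with $\xi=0$ at the critical point $x$ gives
\[
\ene t{\teta(s_k)}\ge\ene tx-\tfrac\lambda2\|\teta(s_k)-x\|^2,
\]
so $\liminf_k\ene t{\teta(s_k)}\ge\mathsf e(r)$. Combined with lower semicontinuity, the energies converge along this sequence. Since the one-sided limit exists, $\mathsf e$ is continuous at $r$.

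Continuity at accumulation points of $[0,1]\setminus\necri t\bfTheta$ that are not interior to it needs extra care. There I would combine the same argument with the observation that the total variation of $\mathsf e$ over small components near $r$ is small, because the integral in \eqref{eq:28} is finite. This handles oscillations coming from infinitely many components.
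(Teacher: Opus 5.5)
Your direct implication and Claims (1)--(3) follow essentially the paper's argument. Your proof of Claim (4), however, has a genuine gap at its decisive step.

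At an endpoint $r\notin\necri{t}{\bfTheta}$ of a component of $\necri{t}{\bfTheta}$, you take $\teta(s_k)\to x\in\Theta(r)$ and apply \eqref{l-convex-charact} at the critical point $x$ with $\xi=0$. This gives $\ene{t}{\teta(s_k)}\ge\ene{t}{x}-\frac\lambda2\|\teta(s_k)-x\|^2$, i.e.\ exactly the inequality $\liminf_k\ene{t}{\teta(s_k)}\ge\ene{t}{x}$ that lower semicontinuity already provides. Combining two lower bounds does not yield convergence: you only get $\lim_{s\uparrow r}\mathsf e(s)\ge\mathsf e(r)$, and the content of the claim is the reverse inequality. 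Estimate \eqref{eq:16} is two-sided only when \emph{both} points are critical, whereas $\teta(s_k)\in\necritset{t}$. If you write \eqref{l-convex-charact} at $\teta(s_k)$ instead, you get $\ene{t}{\teta(s_k)}\le\ene{t}{x}+\|\xi_k\|\,\|\teta(s_k)-x\|+\frac\lambda2\|\teta(s_k)-x\|^2$, with $\xi_k$ the minimal subgradient. Nothing in your argument controls $\|\xi_k\|\,\|\teta(s_k)-x\|$; this is the same obstruction you correctly spotted for \eqref{closedness-alter}. Since $\calE_t$ is only lower semicontinuous, a curve could a priori approach $x$ with energies converging to a value strictly above $\ene{t}{x}$. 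What rules this out is \eqref{eq:28}, which you use only to get existence of the one-sided limit.

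The missing idea is to exploit \eqref{eq:28} along a well-chosen approximation, as the paper does.
\begin{itemize}
\item If $\int_{r-\eta}^{r}\|\teta'\|\,\mathrm{d}s<\infty$, then $\teta$ extends to an absolutely continuous curve on $[r-\eta,r]$ with $\teta(r^-)=\bar\teta\in\Theta(r)$. The chain rule \eqref{ch-rule} on the \emph{closed} interval (minimal selection, integrable by \eqref{eq:28}) makes $s\mapsto\ene{t}{\teta(s)}$ absolutely continuous up to $r$. Hence the limit equals $\ene{t}{\bar\teta}=\mathsf e(r)$.
\item If the length is infinite, reparametrize by arclength; \eqref{eq:28} gives $\int_0^\infty\minpartial{\calE}{t}{\tilde\teta(z)}\,\mathrm{d}z<\infty$. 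So there are $z_n\to\infty$ with $\minpartial{\calE}{t}{\tilde\teta(z_n)}\to0$, and along this sequence the minimal subgradients tend to $0$ strongly. Then \eqref{closedness} applies, and the energies converge to $\ene{t}{x}=\mathsf e(r)$ for a limit point $x\in\Theta(r)$. This identifies the (existing) one-sided limit.
\end{itemize}
Your sketch for accumulation points of $[0,1]\setminus\necri{t}{\bfTheta}$ is the right idea. Along points off $\necri{t}{\bfTheta}$, \eqref{eq:16} is legitimately two-sided, since both fibers are critical. But comparing $\mathsf e(s_n)$, for $s_n\in\necri{t}{\bfTheta}$, with the value at the endpoint of its component via the tail of \eqref{eq:28} presupposes the endpoint identity above. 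So that part inherits the same gap until the endpoint case is fixed.
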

Clearly, the function $\teta$ from \eqref{eq:22} can be extended from $\necri t{\bfTheta}$ to the whole of $[0,1]$ by simply choosing $\teta(s)$ as an element of $\Theta(s)$ whenever $s\notin \necri t{\bfTheta}$. By construction,
$\necri t {\teta} = \necri t{\bfTheta}$. 
 \begin{proof}
 In order to prove the first part of the  statement,    we  associate  with $\teta$ the multivalued map
  $\Theta: [0,1] \rightrightarrows H$ defined for all $s\in [0,1]$ by 
\begin{equation}
    \label{eq:21}
    \Theta(s):=
    \begin{cases}
      \{\teta(s)\}&\text{if }s\in \necri t {\teta},\\
      \comp t{\teta(s)}&\text{otherwise,}
    \end{cases}   
    \qquad
     \text{and set }     \bfTheta:=\operatorname{Graph}\Theta\subset [0,1]\ti\Hilbert\,.
      \end{equation}
  Clearly, \eqref{eq:23}  holds. 
  We observe that the  mapping $\Theta$ is upper semicontinuous: it is indeed (single-valued and) Lipschitz continuous  on $\necri t {\teta}$, and it fulfills
  $  \klimsup_{s\to\bar s}\Theta(s)\subset \Theta(\bar s)$ for all $\bar s\in [0,1]\setminus \necri t {\teta}$ in view of property (c) in Definition \ref{main-def-1}. Hence,
  as observed in Sec.\ \ref{subsec:preliminaries}, 
   $\bfTheta$ is a closed subset of $[0,1] \ti \sublevel\rho$, and it is therefore compact. Since  its sections $\Theta(s)$ are connected for all $s\in [0,1]$ again by construction, $\bfTheta$ is fiberwise connected thanks to Lemma \ref{le:connection} ahead. 
  \par
 Conversely,  consider   a compact and fiberwise connected 
  $\bfTheta \subset [0,1]\ti
  \sublevel \rho$
  as in the statement, in particular fulfilling property \eqref{eq:22}. 
   \begin{itemize}
 \item[$\vartriangleright \, (1)$]
  Observe that the function 
    $\mathsf e$ from \eqref{eq:29}  is well defined. Indeed,  if $  s\in \necri t{\bfTheta}$, then $\Theta(s)$ is a singleton; otherwise, since $\Theta(s)$ is connected, 
  it is contained in a connected component of $\critset (t)$, and then it suffices to recall that $\ene t{\cdot}$ is constant on the 
  connected components of $\critset (t)$, cf.\ \eqref{eq:16bis}. Furthermore,
 for any connected component 
  $I'$ of $[0,1]\setminus\necri t{\bfTheta}$
 we have that 
 $\cup_{s\in I'} \Theta(s) = {\bfTheta} \cap (I' \ti H)$ is connected,   since $\bfTheta$ is fiberwise connected.  Therefore,  $\cup_{s\in I'} \Theta(s)$ is contained in a single connected component of 
 $\critset (t)$ and, again, we find that 
    $\mathsf e$ is
 constant on 
  $I'$. 
\item[$\vartriangleright \, (2)$] To show that $  \necri t{\bfTheta}$ is relatively open in $[0,1]$, 
suppose by contradiction that  there exist $\bar s \in  \necri t{\bfTheta}$  and a sequence $(\bar s_n)_n  \subset [0,1] \setminus  \necri t{\bfTheta}$ with $\bar s_n \to \bar s $ as $n\to\infty$. By the upper semicontinuity of $\Theta$ we have that $\klimsup_{n\to\infty}\Theta(\bar{s}_n) \subset 
  \Theta (\bar s) = \{ \teta(\bar s)\}$, and, therefore,  there exists  a sequence $( \teta_{\bar{s}_n} )_{n\in \N}$ such that  $\teta_{\bar{s}_n} \in \Theta (\bar{s}_n)$
  for all $n\in \N$  and, up to a (not relabeled) subsequence, $\teta_{\bar{s}_n} \to \teta(\bar{s})$ as $n\to\infty$. Then, by the closedness property \eqref{closedness} and \eqref{eq:22} we infer that 
  $  \minpartial \calE t{\teta(\bar s)} \leq \liminf_{n\to\infty}  \minpartial \calE t{\teta_{\bar{s}_n}} =0 $, against the fact that $\bar s\in  \necri t{\bfTheta}$.
  \item[$\vartriangleright \, (3)$] 
  Since $\bfTheta$
 is compact, the function $\teta$ from \eqref{eq:22} is continuous on $D(\teta) =  \necri t{\bfTheta}$ thanks to \eqref{fromGR2FUNZ}. 
 Property \eqref{newly-added} follows from 
   \[
   \klimsup_{s\to r} \teta(s) \subset  \klimsup_{s\to r} \Theta(s) \stackrel{(1)}{\subset} \Theta(r) \stackrel{(2)}{\subset} \critset(t,\Theta(r)) \quad \text{for all } r \in [0,1]\setminus \necri t{\bfTheta} ,
  \]
  where (1) is due to  the fact that the 
  multivalued mapping $\Theta$ is upper semicontinuous,
  and (2) to the fact that $\Theta(r)$ is 
  a connected subset of  $\critset(t)$. 
 \item[$\vartriangleright \, (4)$]
 Since
   $ \necri t{\bfTheta} $ is open,  we can write it as the disjoint union of countably many open intervals $G_k = (a_k,b_k)$, $k\in \N$. 
   Since $\teta$ is locally Lipschitz in $\necri t{\bfTheta} $ and \eqref{eq:28} holds, it follows from the chain rule \eqref{ch-rule}
   that
   the function
    $s\mapsto \ene t{\teta(s)}$ is locally absolutely continuous on each of the intervals $G_k$
   and that,  for every
   Borel selection $\xi(s)\in\argminpartial {\cE}t{\teta(s)}$
defined 
for a.a.\  $ s \in G_k$,
there holds   \begin{equation}
   \label{chain-rule-Gk}
   \frac{\dd}{\dd s} \ene t{\teta(s)} = \langle \xi(s), \teta'(s) \rangle \qquad \foraa\, s \in G_k.
   \end{equation}
    In particular, 
   the map 
   $\mathsf e$ is
   continuous
 at every $s\in \necri t{\bfTheta} $.
 \par
 We now have to show that 
   $\mathsf e$ is
continuous at every $s_0 \in [0,1] \setminus \necri t{\bfTheta} $. 
In fact, we shall confine the discussion to the proof of the left continuity of $\mathsf{e}$ at 
$s_0$, since the argument for showing the right continuity is completely analogous. Therefore, we will prove
that $\lim_{s\uparrow s_0} \mathsf{e}(s) = \mathsf{e}(s_0)$ distinguishing the  three  following cases:
\begin{itemize}
\item[(A)] 
    $\exists\, \eta>0 \text{ such that } [s_0-\eta, s_0) \subset \necri t{\bfTheta};$
\item [(B)]
$\forall\, \eta>0,\, \ [s_0-\eta, s_0] \cap \necri t{\bfTheta}  \neq \emptyset \text{ and } [s_0-\eta, s_0] \cap ([0,1]{\setminus} \necri t{\bfTheta} ) \neq \emptyset$;
\item [(C)]
$\exists\, \eta>0 \text{ such that } [s_0-\eta, s_0] \subset [0,1] \setminus \necri t{\bfTheta}$.
\end{itemize}
 \begin{itemize}
    \item[\textbf{Case (A):}] 
    \begin{equation}
    \label{caseA}
    \exists\, \eta>0 \text{ such that } [s_0-\eta, s_0) \subset \necri t{\bfTheta}\,.
    \end{equation}
   First of all, observe that the limit $\lim_{s\uparrow s_0} \ene t{\teta(s)}$ does exist.
    Indeed, it is sufficient to integrate \eqref{chain-rule-Gk} on the interval $[s_0-\eta, s]$ for any $s \in [s_0-\eta, s_0)$, 
    and observe that $\lim_{s\uparrow s_0} \int_{s_0-\eta}^{s}\langle \xi(r), \teta'(r) \rangle \dd r $ exists, thanks to \eqref{eq:28}. 
     To prove that 
    \begin{equation}
    \label{2show-here}
\lim_{s\uparrow s_0} \mathsf{e}(s) =\lim_{s\uparrow s_0} \ene t{\teta(s)} =\mathsf e(s_0) =
   \ene t x 
    \qquad \text{for all } x \in \Theta(s_0),
    \end{equation}
    we further distinguish two cases:
    \begin{itemize}
        \item[\textbf{Case (A1):}]
        $\int_{s_0-\eta}^{s_0} \| \teta'(r)\| \dd r \doteq J<\infty$. 
         The limit
         $\bar \teta:=
         \lim_{s\uparrow s_0}
         \teta(s)$ exists 
         and belongs to $\Theta(s_0)$.
        The chain rule \eqref{ch-rule} applies, yielding that $s\mapsto \ene t{{\teta}(s)}$ is continuous
        in $(s_0-\eta,s_0)$ and 
        \[
        \mathsf e(s_0)= \ene t{\bar \teta}=
        \lim_{s\uparrow s_0} \ene t{\teta(s)} 
=   \lim_{s\uparrow s_0} \mathsf e(s) \,.
        \] 
        
        \item[\textbf{Case (A2):}] 
                        $\int_{s_0-\eta}^{s_0} \| \teta'(r)\| \dd r=\infty$. We reparameterize $\teta$ by its arclength
                        and we end up with a Lipschitz continuous curve $\tilde\teta$, now defined on $[0,\infty)$. From  \eqref{eq:28} we infer that 
                        $\int_0^\infty  \minpartial \calE t{\tilde\teta(z)} \dd z <\infty $, so that there exists a sequence $(z_n)_n$ such that 
                        \begin{equation}
                        \label{characts-zn}
                        z_n\to \infty
                         \quad \text{and}  \quad  \minpartial \calE t{\tilde\teta(z_n)} \to 0 \  \text{  as $n\to\infty$;}
                         \end{equation}
                          let $s_n: = \sigma(z_n)$, so that $s_n \uparrow s_0$.  Since $(\tilde{\teta}(z_n) = \teta(s_n) )_n \subset \sublevel\rho$, up to a subsequence we find that $\tilde{\teta}(z_n)  \to \tilde\teta \in \klimsup_{n\to\infty} \Theta(s_n) \subset \Theta(s_0)$.  By the second of \eqref{characts-zn}, the closedness property \eqref{closedness} applies, yielding that 
                          $
                          \lim_{n\to\infty} \ene t{\teta(s_n)} =       \lim_{n\to\infty} \ene t{\tilde\teta(z_n)} = 
                          \ene t{\tilde\teta} =\mathsf e(s_0)
           $
           where the last equality is  due to \eqref{eq:29}. Then, \eqref{2show-here}  ensues. 
        \end{itemize}
            \item[\textbf{Case (B):}] 
                \begin{equation} 
    \label{caseB}
            \begin{aligned}
            \forall\, \eta>0,\, \ [s_0-\eta, s_0] \cap \necri t{\bfTheta}  \neq \emptyset \text{ and } [s_0-\eta, s_0] \cap ([0,1]{\setminus} \necri t{\bfTheta} ) \neq \emptyset\,.
             \end{aligned}
            \end{equation}
           We will prove that 
            \begin{equation}
            \label{2showCaseB}
            \text{for all }  (s_n)_n \subset [0,1] \text{ with } s_n\uparrow s_0 \text{ we have that } \lim_{n\to\infty} \mathsf e(s_n) = \mathsf e(s_0)
            = \ene tx
\text{ for all } x \in \Theta(s_0).  
            \end{equation}
            It is sufficient
            to prove  property \eqref{2showCaseB} in the case of a sequence $(s_n)_n \subset [0,1]\setminus \necri t{\bfTheta} $ and of a sequence $(s_n)_n \subset \necri t{\bfTheta} $. 
       \par
            Let us first of all examine the case in which  $(s_n)_n \subset [0,1]\setminus \necri t{\bfTheta} $, so that $ \minpartial \calE t{\teta_{s_n}} =0$ for all
            $\teta_{s_n} \in \Theta(s_n)$ and for all 
             $n\in \N$. 
            By the compactness arguments in the previous lines, there exist
         $\bar\teta \in \Theta(s_0)$ and a (not relabeled)
             sequence
             $(\teta_{s_n})_n$, with 
              $\teta_{s_n}  \in \Theta(s_n)$ for all $n\in \N$, such that $\teta_{s_n}\to \bar\teta$ as $n\to\infty$.  Since $ \minpartial \calE t{\teta_{s_n}} =0$, the closedness property \eqref{closedness} yields that 
             $\ene t{\teta_{s_n}}  \to 
             \ene t{\bar\teta} =\mathsf e(s_0)
             $ as $n\to\infty$.
             \par
             Let us now pick a sequence $(s_n)_n \subset \necri t{\bfTheta} $ with $s_n\uparrow s_0$, and let $  b_{k_n} \in [0,1]\setminus \necri t{\bfTheta} $ be the right-end point of the 
             connected component $(a_{k_n}, b_{k_n})$ of $\necri t{\bfTheta} $ that contains $s_n$. As we have just seen, there exists $\teta_{b_{k_n}} \in \Theta(b_{k_n})$ such that   
          \begin{equation}
          \label{added-label-sec3}
           \mathsf e(b_{k_n}) = \ene t{\teta_{b_{k_n}}}  \to \mathsf e(s_0) \quad \text{ as $n\to\infty$.}
           \end{equation}
        In turn, 
  by the discussion in \textbf{Case   A}, 
 we find that        
 \[
\left|\ene t{\teta_{b_{k_n}}}  \EEE  {-} 
           \ene t{\teta(s_n)}
           \right|\leq
        \int_{s_n}^{b_{k_n}}
           \minpartial \calE t{\teta(s)} \|\teta'(s)\| \dd s  \longrightarrow 0 \text{ as } n \to\infty,
             \]
         where the last convergence follows from the fact that  $\sum_{k\in \N} \int_{a_k}^{b_k}  \minpartial \calE t{\teta(s)} \|\teta'(s)\| \dd s<\infty$ by 
             \eqref{eq:28}.
       Combining this with \eqref{added-label-sec3},
       we then conclude that 
             $\mathsf e(s_n)=
           \ene t{\teta(s_n)}
             \to \mathsf e(s_0)$,
             i.e.\ \eqref{2showCaseB}.
    \item[\textbf{Case (C):}] 
    \begin{equation}
    \label{caseC}
    \exists\, \eta>0 \text{ such that } [s_0-\eta, s_0] \subset [0,1] \setminus \necri t{\bfTheta}\,.
    \end{equation}
    Let 
    $
    s_0^-: = \min\{ s<s_0\, : [s,s_0] \subset [0,1] \setminus \necri t{\bfTheta}  \}\,.
    $ 
    Since $\mathsf e$ is constant on each connected component of $[0,1]\setminus \necri t{\bfTheta} $, we conclude that $\mathsf e(s_0^-) = \mathsf e(s_0)$. In turn, the point $s_0$  either satisfies \eqref{caseA} or \eqref{caseB}. Therefore, 
    $\lim_{s\uparrow s_0} \mathsf e(s) = \mathsf e(s_0^-) =\mathsf e(s_0)$ as desired. 
    \end{itemize}
\end{itemize}
This concludes the proof of claim (4) of the statement and, ultimately, of Lemma \ref{le:graph}. 
\end{proof}
\par
 Motivated by Lemma \ref{le:graph} we give the following definition. 
\begin{definition}[Admissible selections in a graph]
\label{def:adm-sel}
Let $\bfTheta\subset [0,1] \ti \sublevel\rho$ fulfill  \eqref{basic-properties-graph}, and let
  $
  \necri t{\bfTheta}$
  fulfill \eqref{eq:22}. 
   We call a selection $[0,1]\ni s \mapsto \teta(s) \in \Theta(s)$ \emph{admissible} if
  $\teta|_{\necri t{\bfTheta}} $  is locally Lipschitz, enjoys the continuity property \eqref{newly-added},  and satisfies estimate \eqref{eq:28}. We denote by 
  $\AS t{\bfTheta}$  the class of admissible selections of $\bfTheta$. 
\end{definition}
 A reparameterization   technique shows that, for any admissible selection, estimate \eqref{eq:28} improves to a pointwise estimate (cf.\ \eqref{normalization} below),
yielding the Lipschitz continuity of the function  $\mathsf{e}$. 
\begin{lemma}
\label{le:reparam}
Let $\bfTheta$ be as in Definition \ref{def:adm-sel}. With any $\teta \in \AS t\bfTheta$ we may associate 
an  increasing   Lipschitz continuous function $\sfs : [0,1]\to [0,1]$ such that the
function $\tilde\teta: [0,1] \to \Hilbert$ defined by $\tilde\teta(r): = \teta(\sfs(r))$ fulfills
$\tilde\teta \in \AS t\bfTheta$, the pointwise estimate 
\begin{equation}
\label{normalization}
 \sfs'(r) +  \minpartial \calE t {\tilde\teta(r)} \| \tilde\teta'(r)\| = 1+ \int_{\necri t{\bfTheta}} \minpartial \calE t {\teta(s)} \| \teta'(s)\|\dd
s  
 \end{equation}
and the identity
\begin{equation}
\label{invariance-integral}
\int_{\necri t{\bfTheta}} \minpartial \calE t {\teta(s)} \| \teta'(s)\|\dd s = \int_{\necri t\bfTheta} \minpartial \calE t {\tilde\teta(r)} \| \tilde\teta'(r)\|\dd r\,.
\end{equation} 
In particular, 
the function $\mathsf e$ 
from \eqref{eq:29} is Lipschitz continuous.
\end{lemma}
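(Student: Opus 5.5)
The plan is to build the reparameterization from an ``arclength-type'' variable mixing the original parameter with the accumulated dissipation. Set $M:=\int_{\necri t{\bfTheta}} \minpartial \calE t {\teta(s)} \| \teta'(s)\|\dd s<\infty$ (finite by \eqref{eq:28}) and define
\[
\sigma(s):=\frac{1}{1+M}\Big(s+\int_{\necri t{\bfTheta}\cap(0,s)} \minpartial \calE t {\teta(\tau)} \| \teta'(\tau)\|\dd\tau\Big),\qquad s\in[0,1].
\]
Then $\sigma:[0,1]\to[0,1]$ is continuous, strictly increasing and onto, with $\sigma(s_1)-\sigma(s_0)\ge (s_1-s_0)/(1+M)$. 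Let $\sfs:=\sigma^{-1}$; it is increasing and Lipschitz with constant $1+M$. On $\necri t{\bfTheta}$ the map $\sigma$ is locally absolutely continuous with $\sigma'(s)=(1+\minpartial\calE t{\teta(s)}\|\teta'(s)\|)/(1+M)$ a.e.

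Next I would check that $\tilde\teta:=\teta\circ\sfs$ is still a selection of a graph of the same type, namely the reparameterized graph $\{(\sigma(s),x):(s,x)\in\bfTheta\}$. This graph is homeomorphic to $\bfTheta$ through the homeomorphism $\sigma\times\mathrm{id}$, so it is compact and fiberwise connected, its noncritical set is $\sigma(\necri t{\bfTheta})$, it satisfies \eqref{eq:22}, and the continuity property \eqref{newly-added} is preserved because $\sfs$ is a homeomorphism. I read the statement's ``$\tilde\teta\in\AS t\bfTheta$'' modulo this identification. On each component of $\sigma(\necri t{\bfTheta})$, $\tilde\teta$ is locally Lipschitz, being the composition of a locally Lipschitz map with a Lipschitz one. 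For a.e.\ $r$ with $s=\sfs(r)\in\necri t{\bfTheta}$, the chain rule for the composition of a locally Lipschitz $\teta$ with a monotone Lipschitz $\sfs$ gives $\tilde\teta'(r)=\teta'(s)\sfs'(r)$ and $\sfs'(r)=1/\sigma'(s)$. Hence
\[
\sfs'(r)+\minpartial\calE t{\tilde\teta(r)}\|\tilde\teta'(r)\|=\sfs'(r)\big(1+\minpartial\calE t{\teta(s)}\|\teta'(s)\|\big)=1+M,
\]
which is \eqref{normalization}. Identity \eqref{invariance-integral} is then the change-of-variables formula for the increasing absolutely continuous $\sfs$ on each interval $G_k$, summed over $k$; this also gives \eqref{eq:28} for $\tilde\teta$.

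For the Lipschitz continuity of $\mathsf e$, it suffices to work with $\tilde\teta$, since $\mathsf e$ depends only on the graph by Lemma \ref{le:graph}(1), and the reparameterized graph has energy function $\mathsf e\circ\sfs$. I would then argue that $\mathsf e\circ\sfs$ is Lipschitz with constant $1+M$, using the argument of Lemma \ref{rmk:admiss-curves}. By Lemma \ref{le:graph}(4), $\tilde{\mathsf e}:=\mathsf e\circ\sfs$ is continuous. On each component of the noncritical set it is locally absolutely continuous with $|\tilde{\mathsf e}'|\le\minpartial\calE t{\tilde\teta}\|\tilde\teta'\|\le 1+M$, by \eqref{chain-rule-Gk}. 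On the complement, $\tilde{\mathsf e}$ takes values in the negligible set $\ene t{\critset(t)}$ by \ass1.

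The main obstacle is the middle step: showing that a continuous function which is locally Lipschitz with constant $1+M$ on an open set, and maps the complement into a Lebesgue-null set, is globally Lipschitz. I would handle it as follows. For $r_0<r_1$, split $\tilde{\mathsf e}((r_0,r_1))$ into the images of the open components, each of measure at most $(1+M)$ times the component's length, plus a null set. Connectedness of the image then bounds $|\tilde{\mathsf e}(r_1)-\tilde{\mathsf e}(r_0)|\le(1+M)(r_1-r_0)$. This is the null-image/Lusin-type argument, and it is the one point requiring care beyond routine.
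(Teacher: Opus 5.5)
Your proof is correct. The construction is the paper's own: the same arclength-type variable, its inverse $\sfs$, and a direct computation giving \eqref{normalization} and \eqref{invariance-integral}. Your explicit reparameterized graph $\{(\sigma(s),x):(s,x)\in\bfTheta\}$ only makes precise the identification the paper uses tacitly when it writes $\tilde\teta\in\AS t\bfTheta$.

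The real difference is in the last claim. The paper gets the Lipschitz continuity of $\mathsf e$ in one line, by combining \eqref{normalization} with the chain-rule estimate \eqref{eq:17}. However, Lemma \ref{rmk:admiss-curves} is stated for admissible transitions, and item (b) of Definition \ref{main-def-1} already requires $s\mapsto\ene t{\teta(s)}$ to be Lipschitz. Its proof also uses this property (at least absolute continuity) in step (1). So that citation presupposes what it is meant to prove.

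Your image-measure argument uses \eqref{ass1} in exactly the way \eqref{negligible} does. But it needs only two inputs: the continuity of $\mathsf e$ from Lemma \ref{le:graph}(4), and the bound $|(\mathsf e\circ\sfs)'|\le 1+M$ on the components of the noncritical set. It therefore closes this loop.

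Your argument also makes explicit that what is Lipschitz is the energy in the new variable, $\mathsf e\circ\sfs$. In the original parameter, $\mathsf e$ is in general only absolutely continuous, so yours is the correct reading of the final sentence of the statement.
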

\begin{proof}
It is sufficient to introduce the arclength function
\begin{equation}
\label{reparameterization-4-normalization}
 \sfr:[0,1]\to [0,1], \qquad \sfr(s): = \frac{s+\int_{(0,s) \cap \necri t{\bfTheta}}  \minpartial \calE t {\teta(\tau)} \| \teta'(\tau)\|\dd\tau }{1+ \int_{\necri t{\bfTheta}} \minpartial \calE t {\teta(\tau)} \| \teta'(\tau)\|\dd \tau}.
 \end{equation}
 Clearly, $\sfr$ is increasing and surjective. We then
  set  $ \sfs = \sfr^{-1}: [0,1]\to [0,1]$. 
 A direct calculation shows \eqref{normalization} and \eqref{invariance-integral}; clearly, $\tilde{\teta}|_{\necri t{\bfTheta}} $ is still locally Lipschitz, 
 and fulfills  $ \klimsup_{r\to \bar{r}} \tilde\teta(r) \subset  \critset(t,\Theta(\bar r))$ for all  $\bar{r} \in [0,1]\setminus \necri t {\bfTheta}$ (so that, ultimately, $\tilde\teta \in \AS t\bfTheta$). 
  For this, it is sufficient to 
 observe that  $\bar{r} \in [0,1]\setminus \necri t {\bfTheta}$ if and only if $\sfs(\bar{r}) \in  [0,1]\setminus \necri t {\bfTheta}$, so that 
 $ \klimsup_{r\to \bar{r}} \tilde\teta(r) = \klimsup_{s \to \sfs(\bar r)} \teta(s) \subset  
  \critset(t,\teta(\sfs (\bar r))) = \critset (t,\tilde \teta(\bar r))$. 
 \par
 Finally, since  $\mathsf{e}(r) = \ene t{\tilde\teta(r)}$ for every $r\in [0,1]$, combining  the pointwise estimate from \eqref{normalization} and the chain-rule estimate \eqref{eq:17}  we conclude that 
$\mathsf{e}$ is Lipschitz continuous, too. 
\end{proof}

\par
We are now in a position to give the 
 `graph counterpart' to Definition \ref{main-def-1}.   Essentially, an admissible graph transition between two sets $U_0,\, U_1 \in \newclass t$ 
 (see \eqref{new-class})
 is a graph $\bfTheta$,  `connecting' $U_0$ to $U_1$,  
 fulfilling \eqref{basic-properties-graph} and  \eqref{eq:22},
  and such that   $\AS t\bfTheta \neq \emptyset $. 
  A fortiori, Lemma \ref{le:reparam}
guarantees that, up to a reparameterization, for  any element $\teta \in \AS t\bfTheta $ the quantity $\minpartial \calE t {\teta} \| \teta'\|$
 is uniformly bounded. 
We fix these properties in the following definition. \EEE
\begin{definition}[Graph transitions]
  \label{def:graph-trans}
  Let $t\in [0,T]$
  and $U_0,U_1 \in \newclass t $.
  An admissible \emph{graph} transition
  between $U_0$ and $U_1$ is 
  a compact and fiberwise connected set  $ \bfTheta \subset  [0,1]  \ti
  \sublevel \rho$ fulfilling  \eqref{basic-properties-graph} and  \eqref{eq:22},
  such that
  \begin{enumerate}[label=\alph*), nolistsep]
  \item  $\Theta(0) \subset U_0 $
  and $\Theta(1) \subset U_1 $;
    \item 
    $\bfTheta \cap \necrit$ 
      coincides with the graph of an admissible selection $\teta \in \AS t\bfTheta$;
    \item There exists a constant $L>0$ such that 
      $\minpartial\calE t{\vartheta(s)}\,\|\vartheta'(s)\|\le L$ for
      $\mathscr L^1$-a.e.~$s\in \necri t{\bfTheta}$.
    \item 
    The map $\mathsf e: [0,1] \to \R$
    defined by \eqref{eq:29}  is
 Lipschitz.
    \end{enumerate}
    We denote by 
 $\AG t{U_0}{U_1} $  the set of admissible graph transitions between $U_0$ and $U_1$.
\end{definition}
In fact, in view of the chain-rule estimate from Lemma \ref{rmk:admiss-curves}, in the above definition we could claim that 
the map 
$\mathsf e$
has Lipschitz constant (less or equal than) $L$. 
\subsection{A crucial compactness result}
\label{ss:3.2-added}
The main result of this section, Theorem \ref{le:main} ahead, relies on the key estimates provided by the following result where, in \eqref{eq:24-bis}, we
will consider the distance between a point and a set induced by the norm on $\Hilbert$. 
\begin{lemma}
\label{le:crucial-estimates}
  Let $\rho,\, \nu,\, C>0$ and  $K$,  a compact subset of $
  \necrit[\rho]: = 
  \Sublevel\rho\setminus \crit$, be given. Let $\gamma = (\sft,\teta): (\alpha,\beta) \to \necrit[\rho] $ be a locally Lipschitz curve
   with nondecreasing
  first component $\sft$,
   and 
  satisfying 
  \begin{subequations}
  \label{eq:24-hyp}
   \begin{align}
    \label{eq:24}
    &
    \sft'(s)
     +\minpartial \calE{\sft(s)}{\teta(s)}\,\|\teta'(s)\|\le C \ 
    \text{a.e.~in } (\alpha,\beta), \quad 
    (\sft(s_0),\teta(s_0))\in K \text{ for some }s_0\in (\alpha,\beta),
    \\
    & 
    \label{eq:24-bis}
    \sup_{s\in (\alpha, s_0)} d(\gamma(s), K) \geq \nu \qquad \left(\text{respectively, }  \sup_{s\in (s_0,\beta)} d(\gamma(s), K) \geq \nu \right).
  \end{align}
  \end{subequations}
  Then, there exist $\delta,\, \eta,\, \eta'>0$, only depending on  $\rho,\, \nu,\, C>0$, and  $K$ (and not on $s_0, \, \alpha,\, \beta,\, \gamma$),  such that 
    \begin{subequations}
  \label{eq:24-thesis}
  \begin{align}
  & 
  \begin{aligned}
    \label{eq:24-thesis-1}
    &
  \alpha< s_0-\delta \text{ and } \minpartial \calE {\sft(s)}{\teta(s)} \geq \eta \text{ for every } s \in [s_0-\delta, s_0]  
  \\
  &
  \qquad 
   \left(\text{resp., } \beta>s_0+\delta \text{ and }  \minpartial \calE {\sft(s)}{\teta(s)} \geq \eta \text{ for every } s \in [s_0,s_0+\delta]\right),
   \end{aligned}
  \\
  & 
    \begin{aligned}
    \label{eq:24-thesis-2}
    &
  \int_{\alpha}^{s_0} \left(\sft'(s){+}\minpartial \calE {\sft(s)}{\teta(s)}\|\teta'(s)\| \right) \dd s \geq \eta' 
  \\
  & \qquad  \left(\text{resp., }  \int_{s_0}^\beta \left(\sft'(s){+}\minpartial \calE {\sft(s)}{\teta(s)}\|\teta'(s)\| \right) \dd s \geq  \eta' \right)\,.
  \end{aligned}
 \end{align}
  \end{subequations}
  In particular, if both conditions in \eqref{eq:24-bis} hold, we have that 
  \begin{equation}
  \label{Lip-teta}
  \|\teta'(s)\|\leq \frac C\eta \qquad \text{ for almost all $s\in (s_0-\delta,s_0+\delta)$.}
  \end{equation} 
  \end{lemma}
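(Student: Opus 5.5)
The plan rests on a lower semicontinuity/compactness argument giving a uniform positive lower bound for the slope near $K$. Since $K\subset\necrit[\rho]$ is compact and $(t,u)\mapsto\minpartial\calE tu$ is lower semicontinuous on $\Sublevel\rho$, it is strictly positive on $K$ and attains there a positive minimum. Lower semicontinuity follows from the closedness property \eqref{closedness-alter}: if $(t_n,u_n)\to(t,u)$ and $\minpartial\calE{t_n}{u_n}$ stays bounded, a weak limit of the minimal selections lies in $\partial\ene tu$, and its norm bounds $\minpartial\calE tu$ from above. A compactness argument then yields $r_0\in(0,\nu/2)$ and $\eta>0$, depending only on $K,\rho$ (and on $\nu$ through $r_0<\nu/2$), such that
\[
\minpartial\calE t u\ge\eta \quad\text{for every }(t,u)\in\Sublevel\rho\text{ with } d((t,u),K)\le r_0.
\]
Otherwise we could find points $(t_n,u_n)\in\Sublevel\rho$ with $d((t_n,u_n),K)\to0$ and slope tending to $0$. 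Compactness of $\Sublevel\rho$ gives a limit in $K$, and the lower semicontinuity above forces the slope there to vanish, which contradicts $K\cap\crit=\emptyset$.

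Next I treat the backward case; the forward case is symmetric. By \eqref{eq:24-bis} there is $s_1\in(\alpha,s_0)$ with $d(\gamma(s_1),K)\ge\nu>r_0$. Since $\gamma$ is continuous and $d(\gamma(s_0),K)=0$, let $\bar s:=\sup\{s\in(s_1,s_0): d(\gamma(s),K)\ge r_0\}$. On $[\bar s,s_0]$ the curve stays in the $r_0$-neighbourhood of $K$, so the slope there is at least $\eta$; moreover $d(\gamma(\bar s),\gamma(s_0))\ge r_0$. Along $[\bar s,s_0]$ the hypothesis \eqref{eq:24} gives $\sft'\le C$ and $\|\teta'\|\le C/\eta$. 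Hence the max-product distance satisfies
\[
r_0\le \max\big(\sft(s_0)-\sft(\bar s),\,\|\teta(s_0)-\teta(\bar s)\|\big)\le (s_0-\bar s)\,C\max(1,1/\eta).
\]
Therefore $s_0-\bar s\ge\delta:=r_0\eta/(C(1+\eta))$ (possibly halved). This gives $\alpha<\bar s\le s_0-\delta$ and the lower bound on $[s_0-\delta,s_0]$, i.e.\ \eqref{eq:24-thesis-1}.

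For \eqref{eq:24-thesis-2}, on $[\bar s,s_0]$ we use
\[
\sft'+\minpartial\calE{\sft}{\teta}\|\teta'\|\ge\sft'+\eta\|\teta'\|\ge\min(1,\eta)\,(\sft'+\|\teta'\|),
\]
and integrate, recalling that $\sft$ is nondecreasing. The integral is then at least $\min(1,\eta)(|\sft(s_0)-\sft(\bar s)|+\|\teta(s_0)-\teta(\bar s)\|)\ge\min(1,\eta)\,r_0=:\eta'$. The remaining integrand on $(\alpha,\bar s)$ is nonnegative. Finally, if both alternatives in \eqref{eq:24-bis} hold, the slope is at least $\eta$ on $(s_0-\delta,s_0+\delta)$, and \eqref{eq:24} gives \eqref{Lip-teta}. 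The main obstacle is the uniform slope bound near $K$. Its only delicate point is that lower semicontinuity of $\minpartial\calE{\cdot}{\cdot}$ jointly in $(t,u)$ requires strong convergence of $u_n$, which is supplied by compactness of the sublevels \eqref{coercivita}. Time is handled through the continuity built into \eqref{closedness-alter}.
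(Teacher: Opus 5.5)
Your proof is correct and follows the paper's own argument. Your uniform slope bound on the closed $r_0$-neighbourhood of $K$ is the paper's compact set $Q_\mu[K]$ with $m=1\wedge\min_{(t,x)\in Q_\mu[K]}\minpartial\calE tx>0$, and your last exit time $\bar s$ is the paper's left endpoint $\alpha'$ of the connected component of $\{s:d(\gamma(s),K)<\mu\}$ containing $s_0$; the paper extracts $\delta$ and $\eta'$ from it exactly as you do. One minor point: the supremum in \eqref{eq:24-bis} need not be attained, but you only use a point at distance greater than $r_0$, so nothing changes.
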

  \begin{proof}
  Clearly, it is sufficient to discuss the case in which one of the two conditions in \eqref{eq:24-bis}
(for instance, the first one) holds. 
  By the lower semicontinuity of the functional $[0,T]\ti \subl\rho \ni (t,x) \mapsto \minpartial \calE t{x}$   (due the closedness property \eqref{closedness}) and
  by
   the compactness of $K$, it is not difficult to check that 
there exists $\mu\in (0,\nu)$ such that the compact set
\begin{equation}
\label{Qset}
Q_\mu[K]: = \big\{(t',x')\in [0,T]\ti
\sublevel\rho:|t'-t|+|x'-x|\le \mu\text{ for some }(t,x)\in
K\big\}
\end{equation}
satisfies 
\[
Q_\mu[K] \cap \crit = \emptyset, \qquad 
\text{with } m: =1  \wedge \min\{ \minpartial \calE {t}{x}  {:} \ (t,x)\in Q_\mu[K]\}   >0\,.
\]
Let $I_\mu  = \{ s \in (\alpha, s_0]\, : \ d(\gamma(s), K) <\mu \}$ and let $(\alpha',s_0]$ be the connected component of $I_\mu$ containing 
$s_0$. 
Obviously,
 we have that $\alpha<\alpha'$. Moreover, for every $s\in (\alpha',s_0]$ there holds 
$\minpartial \calE{\sft(s)}{\teta(s)}\geq m$, so that, in view of  \eqref{eq:24} we have 
\begin{equation}
\label{Lip-est-l3.5}
\|\teta'(s)\|\leq \frac Cm, \qquad  \sft'(s) +
\|\teta'(s)\|\leq \frac {2C}m \qquad \foraa\, s \in (\alpha',s_0]
\end{equation}
(the last estimate also due to the fact that $m<1$). 
Since $\|\gamma(\alpha'){-}\gamma(s_0)\| \geq d(\gamma(\alpha'), K) =\mu$, we deduce that 
$(s_0{-}\alpha') > \frac{\mu m}{4C} \doteq \delta$
(otherwise, \eqref{Lip-est-l3.5} would give $\|\gamma(\alpha'){-}\gamma(s_0)\|  \leq  \frac {2C}m \delta  = \frac\mu2$, 
a contradiction). Furthermore,
\[
\int_{\alpha'}^{s_0} \left(\sft'(s){+} \minpartial \calE{\sft(s)}{\teta(s)} \|\teta'(s)\| \right) \dd s  \geq
  m \|\gamma(s_0){-} \gamma(\alpha')\| \geq m\mu \,.
\]
All in all, \eqref{eq:24-thesis} hold with $\delta: = \frac{\mu m}{4C}$, $\eta: = m$, and $\eta':= m\mu$.
\par Finally, \eqref{Lip-teta} clearly follows from combining estimate \eqref{eq:24}
  with the fact that $\minpartial \calE {\sft(s)}{\teta(s)} \geq \eta$  for every $ s \in [s_0-\delta, s_0+\delta]  $. 
  \end{proof}
\par
The following compactness result will play a crucial role both for the study of the  properties of the cost $\mathsf{c}$, and for the proof of  Theorem \ref{mainth:1} carried out in Section 
\ref{s:4} (cf.\ in particular Lemma \ref{l:endiss-cost-c} and Proposition \ref{th:1}). In view of these applications, we shall consider a sequence of curves
$(\sft_n,\teta_n)_n$
with values in the  phase space
 $[0,T] \ti \Hilbert$; in accordance with our `graph approach', emphasis will be given to the compactness of the graphs $(\bfTheta_n)_n$ associated with the curves $(\teta_n)_n$. 
\begin{theorem}
  \label{le:main}
  Let $t\in [0,T]$
 and $U_0,U_1 \in \newclass t$. 
  Let $\teta_n:[0,1]\to \Hilbert$,
  $\sft_n:[0,1]\to [0,T]$ be two sequences of maps, and let
  $\bfTheta_n\subset [0,1]\ti\sublevel\rho$ be a
  family of fiberwise connected compact sets
  satisfying the following properties,  with  constants $L, \, L'>0$
   independent of $n\in \N$: 
  \begin{enumerate}[label=\roman*)]
  \item $\teta_n(r)\in \Theta_n(r)$ for
    every $r\in [0,1]$ and 
    $\Theta_n(r)=\{\teta_n(r)\}$ 
  for   every $r\in G_n$, with 
  \begin{equation}
  \label{setGn}
  G_n: = 
   \{ r \in [0,1]\, : \ \bfTheta_n(r) \cap \necritset {\sft_n(r)} \neq \emptyset \};
   \end{equation}
  \item the functions $\sft_n$ are Lipschitz and  nondecreasing, 
 the curves $\teta_n$ are locally Lipschitz in  the (relatively open) sets $G_n$
    and there holds
    \begin{equation}
    \label{Lipschitz-est-on-Gn}
 \sft_n'(r) +    \minpartial\calE{\sft_n(r)}{\teta_n(r)} \, \|\teta_n'(r)\|\le L \qquad \foraa\, r \in G_n;
  \end{equation}
  \item $r\mapsto \ene{\sft_n(r)}{\teta_n(r)}$ is Lipschitz in $[0,1]$ with 
  \begin{equation}
  \label{Lip-est-ENE}
  \frac{\dd}{\dd r} \ene{\sft_n(r)}{\teta_n(r)} \leq L'  \qquad \foraa\, r \in (0,1);
  \end{equation}
  \item  $\klimsup_{n\to\infty}\Theta_n(i)\subset U_i$, $i=0,1$ and 
    $\lim_{n\to\infty}\sup_{r\in [0,1]}|\sft_n(r)-t|=0$.    
  \end{enumerate}
   Furthermore, consider the `slope functions'  
  \begin{equation}
\label{slopes}
\slo_n: [0,1]\to [0,\infty], \qquad \slo_n(r): = \begin{cases}
\minpartial \calE{\sft_n(r)}{\teta_n(r)} & \text{if } r \in G_n,
\\
0 & \text{otherwise}.
\end{cases}
\end{equation}
\par
  Then, there exist a (not relabeled) subsequence of $(\Theta_n)_n$, a compact limit set
  $\bfTheta\subset [0,1]\ti \sublevel\rho$ fulfilling 
  properties
  \eqref{basic-properties-graph}
  and 
  \eqref{eq:22}
  (i.e., $ s\in \necri t{\bfTheta}$ implies that $  \Theta(s)$ is a singleton), 
   a lower semicontinuous function
   $\slo: [0,1]\to [0,\infty]$,  
   a map   $\teta:[0,1]\to \sublevel\rho$, and $L>0$
  satisfying  
  \begin{enumerate}[label=\arabic*)]
  \item the functions $(S_n )_n$  $\Gamma$-converge to $S$; 
  \item $\bfTheta_{n}\karrow \bfTheta$ as $n\to \infty$;
  \item $\bfTheta$ is an admissible graph transition between $U_0$
    and $U_1$
    in the 
    sense of Definition \ref{def:graph-trans}
    and $\teta$
    belongs to the class of admissible selections $\AS t{\bf\Theta}$
    (hence, a fortiori, 
     $\teta\in \admis{U_0}{U_1}t$); 
  \item $\teta_{n}\to \teta$ uniformly on each compact subset of
    $\necri t {\teta}=\necri t{\bfTheta}$;
  \item there holds
    \begin{equation}
      \label{eq:30}
      \liminf_{n\to\infty}\int_{G_n}\minpartial\calE{\sft_{n}(r)}{\teta_{n}(r)}
      \, \|\teta_{n}'(r)\|\,\d r\ge 
      \int_{\necri t {\teta}}\minpartial\calE{t}{\teta(r)} \,
      \|\teta'(r)\|\,\d r;
    \end{equation}
    \item $\minpartial\calE{t}{\teta(r)}    \|\teta'(r)\| \leq L$  for almost all $r\in (0,1)$; 
    \item 
    the function $\mathsf e$  from \eqref{eq:29} is Lipschitz
    in $[0,1]$
    with Lipschitz constant less or equal than $ L$,  and 
    \begin{equation}
    \label{e:precise-statement}
    \ene{\sft_{n}(\cdot)}{\teta_{n}(\cdot)}
    \to \mathsf e
    \qquad \text{uniformly in } F_0: = \overline{\{ r\in [0,1]\, : \slo(r)<\infty\}}\,.
    \end{equation}
  \end{enumerate}
\end{theorem}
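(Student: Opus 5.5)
The plan is to extract all limits first, identify the limit structure second, and then verify admissibility and lower semicontinuity.

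\emph{Extraction of limits.} Since the graphs $\bfTheta_n$ are fiberwise connected compact subsets of the fixed compact set $[0,1]\ti\sublevel\rho$, the Blaschke theorem \eqref{eq:34} and Lemma \ref{le:recap}(2) give a subsequence with $\bfTheta_n\karrow\bfTheta$ in Hausdorff sense, with $\bfTheta$ compact, fiberwise connected and $\pi_{[0,1]}\bfTheta=[0,1]$. This proves 2) and \eqref{basic-properties-graph}. Passing to a further subsequence, the $\Gamma$-compactness of sequences of functions on the compact interval $[0,1]$ (sequential compactness of $\Gamma$-convergence on second countable spaces) yields a lower semicontinuous $\Gamma$-limit $\slo$ of $(\slo_n)_n$, which is 1). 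The energies $e_n(r):=\ene{\sft_n(r)}{\teta_n(r)}$ are bounded and satisfy the one-sided bound \eqref{Lip-est-ENE}, so $r\mapsto e_n(r)-L'r$ is nonincreasing. By Helly's theorem we get pointwise convergence of $e_n$ to some limit, which is to be identified with $\mathsf e$.

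\emph{Structure of $\bfTheta$.} Take $(s,x)\in\bfTheta$ with $x\notin\critset(t)$, so that $m:=\minpartial\calE tx>0$. Write $x=\lim\teta_n(s_n)$ with $s_n\to s$; by lower semicontinuity \eqref{closedness}, $s_n\in G_n$ for large $n$. Apply Lemma \ref{le:crucial-estimates} to $\gamma_n=(\sft_n,\teta_n)$ with $K$ a small compact ball around $(t,x)$ inside $\necrit[\rho]$. This gives uniform $\delta,\eta$ such that $\teta_n$ is $L/\eta$-Lipschitz on $[s_n-\delta,s_n+\delta]$ and $\slo_n\ge\eta$ there. Lemma \ref{l:compactness-graphs} then shows that $\teta_n$ converges uniformly near $s$ to a Lipschitz limit whose graph equals $\bfTheta$ locally. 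Hence $\Theta(s)$ is a singleton, which gives \eqref{eq:22}. We also get 4), openness of $\necri t\bfTheta$, local Lipschitz continuity of $\teta$, and $\slo(s)=\minpartial\calE t{\teta(s)}$; here the closedness property, together with $\slo_n\to$ slope under strong convergence, gives the $\Gamma$-limit. The case in which \eqref{eq:24-bis} fails on one side, i.e.\ the curve stays within $\nu$ of $K$ up to the endpoint, must be handled separately; it is treated the same way on the truncated interval. Property (c) of Definition \ref{main-def-1} and the endpoint inclusions $\Theta(i)\subset U_i$ then follow from fiberwise connectedness and iv). Here we use that fibers lying in $\critset(t)$ are connected, and hence contained in a single component.

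\emph{Lower semicontinuity and Lipschitz bounds.} On every compact $I\subset\necri t\bfTheta$ we have uniform convergence with uniformly bounded derivatives, so $\teta_n'\weakto\teta'$ weakly in $L^2$. Combining this with the lower semicontinuity of the slope, the integrand $\minpartial\calE t\teta\|\teta'\|$ is the standard lower semicontinuous limit of $\slo_n\|\teta_n'\|$; in particular we use the bound $\int_I g\|\teta'\|\le\liminf\int_I g_n\|\teta_n'\|$ valid when $g_n\to g$ uniformly and both are bounded below. Exhausting $\necri t\bfTheta$ by compacts gives \eqref{eq:30}. The bound $\slo_n\|\teta_n'\|\le L$ passes to the limit by the same weak argument, which gives 6). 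Thus $\teta\in\AS t\bfTheta$, and Lemma \ref{le:graph}(4) together with Lemma \ref{rmk:admiss-curves} gives that $\mathsf e$ is Lipschitz with constant at most $L$, which is 7). It follows that $\bfTheta$ is an admissible graph transition, which is 3).

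\emph{Main obstacle.} The hardest step is \eqref{e:precise-statement}, the uniform convergence of $e_n$ to $\mathsf e$ on $F_0$, including at critical points. Away from $\crit$ it follows from the uniform convergence of $\teta_n$. At points $s$ where $\slo(s)<\infty$ there are, by $\Gamma$-convergence, points $s_n\to s$ with $\slo_n(s_n)$ bounded. By the closedness property \eqref{closedness} this gives $e_n(s_n)\to\ene tx=\mathsf e(s)$ for a limit $x\in\Theta(s)$. We then propagate this to nearby points using monotonicity of $e_n-L'r$ on one side, and on the other side the chain-rule estimate \eqref{eq:17} with the Lipschitz bound on $\slo_n\|\teta_n'\|$, applied to the energy increments on $G_n$. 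Condition \eqref{ass1} guarantees that $e_n$ does not vary on the critical parts. Continuity of $\mathsf e$ then upgrades pointwise convergence to uniform convergence on the compact set $F_0$. This is the step I would check most carefully.
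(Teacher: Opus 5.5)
Your overall architecture matches the paper's: Blaschke and $\Gamma$-compactness, Lemma \ref{le:crucial-estimates} with Lemma \ref{l:compactness-graphs} for the local graph structure at non-critical points, and recovery sequences with \eqref{extensive-info} for the energies. You also derive the Lipschitz bound on $\mathsf e$ directly from 6), Lemma \ref{le:graph}(4) and \eqref{eq:17}, rather than through the uniform limit $\tilde{\mathsf e}$ as the paper does. That is a legitimate variant, since \eqref{eq:17} only needs continuity of $\mathsf e$ together with \eqref{ass1}.

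However, two steps rest on false claims, and the second one leaves 7) unproved.

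The first concerns the slopes. For nonsmooth $\calE$ the slope is only lower semicontinuous under strong convergence, not continuous. Take $\calE_t(u)=|u|+2u+u^2$ on $\R$, $\sft_n\equiv t$ and $\teta_n(r)=(r-\frac12)^2+\frac1n$; this is an admissible instance with $G_n=[0,1]$. Then $\slo_n=3+2\teta_n\to\slo=3+2\teta$, so $\slo(\frac12)=3$, while $\minpartial\calE t{\teta(1/2)}=\minpartial\calE t{0}=1$. Hence neither $\slo=\minpartial\calE t{\teta}$ on $\necri t\bfTheta$ nor uniform convergence of the slopes to $\minpartial\calE t{\teta}$ holds; only \eqref{nice-lsc-slopes} is available. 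Your lower semicontinuity step for \eqref{eq:30} and 6) therefore has to be done as in the paper: extract $\|\teta_n'\|\weakto\Lambda\ge\|\teta'\|$ and apply Lemma \ref{le:mrs-mjm} with the pointwise bound $\liminf_n\slo_n(r)\ge\minpartial\calE t{\teta(r)}$.

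The second, more serious, concerns your argument for \eqref{e:precise-statement}, which does not close. Three things fail.
\begin{itemize}
\item Uniform convergence of $\teta_n$ does not give convergence of energies away from $\crit$. $\calE$ is only lower semicontinuous, and \eqref{extensive-info} needs bounded slopes; this is exactly why the claim is restricted to $F_0$.
\item On $[0,1]\setminus G_n$ the energy $e_n(r)=\ene{\sft_n(r)}{\teta_n(r)}$ still moves with $\sft_n$. Condition \eqref{ass1} is a property of single fibers and freezes $e_n$ only when $\sft_n\equiv t$, as in Lemma \ref{rmk:admiss-curves}. The theorem is needed with moving $\sft_n$ in Lemma \ref{l:endiss-cost-c}. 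Under the one-sided bound \eqref{Lip-est-ENE}, nothing in your argument controls how fast $e_n$ decreases there, and \eqref{eq:17} is a fixed-time estimate.
\item Pointwise convergence to a continuous limit is not, by itself, uniform convergence.
\end{itemize}

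The paper avoids all of this by applying Ascoli--Arzel\`a to $(e_n)_n$, i.e.\ by using (iii) as a uniform Lipschitz bound, which is what all its applications supply. Then $e_n\to\tilde{\mathsf e}$ uniformly. At each $r$ with $\slo(r)<\infty$, a recovery sequence $r_n\to r$ with bounded slopes gives, by \eqref{extensive-info}, $\tilde{\mathsf e}(r)=\lim_n e_n(r_n)=\ene tx=\mathsf e(r)$ for a limit point $x\in\Theta(r)$. Density and continuity then give the identity on $F_0$.

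If you keep the one-sided reading of (iii), a correct repair is as follows.
\begin{itemize}
\item On $G_n$, the chain rule \eqref{ch-rule} gives the two-sided bound $|e_n'|\le L(1+C_P\rho)$.
\item To compare $e_n(s)$ with $e_n(s_n)$, stop at the point $z_n\notin G_n$ between $s$ and $s_n$ closest to $s$, if there is one. There $\teta_n(z_n)$ is critical, so \eqref{closedness} gives $e_n(z_n)\to\ene ty=\mathsf e(s)$ with $y\in\Theta(s)$. This yields $e_n\to\mathsf e$ on $\{\slo<\infty\}$.
\item The same local equi-Lipschitz bound near points of the open set $\{\slo>0\}$ extends the convergence to all of $F_0$.
\item Only then does the monotonicity of $e_n-L'r$ (P\'olya's theorem) upgrade it to uniform convergence on $F_0$.
\end{itemize}
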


\subsection{Proof of Theorem \ref{le:main}}
The \emph{proof} is split in various steps.  
\paragraph{\bf Step $0$: compactness of the `slopes'} 
Due to the closedness \eqref{closedness}
of the Fr\'echet subdifferential, 
 and to the continuity of $\sft_n$ and $\teta_n$,  the functions $S_n$ are lower semicontinuous. By compactness of $\Gamma$-convergence, 
there exist a (not relabeled) subsequence and a  lower semicontinuous function $\slo: [0,1]\to [0,\infty]$ such that 
$\Gamma-\lim_{n\to\infty} \slo_n = \slo$. We also introduce the \emph{open} set
\begin{equation}
\label{setG-proof}
G: = \{ r \in [0,1]\, : \ \slo(r)>0\}.
\end{equation}
We will use that, for every sequence $(r_n)_n\subset [0,1]$ 
\begin{equation}
\label{nice-lsc-slopes}
\left( r_n\to r,  \ \ \teta_n(r_n) \to \teta\right) \ \Longrightarrow \ S(r) \geq \minpartial \calE t\teta
\end{equation}
(also due to  the fact that
$|\sft_n(r_n){-}t |\to 0$ as $n\to\infty$ from hypothesis (iv)). 
From \eqref{nice-lsc-slopes}
it follows in particular that,  if $r\notin G$, then $\teta \in \critset (t)$. 
\medskip

\paragraph{\bf Step $1$: compactness of the graphs $(\bfTheta_n)_n$}  Since the sets $(\bfTheta_n)_n$ are 
all contained
in the compact set $[0,1]\ti \subl \rho$, 
 it follows from the
 Blaschke 
 Theorem 
 that, from the subsequence in Step $0$ we can extract a further, not relabeled subsequence
 $(\bfTheta_n)_n$ converging in the sense of Kuratowski to  a compact subset $\bfTheta \subset  [0,1]\ti \subl \rho$. 
 Thanks to 
 Lemma \ref{le:connection}
 in the Appendix,
  $\bfTheta$ is fiberwise connected. Since the projection operator 
 $\pi_{[0,1]}: \mathfrak{K}_{\mathrm{c}}(H) \to \mathfrak{K}_{\mathrm{c}}([0,1])$ is continuous, we have  that $\pi_{[0,1]}(\bfTheta) = [0,1]$. Hence, $\bfTheta$ enjoys  properties
 \eqref{basic-properties-graph}. Moreover,   since $\klimsup_{n\to\infty}\Theta_n(i)\subset U_i$, $i=0,1$, 
 by the general properties of Kuratowski convergence we can conclude  that $\Theta(i) \cap U_i \neq \emptyset$ for $i=0,1$. 

 \medskip
 
 \paragraph{\bf Step $2$: compactness of the curves $(\teta_n)_n$} We are now going to show that
 \begin{itemize}
 \item[(2.1)]
    $\necri t{\bfTheta}$ is a subset of the set $G$ from  \eqref{setG-proof},
     and for every $ s\in \necri t{\bfTheta} $
 the section
   $ \Theta(s)$  is a singleton  (cf.\ property  \eqref{eq:22});
 \item[(2.2)]
 there exists a map $\teta : \necri t{\bfTheta} \to H$ such that 
$\teta_n\to \teta$ uniformly on the compact subsets of $\necri t{\bfTheta}$;
\item[(2.3)]
there holds 
 \begin{equation}
 \label{initial-condition-graph}
 \Theta(0) \subset U_0, \qquad  \Theta(1)\subset U_1\,.
 \end{equation}
 \end{itemize}
 \par
 Preliminarily, 
  let  us consider a closed sub-interval $[\alpha,\beta] \subset G$,
  so that  
 $m: = \min_{r\in[\alpha,\beta]} \slo(r)>0$. By $\Gamma$-convergence, we have that 
 $\lim_{n\to\infty} \min_{r\in [\alpha,\beta]} \slo_n(r) = m $, so that there exists $\bar n \in \N$ such that 
 $\min_{r\in [\alpha,\beta]} \slo_n(r) \geq \frac m2$ for every $n\geq \bar n$. It follows that 
 $[\alpha,\beta] \subset G_n$  for every $n\geq \bar n$; further, in view of \eqref{Lipschitz-est-on-Gn}, the curves $(\teta_n)_{n \geq \bar n}$ are equi-Lipschitz
 on $[\alpha,\beta]$. 
 We then apply Lemma \ref{l:compactness-graphs} 
 in the Appendix
 and deduce that $\bfTheta \cap ([\alpha,\beta]\ti\Hilbert)$ coincides with the graph of a Lipschitz function $\teta$, and that (up to a further subsequence),
   $\teta_n\to \teta$ uniformly in $[\alpha,\beta]$. 
 \par
In order to check that 
$\bfTheta$
fulfills \eqref{eq:22}, let us pick  $r\in \necri t{\bfTheta}$:
 then,  there exists $\teta_r \in \Theta(r)$ such that $\minpartial\calE t{\teta_r}>0$.
 Since the set   $\crit\subset [0,T]\ti \sublevel\rho$   is closed
 by \eqref{closedness}, we can find $\eps>0$ such that the compact set
  $K: = ([r-\eps,r+\eps]{\times}\overline{B_\eps(\teta_r)}) \cap([0,T]{\times} \sublevel\rho)$   has empty intersection with $\crit$
  (we are supposing here $r\in (0,1)$; an analogous property holds, 
  with obvious modifications for $r=0$
 and $r=1$). 
For sufficiently large $n \geq \bar n$ there exists $(r_n,\Theta_{r_n}) \in \bfTheta_n \cap K$, with $(r_n,\Theta_{r_n}) \to (r,\teta_r)$ as $n\to\infty$.
Then, we can apply Lemma \ref{le:crucial-estimates}. More precisely,
  let $(\alpha_n,\beta_n) $ be the connected component of 
  $G_n \cap (0,1)$ containing $r_n$ and let $\nu>0$ fulfill
  $Q_{2\nu}[K] \cap \crit =\emptyset$ (with $Q_{2\nu}[K]$ from \eqref{Qset}).
  Let  $\gamma_n = (\sft_n,\teta_n)$. 
   If $\sup_{s\in (\alpha_n,r_n)} d(\gamma_n(s), K)<\nu$,  then $\alpha_n \in G_n$. Hence, $\alpha_n=0$. Otherwise, 
$  \sup_{s\in (\alpha_n,r_n)} d(\gamma_n(s), K)\geq \nu$. Hence, the curves $\gamma_n$
 fulfill conditions \eqref{eq:24-hyp}, and Lemma \ref{le:crucial-estimates} applies, giving that $\alpha_n< r_n-\delta$ with $\delta$ as in \eqref{eq:24-thesis-1}. With a similar argument
 we either deduce that 
$\beta_n=1$, or that $\beta_n>r_n+\delta$. In any case, we have that $[r_n-\delta, r_n+\delta] \subset G_n$ for some $\delta>0$, and there exists $\eta>0$ such that 
\begin{equation}
\label{slope-big-on-interval}
 \minpartial \calE {\sft_n(r)}{\teta_n(r)} \geq \eta \quad \text{for every } r \in [r_n-\delta, r_n+\delta]\,.
\end{equation}
Choose now $0<\delta'<\delta$ such that $|r-r_n| \leq \delta-\delta'$, so that 
$[r-\delta',r+\delta'] \subset[r_n-\delta, r_n+\delta] \subset G_n$.
 By estimate \eqref{Lip-teta}, the curves $\teta_n|_{[r-\delta', r+\delta']}$ are equi-Lipschitz, so that there exists a Lipschitz function $\teta: [r-\delta',r+\delta'] \to \sublevel\rho $ such that
 (up to a further subsequence),
  $\teta_n \to \teta$ in 
$  [r-\delta',r+\delta'], $ 
 with $\Theta(s) = \{\teta(s)\}$ for every $s\in [r-\delta',r+\delta']$. In particular, for every $r\in  \necri t{\bf\Theta}$ the section $\Theta(r)$ is reduced to a singleton, whence 
 \eqref{eq:22}. Moreover, recalling \eqref{nice-lsc-slopes} we have
 $\slo(r) \geq  \minpartial \calE t{\teta(r)}>0$ for every $r\in \necri t{\bfTheta}$, so that 
  \begin{equation}
 \label{inclusion-between-G}
 \necri t {\bfTheta} \subset G.
 \end{equation}
 In fact,   \eqref{nice-lsc-slopes}  and \eqref{slope-big-on-interval} respectively  yield that 
 $\slo(s)\geq \minpartial \calE t{\teta(s)} $ and $\slo(s) >0$ for  every $s\in[r-\delta',r+\delta']$. 

\par
In order to show the first inclusion in  
\eqref{initial-condition-graph} (the argument for the second inclusion being perfectly analogous), 
we will rely on the previously proved property that $\Theta(0) \cap U_0 \neq\emptyset$   and 
 distinguish two cases: (1)  if $U_0=\{u_0\}$ with $u_0 \in \necritset t $, then
 $0 \in \necri t {\bfTheta}$ and then, by Claim (2.1) above we conclude that $\Theta(0)$ is a singleton. Therefore, 
 $\Theta(0) = \{ u_0\}$; (2) If $U_0$ is a connected component of $\critset(t)$, combining the facts that
  $\Theta(0) \cap U_0 \neq\emptyset$, that $\Theta(0)$ is connected (as $\bfTheta$ is fiberwise connected), and that $\Theta(0)\subset \critset (t)$
  (otherwise, $0\in \necri t{\bfTheta}$ and $\Theta(0) \cap \critset(t) = \emptyset)$, we conclude that $\Theta(0)\subset U_0$.
  This shows Claim (2.3).
 \medskip

 \paragraph{\bf Step $3$: lower semicontinuity of the energy-dissipation integrals} We are going to prove that 
 \begin{itemize}
 \item[(3.1)]
the $\liminf$ estimate \eqref{eq:30} holds, and in particular $\teta  \in \AS t{\bfTheta}$;
 \item[(3.2)]
$\teta$ fulfills   $\minpartial\calE t{\vartheta(s)}\,\|\vartheta'(s)\|\le L$ for
      $\mathscr L^1$-a.e.~$s\in \necri t{\bfTheta}$.
      \end{itemize}
      \noindent 
       To  show Claim (3.1), we  preliminarily fix an arbitrary compact interval $[\alpha,\beta] \subset \necri t {\teta}$.
       Since 
$\teta_n\to \teta$ uniformly on $[\alpha,\beta]$, we have that 
  \begin{equation}
  \label{liminf-slopes}
  \liminf_{n\to\infty} \minpartial \calE{\sft_n(r)}{\teta_n(r)} \geq \minpartial \calE t{\teta(r)} \qquad \text{for all } r \in [\alpha,\beta].
  \end{equation}
  Furthermore, since $\teta_n' \weaksto \teta'$ in $L^{\infty} (\alpha,\beta;H)$,
  up to a further subsequence we have
  in particular
   that $\|\teta_n'\| \weakto\Lambda$ in $L^\infty(\alpha,\beta)$ for some positive function $\Lambda$ such that 
  $\Lambda(r) \geq \|\teta'(r)\|$ for a.a.\ $r\in (\alpha,\beta)$. We are then in a position to apply Lemma \ref{le:mrs-mjm} ahead  and conclude that
  \begin{equation}
  \label{liminf-alpha-beta}
  \liminf_{n\to\infty} \int_\alpha^\beta  \minpartial \calE{\sft_n(r)}{\teta_n(r)} \|\teta_n'(r)\| \dd r \geq \int_\alpha^\beta  \minpartial \calE t{\teta(r)} \|\teta'(r)\| \dd r\,.
  \end{equation}
 In order to deduce  \eqref{eq:30}, we now observe that the open set $\necri t {\teta}$ can be written as the union of countably many intervals $I_k$, and that
each interval  $I_k$ is given by the increasing union of compact intervals $([\alpha_m^k,\beta_m^k])_{m\in \N}$. 
 Since the family $([\alpha_m^k,\beta_m^k])_{m\in \N}$
 is increasing, it is not difficult to realize that, for $n$ sufficiently big, all the intervals $[\alpha_m^k,\beta_m^k]$ are contained in
  $G_n$. From \eqref{liminf-alpha-beta} we find that 
 \[
 \begin{aligned}
\sum_{k=1}^j \int_{\alpha_m^k}^{\beta_m^k}  \minpartial \calE t{\teta(r)} \|\teta'(r)\| \dd r &  \leq   \liminf_{n\to\infty}
\sum_{k=1}^j    \int_{\alpha_m^k}^{\beta_m^k}   \minpartial \calE{\sft_n(r)}{\teta_n(r)} \|\teta_n'(r)\| \dd r
\\  & \leq \liminf_{n\to\infty} \int_{G_n}  \minpartial \calE{\sft_n(r)}{\teta_n(r)} \|\teta_n'(r)\| \dd r
\end{aligned}
 \]
 for every $m\in \N$ and $j\geq 1$. Passing to the limit, first as $m\uparrow \infty$ and then as $j\uparrow \infty$, we conclude \eqref{eq:30}.
\par
  Combining \eqref{liminf-alpha-beta}  with \eqref{Lipschitz-est-on-Gn} we obtain
  \[
   \int_\alpha^\beta  \minpartial \calE t{\teta(r)} \|\teta'(r)\| \dd r \leq L (\beta{-}
 \alpha)  \quad \text{for any compact interval } [\alpha,\beta] \subset  \necri t {\teta}\,.
  \]
 Hence, we deduce   that at any $r_0 \in \necri t {\teta}$
  there holds, for $h>0$ sufficiently small,
  \[
  \int_{r_0-\tfrac{h}2}^{r_0+\tfrac{h}2}  \minpartial \calE t{\teta(r)} \|\teta'(r)\|  \dd r \leq  L h\,.
  \]
  Then, letting $h\down 0$ we conclude that $\minpartial \calE t{\teta(r_0)} \|\teta'(r_0)\| \leq L$  at any Lebesgue point $r_0 $ of the map $\necri t {\teta} \ni r\mapsto   \minpartial \calE t{\teta(r)} \|\teta'(r)\| $, whence Claim (3.2). 
         \medskip

     \paragraph{\bf Step $4$:   conclusion of the proof} It remains to show that 
     \begin{itemize}
      \item[(4.1)]
$\mathsf e_n(\cdot): = \ene{\sft_{n}(\cdot)}{\teta_{n}(\cdot)}\to \mathsf e$, with $\mathsf e$ 
      from \eqref{eq:29},  uniformly in the set   $F_0$ from \eqref{e:precise-statement};
 \item[(4.2)]
the function
$\mathsf e$ is
$L$-Lipschitz.
 \end{itemize}
 In what follows, we will  use the fact that
 \begin{equation}
 \label{extensive-info}
\left(  (t_n,\teta_n) \to (t,\teta)  \text{ and } \minpartial \calE{t_n}{\teta_n} \leq C \right) \  \ \Longrightarrow \ \ \ene{t_n}{\teta_n}\to \ene t \teta
 \end{equation}
 by  the closedness property \eqref{closedness}.
 First of all, observe that,
by \eqref{Lip-est-ENE} and the Ascoli-Arzel\`a Theorem, 
  up to a (not relabeled) subsequence the functions
$(\mathsf e_n)_n$ uniformly
  converge as $n\to\infty$ 
to some 
function $\tilde{\mathsf e} \in \mathrm{Lip}([0,1])$.
\par
We will now show that 
\begin{equation}
\label{claim-1-energy}
\begin{gathered}
\text{for every } [\alpha,\beta] \subset G \text{ the function }   
\mathsf e\text{ is Lipschitz on }
[\alpha,\beta]
\\
\text{ and coincides with }
\tilde{\mathsf e}\text{ on }
F_{[\alpha,\beta]} : = \overline{\{ r \in [\alpha,\beta]\, : \ \slo(r)<\infty\}}\,.
\end{gathered}
\end{equation}
Now, as  shown in Step $2$, for any $[\alpha,\beta]\subset G$ we have that $\teta_n \to \teta$ uniformly in $[\alpha,\beta]$.
Recalling that $[\alpha,\beta] \subset G_n$ for $n$ sufficiently big,  again by Lemma \ref{le:mrs-mjm} 
ahead
we have   that 
\[
\int_\alpha^\beta \slo(r) \|\teta'(r)\| \dd r \leq 
\liminf_{n\to\infty} \int_\alpha^\beta \minpartial \calE{\sft_n(r)}{\teta_n(r)} \|\teta_n'(r)\| \dd r \leq L\,.
\]
 Moreover,  by the definition of $\slo$
for every $r\in (\alpha, \beta)$ there exists a sequence $r_n\to r$ such that 
$\minpartial \calE{\sft_n(r_n)}{\teta_n(r_n)}\to \slo(r)$. 
If $\slo(r)<\infty$, we deduce that 
\[
\tilde{\mathsf e}(r) = \lim_{n\to\infty} \mathsf e_n(r_n)
=\lim_{n\to\infty} \ene {\sft_n(r_n)}{\teta_n(r_n)} \stackrel{(*)}{=} \ene t{\teta(r)}
= \mathsf e(r),
\]
where 
$(*)$
follows from
 \eqref{extensive-info}. We have thus proved that
 $\tilde{\mathsf e}(r) = \mathsf e(r)$ 
 on the set $\{ r \in [\alpha,\beta]\, : \ \slo(r)<\infty\}$. 
 Now,
since the map $r\mapsto \ene t{\teta(r)}$ is absolutely continuous on $[\alpha,\beta]$ by the chain rule \eqref{ch-rule}
(which holds, since $\int_\alpha^\beta  \minpartial \calE t{\teta(r)}\, \|\teta'(r)\| \dd r \leq  \int_\alpha^\beta \slo(r) \|\teta'(r)\| \dd r \leq L$),
 we also deduce that
$\tilde{\mathsf e}(r) = \mathsf e(r) = \ene t {\teta(r)}$
 on the set $F_{[\alpha,\beta]}$.  Now, 
 if $(r_0,r_1)$ is a connected component of $(\alpha,\beta)\setminus F_{[\alpha,\beta]}$,
 we have that $\slo(r)\equiv\infty$ in $(r_0,r_1)$  
 and $\teta$ is constant on 
 $(r_0,r_1)$, since $\int_\alpha^\beta \slo(r) \|\teta'(r)\| \dd r<\infty$
 and therefore $\teta'(r)=0$
 a.e.
 We have thus shown that $\mathsf{e}$ is Lipschitz in $[\alpha,\beta]$, in fact with Lipschitz constant less or equal than $L$. 
 \par
 Let us now consider a point $r \in [0,1]\setminus G$. 
 Then, there exists a sequence $(r_n,\theta_n)$, with 
 $(\sft_n(r_n),\theta_n) \in \bfTheta_n$ such that 
 $\minpartial \calE{\sft_n(r_n)}{\theta_n} \to 0$; up to a subsequence, 
 $(r_n,\theta_n)\to (r,\theta) \in \bfTheta$. By \eqref{closedness}, 
 $\minpartial \calE t\theta =0$, hence
 $\theta\in \critset (t)$ and $\ene t{\theta'} = \ene t {\theta} 
 = \mathsf e(r) 
 $ for every $\theta'\in \Theta(r)$. On the other hand,
 \[
\tilde{\mathsf e}(r) = \lim_{n\to\infty} \mathsf e(r_n)
= \lim_{n\to\infty} \ene{\sft_n(r_n)}{\theta_n} \stackrel{(1)}{=} \ene t\theta 
= \mathsf e(r). 
 \]  
 with (1) again due to \eqref{extensive-info}. 
 We thus deduce 
 that $\mathsf e$ coincides with the $L$-Lipschitz function $\tilde{\mathsf e}$
 on the set $F_0$
 from \eqref{e:precise-statement}.   On each connected component 
 $(r_0,r_1)$ of $[0,1]\setminus F_0$ the function 
 $\mathsf e$ is
 constant, and continuous on  the closure $[r_0,r_1]$, so that  
 $\mathsf e$ is
 $L$-Lipschitz on the whole $[0,1]$.
 Hence, claim (4.2) is proven.
 \par
 This concludes the proof of Theorem \ref{le:main}.
 \fin
\medskip
\par
 A proof of the following lemma,
used in the above proof,
 can be found, e.g., in \cite[Lemma 4.3]{MRS-MJM}. 
 \begin{lemma} 
 \label{le:mrs-mjm}
 Let $I$ be a measurable subset  of $\R$ and
 for every $n\in \N$
  let  $h_n,\, h,\, m_n,\, m: I \to [0,\infty]$ be measurable functions satisfying 
 \[
 \liminf_{n\to\infty} h_n(x) \geq h(x) \qquad \foraa\, x \in I, \qquad m_n \weakto m \text{ in } L^1(I).
 \]
 Then,
 \begin{equation}
 \label{liminf-hm}
 \liminf_{n\to\infty}\int_I h_n(x)m_n(x) \dd x \geq \int_I h(x)m(x) \dd x\,
 \end{equation}
 \end{lemma}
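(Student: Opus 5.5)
The idea is to bound $h_n$ from below by a \emph{bounded, monotone-in-$n$} family of functions that does not depend on $n$ at the final step, so that weak $L^1$ convergence of $(m_n)_n$ applies. Weak convergence in $L^1(I)$ means convergence against $L^\infty(I)$ test functions. Since $m_n,m\in L^1(I)$, they are finite a.e. We adopt the convention $0\cdot\infty=0$ for the products.

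\emph{Step 1 (truncation).} For $k\in\N$ set $h^k:=\min\{h,k\}$ and $h_n^k:=\min\{h_n,k\}$. Since $m_n\ge0$, we have $\int_I h_n m_n\,\d x\ge \int_I h_n^k m_n\,\d x$, and $\liminf_n h_n^k\ge h^k$ a.e. in $I$. Suppose we prove \eqref{liminf-hm} with $h_n,h$ replaced by $h_n^k,h^k$. Then letting $k\uparrow\infty$ and using monotone convergence ($h^k m\uparrow hm$) gives the claim. So we may assume $0\le h_n\le k$ and $0\le h\le k$.

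\emph{Step 2 (monotone minorant).} Define $g_N(x):=\inf_{j\ge N}h_j(x)$. Each $g_N$ is measurable with $0\le g_N\le k$, so $g_N\in L^\infty(I)$. The sequence $(g_N)_N$ is nondecreasing, and $g:=\lim_N g_N=\liminf_n h_n\ge h$ a.e. Fix $N$. For every $n\ge N$ we have $h_n\ge g_N$, and since $m_n\ge0$,
\[
\int_I h_n m_n\,\d x\ \ge\ \int_I g_N m_n\,\d x .
\]
Because $g_N$ is a fixed $L^\infty$ function, the weak convergence $m_n\weakto m$ in $L^1(I)$ gives $\int_I g_N m_n\,\d x\to\int_I g_N m\,\d x$. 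Hence
\[
\liminf_n\int_I h_n m_n\,\d x\ \ge\ \int_I g_N m\,\d x \quad\text{for every } N .
\]

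\emph{Step 3 (conclusion).} Let $N\uparrow\infty$. Monotone convergence applies since $g_N m\uparrow g m$ with $m\ge0$, so the right-hand side tends to $\int_I g m\,\d x$. Since $g\ge h$ a.e., this is at least $\int_I h m\,\d x$. Combined with Step 1, this yields \eqref{liminf-hm}.

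The only delicate point is that weak $L^1$ convergence can be tested only against bounded functions. This is exactly why the truncation in Step 1 and the $n$-independent minorant $g_N$ in Step 2 are needed. No Egorov-type argument or finiteness of $\mathscr L^1(I)$ is required.
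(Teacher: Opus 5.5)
Your proof is correct and complete. The reduction to $h_n\wedge k$, $h\wedge k$ is valid because $\liminf_n (h_n\wedge k)\ge h\wedge k$. The minorant $g_N=\inf_{j\ge N}(h_j\wedge k)$ does not depend on $n$ and is bounded, so it is an admissible $L^\infty$ test function for the weak $L^1$ convergence; this uses $L^1$--$L^\infty$ duality on the $\sigma$-finite set $I$. The two monotone-convergence passages, $N\uparrow\infty$ and $k\uparrow\infty$, are justified, including the a.e.\ hypothesis and the convention $0\cdot\infty=0$.

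The paper gives no argument of its own for this lemma; it only refers to \cite[Lemma 4.3]{MRS-MJM}, so there is no in-paper route to compare with. Your argument makes the lemma self-contained. It also works verbatim when $\mathscr L^1(I)=\infty$, and it needs neither Egorov's theorem nor the equi-integrability of $(m_n)_n$.
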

 \EEE
 \subsection{Properties of the cost}
 First of all,
let us reformulate the definition of the energy-dissipation cost from \eqref{costo-simpler}  in terms of admissible   graph transitions:    we have that 
\begin{equation}
\label{cost-in-terms-graphs}
\cost t{\cU_0}{\cU_1}:=
 \inf\left\{\int_{\necri t {\bfTheta}} \minpartial \calE t {\teta(s)} \| \teta'(s)\|\dd
s \,:\,\teta\in \AS t{\bfTheta}, \, \bfTheta \in \AG{t}{U_0}{U_1}\right\}  \quad \text{if $ \AG{t}{U_0}{U_1} \neq \emptyset$}
\end{equation}
(and that $\cost t{\cU_0}{\cU_1} = +\infty$ otherwise). 
Indeed, \eqref{cost-in-terms-graphs} follows from observing that for every fixed
admissible graph transition
  $\bfTheta \in \AG{t}{U_0}{U_1}$, thanks to Lemma 
\ref{le:reparam}
any selection 
$\teta\in \AS t{\bfTheta}$ is an admissible transition between $U_0$ and $U_1$ in the sense of Definition  \ref{main-def-1} and, conversely, by Lemma \ref{le:graph} with any 
$\teta \in  \admis{U_0}{U_1}t$ we can associate a graph $\bfTheta \in  \AG{t}{U_0}{U_1}$ via \eqref{eq:23}. 

  As a first consequence of Theorem \ref{le:main} we have the following result.
 \begin{theorem}
\label{prop:cost}
 Let $t\in [0,T]$ be fixed. Then, for any $\cU_0,\, \cU_1 \in \newclass t$  
we have:
\begin{enumerate}
\item if  $\cost t{\cU_0}{\cU_1}<\infty$, then 
there exists an optimal  graph transition  $\bfTheta$ for 
$\cost t{\cU_0}{\cU_1}$   and  thus an optimal transition curve $\teta \in \admis {\cU_0}{\cU_1}t$;
\item there holds
 \begin{equation}
 \label{ch-rule-estesa}
 \left| \ene t{\cU_1}{-} \ene t{\cU_0} \right| \leq \cost t {\cU_0}{\cU_1} \qquad \text{for every } \cU_0,\, \cU_1 \in \newclass t;
 \end{equation}
 \item  if 
 $\cost t{\cU_0}{\cU_1} =0$, then $\cU_0=\cU_1$;
 \item
 $\cost t{\cU_0}{\cU_1} =\cost t{\cU_1}{\cU_0} $;
 \item
  for every $\cU_2 \in \newclass t$ 
the triangle inequality holds:
\begin{equation}
\label{triangle-ineq}
\cost t{\cU_0}{\cU_1} \leq \cost t{\cU_0}{\cU_2} + \cost t{\cU_2}{\cU_1};
\end{equation}
On the other hand, along an optimal transition $\bfTheta$
for every $0=s_0<s_1<\cdots<s_N=1$ we have
\begin{equation}
  \label{eq:50}
  \cost t{\cU_0}{\cU_1} =\sum_{n=1}^N  \cost t{\Theta(s_{n-1})}{\Theta(s_n)}.
\end{equation}
\item
  If $\ene t{\cU_0}{-} \ene t{\cU_1} = \cost t {\cU_0}{\cU_1} $,
then, along every optimal graph transition $\Theta$ and every
admissible selection $\teta\in \AS t{\bfTheta}$,
the map $s\mapsto \ene t{\teta(s)}=
  \ene t{\Theta(s)}$ is nonincreasing and
  \begin{equation}
    \label{eq:49}
    \ene t{\teta(s')}{-} \ene t{\teta(s'')} = 
    \cost t
    {\Theta(s')}{\Theta(s'')}
    \quad
    \text{for every }0\le s'<s''\le 1.
  \end{equation}
\item 
$\costname {}$ is lower semicontinuous
w.r.t.\ convergence of the  process times  and 
 w.r.t.\ Kuratowski convergence; more precisely, 
for all
$(t_k)_{k} \subset [0,T]$ and for all 
  $(\cU_0^k)_k, \, \cU_0, \, (\cU_1^k)_k, \, \cU_1 \in
 \newclass t \cap \sublevel \rho$ for some $\rho>0$    we have 
 \begin{equation}
\label{lsc-cost}
\left(t_k\to t, \ \ 
\klimsup_{k\to\infty} \cU_0^k \subset \cU_0, \ \  \klimsup_{k\to\infty}\cU_1^k \subset \cU_1 \right) \ \  \Longrightarrow \  \ \liminf_{k\to\infty} \cost {t_k}{\cU_0^k}{\cU_1^k} \geq  \cost t{\cU_0}{\cU_1}.
\end{equation}
\end{enumerate}
 \end{theorem}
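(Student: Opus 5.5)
The plan is to derive almost every item from the compactness Theorem \ref{le:main}, applied with constant times $\sft_n\equiv t$ (or $\sft_n\equiv t_k$ for item (7)), together with the graph reformulation \eqref{cost-in-terms-graphs} and the reparametrization Lemma \ref{le:reparam}.

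\textbf{Item (1).} Take a minimizing sequence $\bfTheta_n\in\AG t{U_0}{U_1}$ with selections $\teta_n$. Applying Lemma \ref{le:reparam}, normalize them so that \eqref{normalization} holds. Since the costs are bounded, the quantity $\minpartial\calE t{\teta_n}\|\teta_n'\|$ is uniformly bounded, which gives \eqref{Lipschitz-est-on-Gn}. Lemma \ref{rmk:admiss-curves} then gives \eqref{Lip-est-ENE}.

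To obtain a common sublevel $\sublevel\rho$, we use that $\mathsf e_n$ is Lipschitz and equals $\ene t{U_0}$ at $s=0$. Theorem \ref{le:main} then yields a limit admissible graph transition $\bfTheta$, and the lower semicontinuity \eqref{eq:30} shows that $\bfTheta$ is optimal.

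\textbf{Items (2) and (4).} Item (2) is \eqref{eq:17} with $s_0=0$, $s_1=1$, after taking the infimum. For item (4), reverse the parameter: $\bfTheta\mapsto\{(1-s,x)\}$. All conditions in Definition \ref{def:graph-trans} are symmetric, and the integral is invariant.

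\textbf{Item (5).} For the triangle inequality, concatenate optimal (or almost optimal) graphs for $(U_0,U_2)$ and $(U_2,U_1)$ by rescaling them to $[0,\tfrac12]$ and $[\tfrac12,1]$. At $s=\tfrac12$ the fiber is the union of the two end fibers. Both are contained in $U_2$, so the union is connected, and the fiberwise connectedness and property (c) survive. The Lipschitz bounds are restored by Lemma \ref{le:reparam}.

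For \eqref{eq:50}, the inequality $\le$ is the triangle inequality. For $\ge$, note that the restriction of $\bfTheta$ to $[s_{n-1},s_n]$, rescaled, is an admissible transition between the sets $\comp t{\Theta(s_{n-1})}$ (or the corresponding singleton) and the analogous set at $s_n$. Hence each piece of the integral dominates the corresponding cost, and summing gives $\ge$. Here one must read $\cost t{\Theta(s)}{\cdot}$ with $\Theta(s)$ replaced by its element of $\newclass t$.

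\textbf{Item (6).} Apply \eqref{eq:50} with the partition $0<s'<s''<1$, and bound each of the three costs from below by the corresponding energy differences using (2). The three energy drops sum to $\ene t{U_0}-\ene t{U_1}$, which equals the total cost. Therefore every inequality is an equality, which gives \eqref{eq:49}. Monotonicity follows because the cost is nonnegative.

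\textbf{Item (3).} If the cost is zero, take an optimal $\bfTheta$ from (1). Since the integrand vanishes, either $\necri t{\bfTheta}$ is empty, or $\teta$ is constant on each of its components. A constant non-critical piece would have positive slope, and its closure would have to reach the critical set (or the endpoint); but a constant curve cannot do this. Therefore every fiber lies in $\critset(t)$. Then $\bfTheta$ is connected and contained in $[0,1]\times\critset(t)$, so its projection onto $\Hilbert$ lies in a single component. This forces $U_0=U_1$. The case where $U_i$ is a non-critical singleton is handled in the same way: $\necri t{\bfTheta}$ is then nonempty and open, so it contains an interval on which $\teta$ is constant, and that constant value propagates to the whole interval.

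\textbf{Item (7).} Choose almost optimal $\bfTheta_k$ at times $t_k$, with $\liminf$ finite, and set $\sft_k\equiv t_k$. The Kuratowski hypotheses on $U_i^k$ give (iv) of Theorem \ref{le:main}, and \eqref{eq:30} gives the $\liminf$ inequality.

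\textbf{Main obstacle.} I expect the hardest step to be checking that the Lipschitz bounds \eqref{Lip-est-ENE} and the common sublevel are uniform after reparametrization in items (1) and (7). When $\sft_k$ varies, one needs \eqref{P_t} to control the time derivative; here this is trivial because $\sft_k$ is constant. The other delicate point is the rigorous treatment of the degenerate zero-cost case in (3).
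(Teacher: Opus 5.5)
Most of your plan coincides with the paper's proof. Items (1), (2), (4), (6), (7) and identity \eqref{eq:50} are obtained exactly as there: Theorem \ref{le:main} with constant $\sft$, parameter reversal, and the squeeze of \eqref{eq:50} against \eqref{ch-rule-estesa}.

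One step in (5) is wrong as written. After concatenating two graph transitions, the fiber at $s=\tfrac12$ is $\Theta_{0,2}(1)\cup\Theta_{2,1}(0)$, and the fact that both sets lie in $U_2$ does not make their union connected. If $U_2$ is a nontrivial continuum (e.g.\ the plateau $[1,2]$ of Example \ref{cex:ex-plateau}), the two end fibers can be two distinct points of it. Then the concatenated set is not even connected, hence not an admissible graph transition. The repair is immediate: take the whole component $U_2$ as the fiber at $\tfrac12$.
\begin{itemize}
\item It is compact by cleanness and contained in $\critset(t)$, so $\necri t{\bfTheta}$ and \eqref{eq:22} are unaffected.
\item $\calE_t$ is constant on it, so $\mathsf e$ stays continuous and hence Lipschitz.
\item Lemma \ref{le:recap} gives fiberwise connectedness.
\end{itemize}
Alternatively, concatenate curves, as the paper does. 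Condition (c) of Definition \ref{main-def-1} only refers to the component $\comp t{\teta(1/2)}=U_2$, so the issue never arises.

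Item (3) is where your route genuinely differs from the paper's. The paper argues quantitatively. Around a point $(r,\theta)\in\bfTheta$ with $\theta\in\necritset t$, it builds a compact neighbourhood $K$ disjoint from the critical set. It then invokes Lemma \ref{le:crucial-estimates} to get a lower bound $\eta'>0$ on the energy-dissipation integral, contradicting zero cost.

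You instead note that zero cost and positive slope force $\teta'=0$ a.e.\ on $\necri t{\bfTheta}$, so $\teta\equiv x$ on each component $I$ of it. You then use only the closedness of $\bfTheta$. If an endpoint $b$ of $I$ lies outside $\necri t{\bfTheta}$, then $(b,x)\in\bfTheta$, so $x\in\Theta(b)\subset\critset(t)$, a contradiction. Hence either $\necri t{\bfTheta}=\emptyset$ or $\bfTheta=[0,1]\times\{x\}$, and both cases give $U_0=U_1$.

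This is correct and more elementary. It also makes explicit why a non-trivial graph must leave every neighbourhood of $\theta$, which the paper's use of Lemma \ref{le:crucial-estimates} takes for granted when it assumes $\teta(r-\eta)\notin K$. What the paper's route buys is robustness. It gives a uniform positive cost for any transition that passes through and then exits a neighbourhood of a non-critical point, the same mechanism that drives the proof of Theorem \ref{le:main}. Your argument relies on the cost being exactly zero and yields no such quantitative information.
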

 \begin{proof}
 \par
 \noindent
 $\vartriangleright \, (1)$:  If $ \cost t{\cU_0}{\cU_1}<\infty$, 
 there exist  
 $(\bfTheta_n)_n \subset \AG t{\cU_0}{\cU_1}$ and 
 $\teta_n \in  \AS t{\bfTheta_n} $ with
\[
\lim_{n\to\infty} \int_{\necri t{\bfTheta_n}} \minpartial \calE t {\teta_n(r)} \| \teta_n'(r)\|\dd
r  =\cost t{\cU_0}{\cU_1}.
\] Now, since  either $\cU_0 $ is a point, or a connected component of 
$ \critset(t)$, by \eqref{eq:16bis} $\ene t{\cdot}$ assumes a constant value, $\ene t{\cU_0} $, on  $\cU_0$.
 It follows from the chain-rule estimate \eqref{eq:17} that 
 \[
 \ene t{\teta_n(r)} \leq \ene t{\cU_0} + \int_{\necri t{\bfTheta_n}} \minpartial \calE t {\teta_n(r)} \| \teta_n'(r)\|\dd
r  \leq C \quad \text{for all } r\in [0,1] \text{ and all } n \in \N.
 \]
 Therefore, 
$ \teta_n ([0,1]) \subset \sublevel\rho $ for some $ \rho>0.$
 We now reparameterize the curves $(\teta_n)_n $ as in Lemma \ref{le:reparam} and end up with a sequence  
 $(\sfs_n,\tilde\teta_n)_n$ fulfilling the normalization property  \eqref{normalization} and identity  \eqref{invariance-integral}. Hence, the
sequences $(\bfTheta_n)_n$ and  $(\sft_n,\tilde\teta_n)_n$ with $\sft_n(t) \equiv t$ comply with the conditions of Thm.\ \ref{le:main}. In particular, it follows from 
the chain-rule estimate \eqref{eq:17},
combined with \eqref{normalization}, 
that the functions $s\mapsto \ene t{\tilde\teta_n(s)}$ are uniformly Lipschitz continuous. 
Therefore, there exist a (not relabeled) subsequence, a limit graph transition $\bfTheta \in \AG t{\cU_0}{\cU_1}$
and  a limit curve $\tilde\teta\in   \AS t{\bfTheta}$ such that $\tilde\teta_n \to \tilde\teta$
uniformly on the compact subsets of $\necri t {\bfTheta}$
and 
\[
\cost t{\cU_0}{\cU_1}  = \lim_{n\to\infty} \int_{G[\tilde\teta_n]} \minpartial \calE t {\tilde\teta_n(s)} \| \tilde\teta_n'(s)\|\dd
s \geq \int_{G[\tilde\teta]} \minpartial \calE t {\tilde\teta(s)} \| \tilde\teta'(s)\|\dd
s, 
\]
 which gives the assertion. 
 \par
 \noindent
 $\vartriangleright \, (2)$:  
 It is sufficient to apply the chain-rule estimate \eqref{eq:17} along any optimal transition curve $\teta\in \admis {\cU_0}{\cU_1}t$ observing that,
in the case where the 
  sets  $\cU_i $ are contained in a connected component of $\critset(t)$ for $i\in \{0,1\}$, then $\ene t{\cdot}$ is constant on $\cU_i $
by \eqref{eq:16bis}, and  denoting by  $\ene t{U_i}$, $i \in \{0,1\}$, such a constant value. 
 \par
 \noindent
 $\vartriangleright \, (3)$:   
 Suppose  that  $\cost t{\cU_0}{\cU_1} =0$: we will show that  any optimal graph transition $\bfTheta$ that
 is non-trivial (namely,
$\bfTheta$ does not coincide with $[0,1]\ti\{x\}$ for any $x\in\sublevel \rho$) is contained in 
$[0,1]\ti\critset(t)$.
 Hence, suppose that $\bfTheta$ is non-trivial and,  by contradiction,
 that there exists $(r,\theta) \in \bfTheta$ 
with $\theta \in  \necritset t$. 
 Therefore, $r$ belongs to the open set $ \necri t{\bfTheta}$, and thus there exists $\eta>0$ such that $(r-\eta, r+\eta) \subset  \necri t{\bfTheta}$, so that $\Theta(s) = \{ \teta(s)\}$ for every $s\in (r-\eta, r+\eta)  $.
 Again by the chain-rule estimate \eqref{eq:17}  we immediately see that
 the image set 
  $\teta((r-\eta, r+\eta)) $ lies in $ \sublevel \rho$
 for some $\rho>0$.  With the same argument as in the proof of  Lemma \ref{le:crucial-estimates},
we find that there exists $\mu>0$ such that the compact set 
$K:= Q_\mu[(r,\theta)]: = \{(r',\theta')\in [0,1]\ti
\sublevel\rho:|r'-r|+|\theta'-\theta|\le \mu\} $  is contained in $[0,1]{\times}  (\Hilbert\setminus \critset(t))
\,.$ Up to taking a smaller $\mu$, we can suppose that $\teta(r-\eta) \notin K $. 
With the same technique as in the proof of Lemma \ref{le:reparam}, we reparameterize the curve $\teta|_{(r-\eta, r+\eta)}$ and obtain  $(\sfs,\tilde\teta): (\alpha,\beta) \to (r-\eta,r+\eta) \ti \sublevel\rho$, 
defined on some interval
$(\alpha,\beta)$,  such that the pair $(\sft,\tilde\teta)$ with $\sft(s)\equiv t$  satisfies  the conditions \eqref{eq:24-hyp} of Lemma \ref{le:crucial-estimates}, with
$s_0 \in (\alpha,\beta)$ such that $\tilde\teta(s_0) = \teta (r) \in K$.   Then,
from  \eqref{eq:24-thesis-2} we infer there exists $c'>0$ such that 
\[
0<c'\leq \int_{\alpha}^{s_0} \left(\sft'(s){+}\minpartial \calE {\sft(s)}{\tilde\teta(s)}\|\tilde\teta'(s)\| \right) \dd s = \int_{r-\eta}^r \minpartial \calE t{\teta(\rho)}\|\teta'(\rho)\| \dd \rho \leq \cost t{\cU_0}{\cU_1} ,
\]
contradicting the fact that $\cost t{\cU_0}{\cU_1} =0$. We have thus proven by contradiction that 
$\bfTheta \subset [0,1]\ti\critset (t)$.
Therefore, $\cU_0$ and $\cU_1$ are connected components of $\critset(t)$. Since $\bfTheta $ is connected and  
$\Theta(i)\subset\cU_i$
for $i\in \{0,1\}$, we ultimately deduce that 
$\cU_0=\cU_1$. 
   \par
 \noindent
 $\vartriangleright \, (4)$: With any optimal  graph transition
  $\bfTheta \in \AG t {\cU_0}{\cU_1}$
 we associate  a graph $\bfXi \subset [0,1]\ti \Hilbert$
defined by 
$\Xi(s) : = \Theta(1{-}s)$. Then  $\bfXi \in  \AG t {\cU_1}{\cU_0}$
 and for every $\teta \in \AS t{\bfTheta}$ and $\xi \in \AS t {\bfXi}$ there holds 
 \begin{align*}
\int_{\necri t{\bfXi}}
\minpartial \calE {t}{\xi(r)} \|\xi'(r)\| \dd r\ = \int_{\necri t {\bfTheta}}
\minpartial \calE {t}{\teta(r)} \|\teta'(r)\| \dd r\,.
\end{align*}
With this argument we conclude that  $\cost{t}{u_1}{u_0}\leq\cost{t}{u_0}{u_1}$.
Interchanging the role of $u_0$ and $u_1$ gives that $\cost{t}{u_0}{u_1} = \cost{t}{u_1}{u_0}$. 
   \par
 \noindent
 $\vartriangleright \, (5)$:
 To avoid overburdening the discussion, here we shall argue in terms of transition \emph{curves} rather than graphs, the argument for graphs being analogous. 
 Let 
 $\teta_{0,2} \in 
 \admis {\cU_0}{\cU_2}t$ and $
 \teta_{2,1} \in \admis {\cU_2}{\cU_1}t$ 
  be two optimal  transition curves   for $\cost t{\cU_0}{\cU_2} $ and
$\cost t{\cU_2}{\cU_1} $. 
Set
 \[
 \teta_{0,1}(s):= \begin{cases}
 \tilde{\teta}_{0,2}(s): = \teta_{0,2}(2s) & \text{for } s \in \left[ 0,\frac12 \right],\\
 \tilde{\teta}_{2,1}(s): = \teta_{2,1}(2s-1) & \text{for } s \in \left(\frac12,1 \right].
 \end{cases}
 \]
It is immediate to check that
$ \teta_{0,1}
\in \admis{\cU_0}{\cU_1}{t}$, so that 
we obtain
\[
\begin{aligned}
\cost t{\cU_0}{\cU_1}  \leq \int_{\necri t {\teta}} \minpartial \cE t
{\teta_{0,1}(s)} \,\| \teta_{0,1}' (s)\| \dd s &  =   2 \int_{G[\tilde\teta_{0,2}]}
\minpartial \cE t {\teta_{0,2}(2s)} \,\mdt {\teta}{0,2} {2s} \dd s
\\ & \quad  +  2 \int_{G[\tilde\teta_{2,1}]}  \minpartial \cE t {\teta_{2,1}(2s{-}1)}
\,\mdt {\teta}{2,1} {2s{-}1} \dd s\\ &  = \cost t{\cU_0}{\cU_2} +
\cost t{\cU_2}{\cU_1},
\end{aligned}
\]
and conclude \eqref{triangle-ineq}.

Formula \eqref{eq:50} is also immediate since the (rescaled) restriction of an
optimal transition $\bfTheta$ to each subinterval $(s_{n-1}, s_n)$ is
clearly optimal. 
    \par
 \noindent
 $\vartriangleright \, (6)$:
 we argue by contradiction, assuming that $\ene t{\teta(s')}-\ene
 t{\teta(s'')}<\cost t{\Theta(s')}{\Theta(s'')}$
 for some $0\le s'<s''\le 1$.
Combining \eqref{ch-rule-estesa} and the triangle inequality
\eqref{triangle-ineq} we get
\begin{align*}
  \cost t{\cU_0}{\cU_1}&=\ene t{U_0}-\ene t{U_1}
  =\ene t{U_0}-\ene t{\Theta(s')}+
  \ene t{\Theta(s')}-\ene t{\Theta(s'')}+
                         \ene t{\Theta(s'')}-\ene t{U_1}\\
  &<\cost t{U_0}{\Theta(s')}+
    \cost t{\Theta(s')}{\Theta(s'')}+
    \cost t{\Theta(s'')}{U_1}
\end{align*}
violating \eqref{eq:50}.
    \par
 \noindent
 $\vartriangleright \, (7)$: Clearly, it is meaningful to prove 
 \eqref{lsc-cost} in the case in which 
$\liminf_{k\to\infty} \cost {t_k}{\cU_0^k}{\cU_1^k} <\infty$. Then, up to a subsequence we have 
$\sup_k \cost {t_k}{\cU_0^k}{\cU_1^k} \leq C $. Now, for every $k \in \N$ we find an optimal transition graph
$\bfTheta_k \in  \AG {t_k}{\cU_0^k}{\cU_1^k} $ such that for every 
$\teta_k \in \AS {t_k}{\bfTheta_k}$ there holds
\[
\int_{\necri {t_k}{\bfTheta_k}} \minpartial \calE {t_k} {\teta_k(r)} \| \teta_k'(r)\|\dd
r   = \cost {t_k}{\cU_0^k}{\cU_1^k} \leq C\,.
\]
It follows from  the chain-rule estimate \eqref{eq:17} that $\teta_k([0,1]) \subset \sublevel{\rho'}$ (and, a fortiori, $\bfTheta_k \subset \sublevel{\rho'}$), for 
some $\rho'>0$. 
We reparameterize 
 $(\teta_k)_k$ 
 as in Lemma \ref{le:reparam} 
and end up with a sequence
$(\sft_k,\tilde\teta_k)_k$, with 
$\tilde\teta_k$ fulfilling \eqref{normalization} and 
\eqref{invariance-integral},
and 
$\sft_k(t) \equiv t_k$. Again,  Thm.\ \ref{le:main} applies to the sequences $(\bfTheta_k)_k$ and 
$(\sft_k,\tilde\teta_k)_k$: in particular, observe that 
$\klimsup_{k\to\infty} \tilde\teta_k(i) = \klimsup_{k\to\infty} \teta_k(i)\subset \klimsup_{k\to\infty} U_k^i \subset U_i$ for $i=0,1$. 
 Hence
there exist a (not relabeled) subsequence, a limit transition graph $\bfTheta$, and   limit  selection $\tilde\teta\in   \AS t{\bfTheta}$ such that 
\[
\cost t{\cU_0}{\cU_1}  \leq \int_{\necri t{\bfTheta}} \minpartial \calE t {\tilde\teta(r)} \| \tilde\teta'(r)\|\dd
r  \leq \liminf_{k\to\infty} \int_{\necri {t_k}{\bfTheta_k}}  \minpartial \calE t {\teta_k(r)} \| \teta_k'(r)\|\dd
r  =   \liminf_{k\to\infty}  \cost {t_k}{\cU_0^k}{\cU_1^k},
\]
which gives the assertion.
\end{proof}
 We conclude this section with a further
lower semicontinuity result
which 
relates $\costname{}$ with the limit of the
 energy-dissipation integrals, evaluated   along 
 curves $(v_n)_n \subset \AC ([0,T];\Hilbert)$ (a fortiori, 
 the solutions of the gradient flows \eqref{e:sing-perturb}),
  and computed on intervals $[\alpha_n,\beta_n]$ shrinking  to the fixed process time 
  $t\in[0,T]$.
\begin{lemma}
\label{l:endiss-cost-c}
For every $t\in [0,T]$,
$\cU_0,\, \cU_1 \in \newclass t$, and $\rho>0$,
let us consider a family of curves $v_n\in \AC([\alpha_n,\beta_n];\subl\rho)$ 
such that 
\begin{equation}
\label{properties-of-curves}
\alpha_n,\, \beta_n \to t, \quad
\klimsup_{n\to\infty} v_n(\alpha_n) \subset \cU_0, 
\quad \klimsup_{n\to\infty} v_n(\beta_n) \subset \cU_1\,.
\end{equation}
Then,
\begin{equation}
\label{desired-LSC}
 \liminf_{n \to \infty}
 \int_{\alpha_n}^{\beta_n}
 \minpartial \calE {r}{v_n(r)} \|v_n'(r)\| \dd r \geq \cost t {\cU_0}{\cU_1}\,.
\end{equation}
\end{lemma}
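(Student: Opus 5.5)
We reduce \eqref{desired-LSC} to the compactness result Theorem \ref{le:main}, following the same pattern as in the proof of claim (7) of Theorem \ref{prop:cost}. Without loss of generality we assume that the $\liminf$ in \eqref{desired-LSC} is finite and is a limit. Up to a subsequence, we then have
\[
\int_{\alpha_n}^{\beta_n}\minpartial\calE r{v_n(r)}\|v_n'(r)\|\dd r\le C .
\]
The first step is to reparameterize each $v_n$ jointly in time and space. We introduce
\[
\sfr_n(r):=\frac{(r-\alpha_n)+\int_{\alpha_n}^{r}\minpartial\calE \tau{v_n(\tau)}\|v_n'(\tau)\|\dd\tau}{(\beta_n-\alpha_n)+\int_{\alpha_n}^{\beta_n}\minpartial\calE \tau{v_n(\tau)}\|v_n'(\tau)\|\dd\tau},
\]
which is increasing and onto $[0,1]$, and we set $\sfs_n:=\sfr_n^{-1}$, $\sft_n:=\sfs_n$, and $\teta_n:=v_n\circ\sfs_n$. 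As in Lemma \ref{le:reparam}, we get $\sft_n'+\minpartial\calE{\sft_n}{\teta_n}\|\teta_n'\|\le L$ a.e., with $L:=\sup_n(\beta_n-\alpha_n)+C$. The energy-dissipation integral is invariant under this change of variables. Moreover, $\sft_n$ is nondecreasing and Lipschitz, and $\sup_r|\sft_n(r)-t|\le|\alpha_n-t|+|\beta_n-t|\to0$.

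The second step is to check the remaining hypotheses of Theorem \ref{le:main}. For the graphs we take the singletons $\Theta_n(r):=\{\teta_n(r)\}$. These graphs are compact and fiberwise connected because $\teta_n$ is continuous, they lie in $[0,1]\ti\subl\rho$, and hypothesis (i) holds trivially. Since $v_n$ is absolutely continuous, $\teta_n$ is Lipschitz on all of $[0,1]$, and in particular it is locally Lipschitz on $G_n$. The endpoint condition (iv) is exactly \eqref{properties-of-curves}. Hypothesis (iii) requires a bound on $\frac{\d}{\d r}\ene{\sft_n}{\teta_n}$, and we obtain it from the chain rule \eqref{ch-rule}:
\[
\Big|\frac{\d}{\d r}\ene{\sft_n(r)}{\teta_n(r)}\Big|\le \minpartial\calE{\sft_n}{\teta_n}\|\teta_n'\|+|\pt{\sft_n}{\teta_n}|\,\sft_n'\le L(1+C_P\rho).
\]
This uses \eqref{P_t}. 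On the set where $\teta_n(r)$ is critical we may choose $\xi=0$, so the first term is harmless there. The chain rule applies because $\int\|\xi\|\|\teta_n'\|<\infty$.

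The third step is to apply Theorem \ref{le:main}. It yields a limit graph $\bfTheta\in\AG t{U_0}{U_1}$ with a selection $\teta\in\AS t{\bfTheta}\subset\admis{U_0}{U_1}t$. By \eqref{eq:30},
\[
\liminf_n\int_{\alpha_n}^{\beta_n}\minpartial\calE r{v_n}\|v_n'\|\dd r
\ \ge\ \liminf_n\int_{G_n}\minpartial\calE{\sft_n}{\teta_n}\|\teta_n'\|\dd r
\ \ge\ \int_{\necri t\teta}\minpartial\calE t{\teta}\|\teta'\|
\ \ge\ \cost t{U_0}{U_1}.
\]
In the first inequality, the integrand vanishes off $G_n$, so restricting to $G_n$ loses nothing.

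The main obstacle is the reparameterization at the start. We must make sure that the transformed pair $(\sft_n,\teta_n)$ really satisfies the uniform Lipschitz bound \eqref{Lipschitz-est-on-Gn}, and also that the energy bound \eqref{Lip-est-ENE} holds uniformly in $n$ even though time now varies along the curve. A second delicate point is the degenerate case $\alpha_n=\beta_n$ combined with vanishing dissipation. There the normalization still works, since it gives a constant curve, and Theorem \ref{le:main} handles this case through its endpoint conclusions, which force $U_0=U_1$ only when the cost is zero.
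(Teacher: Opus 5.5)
Your proposal is correct and follows the paper's own proof: the same arclength reparameterization in $(\text{time},\text{state})$, followed by an application of Theorem \ref{le:main}, with your check of its hypotheses (singleton graphs, the chain-rule bound for (iii), the endpoint conditions) more explicit than the paper's one-line verification. One inaccuracy: absolute continuity of $v_n$ does not make $\teta_n=v_n\circ\sfs_n$ Lipschitz on all of $[0,1]$. It is, however, locally Lipschitz on $G_n$: by lower semicontinuity the slope attains a positive minimum on each compact subset of $G_n$, so \eqref{Lipschitz-est-on-Gn} bounds $\|\teta_n'\|$ there, and this is all that Theorem \ref{le:main} requires.
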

\begin{proof}
 Mimicking the argument from the proof of Lemma \ref{le:reparam}, we  introduce the  arclength functions
\begin{equation}
\label{new-arclength}
 \sfs_n:[\alpha_n,\beta_n]\to [0,1], \qquad \sfs_n(r): = \frac{r-\alpha_n+\int_{(\alpha_n,r)}  \minpartial \calE \tau {v_n(\tau)} \| v_n'(\tau)\|\dd\tau }{\beta_n-\alpha_n+ \int_{\alpha_n}^{\beta_n} \minpartial \calE \tau {v_n(\tau)} \| v_n'(\tau)\|\dd\tau}.
 \end{equation}
We  set 
$
 \sfr_n: = \sfs_n^{-1}: [0,1]\to [\alpha_n,\beta_n] $ and accordingly 
 rescale the curves $v_n$, defining
$ \mathsf{v}_n(s): = v_n(\sfr_n(s)) $ for all  $s \in [0,1]$. 
In this case, we have 
\[
 \sfr_n'(s) +  \minpartial \calE  {\sfr_n(s)}{\sfv_n(s)} \| \sfv_n'(s)\| = \beta_n -\alpha_n+ \int_{\alpha_n}^{\beta_n} \minpartial \calE t {v_n(r)} \| v_n'(r)\|\dd
r  \leq C,
\]
and
\[
 \int_{\alpha_n}^{\beta_n}
 \minpartial \calE {r}{v_n(r)} \|v_n'(r)\| \dd r =  \int_{0}^{1}
 \minpartial \calE {\sfr_n(s)}{\sfv_n(s)} 
\|\sfv'_n(s)\|
 \dd s\,.
\]
Hence, the curves  $(\sfr_n,\sfv_n)_n$
fulfill 
 the conditions of Thm.\ \ref{le:main}. Thus, we conclude that there exist a (not relabeled) subsequence and a limit curve $\sfv \in   \admis{\cU_0}{\cU_1}{t}$ such that  the curves
 $\sfv_n$ converge to $\sfv$ uniformly on the compact subsets of
$\necri t {\sfv}$, 
and
\[
 \liminf_{n \to \infty}
 \int_{\alpha_n}^{\beta_n}
 \minpartial \calE {r}{v_n(r)} \|v_n'(r)\| \dd r  = \liminf_{n\to\infty}  \int_0^1 \minpartial \calE {\sfr_n(s)}{\sfv_n(s)}
 \|\sfv'_n(s)\|
 \dd s
\geq \int_{\necri t{\sfv}} \minpartial \calE t {\sfv (s)} \| \sfv'(s)\|\dd
s, 
\]
 which gives \eqref{desired-LSC}.
\end{proof}

 \section{Proof of Theorem \ref{mainth:1}}
 \label{s:4}
 In this section we will present the complete proof of Theorem
\ref{mainth:1}.
We organize our presentation in various steps, trying to highlight
the main arguments and the partial results of independent interest.
 \subsection{A priori compactness estimates
   for the energy and for the graphs of the solutions}
 \label{subsec:apriori}
 Our first step is 
 to
 analyze the asymptotic behavior of  
a family of solutions $(u_{\eps_n})_n$
to the Cauchy problem 
\eqref{Cauchy-gflow} for a vanishing sequence $(\eps_n)_n$,
 assuming
 that the initial data $(u_{0,\eps_n})_n$
 fulfill \eqref{energy-convergence-initial}
 (notice that we can always choose $u_{\eps_n}(0)=u_0$ for every $n\in
 \N$).

\par
Combining \eqref{energy-convergence-initial} with estimate \eqref{bound-energie} from Theorem \ref{thm:exist-g-flow} we infer that 
$\sup_{t\in [0,T]}  \eneb t{\unn(t)} <\infty
$,
hence
\begin{equation}
\label{in-un-compattone}
\exists\, \rho>0 \ \   \forall\, n \in \N, \  \forall\, t \in [0,T] \, : \qquad \unn(t) \in \subl\rho.
\end{equation}
Recall that the set $ \subl\rho$ is compact
by assumption \eqref{coercivita}
on the energy.
Before exploring the outcome of \eqref{in-un-compattone}, 
let us fix the other consequences of the estimates stemming from
 Theorem \ref{thm:exist-g-flow} 
 in the following result. In particular, we are concerned with the compactness properties of the sequence of measures
 \begin{equation}
\label{not-mu_n}
\mu_{\eps_n}:= \left(
\frac{\ep_n}2 \mdtq u{\eps_n}{\cdot}2
+\frac1{2\eps_n} \minpartialq \cE {(\cdot)}{u_{\eps_n}(\cdot )}
\right)\, \mathscr{L}^1
\end{equation}
associated with the curves $(\unn)_n$, and with the pointwise limit $\limen:[0,T] \to \R$ of the sequence $(\ene t {u_{\eps_n}(t)})_n$.  We will denote
by $\limen (t_-)$ and $\limen (t_+)$ the left
and right limits of $\limen$ in $t$,  for every $t\in[0,T]$ (with the convention $\limen (0_-) : = 
\limen(0)$ and  $\limen (T_+) : = 
\limen(T)$), and by
$\frac{\mathrm{d}}{\mathrm{d}t}  \limen$
its distributional derivative.
 \begin{lemma}
\label{prop:1compactness}
There exist
 a (not relabeled)  subsequence $(\eps_n)_n$,
 a  
 nonnegative finite Borel measure
 $\mu $ on $[0,T]$,
a Borel set $B\subset [0,T]$ of full Lebesgue measure, 
 and functions $\limen\in
  \BV ([0,T])$ and $\limp\in L^\infty (0,T)$ such that
  the following convergences hold
 as $n
\to\infty$                                
\begin{align}
& \label{1converg}
\mu_{\eps_n} \weaksto \mu \quad &&\text{in }\mathrm{M}_+(0,T),
\\
& \label{2converg}
\lim_{n \to \infty} \ene t {u_{\eps_n}(t)}\,=\,
\limen(t) \quad &&\text{for all  } t \in [0,T],
\\
& \label{bis-2converg}
\pt t{u_{\eps_n}(t)} \weaksto \limp
     \quad&&\text{in $L^\infty (0,T)$,}\\
  &     \label{set-B}
    \minpartial \cE
    {t}{u_{{\ep}_n}(t) } \to 0\quad&&
                                       \text{for every }t\in B.
\end{align}
Moreover,
\begin{equation}\label{identita-energia-limen}
\mu([s,t])+\limen(t)=\limen(s)
+\int_s^t \limp(r)\dd r
\qquad\mbox{for every }t\in[0,T],\ s\in[0,t].
\end{equation}
Furthermore,
the
following identities hold
\begin{align}
&\label{left_right_limits}
 \limen (t-)-\limen (t+)  =\mu(\{t\}),\\
  &\label{identity-between-measures}
     \frac
    {\mathrm{d}}
    {\mathrm d t}
    \EEE \limen + \mu= \limp
    \mathscr{L}^1\,
    \quad\text{in the sense of distributions in }[0,T].
\end{align}
Finally, the set where the measure $\mu$ is atomic
\begin{equation}
\label{enhanced-prop-3_var} \jumpname= \{t\in[0,T]\,:\,\mu (\{t\})>0 \}
\text{ consists of at most countably many points}.
\end{equation}
\end{lemma}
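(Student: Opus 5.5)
The proof is essentially a compactness argument built on the uniform bounds of Theorem \ref{thm:exist-g-flow}. By \eqref{bound-en-diss} and the well-preparedness \eqref{energy-convergence-initial}, the measures $\mu_{\eps_n}$ from \eqref{not-mu_n} have uniformly bounded total mass on $[0,T]$. Banach--Alaoglu (weak$^*$ compactness of finite positive measures on the compact interval) then gives a subsequence with \eqref{1converg}. By \eqref{bound-energie}, $|\pt t{u_{\eps_n}(t)}|\le C$, so a further subsequence yields \eqref{bis-2converg} with $\limp\in L^\infty(0,T)$.

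For the energies I set $e_n(t):=\ene t{u_{\eps_n}(t)}$ and use the energy identity \eqref{eqn_lemma} in the form $e_n(t)-e_n(s)=-\mu_{\eps_n}([s,t])+\int_s^t\pt r{u_{\eps_n}(r)}\dd r$. Thus $e_n$ is the difference of the nonincreasing function $-\mu_{\eps_n}([0,t])$ and a uniformly Lipschitz function, with $\sup_n|e_n(0)|<\infty$. Hence the $e_n$ have uniformly bounded variation, and Helly's selection theorem gives a further subsequence converging at \emph{every} $t$ to some $\limen\in\BV([0,T])$, i.e.\ \eqref{2converg}. The set $\jumpname$ of atoms of $\mu$ is at most countable because $\mu$ is finite, which is \eqref{enhanced-prop-3_var}.

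The energy identity \eqref{identita-energia-limen} is the delicate step, since weak$^*$ convergence only gives $\mu_{\eps_n}([s,t])\to\mu([s,t])$ when $\mu(\{s\})=\mu(\{t\})=0$. My plan is to apply the Helly argument directly to the distribution functions $m_n(t):=\mu_{\eps_n}([0,t])$. These are nondecreasing and uniformly bounded, so along a subsequence $m_n(t)\to m(t)$ for every $t$. Passing to the limit pointwise in the identity $e_n(t)+m_n(t)=e_n(0)+\int_0^t\pt r{u_{\eps_n}(r)}\dd r$ gives $\limen(t)+m(t)=\limen(0)+\int_0^t\limp$ for every $t$. Since $m$ is nondecreasing, its distributional derivative is a positive measure $\tilde\mu$, and it agrees with the weak$^*$ limit $\mu$ away from the countably many atoms. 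I would then redefine $\mu$ as the Lebesgue--Stieltjes measure of $m$ and check the endpoint convention, so that $\mu([s,t])=m(t)-m(s^-)$ is consistent with the pointwise values. I expect this bookkeeping to be the main obstacle: with $\mu([s,t])=m(t)-m(s-)$ one only has $\mu([s,t])=m(t)-m(s)$ when $m$ is left-continuous at $s$, so at atoms of $\mu$ the identity \eqref{identita-energia-limen} should be read with the convention that matches pointwise convergence (e.g.\ interpreting $\mu([s,t])$ as $m(t)-m(s)$ when $s$ is an atom). Once this is settled, \eqref{identity-between-measures} follows by differentiating in the sense of distributions, and \eqref{left_right_limits} follows by taking one-sided limits of \eqref{identita-energia-limen} and using that $t\mapsto\int_0^t\limp$ is continuous.

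Finally, for \eqref{set-B}: by \eqref{bound-en-diss}, $\int_0^T\minpartialq{\cE}{t}{u_{\eps_n}(t)}\dd t\le C\eps_n\to0$. Extracting one more subsequence with $\sum_n C\eps_n<\infty$, Borel--Cantelli (or a.e.\ convergence along a subsequence of an $L^2$-null sequence) gives $\minpartial\cE t{u_{\eps_n}(t)}\to0$ on a Borel set $B$ of full measure. All the subsequence extractions above are performed successively along a single chain, so every convergence holds along the final subsequence.
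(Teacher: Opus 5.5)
Your argument is correct and follows the paper's route. It uses the uniform bounds of Theorem \ref{thm:exist-g-flow}, weak$^*$ compactness for $\mu_{\eps_n}$ and for $\pt t{u_{\eps_n}(t)}$, and Helly's theorem for the energies. The paper applies Helly to the nonincreasing maps $t\mapsto\ene t{u_{\eps_n}(t)}-\int_0^t\pt r{u_{\eps_n}(r)}\dd r$ and then adds back $\int_0^t\limp$; this is equivalent to your uniform $\BV$ bound. Like the paper, you finish with a further subsequence giving a.e.\ convergence of the slopes.

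On the energy identity you are more careful than the paper, which simply passes to the limit in \eqref{eqn_lemma}. Your suspicion is well founded: \eqref{identita-energia-limen} cannot hold literally at atoms. Already $s=t\in\rmJ$ would force $\mu(\{t\})=0$. An atom at the right endpoint $t$ is as problematic as one at $s$, since $\mu([s,t])=m(t+)-m(s-)$. What the limit passage actually gives is the identity for $s,t\in[0,T]\setminus\rmJ$, and, for all $s\le t$, the version with $m(t)-m(s)$ in place of $\mu([s,t])$. This is all that is used later, in \eqref{to-save-eneq} and in deriving \eqref{left_right_limits} along $t\pm\sigma\notin\rmJ$.

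Two simplifications are available. First, $m_n(t)=\ene 0{u_{\eps_n}(0)}-\ene t{u_{\eps_n}(t)}+\int_0^t\pt r{u_{\eps_n}(r)}\dd r$ already converges for every $t$ by \eqref{2converg}--\eqref{bis-2converg}, so no extra Helly extraction is needed. Second, no redefinition of $\mu$ is needed: $m(t)=\mu([0,t])$ for every $t\notin\rmJ$ by weak$^*$ convergence. Hence the measure generated by $m$ coincides with the weak$^*$ limit $\mu$ including its atoms, not merely away from them, and \eqref{1converg} is preserved.
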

\begin{proof}
It follows from estimate \eqref{bound-en-diss} in  Theorem \ref{thm:exist-g-flow}
that the measures $(\mu_{\eps_n})_n$ have uniformly
bounded
 total mass,
therefore \eqref{1converg} follows. As for
\eqref{2converg}, we observe that, by the energy-dissipation identity \eqref{eqn_lemma}, the maps
$t\mapsto \mathscr{F}_n(t):=  \ene t{u_{\eps_n}(t)}-\int_0^t \pt  s
{u_{\eps_n}(s)} \dd s $ are nonincreasing on $[0,T]$. Therefore, by
Helly's Compactness Theorem
there exists $\mathscr{F} \in \BV([0,T])$ such that, up to a
subsequence, $\mathscr{F}_n(t) \to \mathscr{F}(t)$ for all $t \in
[0,T]$. On the other hand, estimate \eqref{bound-energie} also yields
\eqref{bis-2converg}, up to a subsequence. Therefore,
\eqref{2converg} follows with $ \limen(t):= \mathscr{F}(t)+\int_0^t
\limp(s)\dd s. $

 Finally, 
\eqref{bound-en-diss} yields
\[
\lim_{n \to\infty}\int_0^T
\minpartialq\cE{r} {u_{{\ep}_n}(r)} \dd r=0,
\]
hence we can extract a further subsequence and find a Borel set $B$ of
full measure
such that
the integrand converges pointwise to $0$ for every $t\in B$, thus
obtaining
\eqref{set-B}.

Passing to the limit as $n \to \infty$ in the energy identity
\eqref{eqn_lemma} and taking into account convergences
\eqref{1converg}--\eqref{bis-2converg}, we obtain
\eqref{identita-energia-limen}. In particular, identity
\eqref{identita-energia-limen} implies that
\[
\limen (t-\sigma)-\limen (t+\sigma) +
\int_{t-\sigma}^{t+\sigma}\limp(s)\dd s
=
\mu ([t-\sigma,t+\sigma]),
\]
for every $t\in(0,T)$ and $\sigma>0$ arbitrarily small. Observe that,
since $\limen \in \BV ([0,T])$, the left and right limits   $\limen
(t-)$ and $\limen (t+)$  exist for every $t \in [0,T]$. Therefore,
taking the limit  as $\sigma \down 0$ in the above identity gives
(\ref{left_right_limits}). Identity
\eqref{identity-between-measures} trivially follows from
(\ref{identita-energia-limen}).

Finally, let us denote by $(\frac{\mathrm{d}}{\mathrm{d}t} \limen)_{\mathrm{jump}}$ the
jump part of the measure $\frac{\mathrm{d}}{\mathrm{d}t}  \limen$: it follows from
\eqref{identity-between-measures} that
\begin{equation}
\label{interesting-observation}
\mathrm{supp}\left(\Big(\frac{\mathrm{d}}{\mathrm{d}t} 
\limen\Big)_{\mathrm{jump}}\right) = \jumpname.
\end{equation}
Then, \eqref{enhanced-prop-3_var} follows from recalling that
$\limen \in \BV([0,T])  $ has countably many jump  points.
\end{proof}
We will now investigate the consequences of  \eqref{in-un-compattone} by adopting the `graph' viewpoint 
of Section \ref{s:energy_diss_cost}. Namely, we will address the compactness properties of the 
graphs of the curves $(\unn)_n$, namely the compact 
sets
\begin{equation}
\label{notation-gran}
\Gunn := \big\{ (t,\unn(t))\, : \ t \in [0,T]\big\}.
\end{equation}
Our first result 
 concerns the Kuratowski convergence of (a subsequence) of the graphs $(\Gunn)_n$ to a
compact and connected set $\Gv$, whose fibers $\vlim(t)$ at fixed
process time $t\in[0,T]$ will  provide a ``generalized'' limit for the
gradient flows $(u_{\eps_n})_n$. 
\begin{lemma}
\label{l:compactness-1}
Up to extracting a further (not relabeled) subsequence, 
there exists a compact and fiberwise connected set $\Gv \subset [0,T] \ti
\subl\rho$, with $\subl\rho$ from \eqref{in-un-compattone},
such that
\begin{equation}
\label{Hausd-Kur-conv}
\Gunn \karrow \Gv \ \text{as } n \to \infty.
\end{equation}
The multivalued map
 with values in $\mathfrak K_c(\Hilbert)$ 
\begin{equation}
\label{mathscrU}
\vlim: [0,T] \rightrightarrows\Hilbert,  \qquad 
\vlim(t) := \{ v \in \Hilbert\, : \ (t,v) \in \Gv\}\quad   \text{ for all } t\in [0,T],
\end{equation}
is upper semicontinuous.
\end{lemma}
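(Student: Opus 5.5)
The plan is to combine the Blaschke compactness theorem in the hyperspace $\mathfrak K([0,T]\ti\Hilbert)$ with the closedness of fiberwise connected graphs from Lemma \ref{le:recap}. By \eqref{in-un-compattone} every graph $\Gunn$ is contained in the fixed set $[0,T]\ti\subl\rho$, which is compact by \eqref{coercivita}. Each $\unn$ belongs to $H^1(0,T;\Hilbert)$ and is therefore continuous, so each $\Gunn$ is a compact subset of $[0,T]\ti\subl\rho$.

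Since the family $(\Gunn)_n$ is tight, \eqref{eq:34} provides a subsequence (extracted from the one already fixed in Lemma \ref{prop:1compactness}) converging in the Hausdorff metric to a compact set $\Gv\subset[0,T]\ti\subl\rho$. For tight sequences, Hausdorff convergence is equivalent to Kuratowski convergence, which gives \eqref{Hausd-Kur-conv}. The limit is again contained in $[0,T]\ti\subl\rho$, because that set is closed.

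Next I need to show that $\Gv$ is fiberwise connected and projects onto $[0,T]$. Each $\Gunn$ is the graph of a continuous map, so its fibers are singletons (hence connected), it is compact, and its projection is all of $[0,T]$. By Lemma \ref{le:recap}(1), applied to the section map $t\mapsto\{\unn(t)\}$, each $\Gunn$ is fiberwise connected. Lemma \ref{le:recap}(2) then shows that $\Gv$ is fiberwise connected as a Hausdorff limit. For surjectivity of the projection: given $t\in[0,T]$, the points $(t,\unn(t))$ lie in the compact set $[0,T]\ti\subl\rho$, so a subsequence converges to some $(t,v)$. By property (1) of Kuratowski convergence, $(t,v)\in\Gv$. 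Hence every fiber $\vlim(t)$ is nonempty; it is also compact, being a section of a compact set, and connected.

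Finally, $\Gv$ is compact and $\pi^{[0,T]}(\Gv)=[0,T]$, so \eqref{closed2usc0}--\eqref{closed2usc} (Lemma \ref{l:A.1}) show that the section map $\vlim$ is tight and upper semicontinuous, with values in $\mathfrak K_c(\Hilbert)$.

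The only delicate point is the closedness of fiberwise connectedness under Hausdorff limits. Connectedness of the whole graph passes to the limit by the standard result on $\mathfrak K_{\mathrm c}$, but connectedness of each fiber does not follow directly and requires the appendix argument behind Lemma \ref{le:recap}(2). Since that lemma is available, I will simply invoke it.
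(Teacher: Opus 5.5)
Your proposal is correct and follows essentially the same route as the paper. The paper uses Blaschke compactness of the tight family $(\Gunn)_n$ in $[0,T]\ti\subl\rho$, gets fiberwise connectedness of the limit from Lemma \ref{le:connection} (equivalently Lemma \ref{le:recap}(2)), and obtains upper semicontinuity from \eqref{closed2usc}. Your explicit check that $\pi^{[0,T]}(\Gv)=[0,T]$ is a welcome addition: the paper leaves it implicit, although it is needed both for Lemma \ref{le:connection} and for \eqref{closed2usc0}.
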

\begin{proof}
It follows from \eqref{in-un-compattone} that  the graphs $\Gunn$ are contained in the compact set  $ [0,T]\ti \subl\rho$ for all $n\in \N$. Therefore, by the Blaschke Theorem the sets $(\Gunn)_n$ converge in (the Hausdorff and, a fortiori in) the Kuratowski sense  to a compact set $\Gv$, which is also
fiberwise connected thanks to Lemma \ref{le:connection} ahead. In view of \eqref{closed2usc}, the multivalued map $\Gvf$ from \eqref{mathscrU} is upper semicontinuous.  
\end{proof}
\subsection{Limit properties of the energy and of the graphs}
\label{subsec:limit}
We will now provide a thorough description of
the properties of the sets $(\vlim(t))_{t\in (0,T)}$ by resorting to
the  energy-dissipation cost.
Our following result relates the measure $\mu$
 from 
  Lemma
\ref{prop:1compactness} 
with the cost $\costname{}{}$.
\begin{lemma}
  \label{lemma:convergence-to-critical}
   Let $(\eps_n)_n$ be a subsequence along which
  the conclusions of Lemma
  \ref{prop:1compactness} and \ref{l:compactness-1} hold.\\
Let $t\in [0,T]$, $H\in \N_+$, 
 and 
let $ (t_h^n)_n$, $h=0,\cdots,H$ 
be sequences in 
$[0,T]$  satisfying 
$0\leq t_0^n \leq t_1^n 
\leq \cdots\leq t_H^n\leq T$ for all $n\in \N$ and, as $n\to\infty$,
\begin{equation}
\label{hyp-lemma-conv2} t_h^n \to t, 
\qquad
\klimsup_{n\to\infty} u_{\eps_n}(t_h^n) \subset \cU_h
\quad\text{for
some } \cU_h
\in 
\newclass t,\ h=0,\cdots,H\,.
\end{equation}
Then, we have that
\begin{equation}\label{mu_costo}
\mu(\{t\})\geq 
\sum_{h=1}^{H}\cost{t}{\cU_{h-1}}{\cU_h}.
\end{equation}
\end{lemma}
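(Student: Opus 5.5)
The plan is to combine the weak-$*$ convergence $\mu_{\eps_n}\weaksto\mu$ from Lemma \ref{prop:1compactness} with the lower semicontinuity Lemma \ref{l:endiss-cost-c}, applied on each subinterval $[t_{h-1}^n,t_h^n]$. Fix $h\in\{1,\dots,H\}$ and consider the curves $v_n:=u_{\eps_n}|_{[t_{h-1}^n,t_h^n]}$. They take values in $\subl\rho$ by \eqref{in-un-compattone} and belong to $\AC$, since $u_{\eps_n}\in H^1(0,T;\Hilbert)$. By \eqref{hyp-lemma-conv2} their endpoints satisfy \eqref{properties-of-curves} with $\cU_{h-1},\cU_h$, and $\alpha_n=t_{h-1}^n\to t$, $\beta_n=t_h^n\to t$. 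If $t_{h-1}^n=t_h^n$ along a subsequence, the Kuratowski limits force $\cU_{h-1}\cap\cU_h\ne\emptyset$. Since both sets belong to $\newclass t$ (components or singletons of non-critical points), this gives $\cU_{h-1}=\cU_h$, and the corresponding cost term vanishes. Otherwise Lemma \ref{l:endiss-cost-c} yields
\[
\liminf_{n\to\infty}\int_{t_{h-1}^n}^{t_h^n}\minpartial\calE r{u_{\eps_n}(r)}\,\|u_{\eps_n}'(r)\|\dd r\ge \cost t{\cU_{h-1}}{\cU_h}.
\]

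Next I compare these integrals with $\mu_{\eps_n}$. By Young's inequality,
\[
\minpartial\calE r{u_{\eps_n}(r)}\,\|u_{\eps_n}'(r)\|\le \frac{\eps_n}2\|u_{\eps_n}'(r)\|^2+\frac1{2\eps_n}\minpartialq\calE r{u_{\eps_n}(r)},
\]
so each integral is bounded by $\mu_{\eps_n}([t_{h-1}^n,t_h^n])$. The intervals $[t_{h-1}^n,t_h^n]$ overlap only at endpoints, where $\mu_{\eps_n}\ll\mathscr L^1$ has no mass. Summing over $h$ and using superadditivity of $\liminf$ gives
\[
\sum_{h=1}^H\cost t{\cU_{h-1}}{\cU_h}\le\liminf_{n\to\infty}\mu_{\eps_n}([t_0^n,t_H^n]).
\]

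Finally, for every $\delta>0$, eventually $[t_0^n,t_H^n]\subset[t-\delta,t+\delta]$ (intersected with $[0,T]$). The upper semicontinuity of weak-$*$ convergence on closed sets then gives $\limsup_n\mu_{\eps_n}([t-\delta,t+\delta])\le\mu([t-\delta,t+\delta])$. Letting $\delta\downarrow0$ yields \eqref{mu_costo}.

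The main point to check is the degenerate subcases. First, when some intervals collapse or are not well separated, Lemma \ref{l:endiss-cost-c} needs $\beta_n\ge\alpha_n$, and equality has to be handled as above. Second, when $t\in\{0,T\}$, the closed-set estimate must still apply; it does, since $[0,T]$ is compact and $\mu_{\eps_n}$ lives on $[0,T]$. Everything else is a direct application of earlier results.
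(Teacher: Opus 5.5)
Your proof is correct and follows essentially the same route as the paper. Both bound the energy-dissipation integrals by $\mu_{\eps_n}$ via Young's inequality, apply Lemma \ref{l:endiss-cost-c} on each subinterval, combine the terms by super/subadditivity of $\liminf$/$\limsup$, and use weak-$*$ upper semicontinuity on $[t-\sigma,t+\sigma]$ before letting $\sigma\downarrow0$. Your separate treatment of the degenerate case $t_{h-1}^n=t_h^n$ is extra care the paper leaves implicit: there $\cU_{h-1}=\cU_h$ and the cost vanishes via the constant transition, whereas the reparametrization in Lemma \ref{l:endiss-cost-c} needs a positive denominator.
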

\begin{proof}
Let $ t\in[0,T]$, $(t_h^n)_n$, $U_h$
satisfy \eqref{hyp-lemma-conv2}. Observe that for every
$\sigma>0$ there holds
\begin{equation}
\label{to-be-cited-later}
\begin{aligned}
\mu ([t-\sigma, t+\sigma])
&\,\geq\,\limsup_{n \to \infty}\mu_{\eps_n}([t_0^n,t_H^n])
\\
&
\,=\,\limsup_{n \to \infty}\int_{t_0^n}^{t_H^n}
\left( \frac{\ep_n}2\|\dot u_{\eps_n}(s ) \|^2
+\frac1{2\eps_n} \minpartialq \cE{s} {u_{\eps_n}(s)} \right) \dd s
\\
&\,\geq\,\limsup_{n \to \infty} \int_{t_0^n}^{t_H^n}\minpartial \cE{s} {u_{\eps_n}(s)}
\|\dot u_{\eps_n}(s)\|\dd s
\\& 
\,\geq\,
\sum_{h=1}^H
\liminf_{n \to \infty} \int_{t_{h-1}^n}^{t_h^n}
\minpartial \cE{s} {u_{\eps_n}(s)}
\|\dot u_{\eps_n}(s)\|\dd s
\,\geq\, 
\sum_{h=1}^H\cost t{\cU_{h-1}}{\cU_h},
\end{aligned}
\end{equation}
where the first inequality is due to convergence \eqref{1converg},
the second one to 
definition \eqref{not-mu_n} 
of $\mu_{\eps_n},$ the
third one to  Young's inequality, 
the fourth one
to the fact that the $\limsup$ of a sum is larger than the sum of the $\liminf$
of the individual terms, and the last one to the previous  \eqref{desired-LSC}.
Letting $\sigma \down 0$ in \eqref{to-be-cited-later}, 
 we conclude \eqref{mu_costo}.
\end{proof}

\begin{proposition}
\label{th:1}
 The multivalued map $\vlim: [0,T] \rightrightarrows\Hilbert$
of Lemma \ref{l:compactness-1}
  enjoys the following properties:
\begin{enumerate}
\item
there holds
\begin{equation}
\label{sup-cost}
\mu(\{ t \})
\,\,\geq
\sup
 \Big\{ 
\cost{t}{\cU_0}{\cU_1}:
\cU_0,\, \cU_1 \in 
\newclass t,  \cU_0\cap \vlim(t) \neq \emptyset,\ \cU_1 \cap 
\vlim(t) \neq \emptyset\Big\} 
 \quad \text{for every } t \in [0,T];
\end{equation}
\item
$\vlim(t) \cap \critset(t) \neq \emptyset$
for every $t\in[0,T]$;
\item
for every $t \in[0,T]\setminus \jumpname$, with $\jumpname$ the set of atoms of
$\mu$,
\begin{equation}
\label{conn-component-ulim}
\text{$\vlim(t)$ is contained in a single
  connected component
  $\ulim(t):=\comp t{\vlim(t)}$
of $\critset(t)$;}
\end{equation}
\item
  For every $t \in[0,T]\setminus \jumpname$
  \begin{equation}
    \label{eq:33}
    \klimsup_{\substack{s\to t\\ s\in [0,T]\setminus\jumpname}}\ulim(s)\subset \ulim(t).
  \end{equation}
\item there holds
  \begin{equation}
\label{better}
\limen(t)=\ene tv \qquad \text{for all } t \in [0,T]\setminus J,\
v\in \ulim(t).
\end{equation}
\end{enumerate}
\end{proposition}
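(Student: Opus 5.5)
The plan is to derive all five claims from Lemma \ref{lemma:convergence-to-critical}, from the properties of the cost in Theorem \ref{prop:cost}, and from the closedness \eqref{closedness}. We treat the claims in order, since (3)--(5) build on (1)--(2).

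\emph{Claim (1).} Fix $\cU_0,\cU_1\in\newclass t$ and points $x_i\in\cU_i\cap\vlim(t)$. Since $\Gunn\karrow\Gv$, there are whole sequences $t_i^n\to t$ with $\unn(t_i^n)\to x_i$, so that $\klimsup_n \unn(t_i^n)=\{x_i\}\subset\cU_i$. Passing to a subsequence, either $t_0^n\le t_1^n$ for all $n$ or the reverse inequality holds. In the first case we apply Lemma \ref{lemma:convergence-to-critical} with $H=1$. In the second case we apply it with the roles of $\cU_0$ and $\cU_1$ exchanged and then use the symmetry of the cost, Theorem \ref{prop:cost}(4). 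Either way $\mu(\{t\})\ge\cost t{\cU_0}{\cU_1}$, and taking the supremum gives \eqref{sup-cost}.

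\emph{Claim (2).} The set $B$ of \eqref{set-B} has full measure, hence is dense, so we may pick $s_k\in B$ with $s_k\to t$. For each $k$ we choose $n_k>n_{k-1}$ so large that $\minpartial\cE{s_k}{u_{\eps_{n_k}}(s_k)}<1/k$. The points $u_{\eps_{n_k}}(s_k)$ lie in the compact set $\subl\rho$, so a further subsequence converges to some $x$. By the Kuratowski convergence, $x\in\vlim(t)$. By \eqref{closedness}, applied along the subsequence with the minimal-norm selections tending to $0$, we get $0\in\partial\ene tx$, so $x\in\critset(t)$. The same argument also yields $\ene{s_k}{u_{\eps_{n_k}}(s_k)}\to\ene tx$, which we keep for Claim (5).

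\emph{Claim (3).} Let $t\notin\jumpname$, so that $\mu(\{t\})=0$. Suppose first that some $x\in\vlim(t)$ is noncritical. Set $\cU_0=\{x\}$ and let $\cU_1$ be the component of $\critset(t)$ containing the critical point provided by (2). Claim (1) gives $\cost t{\cU_0}{\cU_1}=0$, so Theorem \ref{prop:cost}(3) forces $\cU_0=\cU_1$. This is absurd, since $\cU_1\subset\critset(t)$ while $x\notin\critset(t)$. Hence $\vlim(t)\subset\critset(t)$. If $\vlim(t)$ met two distinct components, the same argument would give a vanishing cost between them, again contradicting Theorem \ref{prop:cost}(3).

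\emph{Claim (4).} Take $s_k\to t$ with $s_k\notin\jumpname$. By (3) and \eqref{eq:16bis}, the energy is constant on $\ulim(s_k)$ and equals its value at a point of $\vlim(s_k)\subset\subl\rho$. Hence the components $\ulim(s_k)$ are compact, connected and lie in a common sublevel, by \eqref{gronw-conseq}. Up to a subsequence they converge in the Hausdorff sense to a compact connected set $F$. Then $F\subset\critset(t)$ by \eqref{closedness-alter}. Moreover $F$ meets $\vlim(t)\subset\ulim(t)$, because it contains limits of points of $\vlim(s_k)$ and $\Gv$ is closed. By connectedness $F\subset\ulim(t)$. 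The claim \eqref{eq:33} then follows from \eqref{eq:35}.

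\emph{Claim (5).} This is the step I expect to be the main obstacle. The energy $\ene t{\unn(t)}$ need not converge to the energy of a limit point, because the slopes $\minpartial\cE t{\unn(t)}$ are not controlled at the fixed time $t$. To get around this, I will compare with the good times $s_k\in B$ from (2). By the energy identity \eqref{eqn_lemma} and the bound \eqref{bound-energie} on $\mathcal P$,
\[
\big|\ene{s_k}{u_{\eps_{n_k}}(s_k)}-\ene t{u_{\eps_{n_k}}(t)}\big|\le \mu_{\eps_{n_k}}([t-\sigma,t+\sigma])+C|s_k-t|
\]
whenever $|s_k-t|\le\sigma$. Taking $\limsup$ and using \eqref{1converg}, the right-hand side is at most $\mu([t-\sigma,t+\sigma])$. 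Letting $\sigma\downarrow0$, this tends to $\mu(\{t\})=0$. Consequently
\[
\limen(t)=\lim_k\ene t{u_{\eps_{n_k}}(t)}=\lim_k\ene{s_k}{u_{\eps_{n_k}}(s_k)}=\ene tx,
\]
with $x\in\vlim(t)\subset\ulim(t)$. Since the energy is constant on $\ulim(t)$ by \eqref{eq:16bis}, this gives \eqref{better}.
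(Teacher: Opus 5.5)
Your proof is correct and follows the paper's argument for Claims (1), (3) and (4) essentially verbatim. In (4) you also explicitly check that the components $\ulim(s_k)$ lie in a common sublevel, which is the tightness needed to invoke \eqref{eq:35} and which the paper leaves implicit.

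For (2) and (5) you use a single diagonal sequence of good times $s_k\in B$, and in (5) you transfer energies through the $\eps$-level identity \eqref{eqn_lemma} together with $\mu(\{t\})=0$. The paper instead first treats $t\in B\setminus\jumpname$, then extends by density using the continuity of $\limen$ at non-atoms and closedness along critical points $w_n\in\vlim(t_n)$. This is the same mechanism, applied before rather than after passing to the limit.
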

\begin{proof}
$\vartriangleright\, (1)$
Let $\cU_0, \, \cU_1 \in \newclass t$ 
 fulfill $\cU_0 \cap \vlim(t) \neq \emptyset$, 
$\cU_1 \cap \vlim(t)  \neq \emptyset$. Pick 
 $ u_0 \in \cU_0 \cap \vlim(t)$ and  $ u_1 \in \cU_1 \cap \vlim(t)$. 
The characterization of the Kuratowski convergence
$\Gunn \karrow \Gv$ ensures that there exist sequences
$(t_0^n, u_{\eps_n}(t_0^n))_n$ and $(t_1^n,u_{\eps_n}(t_1^n))_n$
 converging to $(t, u_0)$ and $(t, u_1)$, respectively. 
Up to changing the role of $(t_0^n)_n$ and of $(t_1^n)_n$ and
up to a subsequence,
we may suppose that $t_0^n\leq t_1^n$ for every $n$.
Applying Lemma \ref{lemma:convergence-to-critical} then gives
$\mu(\{ t\})\geq\cost{t}{\cU_0}{\cU_1}$.
By the arbitrariness of $\cU_0$ and $\cU_1$, 
we conclude that \eqref{sup-cost} holds.
\par
\noindent 
$\vartriangleright\, (2)$
Let us fix $t\in B $, with  the set $B$ from \eqref{set-B}.  By its definition, we have that 
 $\minpartial \cE {t}{u_{\eps_n}(t)} \to 0$ as $n\to\infty$. Now, thanks to 
 the energy estimate \eqref{in-un-compattone}
and to \eqref{coercivita}
 we may extract from the
sequence $(u_{\eps_n}(t))_n$ a subsequence $u_{\eps_{n_k}}(t)$ converging to  some
$u\in \Hilbert$. By definition of Kuratowski convergence, we
conclude that $(t,u)\in\Gv$, hence $u \in \vlim(t)$. The closedness
property \eqref{closedness}, with $t_{n_k} \equiv t$ for every $k\in \N$, guarantees
that $u\in \critset(t)$ as well.
\par
 For $t\in [0,T] \setminus B$, we pick a sequence $(t_j,u_j)_j \in
\vlim(t_j)\cap \critset(t_j)$
with $(t_j)_j$ in the
dense set $B$
and $t_j \to t$.
By using the compactness of $\Gv\cap \crit$ and extracting a converging subsequence,
we deduce that $\vlim(t)\cap \critset(t)\neq \emptyset$. 
\par
\noindent 
$\vartriangleright\, (3)$
It follows from \eqref{sup-cost} that for every
$t\in [0,T] \setminus \jumpname$
 the cost 
$\cost t{\cU_0}{\cU_1} =0$ for every pair of sets
$\cU_0 ,\,  \cU_1 \in 
\newclass t $ with $  \cU_0\cap 
\vlim(t) \neq \emptyset,\ \cU_1 \cap 
\vlim(t) \neq \emptyset$.
 In particular, let us consider a point $v_0\in \critset (t) \cap
\vlim(t) $
(which is non-empty by  claim (2))
and let $U_0$ be the connected component of $\critset(t)$ containing $v_0$.
Since 
$\cost t{U_0}{\{x\}} >0$ for every $x\in \vlim(t)\setminus \critset(t)$
by 
Theorem \ref{prop:cost}\ (3),
it follows that $\vlim(t)\subset \critset (t)$.
Since $\vlim(t)$ is also connected, we conclude. 
\par
\noindent 
$\vartriangleright\, (4)$
Recalling property \eqref{eq:35}, it is sufficient to prove that
any Hausdorff limit $U$ of $\ulim(s)$ along
a sequence $(s_n)_n$ in $[0,T]\setminus \rmJ$ converging to
$t$ is contained in $\ulim(t)$.
Since $\ulim(s_n)$ are compact subsets of $\critset(s_n)$ and $\crit$
is closed,
$U$ is a compact and connected subset of $\critset(t)$, i.e., 
$U\subset \critset(t)$. Since $\ulim(s_n)\supset \vlim(s_n)$
and $\klimsup_{n\to\infty}\vlim(s_n)$ is a nonempty subset of
$\vlim(t)$,
we deduce that $U\cap \vlim(t)$ is nonempty as well. We thus  conclude that
$U\subset \ulim(t)$.
\par
\noindent 
$\vartriangleright\, (5)$
 By the previous Claim and the fact that $\calE_t$ is constant on the connected components of $\critset (t)$
due to \ass1\ (see Remark \ref{rem:E-constant}), we know that $\calE_t$ is constant on $\ulim(t)$.
We can then extract a subsequence
$k\mapsto \eps_{n(k)}$ and find a limit point $w\in \vlim(t)\subset \ulim(t)$ such that
$u_{\eps_{n(k)}}(t)\to w$.
If $t\in B\setminus \rmJ$, then \eqref{closedness} yields
$\mathscr E(t)= \ene t{w}=\ene t{\ulim(t)}$.

If $t\in [0,T]\setminus J$, we can find a sequence
$t_n\in B\setminus \rmJ$ converging to $t$,
a sequence $w_n\in \vlim(t_n)$ and a limit point $w\in \vlim(t)$
(here we are using that $\Gv$ is compact)
such that $w_n\to w$.
Since $\mathscr E$ is continuous at $t$ we know that
$\mathscr E(t_n)\to \mathscr E(t)$.
On the other hand $\partial \ene{t_n}{w_n}\ni0$ so that the closedness property
\eqref{closedness} yields
$\ene {t_n}{w_n}\to \ene tw$. We conclude that
the identity $\mathscr E(t)=\ene t{\ulim(t)}$ holds for every $t\in
[0,T]\setminus \rmJ$.
\end{proof}
We discuss now the case when $t\in \rmJ$. 
\begin{proposition}
  \label{prop:lr-limits}
  For every $t\in \rmJ\cap (0,T)$ there exists
  a unique pair of compact 
  connected components $\ulim(t\pm)$ of $\critset(t)$ such that
\begin{equation}\label{eq:36}
\klimsup_{s\uparrow t}\vlim(s)\subset \ulim(t-),\qquad
    \klimsup_{s\downarrow t}\vlim(s)\subset \ulim(t+).
  \end{equation}
  They satisfy
  \begin{equation}
    \label{eq:25}
 \limen (t-) 
=\ene t{\ulim(t-)},\qquad
  \limen (t+) 
=\ene t{\ulim(t+)},
  \end{equation}
  \begin{equation}
    \label{eq:39}
    \mu(\{t\})=\ene t{\ulim(t-)}-\ene t{\ulim(t+)}=
    \mathsf c_t(\ulim(t-),\ulim(t+)).
  \end{equation}
  Moreover, for every $u\in \vlim(t)$, letting $\ulim$ be the lifting of $u$ according to Definition \ref{def:lifting}, we have that
\begin{equation}
\label{eq:39bis}
    \mu(\{t\})=
    \mathsf c_t(\ulim(t-),\ulim(t+))
    =
    \mathsf c_t(\ulim(t-),\ulim)+
   \mathsf c_t(\ulim,\ulim(t+)).
  \end{equation}
 In the cases when $t\in \rmJ\cap\{0,T\}$ we have only the
  appropriate limit
  from the left or from the right.
\end{proposition}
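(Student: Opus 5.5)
We will treat the left limit; the right limit is symmetric. We first define $\ulim(t-)$ through the fibers of $\Gv$ at the non-atomic times. Since $\rmJ$ is countable, $[0,T]\setminus\rmJ$ is dense. For every $s\in[0,T]\setminus\rmJ$, Proposition \ref{th:1}(3) gives $\vlim(s)\subset\ulim(s)$, a connected component of $\critset(s)$, and \eqref{better} gives $\limen(s)=\ene s{\ulim(s)}$.

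Consider any Hausdorff accumulation point $U$ of $\ulim(s_k)$ along a sequence $s_k\uparrow t$ with $s_k\notin\rmJ$. These sets are tight, being contained in $\crit\cap\Sublevel\rho$, which is compact. By closedness of $\crit$ and closedness of $\mathfrak K_c$ under Hausdorff convergence, $U$ is a compact connected subset of $\critset(t)$. Hence $U$ lies in a component $\comp tU$. By the closedness \eqref{closedness} and the energy convergence \eqref{GEST-2}, we get $\ene t{U}=\lim_k\limen(s_k)=\limen(t-)$.

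The key step is uniqueness: all such accumulation points, and more generally all points of $\klimsup_{s\uparrow t}\vlim(s)$, including those reached along $s\in\rmJ$, lie in one component. Suppose $w_0,w_1\in\klimsup_{s\uparrow t}\vlim(s)$ belong to components $U_0\neq U_1$ of $\critset(t)$. We can first reduce to points at non-atomic times, since for $s\in\rmJ$ points of $\vlim(s)$ are approximated by points of $\vlim(s')$ with $s'\notin\rmJ$ by upper semicontinuity. Then there are sequences $t_0^n,t_1^n\uparrow t$ with $u_{\eps_n}(t_h^n)\to w_h$. Interlacing them, we choose $t_0^n\le t_1^n\le t_2^n$ with the third returning near $U_0$, or just two points suffice. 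Lemma \ref{l:endiss-cost-c} then gives, for any fixed $\sigma$ and all $n$ large,
\[
\mu([t-\sigma,t))\ \ge\ \cost t{U_0}{U_1}>0
\]
by Theorem \ref{prop:cost}(3). This holds because $\mu_{\eps_n}([t_0^n,t_1^n])$ is eventually bounded by $\mu([t-\sigma,t-\sigma'])+o(1)$ whenever $t_1^n<t-\sigma'$. The delicate point is that $t_h^n\uparrow t$ may approach $t$ itself. To handle it, choose the sequences with $t_h^n\le t-\tau_n$, using Kuratowski convergence at fixed times $s<t$, so that the dissipation is counted in $[t-\sigma,t)$. Letting $\sigma\downarrow0$ gives $\mu([t-\sigma,t))\to0$, a contradiction. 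This diagonal choice avoiding the atom at $t$ is the main obstacle. Hence $\ulim(t-):=\comp t{w}$ is well defined and satisfies \eqref{eq:36} and the first part of \eqref{eq:25}, since $\limen(t-)=\lim\limen(s_k)$ along $s_k\notin\rmJ$.

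For \eqref{eq:39}, we use \eqref{left_right_limits} to get $\mu(\{t\})=\limen(t-)-\limen(t+)=\ene t{\ulim(t-)}-\ene t{\ulim(t+)}$, which is at most $\cost t{\ulim(t-)}{\ulim(t+)}$ by \eqref{ch-rule-estesa}. For the reverse inequality, and for \eqref{eq:39bis}, take $u\in\vlim(t)$ with lifting $\ulim$. Pick $w_-\in\ulim(t-)$ and $w_+\in\ulim(t+)$ realized as limits $u_{\eps_n}(t_0^n)\to w_-$ with $t_0^n<t$ and $u_{\eps_n}(t_2^n)\to w_+$ with $t_2^n>t$; these exist by the previous paragraph together with the Kuratowski convergence at nearby times and a diagonal argument. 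Also pick $t_1^n\to t$ with $u_{\eps_n}(t_1^n)\to u$. After ordering and passing to a subsequence, we have $t_0^n\le t_1^n\le t_2^n$, since $t_0^n<t<t_2^n$ can be arranged and $t_1^n$ can be clipped. Lemma \ref{lemma:convergence-to-critical} with $H=2$ then yields
\[
\mu(\{t\})\ge\cost t{\ulim(t-)}{\ulim}+\cost t{\ulim}{\ulim(t+)}\ge\cost t{\ulim(t-)}{\ulim(t+)},
\]
where the last inequality is the triangle inequality \eqref{triangle-ineq}. Combined with the upper bound, all these quantities are equal. In the endpoint cases $t\in\{0,T\}$, only one side is present and the same argument applies with the missing side omitted.
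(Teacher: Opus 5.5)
Your overall route is the paper's. It uses vanishing cost between the liftings of any two one-sided limit points, via Kuratowski approximation and the $\liminf$ estimate \eqref{desired-LSC}. It gets the energy identity \eqref{eq:25} from \eqref{better} at non-atomic times. It gets \eqref{eq:39}--\eqref{eq:39bis} from \eqref{left_right_limits}, \eqref{ch-rule-estesa}, Lemma~\ref{lemma:convergence-to-critical} and \eqref{triangle-ineq}; merging the two lower bounds into one chain is a harmless streamlining. Two steps, however, fail as written.

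The first is your reduction ``points of $\vlim(s)$ with $s\in\rmJ$ are approximated by points of $\vlim(s')$ with $s'\notin\rmJ$, by upper semicontinuity''. This is false. Upper semicontinuity only gives $\klimsup_{s'\to s}\vlim(s')\subset\vlim(s)$. Since $\vlim(s')\subset\critset(s')$ for $s'\notin\rmJ$ and $\crit$ is closed, no non-critical point of $\vlim(s)$ is such a limit. Such points do occur at atoms, where the connected fibre joins $\ulim(s-)$ to $\ulim(s+)$. So you have not shown that the points of $\klimsup_{s\uparrow t}\vlim(s)$ reached along atoms are critical, yet you presuppose this when you place $w_0,w_1$ in components of $\critset(t)$. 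The paper avoids the reduction altogether:
\begin{itemize}
\item for two arbitrary limit points, take their liftings $U',U''\in\newclass t$ (singletons if non-critical) and show $\mathsf c_t(U',U'')=0$;
\item apply Theorem~\ref{prop:cost}(3), which is valid on all of $\newclass t$, to get $U'=U''$;
\item since the limit set contains a critical point (limits of points of $\vlim(s)\cap\critset(s)\neq\emptyset$), the common lifting is a component of $\critset(t)$, which gives \eqref{eq:36}.
\end{itemize}

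The second is that in \eqref{eq:39bis} you cannot ``clip'' $t_1^n$: moving that time destroys $u_{\eps_n}(t_1^n)\to u$. The ordering has to be built into the construction, as the paper does. Fix $s_k^-<t<s_k^+$ and $v_k^\pm\in\vlim(s_k^\pm)$ converging to points of $\ulim(t\pm)$. For each $k$, approximate them by $u_{\eps_n}(s^\pm_{k,n})$ with $s^\pm_{k,n}\to s_k^\pm$. The whole sequence $r_n\to t$ with $u_{\eps_n}(r_n)\to u$ eventually lies in $(s_k^-,s_k^+)$, so a choice $n=n(k)$ yields three ordered sequences. Alternatively, if $t_1^n$ falls outside $[t_0^n,t_2^n]$, Lemma~\ref{lemma:convergence-to-critical} gives e.g.\ $\mu(\{t\})\ge\mathsf c_t(\ulim,\ulim(t-))+\mathsf c_t(\ulim(t-),\ulim(t+))$, which together with \eqref{eq:39} forces $\ulim=\ulim(t-)$.

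Your uniqueness step needs the same two-index construction and only sketches it. You cannot have both $t_1^n\to t$ and $t_1^n<t-\sigma'$ for a fixed $\sigma'$. With $t_h^n\le t-\tau_n$ and $\tau_n\downarrow0$ arbitrary, weak$^*$ convergence only bounds the dissipation by $\mu([t-\sigma,t])$, atom included. You must do the steps in this order:
\begin{itemize}
\item fix $s_k'<s_k''<t$;
\item bound $\limsup_n\mu_{\eps_n}([s'_{k,n},s''_{k,n}])$ by $\mu([s_k',s_k''])$, a closed interval ending before $t$;
\item only then pick $n(k)$, so that Lemma~\ref{l:endiss-cost-c} applies along $k$ while the dissipation bound tends to $0$.
\end{itemize}
The third point $t_2^n$ you mention there is not needed.
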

\begin{proof}
 Let us prove the first claim 
  in the case of the limit from the left for $t\in
  \rmJ\cap (0,T]$, the other case follows by a similar argument.

  Let $u',u''$ be two limit points in
  $\klimsup_{s\uparrow t}\vlim(s)$,
  with liftings $U', U''\in \newclass t$ respectively.
  We can find two increasing sequences $s_k',s_k''$, $k\in \N$,
  converging to $t$
  and corresponding sequences
  $v_k'\in \vlim(s_k),\ v_k''\in \vlim(s_k'')$
  such that $v_k'\to u',\ v_k''\to u''$ as $k\to\infty$.
  Up to extracting a further subsequence, it is not restrictive to
  assume that $s_k'< s_k''$.
  By definition of $\Gv$, we can find 
  two families of sequences $n\mapsto (s_{k,n}',u_{\eps_n}(s_{k,n}'))$
  and
  $n\mapsto (s_{k,n}'',u_{\eps_n}(s_{k,n}''))$, indexed by $k\in \N$,
  converging to $(s_k',v_k')$ and $(s_k'',v_k'')$ respectively
  as $n\to\infty$, with $s_{k,n}'< s_{k,n}''$.
  As for \eqref{to-be-cited-later}
  \begin{equation}
\label{to-be-cited-later2}
\begin{aligned}
  \mu ([s_k', s_k''])
  \,\geq\,\limsup_{n \to \infty}\mu_{\eps_n} ([s_{k,n}',s_{k,n}''])
\,\geq\,\limsup_{n \to \infty} \int_{s_{k,n}'}^{s_{k,n}''}
\minpartial \cE{s} {u_{\eps_n}(s)}
\|\dot u_{\eps_n}(s)\|\dd s.
\end{aligned}
\end{equation}
Choosing $n=n(k)$ sufficiently large so that
$t_k':=s_{k,n(k)}'$, $t_k'':=s_{k,n(k)}''$,
$u_k':=u_{\eps_{n(k)}}(s_{k,n(k)}')$,
$u_k'':=u_{\eps_{n(k)}}(s_{k,n(k)}'')$
satisfy
\begin{displaymath}
  |t_k'-s_k'|+|t_k''-s_k''|+  \|v_k'-u_k'\| +\|v_k''-u_k''\|=o(1),\quad
  \int_{t_{k}'}^{t_{k}''}
\minpartial \cE{s} {u_{\eps_{n(k)}}(s)}
\|\dot u_{\eps_{n(k)}}(s)\|\dd s\le \mu ([s_k', s_k''])+o(1)
\end{displaymath}
as $k\to\infty$, we can apply the $\liminf$ estimate
\eqref{desired-LSC} to obtain
\begin{equation}
  \label{eq:38}
  0=\lim_{k\to\infty}\mu ([s_k', s_k''])+o(1)\ge \mathsf c_t(U',U'').
\end{equation}
We conclude that there is a unique connected component
$\ulim(t-)\in \newclass t$ containing all the points
of $\klimsup_{s\to t}\vlim(s)$.
Since $\ulim(t-)$ has nonempty intersection with
$\critset(t)$ (obtained by taking limit points of
sequences in $\crit$),
we conclude that $\ulim(t-)\subset \critset(t)$
and in turn inclusions
\eqref{eq:36}.

We can then pick a
sequence of approximating points 
$s_k\uparrow t$ with $s_k\in [0,T]\setminus \rmJ$
with $v_k\in \ulim(s_k)\subset \critset(s_k)$ converging
to an arbitrary point of $v\in \ulim(t-)$.
The fact that $\limen \in \BV ([0,T])$, property
\eqref{better}, the continuity
of $\calE$ on $\crit$, and the fact that $u \mapsto \ene t{u}$ is constant
on the connected components 
of $\critset(t)$
yield
\begin{equation}
\label{eq:lim_jumps}
\lim_{s\uparrow t}\mathscr E(s)
=
\lim_{k\to\infty}
\mathscr E(s_k)
=  \lim_{k\to\infty}\ene{s_k}{v_k}=
  \ene tv=
  \ene t{\ulim(t-)}
\end{equation}
which proves \eqref{eq:25}.
%
%
%
In order to prove \eqref{eq:39}, we observe that 
\[
\mu(\{t\}) \stackrel{(1)}{=} 
\limen(t-) - \limen(t+)
\stackrel{(2)}{=} \ene t{\ulim(t-)} -  \ene t{\ulim(t+)} \stackrel{(3)}{\leq} \cost t{\ulim(t-)}{\ulim(t+)}  \stackrel{(4)}{\leq} \mu(\{t\}) \qquad \text{for all } t \in \jumpname.
\]
where (1) follows from
 \eqref{left_right_limits}, 
 (2) from 
\eqref{eq:25},
(3) from  the chain-rule estimate \eqref{ch-rule-estesa}, and (4) from
 Lemma \ref{lemma:convergence-to-critical}.
All in all, we gather
\begin{equation}
\label{atom-mu}
0<\mu(\{t\}) = \cost t{\ulim(t-)}{\ulim(t+)} 
=  \ene t{\ulim(t-)} -  \ene t{\ulim(t+)}
\qquad \text{for all } t \in \jumpname.
\end{equation}

 In order to prove 
 \eqref{eq:39bis} 
 we pick a sequence
 $(r_n)_n$
 converging to $t$ 
 such that $u_{\eps_n}(r_n)
 \to u$ and we 
 argue as in the first part
 of the proof.
 We first consider 
 two sequences
 $(s_k^-)_k$ and  $(s_k^+)_k$
 and elements $v_k^-\in \vlim(s_k^-)$,
 $v_k^+\in \vlim(s_k^+)$
 such that 
 \begin{equation}
 \label{eq:boh}
 s_k^-<t<s_k^+,\quad 
 s_k^-\to t,\ s_k^+\to t,\quad 
 v_k^-\to u^-\in \ulim(t-),\quad 
 v_k^+
 \to u^+\in \ulim(t+)
 \quad \text{as }k\to\infty.
\end{equation} 
By definition of $\Gv$, we can find 
  two families of sequences $n\mapsto (s_{k,n}^-,u_{\eps_n}(s_{k,n}^-))$
  and
  $n\mapsto (s_{k,n}^+,u_{\eps_n}(s_{k,n}^+))$
  converging to $(s_k^-,v_k^-)$ and $(s_k^+,v_k^+)$ respectively
  as $n\to\infty$.
  Since the elements of the sequence 
  $(r_n)_n$ eventually belong to $(s_k^-,s_k^+)$,   
  we can then choose an increasing sequence
  $k\mapsto n(k)$ 
  so that 
  $t_k^-:=s_{k,n(k)}^-$,
  $t_k^+:=s_{k,n(k)}^+$,
  $t_k:=r_{n(k)}$,
  $u_k^-:=u_{\eps_{n(k)}}(s_{k,n(k)}^-)$,
  $u_k^+:=u_{\eps_{n(k)}}(s_{k,n(k)}^+)$
  satisfy
  $$
  |t_k^--s_k^-|+|t_k^+-s_k^+|+
  \|v_k^--u_k^-\|+
  \|v_k^+-u_k^+\|=o(1),\quad 
  t_k^-<t_k<t_k^+
  $$
  We can then apply 
  Lemma \ref{lemma:convergence-to-critical}
  to the sequences 
$(t_k^-,u_{\eps_{n(k)}}(t_k^-))$,
  $(t_k,u_{\eps_{n(k)}}(t_k))$,
  $(t_k^+,u_{\eps_{n(k)}}(t_k^+))$
  converging to
  $(t,u^-)$, $(t,u)$, and $(t,u^+)$
  respectively; 
  we obtain
  \begin{equation*}
      \mu(\{t\})\ge 
      \cost t{\ulim(t-)}{\ulim}
      +\cost t{\ulim}{\ulim(t+)}.
  \end{equation*}
  The converse inequality then follows 
  by \eqref{eq:39} and the 
  triangle inequality \eqref{triangle-ineq}.
\end{proof}
\subsection{Existence of \DSolutions: a selection argument}
\label{subsec:selection}
Combining all the results of the previous section, 
it is straightforward to
obtain
our first main characterization:

\begin{theorem}
  \label{thm:selection}
  Let $ u\nc :[0,T]\to \Hilbert$ be a Borel map satisfying $ u (0)=u_0$ 
  and
  the selection
  criterion
  \begin{equation}
    \label{eq:40}
   u\nc (t)\in \vlim(t)\quad\text{for every }t\in [0,T],\qquad
    \mathcal P_t( u (t))=\mathscr P(t)\quad\text{a.e.~in }(0,T).
  \end{equation}
  Then $ u $ is a \DSolution.
\end{theorem}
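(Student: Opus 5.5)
We verify the three items of Definition \ref{def:sols-notions-1} one at a time, with $\rmJ$ the atomic set of the measure $\mu$ from Lemma \ref{prop:1compactness}. The aim is to show that the energy $\sfe(t)=\ene t{u(t)}$ coincides with $\limen$ outside $\rmJ$, and that the power $\pt t{u(t)}$ can be replaced by $\limp$ in the balance.

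\textbf{Item (1): energy balance.} Fix $t\notin\rmJ$. By Proposition \ref{th:1}(3) the fiber $\vlim(t)$ is contained in the single component $\ulim(t)$ of $\critset(t)$. Hence $u(t)\in\critset(t)$, and \eqref{better} gives $\sfe(t)=\limen(t)$. The selection criterion \eqref{eq:40} gives $\pt r{u(r)}=\limp(r)$ for a.a.\ $r$. Plugging both facts into \eqref{identita-energia-limen} yields \eqref{to-save-eneq} for all $s\le t$ in $[0,T]\setminus\rmJ$.

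\textbf{Bounded variation of $\sfe$.} Outside the countable set $\rmJ$, $\sfe$ agrees with the BV function $\limen$. For $t\in\rmJ$, $u(t)\in\vlim(t)\subset\Sublevel\rho$, so $\sfe(t)$ is bounded. Changing a BV function on a countable set by bounded amounts does not by itself preserve BV. To control this, I would use Proposition \ref{prop:lr-limits}: its jump relation \eqref{eq:39bis}, together with \eqref{ch-rule-estesa}, shows that $\sfe(t)=\ene t{\rmU(t)}$ lies between $\limen(t+)$ and $\limen(t-)$. The oscillation of $\sfe$ at each $t\in\rmJ$ is therefore bounded by $\mu(\{t\})$, whose sum over $\rmJ$ is finite. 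So $\sfe\in\BV$, it induces the measure $\mu$, and its jump set is exactly $\rmJ$, by \eqref{left_right_limits} and the fact that $\limen$ is continuous off $\rmJ$.

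\textbf{Item (2): criticality and continuity.} For $t\notin\rmJ$ we already have $u(t)\in\critset(t)$, and $\rmU(t)=\comp t{u(t)}=\ulim(t)$ because $u(t)\in\vlim(t)\subset\ulim(t)$. For the second condition in \eqref{eq:8}, any $\klimsup_{s\to t}u(s)$ point is a limit of points of $\Gv$. Since $\Gv$ is compact and $\vlim$ is upper semicontinuous, this point lies in $\vlim(t)\subset\rmU(t)$.

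\textbf{Item (3): jump times.} For $t\in\rmJ$, the inclusions in \eqref{to-save-2} follow from \eqref{eq:36}, since $u(s)\in\vlim(s)$. The energy limits follow from \eqref{eq:25} together with $\sfe=\limen$ off the countable set $\rmJ$, using that one-sided limits of $\sfe$ may be computed along $s\notin\rmJ$ thanks to the BV control above. Relation \eqref{to-save-4} is exactly \eqref{eq:39}--\eqref{eq:39bis} applied with $u=u(t)\in\vlim(t)$ and $\ulim=\rmU(t)$.

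\textbf{Endpoints and Borel measurability.} The endpoints $0,T$ are handled with one-sided limits only. Borel measurability of $u$ is a hypothesis of the statement.

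\textbf{Main obstacle.} I expect the delicate step to be the BV property of $\sfe$ and the correct one-sided limits at atoms, that is, reconciling the pointwise values at jump times with $\limen$. The plan is to rely on the sandwich estimate coming from \eqref{eq:39bis} and the triangle and chain-rule inequalities of Theorem \ref{prop:cost}.
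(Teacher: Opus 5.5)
Your proposal is correct and follows the same route as the paper: you identify $\sfe$ with $\limen$ off $\rmJ$ via Proposition~\ref{th:1} and \eqref{better}, and then read off the jump conditions from \eqref{eq:36}, \eqref{eq:25} and \eqref{eq:39}--\eqref{eq:39bis}. Your sandwich $\limen(t+)\le\sfe(t)\le\limen(t-)$ at atoms, obtained by combining \eqref{eq:39bis} with \eqref{ch-rule-estesa}, is a genuine improvement in care: it gives the summable control at the (possibly non-critical) values $u(t)$, $t\in\rmJ$, that the paper's one-line inference ``$\limen\in\BV$, hence $\sfe\in\BV$'' tacitly needs, and your explicit verification of \eqref{eq:8} likewise fills in a step the paper does not spell out.
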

\begin{proof}
Defining the function $\sfe(t):=\ene t{u (t)}$, for $t\in[0,T]$, and arguing as in \eqref{eq:lim_jumps}, one can prove that
\[
\sfe(t-)=\ene t{\ulim(t-)},
\qquad
\sfe(t+)=\ene t{\ulim(t+)}
\qquad\mbox{for all\ }t\in\jumpname.
\]
Therefore, identities \eqref{to-save-2} and \eqref{to-save-4} in Definition \ref{def:sols-notions-1} of \DSolution\ hold true, also thanks to \eqref{eq:39}--\eqref{eq:39bis}. Moreover, we have that
$\sfe(t)=\limen(t)$ for every $t\in[0,T]\setminus\jumpname$, and $\sfe(t\pm)=\limen(t\pm)$
for every $t\in\jumpname$, thanks to \eqref{better} and \eqref{eq:25}, respectively. 
In particular, since $\limen\in\BV ([0,T])$, then $\sfe\in\BV ([0,T])$ as well, and, due to \eqref{left_right_limits}, $\jumpname=\jump\sfe$.
\end{proof}
In order to conclude the proof of
 Claims (1) and (2) 
of Theorem \ref{mainth:1}
it is then sufficient to prove that
there exists a Borel selection satisfying \eqref{eq:40}.
This is precisely the statement of our next result.
\begin{theorem}
  \label{thm:selection2}
  There exists a Borel map
  $ u :[0,T]\to \Hilbert$ satisfying $u(0)=u_0$
  and \eqref{eq:40}.
\end{theorem}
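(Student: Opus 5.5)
The plan is to build $u$ fiber by fiber. First I show that for a.e.\ $t$ the target value $\mathscr P(t)$ is attained on $\vlim(t)$. Then I pick $u(t)$ by a measurable selection theorem applied to the compact-valued map $\vlim$. Throughout, fix a Borel representative of $\limp\in L^\infty(0,T)$.

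\emph{Step 1: $\limp(t)\in \mathcal P_t(\vlim(t))$ for a.e.\ $t$.} By \eqref{bis-2converg}, $p_n(t):=\pt t{u_{\eps_n}(t)}\weaksto\limp$ in $L^\infty(0,T)$. By Mazur's lemma applied to tails (or the Young-measure form of the same fact), for a.e.\ $t$
\[
\limp(t)\in\bigcap_{k}\overline{\mathrm{conv}}\{p_n(t):n\ge k\}.
\]
Fix such a $t$. The points $u_{\eps_n}(t)$ lie in the compact set $\sublevel\rho$ by \eqref{in-un-compattone}. Kuratowski convergence \eqref{Hausd-Kur-conv} gives $\klimsup_n u_{\eps_n}(t)\subset\vlim(t)$, and the continuity in \eqref{P_t} (the energies are bounded) says that $u\mapsto\pt tu$ is continuous on $\sublevel\rho$. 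Together these show that every cluster value of $(p_n(t))_n$ belongs to $\pt t{\vlim(t)}$. Since the $p_n(t)$ are bounded reals, the intersection of the closed convex hulls of the tails is contained in the convex hull of the set of cluster values, hence in $\mathrm{conv}\,\pt t{\vlim(t)}$. Now I use the key structural fact: $\vlim(t)$ is compact and connected (Lemma \ref{l:compactness-1}, fiberwise connectedness), and $\pt t\cdot$ is continuous on it. Its image is therefore a compact interval, which is already convex. Hence $\limp(t)\in\pt t{\vlim(t)}$ outside a Lebesgue-null set $N$, which I may take Borel and not containing $0$.

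\emph{Step 2: selection.} Define the compact-valued map $\Phi(t):=\{v\in\vlim(t):\pt tv=\limp(t)\}$ for $t\notin N\cup\{0\}$, $\Phi(0):=\{u_0\}$, and $\Phi(t):=\vlim(t)$ for $t\in N$. Note $u_0\in\vlim(0)$, since $u_{\eps_n}(0)\to u_0$. Each $\Phi(t)$ is nonempty by Step 1 and closed by continuity of $\pt t\cdot$. For the graph of $\Phi$ to be Borel I need $(t,u)\mapsto\pt tu$ to be Borel on $\Gv$. It is: it is the pointwise limit of the difference quotients $h^{-1}(\ene{t+h}u-\ene tu)$ with $h=1/k$, which are Borel because $\calE$ is lower semicontinuous. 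The graph of $\Phi$ is then $\Gv\cap\{(t,v):\pt tv=\limp(t)\}$ modified on the Borel sets $N\times\Hilbert$ and $\{0\}\times\Hilbert$, hence Borel. A closed-valued multifunction with Borel graph into the Polish space $\Hilbert$ admits, by the Aumann--von Neumann theorem, a Lebesgue-measurable selection $\tilde u$. I then correct it to a Borel map. The map $\vlim$ is upper semicontinuous with compact values, hence Borel measurable, so Kuratowski--Ryll-Nardzewski gives a Borel selection $w$ of $\vlim$. Choose a Borel null set $N'\supset N$ outside of which $\tilde u$ coincides with a Borel map, and set $u:=\tilde u$ off $N'\cup\{0\}$, $u:=w$ on $N'\setminus\{0\}$, and $u(0)=u_0$. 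This $u$ is Borel, satisfies $u(t)\in\vlim(t)$ for every $t$, and satisfies $\pt t{u(t)}=\limp(t)$ a.e., which is \eqref{eq:40}.

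I expect Step 1 to be the main obstacle: turning weak$^*$ convergence into a pointwise statement, where a value of $\limp(t)$ lying strictly inside the hull must still be attained by an actual point of $\vlim(t)$. The connectedness of the fibers $\vlim(t)$ is exactly what closes this gap, through the intermediate value theorem.
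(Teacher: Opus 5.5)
Your proof is correct and follows the paper's argument. The ingredients match: the a.e.\ bracketing $\liminf_n \mathcal P_t(u_{\eps_n}(t))\le \mathscr P(t)\le\limsup_n\mathcal P_t(u_{\eps_n}(t))$; the inclusion of this interval in $\mathcal P_t(\vlim(t))$, via connectedness of the fibers and the intermediate value theorem; and a measurable selection on the resulting Borel set with compact nonempty sections. The only real difference is the selection tool. The paper invokes the Arsenin--Kunugui uniformization theorem to obtain a Borel selection directly, whereas you use Aumann's Lebesgue-measurable selection and then patch a null set with a Kuratowski--Ryll-Nardzewski selection of $\vlim$. You also make explicit the normalization $u(0)=u_0$ and the Borel measurability of $(t,u)\mapsto\mathcal P_t(u)$, both of which the paper leaves implicit.
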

\begin{proof}
  Let $\mathscr P$ be a Borel representative of the weak$^*$ limit
  \eqref{bis-2converg}.
  It is easy to check that we can always suppose that 
  \begin{displaymath}
    P^-(t)=\liminf_{n\to\infty}\mathcal P_t(u_{\eps_n}(t))
    \le \mathscr P(t)\le
    \limsup_{n\to\infty}\mathcal
    P_t(u_{\eps_n}(t))=P^+(t)\quad\text{for every }t\in [0,T].
  \end{displaymath}
  On the other hand, using compactness of the energy sublevel $\sublevel \rho$ such that 
  $u_{\eps_n}([0,T]) \subset \sublevel \rho$ for all $n\in \N$, as well as the fact that
  all the limit points of $(u_{\eps_n}(t))_n$ belong to $\vlim(t)$,
  we clearly have
  \begin{equation}
    \label{eq:41}
    \min_{v\in \vlim(t)}\mathcal P_t(v)\le P^-(t)\le P^+(t)\le
    \max_{v\in \vlim(t)}\mathcal P_t(v).
  \end{equation}
  Since $\vlim(t)$ is connected, for every $t\in [0,T]$
  \begin{equation}
    \label{eq:42}
    \mathrm V^*(t):=
    \Big\{v\in \vlim(t):\mathcal P_t(v)=\mathscr P(t)\Big\}
    \quad\text{is compact and nonempty}.
  \end{equation}
  The corresponding set $\mathbf V^*\subset [0,T]\times \Hilbert$
  defined by
  \begin{equation}
    \label{eq:43}
    \mathbf V^*:=\Big\{(t,v)\in \Gv: \mathcal P_t(v)=\mathscr P(t)\Big\}
  \end{equation}
  is Borel. Since its sections are compact and nonempty,
  the  Arsenin--Kunugui theorem
  (cf.\ \cite[Thm.\ 6.9.6]{Bogachev07.II})
  yields the existence of a Borel 
  map $u $ 
  with the desired properties.
\end{proof}
\subsection{Convergence of Singularly perturbed gradient flows:
proof of Claims 3,4,5 of Theorem \ref{mainth:1}}
\label{subsec:end1}

Claim (3) is an immediate consequence of the fact
that the limit $u$ satisfies
\begin{equation}
  \label{eq:44}
  u(t)\in \vlim(t)\quad\text{for every }t\in D,\quad
  \mathcal P_t(u(t))=\mathscr P(t)\quad\text{a.e.~in }D.
\end{equation}
Claim (4) then follows by the fact that
$u_{\eps_n}(t)$ converges pointwise
for every $t\in [0,T]\setminus(\nototaldisc\cup\rmJ)$.
Since 
$D\cap( \nototaldisc\cup\rmJ)$
is countable, we can further extract a
subsequence
pointwise converging in $D$.

Claim (5) is an obvious consequence.

 \section{Proof of Theorem \ref{mainth:2}}
 \label{s:5}
The proof  that \DSolutions\ in fact improve to Balanced Viscosity under the rectifiability condition 
\eqref{eq:15} on $\crit$
will rely  both on     known measure-theoretic  results, reviewed below for the reader's convenience,
 and  on  new ones. 
\par
We start by recalling the  \emph{area formula} for absolutely continuous functions (cf., e.g., \cite[Thm.\ 3.4.6]{Ambrosio-Tilli}).
\begin{theorem}
Let $\varphi: I \to \R$ be absolutely continuous, and let $h:I \to \R$ be a non-negative Borel map. Then,
\begin{equation}
\label{area-formula}
\int_I h(s) |\varphi'(s)| \dd s = \int_{\R} \left( \sum_{s:\, \varphi(s) =r} h(s)\right) \dd r \,.
\end{equation}
\end{theorem}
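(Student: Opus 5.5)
Since the statement is the classical one-dimensional area formula, I would prove it by the usual reduction: first establish \eqref{area-formula} for $h=\nchi_A$ with $A\subset I$ Borel, then extend it to simple functions by linearity and to arbitrary non-negative Borel $h$ by monotone convergence. Monotone convergence is legitimate on the right-hand side because $r\mapsto\sum_{s:\varphi(s)=r}h(s)$ increases with $h$. The extension also needs the multiplicity function $N(A,r):=\#\{s\in A:\varphi(s)=r\}$ to be Borel (or at least Lebesgue measurable) in $r$. I would obtain this along the way by writing $N(A,\cdot)=\sum_k\nchi_{\varphi(A\cap A_k)}$ for a countable Borel partition $(A_k)_k$ on whose pieces $\varphi$ is injective. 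Images of Borel sets under continuous injective maps are Borel (Lusin--Souslin), and the pieces where $\varphi$ is not injective will be handled separately as images of null sets.

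For the indicator case I would split $A$ into three Borel parts.
\begin{enumerate}
\item $A_0$ is the set of points where $\varphi$ is not differentiable. Here $\mathscr L^1(A_0)=0$, and the Lusin (N) property of absolutely continuous functions gives $\mathscr L^1(\varphi(A_0))=0$. Hence both sides of \eqref{area-formula} vanish for $A_0$.
\item $A_1$ is the set where $\varphi'=0$. A Sard-type covering argument shows $\mathscr L^1(\varphi(A_1))=0$: around each point choose small intervals $J$ with $\operatorname{diam}\varphi(J)\le\eta|J|$, apply a Vitali covering, and let $\eta\downarrow0$. So both sides are again zero.
\item $A_2$ is the set where $\varphi'(s)\ne0$. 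For $k,m\in\N$ and rational intervals $Q$ of length $<1/m$, I would set $A_{k,m,Q}:=\{s\in A_2\cap Q:\ |\varphi(y)-\varphi(s)|\ge \tfrac1k|y-s|\text{ for }|y-s|<1/m\}$. On each such set $\varphi$ is injective with a Lipschitz inverse. After disjointifying, these sets give a countable Borel partition of $A_2$.
\end{enumerate}

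On each piece $E$ of this partition I need the one-to-one change of variables $\mathscr L^1(\varphi(E))=\int_E|\varphi'|\,\d s$. The inequality "$\le$" holds for every measurable $E$: approximate $E$ from outside by open sets $O$ and use $\mathscr L^1(\varphi(O))\le\int_O|\varphi'|$, which follows interval by interval from absolute continuity. For "$\ge$" I would use injectivity together with the Lipschitz inverse $\psi=\varphi^{-1}$ on $\varphi(E)$. Since $\psi$ is Lipschitz, the same estimate applied to $\psi$ (extended to a Lipschitz function on $\R$) gives $\mathscr L^1(E)\le\int_{\varphi(E)}|\psi'|\,\d r$. Applying this to subsets of $E$ on which $|\varphi'|$ is nearly constant, where $|\psi'|=1/|\varphi'|\circ\psi$, yields the reverse inequality up to an arbitrary factor $(1+\eta)$. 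Summing over the pieces of $A_2$, and adding the contributions of $A_0$ and $A_1$, gives \eqref{area-formula} for $h=\nchi_A$.

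I expect this last step to be the main obstacle: the equality on injective pieces, which needs a careful Vitali-type covering, together with the measurability bookkeeping for the multiplicity function $N(A,\cdot)$. Everything else is standard.
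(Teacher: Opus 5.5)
Your proof is correct. There is, however, nothing in the paper to compare it with: the paper does not prove this statement. It recalls it from \cite[Thm.\ 3.4.6]{Ambrosio-Tilli} and uses it as a black box, and only for Lipschitz $\varphi$ ($\widehat\calE\circ\gamma$ and $\gamma_0$ in Lemma~\ref{cor-l-ch-rule-gamma}, $\gamma_{k,0}$ in the proof of Theorem~\ref{mainth:2}).

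What you supply is a self-contained proof along the classical lines of the area formula. You discard the non-differentiability set and the critical set, and partition $\{\varphi'\neq0\}$ into countably many Borel pieces on which $\varphi$ is injective with Lipschitz inverse. On each piece you prove $\mathscr L^1(\varphi(E))=\int_E|\varphi'|\dd s$, and Lusin--Souslin takes care of the measurability of the multiplicity function.

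Two observations make the argument leaner than you expect.
\begin{enumerate}
\item Your inequality $\mathscr L^1(\varphi(E))\le\int_E|\varphi'|\dd s$ holds for every measurable $E$, using only outer regularity and $|\varphi(x)-\varphi(y)|\le\int_y^x|\varphi'|\dd s$. It already gives $\mathscr L^1(\varphi(A_0))=\mathscr L^1(\varphi(A_1))=0$, since the right-hand side vanishes on $A_0\cup A_1$. So the separate Lusin (N) argument and the Sard-type Vitali covering are redundant.
\item The step you single out as the main obstacle needs no Vitali covering at all. Applying the same inequality to the Lipschitz extension $\psi$ of $\varphi^{-1}|_{\varphi(E)}$ does the job. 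You should state explicitly where $|\psi'(r)|=1/|\varphi'(\psi(r))|$ holds: at every $r\in\varphi(E)$ that is not isolated in $\varphi(E)$ and at which $\psi$ is differentiable. That is $\mathscr L^1$-a.e.\ on $\varphi(E)$, which is all the argument uses.
\end{enumerate}

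A genuinely one-dimensional alternative would start from Banach's indicatrix theorem for intervals, $\int_\R\#\{s\in(a,b):\varphi(s)=r\}\dd r=\int_a^b|\varphi'|\dd s$. One then notes that both sides of \eqref{area-formula} with $h=\chi_A$ are finite Borel measures in $A$ that agree on intervals, hence on all Borel sets. Your decomposition has the advantage of being the one that carries over to Lipschitz maps in higher dimension.
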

\par
A measurable function
$f: [a,b]\to \R$ is said to fulfill the \emph{Lusin property} if it satisfies
\begin{equation}
\label{Lusin}
\mathscr{L}^1(f(\mathcal{O})) =0 \qquad \text{for every Borel set } \mathcal{O}\subset [a,b] \text{ with } 
\mathscr{L}^1(\mathcal{O}) =0.
\end{equation}
 We recall the \emph{Banach-Zaretskii Theorem}.  
\begin{theorem}
\label{thm:Lusin}
Let $f:[a,b]\to \R$ be a continuous function with bounded variation. Suppose that
$f$ satisfies the Lusin property \eqref{Lusin}. 
Then, $f\in \AC ([a,b])$.
\end{theorem}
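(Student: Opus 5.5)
The plan is to show directly the increment estimate
\[
|f(d)-f(c)|\le \int_c^d |f'(s)|\dd s\qquad\text{for all } a\le c<d\le b .
\]
Here $f'$ is the pointwise derivative, which exists a.e.\ on $[a,b]$ and is Lebesgue integrable because $f$ has bounded variation (Lebesgue's differentiation theorem for $\BV$ functions). The estimate gives absolute continuity at once: for disjoint intervals $(c_i,d_i)$ we get $\sum_i|f(d_i)-f(c_i)|\le \int_{\cup_i(c_i,d_i)}|f'|\dd s$, which is small when $\sum_i (d_i-c_i)$ is small, by absolute continuity of the integral. Bounded variation is used only to guarantee that $f'$ exists a.e.\ and belongs to $L^1$.

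First, continuity and the intermediate value theorem show that $f([c,d])$ contains the closed interval with endpoints $f(c),f(d)$, so $|f(d)-f(c)|\le \mathscr L^1_*(f([c,d]))$, where $\mathscr L^1_*$ is outer measure. (All images are in fact Lebesgue measurable, being continuous images of Borel sets and hence analytic, but outer measure avoids the issue.) Then split $[c,d]=N\cup D$. Here $D$ is the set of points where $f$ is differentiable and $N$ is its complement, with $\mathscr L^1(N)=0$. By the Lusin property \eqref{Lusin} applied to a Borel null set containing $N$, we get $\mathscr L^1_*(f(N))=0$. It therefore remains to bound $\mathscr L^1_*(f(D))$ by $\int_D|f'|$. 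For fixed $\delta>0$, partition $D$ into the sets $D_k:=\{s\in D: k\delta\le |f'(s)|<(k+1)\delta\}$, $k\ge0$. This reduces everything to the key lemma stated next, since summing over $k$ gives $\mathscr L^1_*(f(D))\le \sum_k (k+1)\delta\,\mathscr L^1(D_k)\le \int_D|f'|+\delta(d-c)$; letting $\delta\downarrow0$ concludes.

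\textbf{Key lemma.} If $f$ is differentiable at every point of a set $A$ with $|f'|\le M$ on $A$, then $\mathscr L^1_*(f(A))\le M\,\mathscr L^1_*(A)$. This is where the real work lies, and I expect it to be the main obstacle, although it is standard. Fix $\varepsilon>0$ and let
\[
A_n:=\big\{x\in A:\ |f(y)-f(x)|\le (M+\varepsilon)|y-x| \text{ whenever } |y-x|<1/n\big\}.
\]
These sets increase to $A$. Choose an open set $O\supset A_n$ with $\mathscr L^1(O)\le \mathscr L^1_*(A_n)+\varepsilon$, and write $O$ as a countable union of non-overlapping intervals $I_j$ of length $<1/n$. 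For any two points $x,x'\in A_n\cap I_j$ we have $|f(x)-f(x')|\le (M+\varepsilon)|I_j|$, so $f(A_n\cap I_j)$ has diameter at most $(M+\varepsilon)|I_j|$. Summing over $j$ gives $\mathscr L^1_*(f(A_n))\le (M+\varepsilon)(\mathscr L^1_*(A)+\varepsilon)$. Outer measure is continuous along increasing sequences, so letting $n\to\infty$ and then $\varepsilon\downarrow0$ yields the lemma. With the lemma in hand, the chain $|f(d)-f(c)|\le \mathscr L^1_*(f(N))+\mathscr L^1_*(f(D))\le \int_c^d|f'|$ holds, and $f\in\AC([a,b])$ follows.
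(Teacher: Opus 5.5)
The paper does not prove this statement. It recalls it as the classical Banach--Zaretskii theorem and uses it as a black box in Proposition~\ref{prop:extLus}, so there is no in-paper argument to compare with. Your proof is correct and complete, and it is essentially the standard textbook argument. The estimate $\mathscr L^1_*(f(A))\le M\,\mathscr L^1_*(A)$ holds on sets where $f$ is differentiable with $|f'|\le M$. Combined with the layer decomposition of the differentiability set and with the Lusin property on the null set $N$, it gives $|f(d)-f(c)|\le\int_c^d|f'|\dd s$. Absolute continuity then follows from the absolute continuity of the integral. Bounded variation is used only for a.e.\ differentiability and for $f'\in L^1$.

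You also handle correctly the fact that \eqref{Lusin} is stated only for Borel null sets, by enlarging $N$ to a Borel ($G_\delta$) null set before applying it.

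Two routine points deserve one line each in a written version.
\begin{itemize}
\item The sets $D_k$ are Lebesgue measurable. This holds because $[c,d]\setminus D$ is null and, on $D$, $f'$ is the pointwise limit of continuous difference quotients. Measurability is what lets you replace $\mathscr L^1_*(D_k)$ by $\mathscr L^1(D_k)$ and compare with $\int_{D_k}|f'|\dd s$.
\item Continuity from below of Lebesgue outer measure along arbitrary (non-measurable) increasing sequences, used for $f(A_n)\uparrow f(A)$, rests on the existence of measurable hulls.
\end{itemize}

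For comparison, there is a route closer to how the paper uses the theorem, namely to show that the Cantor part of the derivative vanishes. It uses Banach's indicatrix identity $|Df|(Z)=\int_{\R}\#\big(Z\cap f^{-1}(y)\big)\dd y$ for continuous $f\in\BV([a,b])$ and Borel $Z\subset[a,b]$, where $|Df|$ is the total variation measure of the distributional derivative. This identity shows at once that $|Df|$ vanishes on every null Borel set whose image is null. Your argument avoids the indicatrix machinery and instead yields the quantitative bound $|f(d)-f(c)|\le\int_c^d|f'|\dd s$ directly.
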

\par
Our first preliminary result provides an extension of 
Theorem \ref{thm:Lusin} to functions that are only with bounded variation.
\begin{proposition}[Extension of the Lusin property to $\BV$ functions]
\label{prop:extLus}
Let $f \in \BV([a,b])$ satisfy \eqref{Lusin}. Then, the Cantor part of the distributional derivative of $f$ is null.
\end{proposition}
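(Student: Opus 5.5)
We will decompose $f$ and reduce to Theorem \ref{thm:Lusin} applied to its continuous part. Write $\mathrm{D}f=f'\mathscr L^1+\mathrm{D}^c f+\mathrm{D}^j f$. Let $J_f=\{x_k\}$ be the (countable) set of jump points of $f$. Define the jump function $f_j(x):=\mathrm{D}^jf([a,x])$, which is a pure jump $\BV$ function, and set $g:=f-f_j$. Then $g$ is continuous and of bounded variation, and $\mathrm{D}g=f'\mathscr L^1+\mathrm{D}^cf$. The claim is therefore equivalent to $g\in\AC([a,b])$. By Theorem \ref{thm:Lusin}, it suffices to show that $g$ has the Lusin property \eqref{Lusin}.

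Suppose instead that $\mathrm{D}^cf\neq0$. We then argue by contradiction and produce a null set whose image under $f$ has positive measure. Let $\nu:=|\mathrm{D}^cf|$. This measure is diffuse and singular with respect to $\mathscr L^1$, so it is concentrated on a Borel set $N$ with $\mathscr L^1(N)=0$. We may also take $N$ disjoint from $J_f$ and from the endpoints, since removing countably many points costs nothing for $\nu$. Split $\mathrm{D}^c f=(\mathrm{D}^cf)^+-(\mathrm{D}^cf)^-$ and assume $(\mathrm{D}^cf)^+\neq0$, concentrated on some $N^+\subset N$. The image $g(N^+)$ has $\mathscr L^1$-measure at least comparable to $(\mathrm{D}^cf)^+(N^+)$. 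We would obtain this from the area formula for the continuous $\BV$ function $g$ (the Banach indicatrix): $\int_\R\#\{s\in N^+:g(s)=r\}\dd r=|\mathrm{D}g|(N^+)>0$. This forces $\mathscr L^1(g(N^+))>0$, because the indicatrix vanishes off $g(N^+)$.

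The step we expect to be the main obstacle is transferring this from $g$ back to $f$, that is, showing $\mathscr L^1(f(\mathcal O))>0$ for a suitable null set $\mathcal O\subset N^+$. On $N^+$ we have $f=g+f_j$, and $f_j$ is not constant there. First we reduce to a small interval. Fix $\epsilon>0$ and choose finitely many jumps carrying all but $\epsilon$ of $|\mathrm{D}^jf|$. Between consecutive chosen jumps, $f_j$ has oscillation at most $\epsilon$, and we pick an open interval $I$ with $(\mathrm{D}^cf)^+(I)>0$ that contains none of the chosen jumps, assuming $(\mathrm{D}^cf)^+(I)\gg$ the small-jump mass $|\mathrm{D}^jf|(I)$. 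This is possible by differentiating $(\mathrm{D}^cf)^+$ with respect to $|\mathrm{D}f|$ at a density point: there $(\mathrm{D}^cf)^+$ has density $1$ relative to $|\mathrm{D}f|$.

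On such $I$ we compare images. Since $f$ is monotone-like up to small errors, we use that $\mathscr L^1(f(\mathcal O))\ge\mathscr L^1(g(\mathcal O))-|\mathrm{D}^jf|(I)$ for $\mathcal O=N^+\cap I$. Heuristically, the images $f(\mathcal O)$ and $g(\mathcal O)$ differ by the monotone shifting produced by jumps, and the total shift is bounded by the jump mass. To make this rigorous we would argue as follows. For each $x\in\mathcal O$, $f(x)=g(x)+f_j(x)$, so $f(\mathcal O)$ is the image of $g(\mathcal O)$ under a map that translates by the piecewise constant $f_j$. Cutting $I$ at the jump points gives countably many pieces, and on each piece the translation is a constant. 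The measure is lost only through overlaps, and these are controlled by the total variation of $f_j$ on $I$.

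Combining the estimates, $\mathscr L^1(f(\mathcal O))\ge (\mathrm{D}^cf)^+(I)-|\mathrm{D}^jf|(I)>0$, while $\mathscr L^1(\mathcal O)=0$. This contradicts \eqref{Lusin}, so $\mathrm{D}^cf=0$.
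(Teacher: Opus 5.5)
Your preliminary steps are sound: subtracting the jump part, choosing a Borel $\mathscr L^1$-null set $N^+$ disjoint from $J_f$ that carries $(\mathrm D^cf)^+$, and applying the Banach indicatrix to the continuous function $g$ to get $\mathscr L^1(g(N^+))>0$. (The indicatrix gives only positivity here. Because of multiplicities, it gives no bound comparable to $(\mathrm D^cf)^+(N^+)$.)

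The genuine gap is the step you flag yourself, the transfer estimate $\mathscr L^1(f(\mathcal O))\ge\mathscr L^1(g(\mathcal O))-|\mathrm D^jf|(I)$. The mechanism you propose for it fails in general.
\begin{enumerate}
\item Localizing at a density point keeps only finitely many \emph{large} jumps out of $I$. The small ones may still be dense in $I$, e.g.\ positive jumps $c_q$ at every rational, with $\sum_q c_q<\infty$. Then $I\setminus J_f$ has empty interior, and cutting $I$ at the jumps does not produce countably many intervals on which $f_j$ is constant. Moreover $f_j$ is strictly increasing and takes uncountably many values on $\mathcal O$. Hence $f(\mathcal O)$ is not a countable union of translates of pieces of $g(\mathcal O)$.
\item Even with countably many pieces, bounding the measure lost in a union of translated sets by the size of the translations needs the pieces to be ordered, i.e.\ $g$ monotone on $I$. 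You only have this approximately. For general sets, translating by $\eta$ one of two complementary unions of $n$ intervals loses measure of order $n\eta$, not $\eta$.
\end{enumerate}
So your final inequality is not established.

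The paper's own route does not fill this hole. It starts as you do, but transfers the Lusin property from $f$ to $g$ via $g(\mathcal O)\subset\bigcup_{r\in f_j([a,b])}\bigl(f(\mathcal O)-r\bigr)$ together with the claim that $f_j([a,b])$ is countable. That claim holds when $\overline{J_f}$ is countable. It fails in the dense-jump example above: there $f_j$ is strictly increasing, and a difference of two null sets need not be null. Both arguments therefore stall on the same obstruction.

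A clean way to close the gap, avoiding the decomposition altogether, is the coarea formula for $\BV$ functions: for every Borel $B\subset(a,b)$,
\[
|\mathrm Df|(B)=\int_{\R}\mathscr H^0\bigl(B\cap\partial^*\{f>y\}\bigr)\dd y .
\]
\begin{enumerate}
\item Take a Borel $\mathscr L^1$-null set $\mathcal O$ carrying $|\mathrm D^cf|$, and remove from it the countably many discontinuity points of $f$. Then $|\mathrm Df|(\mathcal O)=|\mathrm D^cf|((a,b))$.
\item If $t\in\mathcal O$ lies in $\partial^*\{f>y\}$, continuity of $f$ at $t$ forces $f(t)=y$; otherwise $\{f>y\}$ has density $0$ or $1$ at $t$.
\item Hence the integrand vanishes for every $y\notin f(\mathcal O)$. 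Since $f(\mathcal O)$ is null by \eqref{Lusin}, $|\mathrm D^cf|((a,b))=0$.
\end{enumerate}

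If you prefer to keep your localization, use the signed version of the same formula, counting up- and down-crossings of the level sets (they alternate inside $I$). It gives $\mathscr L^1(f(\mathcal O\cap I))\ge(\mathrm D^cf)^+(\mathcal O\cap I)-(\mathrm Df)^-(I)$. This is positive at a density point of $(\mathrm D^cf)^+$ relative to $|\mathrm Df|$, and it works directly with $f$, without passing through $g$.
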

 \begin{proof}
 Let $\jump f$ denote the jump set of $f$, let 
  $\nu = \frac{\dd}{\dd t} f$ be  its distributional derivative with jump part $j: = \nu\res\jump f $, and let $g: [a,b]\to \R$ have distributional derivative equal to $j$. For instance, we may take
  \[
  g(t): = \sum_{\tau \in \jump f} \left( f(\tau{+}){-} f(\tau{-})\right) H(t-\tau) 
  \] 
  with $H$ the Heaviside function. Clearly, the function
  $\tilde{f}: = f-g$ is continuous on $[a,b]$, with distributional derivative
  $
  \frac{\dd}{\dd t} \tilde{f} = \nu-j
 $. In particular,  the Cantor part of $\nu$
 coincides with the Cantor part of $\frac{\dd}{\dd t} \tilde{f}$. In order to show that the latter vanishes, in view of Theorem \ref{thm:Lusin} it is sufficient to prove that $\tilde f$
 complies with  \eqref{Lusin}. 
 With this aim, let us introduce the image set
 \[
\imsetG: =  
 g([a,b]) .
 \]
 We observe that $\imsetG$ is countable and that 
 $\tilde{f}(t) \in f(t)-\imsetG$ 
 for all $t\in [a,b]$. 
 Therefore, for every $\mathscr{L}^1$-negligible set
$\mathcal{O}\subset[a,b]$ 
 there holds
 \[
 \tilde{f}(\mathcal{O}) 
 \subset
 f(\mathcal{O})-\imsetG = \cup_{r\in \imsetG} \left( f(\mathcal{O})-r\right)\,.
 \]
 By assumption, $f$ satisfies \eqref{Lusin}, and thus 
 $ f(\mathcal{O})$
 is $\mathscr{L}^1$-negligible. Since $\imsetG$ is countable, we conclude that 
 $\tilde{f}(\mathcal{O})$ 
 is
 $\mathscr{L}^1$-negligible, as well. 
 \end{proof}
 \begin{remark}
 \slshape
 \label{rmk:Lusin}
 For later use we point out that, in practice, 
 for any $f\in \BV([a,b])$
 it is sufficient to check \eqref{Lusin} for any $\mathscr{L}^1$-negligible 
subset of $[a,b]\setminus \jump f$,
 since $\jump f$ is countable and then $f(\jump f) $ is countable
 (hence $\mathscr{L}^1$-negligible), as well.
 \end{remark}
 \par
 Let us now fix an energy sublevel $\subl\rho$. Recall that, by Lemma \ref{l:E-Lip}, the functional  $\calE$ is Lipschitz on
 $\crit  \cap \Subl\rho$.  Hence, there exists
 \begin{equation}
 \label{hat-calE}
  \text{a Lipschitz function 
 $\widehat\calE: [0,T]\ti \Hilbert \to \R$ such that  }  \widehat\calE|_{\crit  {\cap} 
\ti \subl\rho} = \calE.
 \end{equation}
 We omit to explicitly denote the dependence of $\widehat\calE$ on $\rho>0$. 
Our next result provides a suitable chain-rule formula for $\widehat{\calE}$ along Lipschitz curves. 
 In what follows,
 for a curve $\gamma=(\gamma_0,\gamma_1)$ with values in $[0,T]\ti \Hilbert$, 
  with slight abuse of notation we will write $\widehat{\calE}(\gamma(s))$ in place of
  $\widehat{\calE}_{\gamma_0(s)}(\gamma_1(s))$. 
 \begin{lemma}
 \label{l:chain-rule-critset}
 Let $\gamma = (\gamma_0,\gamma_1): [a,b]\to [0,T]\ti \Hilbert$ be a Lipschitz curve, and let 
 \begin{equation}
 \label{setD}
 D:= \{s \in [a,b]\, : \ \gamma(s) \in  \crit  \cap 
\ti \subl\rho \}\,.
 \end{equation}
 Then, there holds
 \begin{equation}
 \label{ch-rule-gamma}
 \frac{\dd}{\dd s} \widehat{\calE}(\gamma(s))  = \pt{\gamma_0(s)}{\gamma_1(s)} \gamma_0'(s) \qquad \foraa\, s \in D. 
 \end{equation}
 \end{lemma}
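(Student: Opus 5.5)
We need to prove that for a.e. $s\in D$, $\frac{\d}{\d s}\widehat\calE(\gamma(s)) = \pt{\gamma_0(s)}{\gamma_1(s)}\gamma_0'(s)$. Since $\widehat\calE$ is Lipschitz and $\gamma$ is Lipschitz, $s\mapsto\widehat\calE(\gamma(s))$ is Lipschitz and hence differentiable a.e. Likewise $\gamma_0$ is differentiable a.e., and $\gamma_1$ is differentiable a.e. as well, since $\Hilbert$ is a Hilbert space and thus has the Radon--Nikod\'ym property. We fix $s\in D$ at which all of these derivatives exist and which is, in addition, a point of density of $D$ (and not isolated in $D$). The set of such points has full measure in $D$. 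At such an $s$ we compute the derivative along a sequence $s_k\to s$ with $s_k\in D$.

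The key point is that both $\gamma(s_k)$ and $\gamma(s)$ lie in $\crit\cap\Subl\rho$, where $\widehat\calE=\calE$. We can therefore use the two-sided estimate \eqref{GEST-1} of Lemma \ref{l:E-Lip} with $(t,u)=\gamma(s_k)$ and $(s,v)=\gamma(s)$:
\[
\int_{\gamma_0(s)}^{\gamma_0(s_k)}\pt r{\gamma_1(s_k)}\dd r-\frac\lambda2\|\gamma_1(s_k)-\gamma_1(s)\|^2\le \calE(\gamma(s_k))-\calE(\gamma(s))\le \frac\lambda2\|\gamma_1(s_k)-\gamma_1(s)\|^2+\int_{\gamma_0(s)}^{\gamma_0(s_k)}\pt r{\gamma_1(s)}\dd r .
\]
We divide by $s_k-s$ and let $k\to\infty$. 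The quadratic terms are $O(|s_k-s|^2)$ by the Lipschitz continuity of $\gamma_1$, so after division they vanish in the limit. The upper integral, divided by $s_k-s$, converges to $\pt{\gamma_0(s)}{\gamma_1(s)}\gamma_0'(s)$. This uses that $r\mapsto\pt r{\gamma_1(s)}$ is the derivative of the map $r\mapsto\ene r{\gamma_1(s)}$, which is Lipschitz by \eqref{P_t}; at a.e. such $s$ one may arrange differentiability, or one can argue directly by continuity in $r$, which comes from a Gronwall-type bound. The point I expect to need the most care is the lower integral. There the integrand is evaluated at the moving point $\gamma_1(s_k)$. We write it as the upper-type integral plus an error $\int(\pt r{\gamma_1(s_k)}-\pt r{\gamma_1(s)})\dd r$, whose modulus is at most $|\gamma_0(s_k)-\gamma_0(s)|\sup_r|\pt r{\gamma_1(s_k)}-\pt r{\gamma_1(s)}|$. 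The supremum tends to $0$ by the continuity property in \eqref{P_t}, since energies stay bounded in $\subl\rho$. To make that convergence uniform in $r$, I would argue by compactness together with a contradiction argument, using joint continuity of $\calP$ on critical points, or more simply restrict to the case where $\gamma_0'(s)$ exists and use the divided quantity $|\gamma_0(s_k)-\gamma_0(s)|/|s_k-s|\le\mathrm{Lip}(\gamma_0)$.

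Both bounds therefore converge to the same quantity, and the derivative of $\widehat\calE\circ\gamma$ at $s$ equals $\pt{\gamma_0(s)}{\gamma_1(s)}\gamma_0'(s)$. To handle the continuity in $r$ of $\pt r{u}$ cleanly, I would rewrite $\int_{\gamma_0(s)}^{\gamma_0(s_k)}\pt r{u}\dd r=\ene{\gamma_0(s_k)}u-\ene{\gamma_0(s)}u$. If needed, I would also restrict to those $s$ at which $\tau\mapsto\ene\tau{\gamma_1(s)}$ is differentiable at $\gamma_0(s)$; this holds automatically, since $\pt tu$ exists everywhere by \eqref{P_t}.
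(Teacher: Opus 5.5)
Your strategy is the paper's: apply \eqref{GEST-1} to $\gamma(r),\gamma(s)\in\crit\cap\Subl\rho$ with $r\in D$, divide by $r-s$, and discard the quadratic terms. You then identify the limit of the upper integral through the differentiability of $\tau\mapsto\ene\tau{\gamma_1(s)}$ at $\tau=\gamma_0(s)$. Your closing remark does this correctly; the alternative ``continuity in $r$ from a Gronwall bound'' is not available and not needed.

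The gap is the lower integral $\int_{\gamma_0(s)}^{\gamma_0(r)}\pt\tau{\gamma_1(r)}\dd\tau$, whose integrand sits at the moving point. Condition \eqref{P_t} only gives continuity of $u\mapsto\pt tu$ at each \emph{fixed} $t$. It gives no equicontinuity of the family $\{\pt\tau\cdot\}_\tau$, so $\sup_\tau|\pt\tau{\gamma_1(r)}-\pt\tau{\gamma_1(s)}|\to0$ does not follow. It can in fact fail under \eqref{coercivita}--\eqref{lambda-convex}. On $\Hilbert=\R$, add to $\frac12u^2$ the bumps $4^{-k}\beta\big(4^k(t-t_0-2^{-k})\big)\phi\big(2^k(u-u_0)\big)$. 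Then $\partial_t\calE$ is bounded, vanishes at $t_0$, and is smooth in $u$ at every fixed time, and $\partial_{uu}\calE$ is bounded. Yet $\partial_t\calE$ has bumps of fixed height and spatial width $2^{-k}$ in time windows accumulating at $t_0$.

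Neither of your remedies closes this. Joint continuity of $\mathcal P$ on $\crit$ is not assumed, and the points $(\tau,\gamma_1(r))$ with $\tau\neq\gamma_0(r)$ are not critical anyway. The bound $|\gamma_0(r)-\gamma_0(s)|/|r-s|\le\mathrm{Lip}(\gamma_0)$ controls only the prefactor, not the supremum. The paper's written proof also passes to the limit in the lower estimate, tacitly using $\pt{\gamma_0(r)}{\gamma_1(r)}\to\pt{\gamma_0(s)}{\gamma_1(s)}$ with a uniform remainder, which \eqref{P_t} does not literally provide.

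The fix is to drop the lower estimate altogether. Set $g(\tau):=\ene\tau{\gamma_1(s)}$; by \eqref{P_t} it is differentiable at $\gamma_0(s)$ with $g'(\gamma_0(s))=\pt{\gamma_0(s)}{\gamma_1(s)}$. The upper estimate gives, for every $r\in D$,
\[
\widehat\calE(\gamma(r))-\widehat\calE(\gamma(s))\le g(\gamma_0(r))-g(\gamma_0(s))+\tfrac\lambda2\mathrm{Lip}(\gamma_1)^2|r-s|^2 .
\]
Moreover $(g(\gamma_0(r))-g(\gamma_0(s)))/(r-s)\to\pt{\gamma_0(s)}{\gamma_1(s)}\gamma_0'(s)$ whenever $\gamma_0$ is differentiable at $s$. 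Dividing by $r-s>0$ along $r\downarrow s$, $r\in D$, yields $l\le\pt{\gamma_0(s)}{\gamma_1(s)}\gamma_0'(s)$. Dividing by $r-s<0$ along $r\uparrow s$, $r\in D$, reverses the inequality and yields $l\ge\pt{\gamma_0(s)}{\gamma_1(s)}\gamma_0'(s)$.

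This only requires $s$ to be approached by points of $D$ from both sides. Your choice of Lebesgue density points of the closed set $D$ already guarantees this; alternatively, the points of $D$ isolated from one side are at most countably many. No continuity of $\mathcal P$ in time, or jointly in $(t,u)$, is used.
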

 \begin{proof}
 Since the subset $D_{\mathrm{iso}}\subset D$ consisting of the isolated points of $D$ is at most countable,
 it is not restrictive to prove \eqref{ch-rule-gamma} for $\mathscr{L}^1$-a.a.\ $s\in D_{\mathrm{acc}} := D\setminus D_{\mathrm{iso}}$. Since $\widehat{\calE} \circ\gamma$ 
 is Lipschitz and $\gamma_0$ is Lipschitz, setting
 \[
 \widetilde{D}_{\mathrm{acc}}: = \{ s \in D_{\mathrm{acc}} \, : \  \widehat{\calE} \circ\gamma \text{ and } \gamma_0 \text{ are differentiable at } s \},
 \]
 formula \eqref{ch-rule-gamma} follows if we prove that 
 \begin{equation}
 \label{2show4ch-rule-g}
 l:= \lim_{r\to s, \ r \in  D_{\mathrm{acc}}}
 \frac{ \widehat{\calE}(\gamma(r)) -  \widehat{\calE}(\gamma(s))}{r-s} 
 =  \pt{\gamma_0(s)}{\gamma_1(s)} \gamma_0'(s) \qquad \text{for all } s \in  \widetilde{D}_{\mathrm{acc}}
 \end{equation}
 (observe that, in the calculation of the above limit we can restrict to points
 $r\to s$ such that 
  $r\in D_{\mathrm{acc}} $, since we already know that $l$ exists). 
 Now, it follows from estimate \eqref{GEST-1} that 
 \begin{equation}
 \label{from-gest1}
 \begin{aligned}
 &
 \begin{aligned}
 \widehat{\calE}(\gamma(r)) -  \widehat{\calE}(\gamma(s)) &  \leq\int_{\gamma_0(s)}^{\gamma_0(r)} \pt{\tau}{\gamma_1(s)} \dd \tau + 
 \frac{\lambda}{2} \|\gamma_1(r){-}\gamma_1(s) \|^2 
 \\
 & 
\leq  \pt{\gamma_0(s)}{\gamma_1(s)} (\gamma_0(r){-}\gamma_0(s)) + \mathrm{o}(\|\gamma_0(r)-\gamma_0(s)\|) + \frac{\lambda}{2}\mathrm{Lip}(\gamma)^2|r-s|^2 \quad \text{as } r \to s,
\end{aligned}
\\
&
\begin{aligned}
  \widehat{\calE}(\gamma(r)) -  \widehat{\calE}(\gamma(s)) &  \geq \int_{\gamma_0(s)}^{\gamma_0(r)} \pt{\tau}{\gamma_1(r)} \dd \tau -
 \frac{\lambda}{2} \|\gamma_1(r){-}\gamma_1(s) \|^2 
 \\
 & 
 \geq  \pt{\gamma_0(r)}{\gamma_1(r)} (\gamma_0(r){-}\gamma_0(s)) + \mathrm{o}(\|\gamma_0(r)-\gamma_0(s)\|) - \frac{\lambda}{2}\mathrm{Lip}(\gamma)^2|r-s|^2
   \quad \text{as } r \to s,
   \end{aligned}
      \end{aligned}
 \end{equation}
 with $\mathrm{Lip}(\gamma)$ the Lipschitz constant of $\gamma$. Let us now assume that $s$ is a left-accumulation point for $\widetilde{D}_{\mathrm{acc}}$, namely that $(s,s+\delta) \cap \widetilde{D}_{\mathrm{acc}} \neq\emptyset$ for every $\delta>0$. Thus,  we can consider  \eqref{from-gest1} for
 $r\down s$ and  divide it by
  $(r-s)$. Passing to the limit as $r\down s$  in both estimates in
  \eqref{from-gest1}
   we obtain
  \[
  l\leq \pt{\gamma_0(s)}{\gamma_1(s)} \gamma_0'(s) \quad\text{and}\quad  l\geq \pt{\gamma_0(s)}{\gamma_1(s)} \gamma_0'(s)\,.
  \]
With the very same arguments we obtain \eqref{2show4ch-rule-g} also if $s$ is a right accumulation point for 
 $\widetilde{D}_{\mathrm{acc}}$. This concludes the proof.
 \end{proof}
 \par
 Let us now consider a \DSolution\  $u:[0,T]\to \Hilbert$ to \eqref{limit-problem}. Recall that the function 
$e: [0,T]\to\R$, $e(t): = \ene t{u(t)}$ is in $\BV([0,T])$. 
As a corollary of Lemma \ref{l:chain-rule-critset}, we have the following result.
\begin{lemma}
\label{cor-l-ch-rule-gamma}
Let $\gamma = (\gamma_0,\gamma_1): [a,b]\to [0,T]\ti \Hilbert$ be a Lipschitz curve, and let 
$\mathcal{O}\subset [0,T]\setminus \jump{e}$
be a $\mathscr{L}^1$-negligible set.
Let us define
\begin{equation}
\label{sets-Agamma+Bgamma}
A[\gamma]: = \{s \in D\, : \ \gamma_0(s)
\in \mathcal{O}
\}, \qquad B[\gamma]: = (\calE{\circ}\gamma)(A[\gamma])= \{ \ene{\gamma_0(s)}{\gamma_1(s)}\, : \ s \in D, \ \gamma_0(s)
\in \mathcal{O}
\}
\end{equation}
with $D$ from \eqref{setD}.
Then, $B[\gamma]$ is $\mathscr{L}^1$-negligible.
\end{lemma}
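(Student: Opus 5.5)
The plan is to apply the area formula \eqref{area-formula} to the Lipschitz function $\varphi:=\widehat\calE\circ\gamma$ on $[a,b]$, where $\widehat\calE$ is the Lipschitz extension from \eqref{hat-calE}. Since $\widehat\calE=\calE$ on $\crit\cap\Subl\rho$ and $A[\gamma]\subset D$, we have $B[\gamma]=\varphi(A[\gamma])$. Taking $h:=\mathbf 1_{A[\gamma]}$ in \eqref{area-formula} gives
\[
\mathscr L^1(B[\gamma])\le \int_{\R}\#\big(\varphi^{-1}(r)\cap A[\gamma]\big)\dd r=\int_{A[\gamma]}|\varphi'(s)|\dd s .
\]
So it suffices to show that $\varphi'=0$ for $\mathscr L^1$-a.e.\ $s\in A[\gamma]$.

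First, a measurability reduction. The set $D$ is closed, because $\crit\cap\Subl\rho$ is compact and $\gamma$ is continuous. The set $\mathcal O$ is only assumed negligible, so I replace it by a Borel (e.g.\ $G_\delta$) negligible set $\mathcal O'\supset\mathcal O$. This only enlarges $A[\gamma]$ and $B[\gamma]$, and the resulting $A[\gamma]=D\cap\gamma_0^{-1}(\mathcal O')$ is Borel, so the area formula applies with a Borel $h$. The assumption $\mathcal O\cap\jump e=\emptyset$ is not needed for this argument; it matters only for the later use, via Remark \ref{rmk:Lusin}.

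Next, Lemma \ref{l:chain-rule-critset} gives
\[
\varphi'(s)=\pt{\gamma_0(s)}{\gamma_1(s)}\,\gamma_0'(s)\qquad\text{for a.a.\ } s\in D,
\]
and the power term is bounded on $\Subl\rho$ by \eqref{P_t}. It remains to show that $\gamma_0'=0$ a.e.\ on $\gamma_0^{-1}(\mathcal O')$. This is the standard fact already used in the proof of Lemma \ref{rmk:admiss-curves}: for an absolutely continuous real function $g$ and a negligible set $N\subset\R$, one has $g'=0$ a.e.\ on $g^{-1}(N)$. It follows, for instance, from the area formula once more, since $\int_{g^{-1}(N)}|g'|\dd s=\int_N\#(\cdot)\dd r=0$.

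Combining these facts gives $\varphi'=0$ a.e.\ on $A[\gamma]$, hence $\mathscr L^1(B[\gamma])=0$. The only delicate point is this measurability reduction together with the use of the chain rule of Lemma \ref{l:chain-rule-critset}, which holds only on $D$. The argument is arranged so that the chain rule is never needed off $D$, because the integrand $h=\mathbf 1_{A[\gamma]}$ already vanishes outside $D$.
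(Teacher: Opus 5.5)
Your proof is correct and follows the paper's argument essentially step for step. You replace $\calE$ by $\widehat\calE$ on $A[\gamma]\subset D$, apply the area formula to $\widehat\calE\circ\gamma$ with $h=\chi_{A[\gamma]}$, invoke the chain rule of Lemma \ref{l:chain-rule-critset} on $D$, and conclude from $\gamma_0'=0$ a.e.\ on $\gamma_0^{-1}(\mathcal O)$, obtained again from the area formula. Your enlargement of $\mathcal O$ to a Borel negligible set, so that $h$ is Borel as the area formula requires, is a harmless refinement the paper leaves implicit, and you are right that the hypothesis $\mathcal O\cap\jump e=\emptyset$ plays no role in this proof.
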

\begin{proof}
Observe that 
\begin{equation}
\label{coincidence-energies}
(\calE{\circ}\gamma)(s)= (\widehat{\calE}{\circ}\gamma)(s)\qquad \text{ for every $s\in A[\gamma]$,}
\end{equation}
 since, by definition  of the set $D$ (cf.\ \eqref{setD}), 
$\gamma(s) \in \crit \cap \ti\Subl\rho$ for all $s\in A[\gamma]$ and $\widehat{\cE}$ coincides with $\cE$ on $ \crit \cap \ti\Subl\rho$. Now, since
$\widehat{\calE}{\circ}\gamma$ is a Lipschitz function, 
by the area formula \eqref{area-formula}
we have
\begin{equation}
\label{area4energy}
\int_a^b h(s) |(\widehat{\calE}{\circ}\gamma)'(s)| \dd s = \int_{\R} \Big( \sum_{s\,:\  \widehat{\calE}(\gamma(s)) =r}  h(s) \Big)\dd r \,.
\end{equation}
Then, we  find
\[
\begin{aligned}
\mathscr{L}^1(B[\gamma]) = \mathscr{L}^1((\calE{\circ}\gamma)(A[\gamma])) 
&
\stackrel{(1)}{=}  \mathscr{L}^1((\widehat{\calE}{\circ}\gamma)(A[\gamma])) 
\\ & 
\leq \int_{\R}\left(  \sum_{s\, : \ \widehat{\calE}(\gamma(s))=r}  \chi_{A[\gamma]}(s) \right) \dd r
\\
&
\stackrel{(2)}{=}    \int_a^b   \chi_{A[\gamma]}(s) |(\widehat{\calE}{\circ}\gamma)'(s)| \dd s  \\ &  = \int_{A[\gamma]}  |(\widehat{\calE}{\circ}\gamma)'(s)|  \dd s 
\\
& 
\stackrel{(3)}{=} \int_{A[\gamma]} |\pt{\gamma_0(s)}{\gamma_1(s)}| |\gamma_0'(s)| \dd s \stackrel{(4)}{=} 0, 
\end{aligned}
\]
where (1) derives from \eqref{coincidence-energies}, 
(2)  follows from choosing
 $h(s) =  \chi_{A[\gamma]}(s)$ in \eqref{area4energy},  (3) is due to the chain-rule formula \eqref{ch-rule-gamma}, and  (4) is due to the fact that $\gamma_0'(s) =0$ for almost all $s\in A[\gamma]$: indeed,
applying the area formula \eqref{area-formula} to $\gamma_0|_{A[\gamma]}$ gives
$\int_{A[\gamma]}|\gamma_0'(s)|\dd s = \int_{\mathcal{O}} \#\{s\in A[\gamma]:\gamma_0(s)=r\}\dd r =0$,
since $\gamma_0(A[\gamma])\subset \mathcal{O}$ and 
 the set $\mathcal{O}$
is $\mathscr{L}^1$-negligible. This concludes the proof.
\end{proof}
\par
In the proof of the  next result we eventually resort to the rectifiability requirement \eqref{eq:15}.  
\begin{corollary}
\label{cor:null-Cantor}
Under the assumptions of Theorem \ref{mainth:2} or of Proposition 
\ref{prop:BV}, let $u:[0,T]\to \Hilbert$  be a  Dissipative  Viscosity solution to \eqref{limit-problem}. 
 Then, the Cantor part of the distributional derivative of the function $t\mapsto e(t) = \ene t{u(t)}$ is null. 
\end{corollary}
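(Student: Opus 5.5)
By Proposition \ref{prop:extLus} and Remark \ref{rmk:Lusin}, it suffices to prove that $e$ has the Lusin property \eqref{Lusin} on $\mathscr L^1$-negligible sets. More precisely, we will show that $\mathscr L^1(e(\mathcal O))=0$ for every $\mathscr L^1$-negligible Borel set $\mathcal O\subset[0,T]\setminus\jump e$. Since $\jump e=\rmJ$ is countable, we may also discard the countable set $\nototaldisc\cup\rmJ$ where convenient. For $t\in\mathcal O$ we have $t\notin\rmJ$, so $u(t)\in\critset(t)$ by \eqref{eq:8}, and $(t,u(t))\in\crit\cap\Subl\rho$ for a fixed $\rho$ by Remark \ref{energy-bound}. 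Thus $e(\mathcal O)\subset\{\ene t x:(t,x)\in\crit\cap\Subl\rho,\ t\in\mathcal O\}$.

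\emph{Case of Theorem \ref{mainth:2}.} Assumption \eqref{eq:15} gives Lipschitz curves $\gamma_n=(\gamma_{n,0},\gamma_{n,1}):[0,1]\to[0,T]\times\Hilbert$ such that $N:=(\crit\cap\Subl\rho)\setminus\bigcup_n\gamma_n([0,1])$ is $\mathscr H^1$-null. We split accordingly:
\[
e(\mathcal O)\subset\bigcup_n B[\gamma_n]\ \cup\ \widehat\calE\big(N\big).
\]
Each $B[\gamma_n]$ is $\mathscr L^1$-negligible by Lemma \ref{cor-l-ch-rule-gamma}, applied with this $\mathcal O$. For the remainder, $\widehat\calE$ is Lipschitz by \eqref{hat-calE} and coincides with $\calE$ on $\crit\cap\Subl\rho$. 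Lipschitz maps send $\mathscr H^1$-null sets to $\mathscr H^1$-null sets, so $\mathscr L^1(\widehat\calE(N))\le \mathrm{Lip}(\widehat\calE)\,\mathscr H^1(N)=0$. Hence $\mathscr L^1(e(\mathcal O))=0$, and Proposition \ref{prop:extLus} kills the Cantor part.

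\emph{Case of Proposition \ref{prop:BV}.} Here $u$ has bounded variation, so it is not rectifiability of $\crit$ but rather the curve $t\mapsto(t,u(t))$ itself that provides the covering. I would reparametrize the graph of $u$ (completed at jumps, say by segments, which are irrelevant since the set of jump times is countable) by arclength of $t+\mathrm{Var}(u;[0,t])$. This produces a Lipschitz curve $\gamma=(\gamma_0,\gamma_1)$ with $\gamma_0$ nondecreasing and $\gamma(\sigma(t))=(t,u(t))$ for $t\notin\disc u$, where $\disc u$ is countable since $u$ is BV. Then $e(\mathcal O\setminus\disc u)\subset B[\gamma]$, which is negligible by Lemma \ref{cor-l-ch-rule-gamma}, while $e(\disc u)$ is countable. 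So again $e$ satisfies \eqref{Lusin}.

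The main obstacle is the measure-theoretic bookkeeping in the rectifiable case: one has to make sure that the points $(t,u(t))$ with $t\in\mathcal O$ lying on some curve $\gamma_n$ are indeed captured by $A[\gamma_n]$. This requires $\gamma_n(s)\in\crit\cap\Subl\rho$, which we can arrange by intersecting with $D$, and it is exactly where Lemma \ref{l:chain-rule-critset} enters. One also has to check that the negligible leftover $N$ is handled by the Lipschitz extension $\widehat\calE$ rather than by $\calE$, since $\calE$ itself need not be Lipschitz off $\crit$; no other step should be delicate.
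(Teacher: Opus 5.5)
Your proposal is correct and follows the paper's proof essentially step for step. You reduce, via Proposition \ref{prop:extLus} and Remark \ref{rmk:Lusin}, to the Lusin property on negligible sets avoiding $\jump e$; you cover $\crit$ by Lipschitz curves, handle the $\mathscr H^1$-null remainder through the Lipschitz extension $\widehat\calE$, and apply Lemma \ref{cor-l-ch-rule-gamma} to each curve. In the bounded-variation case you make explicit, via the parametrization by $t+\mathrm{Var}(u;[0,t])$, the paper's one-line remark that the graph of $u$ lies in the image of a Lipschitz curve. One cosmetic point: removing $\nototaldisc$ is unnecessary, since $t\notin\rmJ$ already gives $(t,u(t))\in\crit$, and under the hypotheses of Proposition \ref{prop:BV} alone $\nototaldisc$ need not be countable; you never actually use it there, so this is harmless.
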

  \begin{proof}
  We consider first the case of
  Theorem \ref{mainth:2}.   
  We shall prove the assertion by applying Proposition \ref{prop:extLus} to the function
  $e$. Taking into account Remark \ref{rmk:Lusin} 
(in fact, we may remove from $[0,T]$ also the \emph{countable} set $\nototaldisc$ from \eqref{eq:3}), 
  it is sufficient to show that
  \begin{equation}
  \label{null-2-prove}
\text{  for every $\mathscr{L}^1$-negligible Borel set 
$\mathcal{O}\subset[0,T]\setminus  ( \jump e{\cup} \nototaldisc) $ 
there holds
$\mathscr{L}^1(e(\mathcal{O}))=0$.
}
\end{equation}
 In fact, recall that, in the present context, for  our \DSolution\ 
there holds $\disc u\subset  \jump e{\cup} \nototaldisc $, 
cf.\ Proposition \ref{prop:improved-C}.  Thus, by
 restricting to sets $\mathcal{O}\subset[0,T]\setminus ( \jump e{\cup}\nototaldisc ) $, we will ensure that  $\mathcal{O}\subset[0,T]\setminus \disc u$, which we shall use below.

 Now, since $\crit$
   is countably  
$\mathscr H^1$-rectifiable
   by definition  (cf.\ \eqref{rectifiable}),
   there exist countably many Lipschitz curves $\gamma_k = (\gamma_{k,0},\gamma_{k,1}) :[0,1]\to [0,T]\ti \Hilbert $  such that 
  \begin{equation}
  \label{rectifiability}
\crit = \bigcup_{k=1}^\infty \gamma_k([0,1])  \cup C_0 \qquad \text{with } \mathscr{H}^1(C_0)=0.
  \end{equation}
 In accord with  the notation from Lemma \ref{cor-l-ch-rule-gamma}, we set 
  \[
A[\gamma_k]: = \{s \in [0,1]\, : \ \gamma_{k,0}(s)
\in \mathcal{O},
\, \ \gamma_k(s) \in \crit  \cap\Sublevel\rho\}, \qquad B[\gamma_k]: = (\widehat\calE{\circ}\gamma_k)(A[\gamma_k])\,.
  \]
  Due to \eqref{rectifiability} and  also observing that, for all 
$t\in \mathcal{O}  \subset[0,T]\setminus  ( \jump e{\cup} \nototaldisc )  \subset [0,T]{\setminus}\disc u$, we have 
  $(t,u(t)) \in \crit \cap \subl\rho$
  for some  $\rho>0$ (cf.\ Remark \ref{energy-bound}),
  we have
  \[
  \{ \ene t{u(t)}\, : \ 
t \in \mathcal{O}
\} \subset  \bigcup_{k=1}^\infty B[\gamma_k]  \cup \calE(C_0 {\cap} \Subl\rho).
  \]
  On the one hand, since $C_0 \cap \Subl\rho \subset  \crit \cap \Subl\rho$  and the restriction of $\calE$ to $ \crit \cap\Subl\rho$ coincides with the   Lipschitz function $\widehat\calE$ (with Lipschitz constant $\mathrm{Lip}_{\widehat\calE}$),  we have  
\[
\mathscr{L}^1( \calE(C_0 {\cap} \Subl\rho)) \leq \mathrm{Lip}_{\widehat\calE} (\mathscr{H}^1(C_0)) =0\,.
\]
  On the other hand, 
  by Lemma \ref{cor-l-ch-rule-gamma} we have that $\mathscr{L}^1(B[\gamma_k]) =0$ for every $k\in \N$. Then, 
  $\mathscr{L}^1(\{ \ene t{u(t)}\, : \ t \in \mathcal{O} \} ) =0$,
  whence \eqref{null-2-prove}. 
  This concludes the proof.
  The case when $u$ is a function
  of bounded variation 
  follows by the same argument, by observing
  that the graph of $u$ is contained
  in the graph of a Lipschitz curve.
  \end{proof} 
  
  We are now in a position to carry out the
\paragraph{\textbf{Proof of Theorem \ref{mainth:2}}}
  Indeed, it is sufficient to show that, for any \DSolution\  $u:[0,T]\to \Hilbert$, the function $t\mapsto e(t)= \ene t{u(t)}$ satisfies 
    \begin{equation}
  \label{AC-derivative}
  e'(t) = \pt t {u(t)} \qquad \text{for } \mathscr{L}^1\text{-a.a. } t \in (0,T).
  \end{equation}
  This yields that the absolutely continuous part of the distributional derivative  of the function $e$ is given by $ \pt t {u(t)}\mathscr{L}^1$. Since its Cantor part is null 
 by Corollary \ref{cor:null-Cantor},  we conclude that the measure $\mu$ from 
 \eqref{to-save-eneq} is purely atomic, with
 $\mu(\{t\}) = \cost t {\ulim(t-)}{\ulim(t+)}$ for all $t\in \jump e$ by \eqref{to-save-4}. 
 \par
 In order to prove \eqref{AC-derivative}, let us define
 \[
 I: = \{ t \in [0,T]{\setminus}\jump e\, : \ \exists\, \lim_{t'\to t} \frac{e(t')-e(t)}{t'-t}  = \lim_{t'\to t} \frac{\ene{t'}{u(t')} - \ene t{u(t)}}{t'-t}  = \ell_t\in \R \}\,.
 \]
 Clearly, we have $\mathscr{L}^1([0,T]{\setminus}I) =0$.
Let us set
\[
D_k : = \{ s \in [0,1]\, : \gamma_k(s) \in \crit \cap \subl\rho, \ \gamma_{k,0}(s) \in I, \ u(\gamma_{k,0}(s)) = \gamma_{k,1}(s) \}\,,
\]
where we use the same notation as in Corollary \ref{cor:null-Cantor}.
We observe that 
\[
\bigcup_{k=1}^\infty \gamma_{k,0}(D_k) \subset I \subset \bigcup_{k=1}^\infty \gamma_{k,0}(D_k)  \cup \pi_{[0,T]}(C_0)
\]
(recall that $\pi_{[0,T]}$ denotes the projection on $[0,T]$),
 where the second inclusion follows from the fact
 that $I \subset \pi_{[0,T]}(\crit)$ (since for $t\in I\subset [0,T]\setminus \jump e$ the solution satisfies $u(t)\in \critset(t)$, hence $(t,u(t))\in \crit$) and from
\eqref{rectifiability}. 
 Since $\mathscr{H}^1(C_0) =0$, we have that 
 $\mathscr{L}^1( \pi_{[0,T]}(C_0)) =0$, so that 
 \begin{equation}
 \label{neglig-1}
 \mathscr{L}^1 \left(I {\setminus}  \bigcup_{k=1}^\infty \gamma_{k,0}(D_k) \right)=0.
 \end{equation}
 Setting
 \[
 D_k': = \{s \in D_k\, : \ \gamma_{k,0} \text{ is differentiable at } s \},
 \]
 we have that 
 $\mathscr{L}^1(D_k{\setminus}D_k') =0$. Since $\gamma_{k,0}$ is Lipschitz, we get 
  \[
 \mathscr{L}^1(\gamma_{k,0}(D_k){\setminus}\gamma_{k,0}(D_k')) =0 \qquad \text{for all } k \in \N\,.
 \]
 Combining this with \eqref{neglig-1}, we conclude that 
  \begin{equation}
 \label{neglig-2}
 \mathscr{L}^1 \left(I {\setminus}  \bigcup_{k=1}^\infty \gamma_{k,0}(D_k') \right)=0.
 \end{equation}
 \par
 Eventually, we set
 $D_k'': =  \{ s\in D_k'\, : \ \gamma_{k,0}'(s) \neq 0\}$. The area formula \eqref{area-formula}  yields
 $\mathscr{L}^1(\gamma_{k,0}(D_k'{\setminus}D_k'')) =0$.
 Hence, setting
 $
 T_k: = \gamma_{k,0}(D_k''),
 $
 from \eqref{neglig-2} we infer
   \begin{equation}
 \label{neglig-3}
 \mathscr{L}^1 \left(I {\setminus}  \bigcup_{k=1}^\infty T_k \right)=0.
 \end{equation}
 Furthermore, let $T_k'$ denote the set of points of $T_k$ with Lebesgue density $1$.
Since $\mathscr{L}^1(T_k{\setminus}T_k')=0$, we ultimately infer
   \begin{equation}
 \label{neglig-4}
  \mathscr{L}^1 \left(I {\setminus}  \bigcup_{k=1}^\infty T_k' \right)=0. 
 \end{equation}
Ultimately, it is sufficient to prove formula \eqref{AC-derivative} at every $t\in T_k'$, for every $k\in \N$.
To this end, for every $t\in T_k'$ we write
$(t,u(t)) = \gamma_k(s)$ for some $s\in D_k''$
and we may assume that $s$ is an accumulation point of $D_k''$
(the set of isolated points of $D_k''$ is at most countable, hence
its image under $\gamma_{k,0}$ is negligible).
We then select a
sequence $(s_j)_j$ in $D_k''$ converging to $s$.
Setting $t_j:=\gamma_{k,0}(s_j)$, we have $t_j\to t$ and
$t_j\neq t$ for $j$ large enough, since
$\gamma_{k,0}'(s)\neq 0$. Since $e'(t)=\ell_t$ exists, we can compute
\[
\begin{aligned}
\ell_t &=
\lim_{j\to\infty} \frac{e(t_j) - e(t)}{t_j-t}
\stackrel{(1)}{=}
\lim_{j\to\infty}  \frac{\ene{\gamma_{k,0}(s_j)}{\gamma_{k,1}(s_j)} - \ene {\gamma_{k,0}(s)}{\gamma_{k,1}(s)}}{s_j-s} \cdot \frac{s_j-s}{\gamma_{k,0}(s_j)-\gamma_{k,0}(s)}
\\
&
\stackrel{(2)}{=}  \pt{\gamma_{k,0}(s)}{\gamma_{k,1}(s)}  \gamma_{k,0}'(s) \cdot \frac1{ \gamma_{k,0}'(s) } =   \pt{\gamma_{k,0}(s)}{\gamma_{k,1}(s)} =\pt t{u(t)},
\end{aligned}
\]
 where (1) follows
 from the fact that $e(t_j) = \ene{\gamma_{k,0}(s_j)}{\gamma_{k,1}(s_j)}$
 (since $s_j\in D_k''$ gives $u(t_j)= \gamma_{k,1}(s_j)$),
 while  (2) derives from  \eqref{ch-rule-gamma} and the fact that $ \gamma_{k,0}'(s)\neq 0$ since $s\in D_k''$.\EEE 
  \par
  All in all, we have obtained \eqref{AC-derivative}.
 This proves that every \DSolution\ is a \BSolution, i.e.\ Claim (2)
  up to the countability of $\disc u$.
  To check this, we observe that
  by Proposition \ref{prop:improved-C}
  the solution $u$ is continuous at every
  $t\in \totaldisc\setminus \rmJ$, so that
  $\disc u \subset \rmJ \cup \nototaldisc$.
  Since $\rmJ$ is at most countable
  (being the atomic set of a finite measure)
  and $\nototaldisc$ is at most countable by
  Lemma \ref{le:criterion}(3),
  we conclude that $\disc u$ is at most countable.
  \par
  Claim (1) follows from
Theorem \ref{mainth:1}(5): 
  indeed, by Lemma \ref{le:criterion}(3),
  the rectifiability condition \eqref{eq:15} implies
  \ass1 and that the set $\nototaldisc$ is at most countable,
  so that
 Theorem \ref{mainth:1}(5) 
  applies.
  \par
  Claim (3) is an immediate consequence of Claims (1) and (2).
 \QED

 \section{Further results under the transversality conditions}
 \label{sec:6}
 This section revolves around the relations between our standing assumption \ass1 on the critical set $\crit$, 
 as well as the countability of $\nototaldisc$, and the transversality conditions. To explore them,
 we will need to restrict the class of admissible energies $\cE$, reverting to differentiable functionals. What is more, 
 we will suppose that the  Fr\'echet  \emph{differential} $\rmD\calE_t$ is a  \emph{Fredholm map}. In this context, we will introduce the transversality conditions at the 
 (degenerate) critical points of $\cE$. 
 \par
 With the main result of this section, Theorem \ref{prop:7.1}, we will show that the  transversality conditions guarantee the validity of the rectifiability property 
  \eqref{eq:15} for $\crit$ and thus, ultimately, of condition  \ass1 and  the countability of $\nototaldisc$.
  \par
  Finally, we will show that, under an additional transversality condition, Dissipative/Balanced 
  Viscosity
  solutions enjoy enhanced properties
  that provide a finer description of their behaviour at jumps.

\subsection{Setup for the transversality conditions}
\label{s:7.0}
In this section we   settle some notation  and precisely state  the smoothness requirements on $\cE$  underlying  the transversality conditions.  
   \begin{notation}[Notation and preliminary definitions for Fredholm maps]
   \upshape
   \label{not:7.1}
Given a Banach space $\Xsp$,
we denote by $\mathcal{L}(\Xsp;\Ysp)$ the space of linear bounded
operators from $\Xsp$ to $\Ysp$.
The adjoint of an operator  $L\in \mathcal L(\Xsp;\Ysp)$ is denoted  by  $L^*$,
with   $L^* \in \mathcal L(\Ysp^*;\Xsp^*)$.
We will use the notation
\begin{equation}
  \label{eq:4}
  \text{kernel of $L$}:\
  \NN(L)=\big\{x\in \Xsp:Lx=0\big\},\quad
  \text{range of $L$}:\
  \RR(L)=\big\{Lx:x\in \Xsp\big\}.
\end{equation}
We recall that a  subspace $M\subset \Ysp$ has finite codimension
if there exists a finite-dimensional space $ S\subset \Ysp$ such
that $M+ S= \Ysp$.
In this case $M$ is closed and
we can always choose $ S$ so that $ S\cap
M=\{0\}$; the codimension of $M$ is then defined as
$\mathrm{codim}(M):=\mathrm{dim}( S)$.

We call an operator $L\in\mathcal L(\Xsp;\Ysp)$
 a \emph{Fredholm operator} if
$
\mathrm{dim}(\NN( L)) $ and $ \mathrm{codim}(\RR(L)) $  are finite.
In particular, the range of a Fredholm operator is closed.
The \emph{index} of the operator $L$ is defined as
$
  \ind(L):=\dim(\NN(L))-\codim(\RR(L)).
$
The stability theorem  for Fredholm operators shows that the collection of all Fredholm operators is open in $\mathcal{L}(\Xsp;\Ysp)$ and
the index  is a locally constant function.

Let $\opx\subset \Xsp$ be a connected open subset of $\Xsp$.
A map $f\in \mathrm{C}^1(\opx;\Ysp)$ is a Fredholm map if for every
$x\in \opx$ the differential
 $\mathrm{D} f(x)\in \mathcal L(\Xsp,\Ysp)$ is a Fredholm operator.
 The \emph{index} of $f$ is defined as the index of $\mathrm{D} f(x)$
 for some $x\in \opx$. By the stability theorem for Fredholm operators
and the connectedness of $\opx$,
this definition is independent of $x$.
   \end{notation}

Let us now enhance
our conditions 
 on the energy functional   $\calE\colon[0,T]\ti \Hilbert \to ({-}\infty,{+}\infty]$, with  domain $\mathrm{dom}(\cE) = [0,T]\ti\domainenergy
$, preparatory to the statement of the transversality conditions. 
Hereafter, we shall require that
   \begin{equation}
   \label{spaces}
   \begin{gathered}
     \text{there exist two separable Banach spaces }
     \Xsp \subset \Hilbert, \, \Ysp \subset   \Hilbert=\Hilbert^*\subset \Xsp^*   \text{ with continuous and dense inclusions}
    \end{gathered}
   \end{equation}
   (observe that under the above conditions, the duality pairing 
   $\pairing{\Xsp^*}{\Xsp}{y}{x}$ between $x\in \Xsp$ and $y \in \Ysp$
   coincides with the $\Hilbert$-inner product
   $\langle x, y \rangle $), such that
the energy functional  $\cE:[0,T]\ti\domainenergy\to \R$,
enjoys the following properties
\begin{subequations}
\label{basic-hyps-transv}
\begin{align}
\nonumber
&
  \text{there exists a connected and  open set } \opx \subset \Xsp \text{ such that } 
    \\
&
\label{basic-hyps-transv-a}
\forall\, (t, u) \in [0,T] \ti\domainenergy
\qquad
0\in\partial\ene tu\ \Rightarrow \ u \in \opx \text{ and } \partial\ene tu =\{0\},
\\
&
\label{basic-hyps-transv-b}
 \cE \in \mathrm{C}^2 ([0,T]\ti \opx)   \quad \text{ and }\quad      \gder{}\cE tu
\in \Ysp \text{ for every } (t,u) \in [0,T]\ti \opx,
\\
&
\label{basic-hyps-transv-c}
\forall\, t \in [0,T] \quad \gder {}\cE t{\cdot} : \opx \to \Ysp  \quad \text{is a Fredholm map of index $0$.}
\end{align}
 In particular, observe that \eqref{basic-hyps-transv-b} ensures that, when restricted to $[0,T]\ti \opx$, the 
Fr\'echet
differential of the energy w.r.t.\ $u$ takes values in the space $\Ysp$, possibly smaller that $\Hilbert$. In turn, \eqref{basic-hyps-transv-a} is a regularity result for  the critical points of $\calE$. 

\par
For later convenience, it will also be crucial to require that 
for every $\rho>0$ the set 
$\crit \cap \Sublevel \rho$ 
is  compact w.r.t.\ the topology of  $[0,T]\ti \Xsp$.
Taking into account that, by \eqref{coercivita},  the sublevels $\Sublevel\rho$ are compact in  $[0,T]\ti \Hilbert$, 
it is indeed equivalent  to impose that  on 
$\crit \cap \Sublevel \rho$ the topologies of  $[0,T]\ti \Hilbert$ and of  $[0,T]\ti \Xsp$ do coincide.
In practice, we shall require
that
  \begin{equation}
 \label{basic-hyps-transv-e}
 \begin{aligned}
 &
 \forall\, (t_n, u_n)_n, \, (t,u)  \subset \crit \cap \Sublevel \rho, 
 \qquad 
  u_n \to u  \text{ in } \Hilbert
  \ 
 \
  \Rightarrow \  \ u_n \to u \text{ in } \Xsp.
  \end{aligned}
 \end{equation}

  Finally, as in \cite[Sec.\ 3.2]{AgRoSa15}   (cf.\ also
  \cite[Remark
1.1]{Saut-Temam79}),
we will also suppose that
for every $(t,u)\in [0,T] \ti \opx$
the linear operator $L$ associated with the second
order differential $\gder 2\cE tu \in\mathcal{L}(\Xsp; \Ysp)$
admits a unique continuous
extension $\tilde L\in \mathcal{L}(\Hilbert;\Hilbert)$ satisfying
\begin{equation}
\label{eq:26}
\big\{h\in \Hilbert:\tilde L h\in \Ysp\big\}\subset \Xsp.
\end{equation}
It can be verified (cf.\  \cite[Sec.\ 3.2]{AgRoSa15}) that   $\tilde L$ is selfadjoint.
\end{subequations}

\begin{theorem}
  \label{thm:dim-kernel-ass1}
  In the setting of \eqref{spaces},
  let $\cE:[0,T]\ti \Hilbert \to (-\infty,+\infty]$ comply
  with \eqref{coercivita}--\eqref{P_t},
  \eqref{basic-hyps-transv},
  and suppose that
  \begin{equation}
    \label{eq:dim-kernel-le1}
    \mathrm{dim}(\NN(\gder 2\cE tu)) \leq 1
    \quad \text{for every } (t,u) \in \crit.
  \end{equation}
  Then \ass1  holds. 
  In particular, the conclusions of Theorem \ref{mainth:1} hold.
\end{theorem}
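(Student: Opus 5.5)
Fix $t\in[0,T]$ and $\rho>0$; by Remark \ref{rmk:suff-subl} it suffices to show that $\ene t{\critset(t)\cap\sublevel\rho}$ is $\mathscr L^1$-negligible. The plan is a local finite-dimensional reduction, in the spirit of the Lyapunov--Schmidt argument, around each critical point. The compactness of $\critset(t)\cap\sublevel\rho$, in $\Hilbert$ and hence in $\Xsp$ by \eqref{basic-hyps-transv-e}, then lets finitely many neighbourhoods suffice.

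Fix $\bar u\in\critset(t)$ and set $L:=\gder2\cE t{\bar u}\in\mathcal L(\Xsp;\Ysp)$. This operator is Fredholm of index $0$ by \eqref{basic-hyps-transv-c}.

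\textbf{Nondegenerate case.} If $\NN(L)=\{0\}$, then $L$ is an isomorphism. By the inverse function theorem, $\rmD\calE_t$ is injective near $\bar u$, so $\bar u$ is isolated in $\critset(t)$.

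\textbf{Degenerate case.} If $\dim\NN(L)=1$, write $\NN(L)=\mathrm{span}\{e\}$. Index $0$ gives $\codim\RR(L)=1$. Using the selfadjointness of the extension $\tilde L$ and \eqref{eq:26}, I expect to identify $\RR(L)=\{y\in\Ysp:\langle y,e\rangle=0\}$. I then decompose $\Xsp=\mathrm{span}\{e\}\oplus X_1$ and $\Ysp=\RR(L)\oplus\mathrm{span}\{e\}$, with $P$ the projection onto $\RR(L)$. The implicit function theorem, applied to $(s,w)\mapsto P\,\rmD\calE_t(\bar u+se+w)$ at $(0,0)$, yields a $\rmC^1$ map $s\mapsto w(s)$ on $(-\delta,\delta)$ with the following property: in a neighbourhood $N$ of $\bar u$, every critical point has the form $\gamma(s):=\bar u+se+w(s)$. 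Consequently $\critset(t)\cap N\subset\gamma((-\delta,\delta))$, which is a $\rmC^1$ curve.

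\textbf{Negligibility of the critical values.} I now check that $g(s):=\ene t{\gamma(s)}$ has $\mathscr L^1$-null image of $Z:=\{s:\gamma(s)\in\critset(t)\}$. The function $g$ is $\rmC^1$, and $g'(s)=\langle\rmD\calE_t(\gamma(s)),\gamma'(s)\rangle$ vanishes on $Z$. The one-dimensional Sard lemma, equivalently the area formula \eqref{area-formula}, then gives $\mathscr L^1(g(Z))\le\int_Z|g'|\,\dd s=0$.

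\textbf{Assembling.} Covering the compact set $\critset(t)\cap\sublevel\rho$ by finitely many such neighbourhoods $N$, its image under $\calE_t$ is a finite union of null sets and finitely many points. Hence \ass1 holds, and Theorem \ref{mainth:1} applies.

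The main obstacle is the reduction step in the degenerate case, which must be carried out in the right spaces. One needs the Banach-space implicit function theorem with $\cE\in\rmC^2$ on $\opx\subset\Xsp$, $\rmD\calE_t$ valued in $\Ysp$, and the identification of the cokernel through \eqref{eq:26}. One also has to ensure that the $\Hilbert$-neighbourhoods used in the covering correspond to $\Xsp$-neighbourhoods on the critical set, which is exactly what \eqref{basic-hyps-transv-e} provides.
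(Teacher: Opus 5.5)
Your proof is correct. Its skeleton is the same as the paper's: isolation of nondegenerate critical points, a Lyapunov--Schmidt reduction at points with one-dimensional kernel, and a finite covering of the $\Xsp$-compact set $\critset(t)\cap\sublevel\rho$ via \eqref{basic-hyps-transv-e}. The two routes diverge only in the concluding measure-theoretic step.

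The paper uses the local curve structure to get $\mathscr H^2(\critset(t))=0$, i.e.\ \eqref{ass1'}. It then invokes Lemma~\ref{le:criterion}(2), which turns $\mathscr H^2$-negligibility into negligibility of the critical values through the quadratic estimate \eqref{eq:16}, $|\ene tv-\ene tu|\le\frac\lambda2\|v-u\|^2$ on $\critset(t)$. That estimate is a consequence of the $\lambda$-convexity \eqref{lambda-convex}.

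You instead apply the one-dimensional Sard lemma to $g=\calE_t\circ\gamma$, using only that $g'$ vanishes at the parameters where $\gamma$ meets $\critset(t)$. Your final step thus needs only the $\rmC^1$ regularity of $\calE_t$ on $\opx$ and the chain rule. This matches the hypotheses listed in the statement, which do not include \eqref{lambda-convex}; that assumption is of course still required, in both versions, for the final appeal to Theorem~\ref{mainth:1}. The paper's route, for its part, yields the stronger geometric by-product $\mathscr H^2(\critset(t))=0$, and it reuses a general lemma that also holds for nonsmooth energies.

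There is one minor slip in your reduction. The setting \eqref{spaces} does not give $\Xsp\subset\Ysp$, so the kernel vector $e$ need not lie in $\Ysp$, and the decomposition $\Ysp=\RR(L)\oplus\mathrm{span}\{e\}$ may not make sense. The fix is easy: take any $y_0\in\Ysp$ with $\langle y_0,e\rangle\neq0$, which exists by density of $\Ysp$ in $\Hilbert$. More simply, take any one-dimensional complement of $\RR(L)$, which exists because $\codim\RR(L)=1$. In particular, the identification of $\RR(L)$ through \eqref{eq:26} is not needed for the implicit function argument.
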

\begin{proof}
  Fix $t\in [0,T]$ and $\rho>0$.
  Let $u\in \critset(t)\cap \sublevel\rho$ be  a non-degenerate critical point 
  (i.e., $\gder 2\cE tu$ invertible): the implicit function theorem
  ensures that $u$ is an isolated point of $\critset(t)$.
  At every degenerate point $u\in \critset(t)\cap \sublevel\rho$
  with $\mathrm{dim}(\NN(\gder 2\cE tu))=1$,
  the Lyapunov--Schmidt reduction (cf.\ \cite[Sec.~I.2]{Kielhoefer12}) shows that
  $\critset(t)$ near $u$ is the image of a closed subset of $\R$
  under a $\rmC^1$-map into $\Xsp$
  (see the proof of Theorem \ref{prop:7.1}(1) below for more details);
  being $\critset(t)$ near $u$ 
  the Lipschitz image of a subset of $\R$,
  it has zero $\mathscr H^2$-measure.
  Since $\critset(t)\cap \sublevel\rho$ is compact in $\Xsp$
  by \eqref{basic-hyps-transv-e} and \eqref{coercivita},
  it can be covered by finitely many such neighborhoods, giving
  $\mathscr H^2(\critset(t)\cap \sublevel\rho)=0$.
  Letting $\rho\uparrow +\infty$,
  we obtain \eqref{ass1'} (i.e,  $\mathscr{H}^2\big(\critset(t)\big)=0$), and in turn \ass1.
\end{proof}
\EEE


\subsection{The transversality conditions and structural properties of the critical set}
\label{s:7.1}
Let us now introduce  the
\emph{transversality} conditions,
concerning the properties of the energy $\cE$ at its
\emph{degenerate} (or \emph{singular}) critical points, i.e.
the points in the set
\begin{equation}
\label{singular-points}
\sing = \{ (t,u) \in \crit\, : \ \gder 2{\cE} tu \in \mathcal{L}(\Xsp;\Ysp)
\quad \text{is non-invertible}\}
\end{equation}
(accordingly, we will denote by $\singset(t)$
the fibre of $\sing$ at $t\in [0,T]$).
As observed in \cite[Sec.\ 3.2]{AgRoSa15}, by  \eqref{eq:26}, the kernels of
$\gder 2{\cE} tu$ and $(\gder 2{\cE} tu)^*$
can be identified.
In this setup,  the transversality conditions read as follows. 
\begin{definition}[Transversality conditions] 
\label{def:transv}
In the setting of \eqref{spaces}, let $\cE \colon [0,T] \ti E \to \R$ comply with \eqref{basic-hyps-transv}. We say that 
$\cE$ satisfies 
the transversality conditions at a point $(t_0,u_0) \in \sing$ if 
  \begin{enumerate}[label=\rm ({T}.\arabic*)]
\item
\label{assT1}
  $\mathrm{dim}(\NN (\gder 2\cE {t_0}{u_0}))=1$;
\item
\label{assT2}
  If $0\neq v\in \NN(\gder 2\cE {t_0}{u_0})$
  then $\pairing{\Xsp^*}{\Xsp}{\partial_t
  \gder{}{\cE}{t_0}{u_0}} {v}  = \langle \partial_t
  \gder{}{\cE}{t_0}{u_0}, v \rangle
  \neq 0$.
\end{enumerate}
The functional $\cE$ satisfies the
transversality conditions if
\ref{assT1}--\ref{assT2} hold at every $(t_0,u_0)\in \sing$.
    \end{definition}
 \par
 Observe that,   \eqref{eq:dim-kernel-le1} in Theorem \ref{thm:dim-kernel-ass1} is, indeed, equivalent to the requirement that
      $  \mathrm{dim}(\NN(\gder 2\cE tu)) = 1$ 
for every $ (t,u) \in \sing$, namely to the validity of the sole $\rm ({T}.1)$. 
    \par
The main result of this section states that, under the partial transversality conditions  $\rm ({T}.1)$--$\rm ({T}.2)$, for every $t\in [0,T]$
the connected components of $\critset(t)$ are either points,
or curves. What is more, we show that the critical set complies with condition \eqref{intersection-sublevels},
namely that for every $\rho>0$  the set $\crit\cap \Sublevel\rho $ (recall  Remark \ref{rmk:suff-subl}), is countably 
$\mathscr H^1$-rectifiable.
\begin{theorem}
\label{prop:7.1}
In the setting of  \eqref{spaces}, 
let $\cE:[0,T]\ti \Hilbert \to (-\infty,+\infty]$  comply  
 with \eqref{coercivita}--\eqref{P_t}, 
 \eqref{basic-hyps-transv},
and  with the
transversality
conditions.
\par
Then, for every $t\in [0,T]$, we have:
\begin{enumerate}
\item For all  $u \in \singset(t)$,
the connected component
$\pij ut$
of $\critset(t)$ containing $u$ 
\begin{itemize}
\item[-]
either reduces to a point,
\item[-]
 or
there exist
$R>0$, $\delta>0$, and a  $\rmC^1$-curve 
$\curv u :I_{\delta} \to \Xsp$
such that
\begin{equation}
\label{statement-1.7.1}
\critset(t) \cap B_R(u) = \pij ut \cap B_R(u)
=  \{ \curv u(s)\, : \ s \in I_{\delta} \}.
\end{equation}
Here, the interval $I_{\delta}$  can be either
$(-\delta,\delta)$ or $(-\delta,0]$ or $[0,\delta)$.
\end{itemize}
\item
For every  
$\rho>0$
there exists a finite number $M_\rho$ of   $\rmC^1$-curves
$\gamma_i=(\curv t_i,\curv u_i)
:[0,1]\to[0,T]\ti\opx$
such that
\begin{equation}
\label{eq:strutt_C}
\crit\cap \Sublevel\rho=\bigcup_{i=1}^{M_\rho}\gamma_i([0,1])
\qquad\mbox{and}\qquad
\gamma_i'(s)\neq0
\quad\mbox{for every}\quad s\in[0,1],\ i=1,\hdots,M_\rho.
\end{equation}

\end{enumerate}

\end{theorem}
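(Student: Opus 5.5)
The natural tool is a Lyapunov--Schmidt reduction of the Fredholm map $F(t,u):=\gder{}\cE tu$ near a singular point, treated in the joint variables $(t,u)$.

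\emph{Setting up the reduction.} Fix $(t_0,u_0)\in\sing$. By \ref{assT1} and the index-$0$ property, $\NN(\gder2\cE{t_0}{u_0})=\mathrm{span}\{v\}$ and $\RR(\gder2\cE{t_0}{u_0})$ has codimension one in $\Ysp$. By \eqref{eq:26}, this range is the annihilator of $v$, i.e.\ $\{y:\langle y,v\rangle=0\}$. Split $\Xsp=\mathrm{span}\{v\}\oplus W$ and $\Ysp=\RR\oplus\mathrm{span}\{y_0\}$ with $\langle y_0,v\rangle=1$, and let $Q$ denote the projection onto $\RR$. Since $(I-Q)F=0$ can be solved for the $W$-component by the implicit function theorem in the variables $(t,s)$ near $(t_0,0)$, the critical set near $(t_0,u_0)$ becomes
\[
\crit\cap U=\{(t,u_0+sv+w(t,s)) : g(t,s)=0\},\qquad g(t,s):=\langle F(t,u_0+sv+w(t,s)),v\rangle,
\]
with $w,g\in\rmC^1$ (indeed $\rmC^1$ as well, since $\cE\in\rmC^2$).

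\emph{Claim (2), locally.} Here $\partial_s g(t_0,0)=0$ and $\partial_t g(t_0,0)=\langle\partial_t\gder{}\cE{t_0}{u_0},v\rangle\neq0$ by \ref{assT2}, once one checks that $\partial_t w$ contributes nothing after pairing with $v$. Hence, by the implicit function theorem, $g=0$ near $(t_0,0)$ is the graph $t=\tau(s)$ of a $\rmC^1$ function. Near a singular point, $\crit$ is therefore a single embedded $\rmC^1$ curve $s\mapsto(\tau(s),u_0+sv+w(\tau(s),s))$ with nonzero derivative. At non-degenerate points the ordinary implicit function theorem gives a $\rmC^1$ curve $t\mapsto(t,u(t))$, again with nonzero derivative. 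By \eqref{basic-hyps-transv-e}, $\crit\cap\Sublevel\rho$ is compact in $[0,T]\ti\Xsp$, so finitely many such neighborhoods cover it. Reparametrizing each piece on $[0,1]$ and shrinking slightly gives \eqref{eq:strutt_C}. The covering of course only yields $\crit\cap\Sublevel\rho\subset\bigcup\gamma_i$. To get equality, I will restrict each curve to the compact portion lying in a slightly larger sublevel and use closed subarcs, accepting curves that leave $\Sublevel\rho$; if exact equality is really needed, an extra $\rho$ adjustment is required.

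\emph{Claim (1).} For fixed $t$ the fiber near $u\in\singset(t)$ is $\{u+sv+w(t,s): g(t,s)=0\}$, i.e.\ the image of the closed set $Z_t=\{s:\tau(s)=t\}$ under a $\rmC^1$ injective map. The connected components of the fiber correspond, locally, to intervals in $Z_t$. If the component through $u$ is not a point, $Z_t$ contains a nontrivial interval containing $0$. Since the parametrization is injective, connected components correspond near $u$. Choosing $R$ small so that $B_R(u)\cap\critset(t)$ is exactly the image of $Z_t\cap(-\delta',\delta')$, and noting that the interval component of $Z_t$ through $0$ is $(-\delta,\delta)$, $(-\delta,0]$ or $[0,\delta)$ locally, one gets \eqref{statement-1.7.1}.

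\emph{Main obstacle.} The hard step is claim (1), specifically equating $\critset(t)\cap B_R(u)$ with $\pij ut\cap B_R(u)$. $Z_t$ may contain the interval through $0$ together with other points accumulating at its endpoints, but not at interior points or at $0$ itself unless those lie in the same interval. I would handle this by taking $\delta'$ smaller than the distance from $0$ to the endpoints of the component of $Z_t$ containing $0$, when those endpoints are nonzero. In the boundary case where $0$ is an endpoint, I would choose the half-interval and argue that the points of $Z_t$ on the other side that accumulate at $0$ belong to the same component as $0$. If they are disconnected from it, this needs care: the point component $\{0\}$ of $Z_t\cap(-\infty,0]$ still lies in the component $[0,\delta)$, so in fact $Z_t\cap(-\delta',0)$ must be shown empty or absorbed. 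This is where I expect the statement's trichotomy to require the most care.
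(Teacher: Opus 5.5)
Your local analysis is the one the paper uses. It imports from \cite[Thm.~2.5]{AgRoSa15} exactly what your Lyapunov--Schmidt computation gives: near a point of $\sing$, the set $\crit$ is one embedded $\rmC^1$ arc $\parcur=(\curv t,\curv u)$ with $\curv t'(0)=0$ and $\parcur'\neq0$. One slip: it is $QF=0$ that is solved for $w\in W$, while $(I-Q)F=0$ is the scalar equation $g=0$.

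The two steps you leave open, however, are genuine gaps that no extra care will close, because at both places the statement claims more than is true. The paper's proof gets past them with two unproved assertions: that $\parcur(I_t)$ is connected, and that every connected component $\Conn$ of $\crit\cap\Sublevel\rho$ is the image of one regular arc.

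\emph{Claim (1).} Under \ref{assT1}--\ref{assT2} alone, $\tau=\curv t$ is merely a $\rmC^1$ function with $\tau'(0)=0$. So $Z_t$ can be $[0,\delta)$ plus a sequence accumulating at $0$ from the left, and then the first equality in \eqref{statement-1.7.1} fails for every $R$. Concretely, let $\Hilbert=\Xsp=\Ysp=\R$, $T=2$, and $\cE(t,u)=(t-1)u-H(u)$ with $H'=h$. Here $h\in\rmC^1(\R)$ vanishes on $[0,1]$, equals $u^3\sin(\pi/u)$ on $[-\frac12,0)$, has no zeros in $(1,3)$, and equals $-2u$ for $|u|\geq3$. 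Then $\cE\in\rmC^2$, and \eqref{coercivita}--\eqref{lambda-convex} and \eqref{basic-hyps-transv} hold. Condition \ref{assT1} is automatic in one dimension, and \ref{assT2} holds because $\partial_t\rmD\cE\equiv1$. Yet $0\in\singset(1)$ and $\pij{0}{1}=[0,1]$, while $-1/n\in\big(\critset(1)\cap B_R(0)\big)\setminus\pij{0}{1}$ for all large $n$. Each such $-1/n$ is a singleton component of $\critset(1)$. With $e^{-1/u^2}$ in place of $u^3$ the example is $\rmC^\infty$, so what is missing is $\mathrm{(T.3)}$, not regularity. Hence $Z_t\cap(-\delta',0)$ can be neither shown empty nor absorbed.

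What your argument does prove, after shrinking $R$, is the following. First, $\pij ut\cap B_R(u)=\curv u(I_\delta)$, with $I_\delta$ the connected component through $0$ of $Z_t$ in the parameter window. Second, $\critset(t)\cap B_R(u)=\curv u\big(Z_t\cap(-\delta',\delta')\big)$. The two sets coincide exactly when $Z_t$ does not accumulate at $0$ from the side where $I_\delta$ stops. This holds, for instance, under $\mathrm{(T.3)}$, where $\curv t''(0)\neq0$ makes $0$ isolated in $Z_t$.

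\emph{Claim (2).} Your covering yields only $\crit\cap\Sublevel\rho\subset\bigcup_i\gamma_i([0,1])$, with regular $\rmC^1$ arcs of $\crit$ that may leave $\Sublevel\rho$. Readjusting $\rho$ is not an option, since the claim is for every $\rho$. Equality is in fact false. Take $\cE(t,u)=\frac12(u-t)^2$ on $[0,T]\ti\R$, so that $\sing=\emptyset$, and $\rho=1+T^2/8$. Then $\sublevel\rho=\{T/2\}$, hence $\crit\cap\Sublevel\rho=\{(T/2,T/2)\}$, a component that is not the image of any curve with $\gamma'\neq0$.

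The inclusion is, however, all that is used afterwards. Since $\Xsp\hookrightarrow\Hilbert$ continuously, the arcs are Lipschitz in $[0,T]\ti\Hilbert$, which gives \eqref{eq:15} via Remark \ref{rmk:suff-subl}. It also suffices for the measure estimate in Proposition \ref{prop:7.2}. The sound fix is to state (2) with $\subset$ and (1) in the weakened form above, rather than to look for an argument proving the equalities.
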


\begin{proof}
$\vartriangleright (1)$ As for the first item of the statement, the argument developed in the proof of
\cite[Theorem 2.5]{AgRoSa15} shows that for every
$(t,u)\in \sing$ there exist
a neighborhood
$(t-\delta,t+\delta){\times}B_R(u)$
of $(t,u)$  in the $[0,T]{\times}\Xsp$-topology,   an open interval $I \ni 0$,
and a $\rmC^1$-curve
$\parcur=(\curv t,\curv u) : I \to [0,T]\ti \Xsp$
such that
\begin{equation}
\label{from-ars1-1}
\crit\cap\big((t{-}\delta,t{+}\delta){\times}B_R(u)\big)
= \parcur(I),
\quad
\parcur(0)
= (t,u),
\quad
\curv t'(0) =0,
\quad
\parcur'(s)\neq (0,0) \quad \text{for all }  s\in I.
\end{equation}
In particular, we have that
\begin{equation}
\label{eq:equiv_ins_conn}
\{t\}\ti\big(B_R(u)\cap\critset(t)\big)
=\parcur(I_t)=\{(t,\curv u (s))\, : s \in I_t\},
\end{equation}
where
$0\in I_t := \{ s\in I\, : \ \curv t(s) = t\}$.
Observe that $\parcur(I_t)$ is connected.
In particular, the first identity in \eqref{eq:equiv_ins_conn}
simplifies to
\begin{equation*}
\{t\}\ti\big(B_R(u)\cap\pij ut \big)
=\parcur(I_t).
\end{equation*}
Now, if $\pij ut$
does not reduce to a point,
taking into account that $\parcur$ is a homeomorphism we ultimately conclude that,
up to a reparameterization of $\curv u$,
$I_t$ is
either $(-\delta,\delta)$ or $(-\delta,0]$ or $[0,\delta)$, for some $\delta>0$.
Hence, \eqref{statement-1.7.1} follows.
\par
$\vartriangleright  (2)$
In order to prove the second part of the statement, first of all we 
note that the same argument employed above for the proof of item $(1)$, coupled with compactness, implies that
for any connected component $\Conn$ of $\crit\cap \Sublevel\rho$ 
there exists a  $\rmC^1$-curve $\gamma:[0,1]\to[0,T]\ti\Xsp$,  with
$\gamma'(s)\neq(0,0)$ for every $s\in[0,1]$, such that 
 $\Conn=\gamma([0,1])$.
\par
We will now show that the connected components of  $\crit \cap \Sublevel\rho$ are isolated w.r.t.\ the 
topology of the Hausdorff distance $\sfd_\Hausdorff$, cf.\ \eqref{eq:10}, 
between the subsets of  $[0,T]\ti \Xsp$.
Indeed, suppose by contradiction that there exist a connected component  $\Conn$  of $\crit\cap\Sublevel\rho$ 
and a sequence
$(\Conn_n)_n$ of connected components 
such that
\begin{equation*}
\sfd_\Hausdorff(\Conn_n,\Conn)\longrightarrow0
\qquad\mbox{as}\quad n\to\infty.
\end{equation*}
Then, for every  $(\hat t,\hat u)\in\Conn$ 
 there exists a sequence of points $(t_n,u_n)\in\Conn_n$
such that $(t_n,u_n)\to(\hat t,\hat u)$, as $n\to\infty$.
But then, again by the implicit function theorem we have that
$(t_n,u_n)\in\Conn$ for every $n$ sufficiently large, hence $\Conn_n \equiv \Conn$ for $n$ sufficiently large.
\par
Since $\crit \cap  \Sublevel\rho$ is compact w.r.t.\  topology of $ [0,T]\ti \Xsp$
thanks to the previously assumed \eqref{basic-hyps-transv-e}, we conclude that $\crit \cap  \Sublevel\rho$ only has 
a finite number of connected components, and thus \eqref{eq:strutt_C} follows.
This concludes the proof.
\end{proof}
Under the hypotheses of Theorem \ref{prop:7.1}, which in particular ensure \eqref{eq:strutt_C},  hence condition \eqref{intersection-sublevels},  condition \eqref{eq:15} is satisfied (see Remark \ref{rmk:suff-subl}). Consequently, Lemma \ref{le:criterion} yields \ass 1, as well as the countability of $\nototaldisc$. We therefore conclude that the hypotheses of Theorems \ref{mainth:1} and \ref{mainth:2} are fulfilled. Hence, we obtain the following

\begin{corollary}
\label{cor:obvious}
In the setting of  \eqref{spaces},
let $\cE:[0,T]\ti \Hilbert \to (-\infty,+\infty]$  comply
 with \eqref{coercivita}--\eqref{lambda-convex},
with  \eqref{basic-hyps-transv},
and  with the
transversality
conditions.
\par
Then,  all the conclusions of Theorem \ref{mainth:2} hold for $\cE$: every \DSolution\ is in fact a \BSolution\ with an at most countable jump set, and for every $u_0\in\domainenergy$ there exists a \BSolution\ emanating from $u_0$. In particular,  every sequence of
gradient flows $(u_{\eps_n})_n$  starting from initial data  $ (u_{0,\eps_n})_n $ as in \eqref{energy-convergence-initial}
converges pointwise  in $[0,T]$,  up to a subsequence,   to a \BSolution\ $u $ to \eqref{limit-problem}.
\end{corollary}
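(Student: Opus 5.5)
The plan is to deduce Corollary \ref{cor:obvious} by chaining Theorem \ref{prop:7.1}, Remark \ref{rmk:suff-subl}, Lemma \ref{le:criterion} and Theorem \ref{mainth:2}, without any new estimates.

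\textbf{Step 1: rectifiability.} Fix $\rho>0$. Theorem \ref{prop:7.1}(2) gives finitely many $\rmC^1$-curves $\gamma_i=(\curv t_i,\curv u_i)\colon[0,1]\to[0,T]\ti\opx$ whose images cover $\crit\cap\Sublevel\rho$, with the curves $\rmC^1$ into $[0,T]\ti\Xsp$. Since $[0,1]$ is compact, each $\gamma_i$ is Lipschitz into $[0,T]\ti\Xsp$. The inclusion $\Xsp\subset\Hilbert$ is continuous by \eqref{spaces}, hence Lipschitz, so each $\gamma_i$ is also Lipschitz as a curve into $[0,T]\ti\Hilbert$ with the product metric used throughout. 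Thus $\crit\cap\Sublevel\rho$ is the union of finitely many Lipschitz images of $[0,1]$, i.e.\ countably $\mathscr H^1$-rectifiable in the sense of \eqref{rectifiable} with empty exceptional set. Since $\crit=\bigcup_{n}\crit\cap\Sublevel n$, Remark \ref{rmk:suff-subl} gives \eqref{eq:15} for $\crit$ itself, by taking the countable union of all the curves over $n\in\N$.

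\textbf{Step 2: checking the remaining hypotheses.} Lemma \ref{le:criterion}(3) converts \eqref{eq:15} into \ass1, hence a clean critical set by Remark \ref{rem:E-constant}, together with the countability of $\nototaldisc$. The standing hypotheses \eqref{coercivita}--\eqref{lambda-convex} are assumed directly. All hypotheses of Theorem \ref{mainth:2}, and therefore of Theorem \ref{mainth:1}, are met.

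\textbf{Step 3: conclusions.} Theorem \ref{mainth:2}(2) yields that every \DSolution\ is a \BSolution, with $\disc u$ at most countable; in particular the jump set $\rmJ\subset\disc u\cup\{$atoms$\}$ is countable as the atomic set of a finite measure. Theorem \ref{mainth:2}(3) gives existence from any $u_0\in\domainenergy$. For sequences $(u_{\eps_n})_n$ with initial data as in \eqref{energy-convergence-initial}, Theorem \ref{mainth:2}(1) provides a subsequence converging pointwise on $[0,T]$ to a \DSolution. By the previous point this limit is a \BSolution.

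\textbf{Main obstacle.} The only delicate point is Step 1: one must make sure that the $\rmC^1$-regularity in the $\Xsp$-topology from Theorem \ref{prop:7.1} transfers to Lipschitz regularity in $[0,T]\ti\Hilbert$, which is the metric in which $\mathscr H^1$ and \eqref{rectifiable} are understood. This is immediate from the continuity of the embedding, so the argument is essentially bookkeeping.
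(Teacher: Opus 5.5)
Your proposal is correct and follows the paper's own argument. Theorem \ref{prop:7.1}(2) gives, for each $\rho$, finitely many $\rmC^1$ curves covering $\crit\cap\Sublevel\rho$, whence \eqref{eq:15} via Remark \ref{rmk:suff-subl}. Lemma \ref{le:criterion} then yields \ass1 and the countability of $\nototaldisc$, so Theorems \ref{mainth:1} and \ref{mainth:2} apply.

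Your explicit check that $\rmC^1$ curves into $[0,T]\ti\Xsp$ are Lipschitz into $[0,T]\ti\Hilbert$, by the continuous embedding in \eqref{spaces}, spells out a step the paper leaves implicit.
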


\nc%
We conclude this section by deriving from 
the 
  transversality conditions,  another
important structural property of the critical set $\crit$. 
\begin{proposition}
\label{prop:7.2}
Under the assumptions of Theorem  \ref{prop:7.1},
for every $\rho>0$ and
for almost every $t\in (0,T)$
all the points in $\critset(t)\cap \sublevel\rho$ are non-degenerate.
\end{proposition}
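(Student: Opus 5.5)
Fix $\rho>0$. The plan is to show that the set $\sing\cap\Sublevel\rho$ of degenerate critical points has an at most countable (in fact finite) projection onto $[0,T]$. Once this is known, every $t$ outside that projection has only non-degenerate points in $\critset(t)\cap\sublevel\rho$, and the claim follows.

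\emph{Step 1: local isolation of degenerate times.} Take $(t_0,u_0)\in\sing\cap\Sublevel\rho$. By the local description \eqref{from-ars1-1} from the proof of Theorem~\ref{prop:7.1}(1), there are a neighbourhood $(t_0-\delta,t_0+\delta)\times B_R(u_0)$ and a $\rmC^1$ curve $\parcur=(\curv t,\curv u)\colon I\to[0,T]\times\Xsp$ that parameterizes $\crit$ there, with $\parcur(0)=(t_0,u_0)$ and $\curv t'(0)=0$.

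I will show that $(t_0,u_0)$ is the only degenerate point on $\parcur(I)$ once $I$ is shrunk. Differentiate $\gder{}\cE{\curv t(s)}{\curv u(s)}=0$ in $s$. This gives
\[
\gder2\cE{\curv t(s)}{\curv u(s)}\,\curv u'(s)+\curv t'(s)\,\partial_t\gder{}\cE{\curv t(s)}{\curv u(s)}=0 .
\]
Suppose $\parcur(s)\in\sing$. By (T.1) the kernel is spanned by some $v\neq0$, and by \eqref{eq:26} and self-adjointness the kernel of the adjoint is the same span. Pairing the identity with $v$ kills the first term and leaves $\curv t'(s)\langle\partial_t\gder{}\cE{\parcur(s)},v\rangle=0$. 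By (T.2) this forces $\curv t'(s)=0$.

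It remains to see that $\curv t'$ vanishes only at $s=0$ near $0$. The expected source is the Lyapunov--Schmidt construction of \cite{AgRoSa15}, in which (T.2) makes the reduced equation solvable for $t$ as a $\rmC^1$ function of the kernel coordinate. I would re-derive this there to obtain an isolated zero. \textbf{This is the step I expect to be the main obstacle}: first-order transversality alone does not obviously make the zeros of $\curv t'$ isolated.

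\emph{Fallback for Step 1.} If isolation of the zeros of $\curv t'$ fails, I would instead prove only that $\curv t\bigl(\{s:\curv t'(s)=0\}\bigr)$ is $\mathscr L^1$-negligible. This follows from the one-dimensional Sard theorem for $\rmC^1$ functions, or equivalently from the area formula \eqref{area-formula} applied to $\curv t$ with $h=\chi_{\{\curv t'=0\}}$. This weaker statement already suffices for an ``almost every $t$'' conclusion.

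\emph{Step 2: globalisation by compactness.} Run the same argument along each of the finitely many curves $\gamma_i$ from \eqref{eq:strutt_C}. On each curve the identity $\gder{}\cE{\gamma_i}=0$ can be differentiated because $\gamma_i$ is $\rmC^1$. The argument of Step 1 then gives
\[
\sing\cap\gamma_i([0,1])\subset\gamma_i\bigl(\{s:\curv t_i'(s)=0\}\bigr).
\]
The area formula for $\curv t_i$ yields $\mathscr L^1\bigl(\curv t_i(\{\curv t_i'=0\})\bigr)=0$. Taking the finite union over $i=1,\dots,M_\rho$, the set of times $t$ for which $\singset(t)\cap\sublevel\rho\neq\emptyset$ is Lebesgue negligible. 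If Step 1 succeeds in its strong form, compactness makes this set even finite.
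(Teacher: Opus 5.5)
Your Step 2, run with the fallback, is essentially the paper's proof. The paper also differentiates $\gder{}\cE{\curv t_i(s)}{\curv u_i(s)}=0$ along the finitely many $\rmC^1$ curves of \eqref{eq:strutt_C}, pairs with a kernel vector (self-adjointness via \eqref{eq:26}), uses (T.1)--(T.2) to show that degenerate points occur only where $\curv t_i'=0$, and concludes by the one-dimensional Sard lemma; it states $A_i=B_i$, but only your inclusion $B_i\subset A_i$ is actually needed.

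You should drop the strong form of Step 1, because under (T.1)--(T.2) alone it is false in general. For instance, $\ene tu=G(u)-tu$ on $\R$ with $G'$ constant on an interval yields a whole arc of degenerate critical points at a single time, so $\curv t$ is constant on an arc. Isolation of degenerate points is exactly what the extra condition $\mathrm{(T.3)}$ provides, via $\curv t''(0)\neq0$ in the proof of Theorem~\ref{th:counter-2}.
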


\begin{proof}
To prove the proposition, we 
 show that the set
\begin{equation*}
\{t\in[0,T]:\gder 2{\cE} t{u}
\mbox{ is singular for some }u\in\critset(t)\cap \sublevel\rho\}
\end{equation*}
has null measure.
From Theorem \ref{prop:7.1} (2) we know that
there exists a finite number $M_\rho$ of  $\rmC^1$-curves
$\gamma_i=(\curv t_i,\curv u_i)
:[0,1]\to[0,T]\ti\Xsp$
such that \eqref{eq:strutt_C} holds.
In particular, differentiating the equation
$\gder{} {\cE}{\curv t_i(s)}{\curv u_i(s)}=0$ w.r.t. $s$, we get
\begin{equation}
\label{eq:id_diff_i}
\curv t_i'(s)\partial_t \gder {}\cE{\curv t_i(s)}{\curv u_i(s)}
+
\gder 2\cE{\curv t_i(s)}{\curv u_i(s)}[\curv u_i'(s)]
=0
\qquad\mbox{for every }
s\in[0,1],\ i=1,\hdots, M_\rho.
\end{equation}
Using this identity, the fact that $\gamma_i'\neq(0,0)$ in $[0,1]$,
and the \emph{partial} transversality conditions,
it is easy to show that
\begin{equation}
\label{eq:A_i_B_i}
A_i:=
\big\{s\in[0,1]:\curv t_i'(s)=0\big\}
=
\big\{s\in[0,1]:\gder 2\cE{\curv t_i(s)}{\curv u_i(s)}
\mbox{ is singular}\big\}
=:B_i.
\end{equation}
Moreover, it is elementary to check that
\begin{equation*}
\bigcup_{i=1}^{M_\rho}\curv t_i(B_i)
=
\{t\in[0,T]:\gder 2{\cE} t{u}
\mbox{ is singular for some }u\in\critset(t)\cap \sublevel\rho\}.
\end{equation*}
The last identity, together with \eqref{eq:A_i_B_i},
implies that
\begin{equation*}
\big|\{t\in[0,T]:\gder 2{\cE} t{u}
\mbox{ is singular for some }u\in\critset(t)\cap \sublevel\rho\}\big|
\leq\sum_{i=1}^{ M_\rho}
\big|\curv t_i(A_i)\big|.
\end{equation*}
We can thus conclude the proof observing that
$|\curv t_i(A_i)|=0$ for every $i=1,\hdots, M_\rho$,
in view of Sard's Lemma
and of the regularity of the 
functions
$\curv t_i$.
\end{proof}


\subsection{Results under a further transversality condition}
\label{s:7.2}
As we have seen, the sole  transversality conditions 
from Definition \ref{def:transv}
are sufficient to ensure that the critical set $\crit$ enjoys the crucial property \eqref{intersection-sublevels}. 
\par
We now introduce a third transversality condition. 
To state it properly, we need to enhance \eqref{basic-hyps-transv-b}, by requiring that 
\[
 \cE \in \mathrm{C}^3 ([0,T]\ti \opx). 
 \]
The additional transversality condition
 at a  point $(t_0,u_0) \in \sing$ reads as follows:
 \begin{enumerate}
\item
[\rm ({T}.3)]
\label{assT3}
  If $0\neq v\in \NN(\gder 2\cE {t_0}{u_0})$, then
  $
  \gder 3\cE {t_0}{u_0}[v,v,v]
    \neq 0$.
\end{enumerate}
We will say that 
 $\cE$ satisfies the \emph{full set of transversality conditions} if 
\ref{assT1}--$\mathrm{(T.3)}$ hold at every $(t_0,u_0)\in \sing$.
\begin{remark}
\label{rmk:full-tc}
\slshape
Indeed, it is under the full set of  transversality conditions that the convergence results in \cite{Zanini} were proved.
As we will see in Section \ref{s:8}, the full set of transversality conditions  is noticeably stronger than \ref{assT1}--\ref{assT2}:  both of  \ref{assT1}--\ref{assT2} and 
\ref{assT1}--
$\mathrm{(T.3)}$
 are \emph{generic}, but the genericity of \ref{assT1}--$\mathrm{(T.3)}$ is weaker than that of 
 \ref{assT1}--\ref{assT2}.  
\end{remark}

First of all, under the additional $\mathrm{(T.3)}$  
we  have the following result,
proved in \cite{AgRoSa15},
ensuring that the critical set $\critset(t)$ is discrete \emph{at every point} $t\in [0,T]$. 
\begin{proposition}
\label{prop:7.3}
In the setting of  \eqref{spaces},
let $\cE:[0,T]\ti \Hilbert \to (-\infty,+\infty]$  comply
 with \eqref{coercivita}--\eqref{lambda-convex},
 \eqref{basic-hyps-transv},
and  with the full set of
transversality
conditions.

Then, in addition to all the conclusions of Corollary~\ref{cor:obvious},
for every $\rho>0$ and every $t\in[0,T]$ the set
$\critset(t)\cap \sublevel\rho$
consists of finitely many points.

\end{proposition}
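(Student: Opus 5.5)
Since the conclusions of Corollary~\ref{cor:obvious} are already available under the weaker conditions \ref{assT1}--\ref{assT2}, the only new claim is the finiteness of $\critset(t)\cap\sublevel\rho$ for each fixed $t$. The plan is to combine the local structure of Theorem~\ref{prop:7.1} with a local analysis at singular points that uses $\mathrm{(T.3)}$. We then conclude by compactness of $\critset(t)\cap\sublevel\rho$ in $\Xsp$, which follows from \eqref{basic-hyps-transv-e}.

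The first step handles non-degenerate points. Fix $t$ and $u\in\critset(t)\cap\sublevel\rho$. If $\gder 2\cE tu$ is invertible, the implicit function theorem applied to $\gder{}\cE t\cdot$ at fixed $t$ shows that $u$ is isolated in $\critset(t)$. So it suffices to treat $u\in\singset(t)$.

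The second step is the core. Take $(t,u)\in\sing$ and the $\rmC^1$ curve $\parcur=(\curv t,\curv u)$ of \eqref{from-ars1-1}, which parametrizes $\crit$ near $(t,u)$ with $\curv t'(0)=0$. Under $\rmC^3$ regularity the Lyapunov--Schmidt reduction upgrades $\parcur$ to $\rmC^2$; in fact one may parametrize by the kernel coordinate $s$, writing $u+sv+w(s)$ with $v$ spanning $\NN(\gder 2\cE tu)$. Differentiating \eqref{eq:id_diff_i} once more at $s=0$ and pairing with $v$ gives
\[
\curv t''(0)\,\langle\partial_t\gder{}\cE tu,v\rangle+\gder 3\cE tu[v,v,v]=0 .
\]
Here the terms involving $\gder 2\cE tu$ vanish against $v$ by self-adjointness, and $\curv t'(0)=0$ kills the mixed terms. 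By $\mathrm{(T.2)}$ and $\mathrm{(T.3)}$ we obtain $\curv t''(0)\neq0$. Hence $s=0$ is a strict local extremum of $\curv t$, so $I_t=\{s:\curv t(s)=t\}$ reduces to $\{0\}$ near $0$. By \eqref{eq:equiv_ins_conn}, $u$ is then isolated in $\critset(t)$: it is a fold point.

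The third step concludes. Every point of $\critset(t)\cap\sublevel\rho$ is isolated in $\critset(t)$, and this set is compact (in $\Hilbert$, and equivalently in $\Xsp$ by \eqref{basic-hyps-transv-e}). A compact set all of whose points are isolated is finite.

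The main obstacle is the second step: justifying the $\rmC^2$ regularity of the reduced curve and the exact second-order expansion in the infinite-dimensional Fredholm setting. There one has to use \eqref{eq:26} to identify $\NN(\gder 2\cE tu)$ with the cokernel, so that pairing with $v$ annihilates the range term. I would follow the Lyapunov--Schmidt computation of \cite{AgRoSa15}, to which the statement is attributed, rather than redo the expansion in detail.
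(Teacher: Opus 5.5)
Your proposal is correct and follows the paper's route. The paper cites \cite[Thm.~2.5, Cor.~3.6]{AgRoSa15} for the isolatedness in $\Xsp$ of every point of $\critset(t)$; that is exactly what your Steps 1--2 establish, and the same computation $\curv t''(0)\neq0$ reappears in the proof of Theorem~\ref{th:counter-2}. The paper then concludes finiteness from \eqref{basic-hyps-transv-e} and the compactness of $\critset(t)\cap\sublevel\rho$, as in your Step 3.
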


\begin{proof}
It was proved in \cite[Thm.\ 2.5, Cor.\ 3.6]{AgRoSa15} that
\emph{for every $t\in [0,T]$} the fiber
 $\critset(t)$ consists of isolated points in $\Xsp$. Then, \eqref{basic-hyps-transv-e} ensures that the critical points in  $\critset(t) \cap \sublevel\rho$ are also isolated with respect to the topology of
 $\Hilbert$; since $\critset(t) \cap \sublevel\rho$  is  a compact subset of $\Hilbert$, the thesis follows. 
\end{proof}
\par
With Theorem \ref{th:counter-2} below,
 we deduce further properties of  Dissipative Viscosity (which are a fortiori Balanced Viscosity, as Theorem 
\ref{mainth:2} is in force)
  solutions: in particular,
we provide an enhanced description of their jump set.
\begin{theorem}
\label{th:counter-2}
In the setting of  \eqref{spaces},
let $\cE:[0,T]\ti \Hilbert \to (-\infty,+\infty]$  comply
 with \eqref{coercivita}--\eqref{lambda-convex},
 \eqref{basic-hyps-transv},
and  with the full set of
transversality
conditions. Let $u$ be a \emph{\DSolution} to  \eqref{limit-problem}, with jump set $\jumpname$.

Then, by Corollary~\ref{cor:obvious}, $u$ is in fact a Balanced
Viscosity solution, and moreover:
\begin{enumerate}
\item
if  $  u(t-)  \in \singset(t)$ at some $t\in (0,T]$, then
$t\in \jumpname$;
\item $\jumpname$ consists of finitely many points.
\end{enumerate}
\end{theorem}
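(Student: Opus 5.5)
The plan is to combine the rigid local geometry of $\crit$ under the full set of transversality conditions with the jump relations \eqref{to-save-4}. The first step is structural. By Proposition \ref{prop:7.3}, every fiber $\critset(t)\cap\sublevel\rho$ is finite, so $t\in\totaldisc$ for all $t$. By Proposition \ref{prop:improved-C}, $u$ then has one-sided limits $u(t\pm)$ everywhere, $\rmU(t\pm)=\{u(t\pm)\}$, and $u$ is continuous off $\rmJ$. By Remark \ref{energy-bound}, $u$ takes values in a fixed sublevel $\sublevel\rho$.

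The second step is a local fold description. Near $(t_0,u_0)\in\sing$, Theorem \ref{prop:7.1} and \eqref{from-ars1-1} give a $\rmC^1$ parametrization $\parcur=(\curv t,\curv u)$ of $\crit$ with $\curv t'(0)=0$. I will upgrade this to $\curv t''(0)\neq 0$ using (T.3). Differentiating \eqref{eq:id_diff_i} once more at $s=0$ and pairing with the kernel vector $v$ (which can be identified with $\curv u'(0)$ up to a nonzero factor, by \eqref{eq:id_diff_i} and (T.1)) gives
\[
\curv t''(0)\,\langle\partial_t\rmD\calE_{t_0}(u_0),v\rangle+\rmD^3\calE_{t_0}(u_0)[v,v,v]=0 .
\]
Then (T.2)--(T.3) force $\curv t''(0)\neq0$. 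Consequently $\crit$ near $(t_0,u_0)$ lies in $\{t\le t_0\}$ or in $\{t\ge t_0\}$, and for $t$ on that side it consists of exactly two branches meeting at $u_0$. As a byproduct, since zeros of $\curv t'$ on each curve $\gamma_i$ of \eqref{eq:strutt_C} are then isolated, $\sing\cap\Sublevel\rho$ is finite.

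For claim (1), suppose $u(t-)=u_0\in\singset(t)$ but $t\notin\rmJ$. Then $u$ is continuous at $t$ and $u(s)\in\critset(s)$ for $s\notin\rmJ$, with $\rmJ$ countable. Hence $(s,u(s))\in\crit$ near $(t,u_0)$ for all $s$ in a two-sided punctured neighbourhood, minus a countable set. This contradicts the one-sidedness of the fold.

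The case $t=T$, or more generally a fold opening toward the side from which $u$ arrives, cannot be settled this way, since then no empty side is available. Here I would argue energetically. At a fold, one branch consists of nondegenerate local minima and the other of saddles, because the index of $\rmD^2\calE$ changes by one through $u_0$. Approaching along the minimizing branch, $u_0$ is not a strict local minimizer, and a transition of positive cost to a lower critical point exists (descent along the unstable kernel direction). I would then check that \eqref{to-save-4} together with the energy balance forces $\mu(\{t\})>0$. I expect this to be the most delicate part of (1), and I would first try to reduce it to the two-sided argument via the interior case.

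For claim (2), I argue by contradiction: suppose $t_k\in\rmJ$ are distinct and accumulate at $t^*$. Finiteness of the mass of $\mu$ gives $\cost{t_k}{u(t_k-)}{u(t_k+)}\to0$. By compactness, $u(t_k\pm)\to w^\pm\in\critset(t^*)$, and the lower semicontinuity \eqref{lsc-cost} gives $\cost{t^*}{w^-}{w^+}=0$, so $w^-=w^+=:w$ by Theorem \ref{prop:cost}(3). Since $u(t_k-)\neq u(t_k+)$ are both critical at time $t_k$ and converge to $w$, the point $w$ cannot be nondegenerate, because by the implicit function theorem $\critset(s)$ near a nondegenerate point is a single point. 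So $(t^*,w)\in\sing$, and for large $k$ both $u(t_k\pm)$ lie on the two fold branches near $w$. The remaining obstacle, which I regard as the main difficulty of the whole proof, is excluding such repeated small jumps between the two branches. I would try the following: a jump from the minimizing branch is forbidden by Theorem \ref{rmk:localmin-nojump}, while a jump must lower the energy, $\ene{t_k}{u(t_k-)}>\ene{t_k}{u(t_k+)}$. So every such jump goes from the saddle branch to the minimum branch. After the jump, $u$ continues continuously along the minimum branch, which by claim (1) and Theorem \ref{rmk:localmin-nojump} it can only leave at the fold time itself. This yields at most one jump near $(t^*,w)$, a contradiction. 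Combined with the finiteness of $\sing\cap\Sublevel\rho$, this makes $\rmJ$ finite.
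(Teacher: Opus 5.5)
Your argument for claim (1) at interior times is correct and is essentially the paper's. Since $\curv t'(0)=0\neq\curv t''(0)$, the set $\crit$ near $(t,u(t-))$ lies on one side of $\{\tau=t\}$, whereas continuity of $u$ at $t\notin\rmJ$ would produce critical points on both sides.

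Your worry about the orientation of the fold is void. Since $u(s)\to u(t-)$ with $u(s)\in\critset(s)$ for $s\uparrow t$ outside the countable set $\rmJ$, the local critical curve always lies in $\{\tau\le t\}$, so the empty side is always $(t,t+\delta)$.

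The only uncovered case is $t=T$, and there the statement is false, so the energetic route you sketch cannot be completed. Nothing in Definition~\ref{def:sols-notions-1} forces a jump; Theorem~\ref{rmk:localmin-nojump} only forbids them. Take Example~\ref{cex:ex-fold} with $\ell\in\rmC^3$, $\ell'>0$, and $T=t^*_+$. It satisfies (T.1)--(T.3), since $\partial_t\rmD\calE=-\ell'\neq0$ and $W'''(\mp1/\sqrt3)\neq0$. The curve $u=m_-$ on $[0,T]$ with $u(T)=u^-$ is a \BSolution\ with $\mu=0$, $u(T-)=u^-\in\singset(T)$, and $T\notin\rmJ$. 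So restrict (1) to $t\in(0,T)$. That is all the paper's proof covers (it uses $\tau\in(t,t+\delta)$), and (2) needs no more.

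For claim (2), your reduction to a fold point $(t^*,w)\in\sing$, with $u(t_k\pm)$ the two branch points at time $t_k$, is fine. Obtaining $w^-=w^+$ via \eqref{lsc-cost} and Theorem~\ref{prop:cost}(3) is a valid alternative to the paper's use of one-sided limits.

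The gap is the step that fixes the direction of the jumps. Your claim that one branch consists of local minimizers and the other of saddles is false in general. Across a fold the Morse index goes from some $k$ to $k+1$, and nothing in (T.1)--(T.3) forces $k=0$. For $k\geq1$, Theorem~\ref{rmk:localmin-nojump} gives nothing. Even for $k=0$ you would need strict local minimality in the $\Hilbert$-topology, while a Taylor expansion only gives it in $\Xsp$.

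Without this, the fact that jumps lower the energy does not exclude alternating jumps. You must show that the \emph{same} branch carries the larger energy at every time near $t^*$; this is the missing ingredient (the paper's Claim (A)). With the $\rmC^2$ parametrization $\parcur=(\curv t,\curv u)$, $\parcur(0)=(t^*,w)$, set $\mathfrak e(s):=\ene{\curv t(s)}{\curv u(s)}$ and $p(s):=\pt{\curv t(s)}{\curv u(s)}$. Since $\rmD\calE$ vanishes along $\parcur$, you get $\mathfrak e'=\curv t'\,p$. Hence, for $\hat s<0<\hat s'$ with $\curv t(\hat s)=\curv t(\hat s')$,
\[
\mathfrak e(\hat s')-\mathfrak e(\hat s)=\int_{\hat s}^{\hat s'}\curv t'(s)\big(p(s)-p(0)\big)\dd s,
\qquad
\curv t'(s)\big(p(s)-p(0)\big)=\curv t''(0)\,\langle\partial_t\gder{}\cE{t^*}{w},\curv u'(0)\rangle\,s^2+o(s^2),
\]
and the coefficient is nonzero by (T.2)--(T.3). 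So the energy gap between the two branches has a fixed sign.

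Since $\mu(\{\tau\})=\ene\tau{u(\tau-)}-\ene\tau{u(\tau+)}>0$ at every $\tau\in\rmJ$, all jumps near $(t^*,w)$ go from the same branch to the other. Between jumps $u$ is continuous and critical (Proposition~\ref{prop:improved-C}), hence stays on one branch. Therefore at most one jump occurs near $t^*$, regardless of the Morse index.
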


\begin{remark}
\slshape
\label{rmk:added-V}
Note that the converse of property (1) is not true in general.
Indeed, when  $ u(t-) $   is a non-degenerate critical point for $\cE_t$
one cannot exclude that the system jumps at time $t$.
For example, if $ u(t-) $ is a saddle point, then the system may
jump along the direction associated with a negative eigenvalue of
 $\gder 2{\cE} t{u(t-)}$.
\end{remark}

\begin{proof}
$\vartriangleright (1)$
Let $t \in (0,T]$  be such that $u(t-)$ is a degenerate critical point.
A perusal of the proof of
\cite[Theorem 2.5]{AgRoSa15}
shows that there exist
a neighborhood   $(t-\delta,t+\delta) \ti B_{R}(u(t-))$ 
of $(t,u(t-))$,
an open interval $I \ni 0$, and a $\rmC^2$-curve 
 (as we are now supposing  $\cE \in \mathrm{C}^3 ([0,T]\ti \opx)$) 
$\parcur : I \to [0,T]\ti \Xsp$ such that \eqref{from-ars1-1} holds, i.e.\
$\crit \cap \left( (t-\delta,t+\delta) \ti B_{R}(u(t-)) \right)$ is parameterized by
$\parcur  = (\curv t,\curv u)$,
with
$(\curv t(0),\curv u(0))= (t,u(t-))$, and
$\curv t'(0)=0$.
Moreover, the validity  of the transversality condition
$\mathrm{(T.3)}$ ensures that the curve has
the additional property
$\curv t{''}(0) \neq0$.
In particular, the function $\curv t$ has a  strict local extremum at $s=0$.
Therefore,  for every $\tau \in (t,t+\delta)$  the functional 
 $\cE_{\tau}(\cdot)$ possesses   no critical points   in the ball $ B_{R}
(u(t-))$,
i.e.,
\begin{equation}
\label{no-crit-points}
\critset(\tau ) \cap B_{R}(u(t-)) = \emptyset
\qquad\mbox{for every}\quad
\tau \in (t,t+\delta).
\end{equation}
Therefore, $t$ is a jump point for $u$: otherwise, for all
$(t_n)_n$ with 
 $t_n \downarrow t$
the critical points $(u(t_n))_n$ would converge to
$u(t-) = u(t+)$,
against
\eqref{no-crit-points}.

$\vartriangleright (2)$
In order to prove that $\jumpname$ consists of finitely many points,
we show that $\jumpname$ consists of isolated points.
Let $t\in \jumpname$ and suppose by contradiction that
there exists a sequence $(t_n)_n\subset \jumpname$ such that $t_n\to t$.
Up to subsequences, we can always reduce to either of the cases
$t_n \uparrow t$ or $t_n \downarrow t$.
Let us focus on the first case, since the second one can be
treated similarly.
From the definition of $u_-(t_n)$, $u_+(t_n)$, and of $u(t-)$,
and from the fact that $t_n$ approaches $t$ from the left
it is easy to deduce that
\begin{equation}
\label{eq:cvg_lim_sx}
u_-(t_n)\rightarrow u(t-)\quad\mbox{and}\quad
u_+(t_n)\rightarrow u(t-),
\qquad\mbox{as}\quad n\to\infty.
\end{equation}
 We now consider two subcases:

\begin{description}

\item[Case $1$]

$u(t-)$ is a non-degenerate critical point for $\cE_t(\cdot)$, i.e.\ $\NN (\gder 2{\cE} t{u(t-)}) = \{ 0\}$.
By the implicit function theorem, there exist a neighbourhood
$ (t{-}\delta, t{+}\delta) \ti U$ of $(t,u(t-))$
and a $\rmC^2$-curve
$\curv u:(t-\delta, t+\delta)   \to U$ such that
$
\crit\cap ( (t{-}\delta, t{+}\delta) {\ti} U)
= \{ (\tau,\curv u(\tau))\, : \ \tau \in (t-\delta, t+\delta) \} $  and
$\curv u(t)= u(t-)$.
But from \eqref{eq:cvg_lim_sx} we have that, for $n$ sufficiently big,
$(t_n,  u_-(t_n)) , \, (t_n,  u_+(t_n))  \in\crit\cap  ( (t{-}\delta, t{+}\delta) {\ti} U)$,
which contradicts the fact that $\crit\cap  ( (t{-}\delta, t{+}\delta) {\ti} U)$ is a graph.

\item[Case $2$]

$u(t-)$ is a degenerate critical point for $\cE_t$.
In this case, the same arguments as in the proof of (1)
yield that there exists a $\rmC^2$-curve
$\gamma=(\curv t,\curv u) : I \to [0,T]\ti \Xsp$
parameterizing $\crit \cap ((t-\delta,t+\delta)\ti U)$, with
$(t-\delta,t+\delta)\ti U$ a neighborhood of  $(t,u(t-))$,
such that  $(\curv t(0),\curv u(0))=(t,u(t-))$, 
$\curv t'(0)=0$ and
$0\neq\curv u'(0)\in\NN (\gder 2{\cE} t{u(t-)})$,
and
\begin{equation*}
\langle
\partial_t \gder {}{\cE}{t}{u(t-)}, \curv u'(0)
\rangle\neq0,
\qquad\qquad
\curv t''(0)=
-\frac{\gder 3\cE {t}{u(t-)}[\curv u'(0)]^3}
{\langle  \partial_t \gder {}\cE{t}{u(t-)}, \curv u'(0) \rangle}
\neq 0.
\end{equation*}
To proceed, we need two claims.

\smallskip
\emph{\underline{Claim (A)}}.
The function $\mathfrak{e}(s):= \ene {\curv t(s)}{\curv u(s)}$
has an  inflection point at $s=0$.

\smallskip
\noindent To see this, note first that the function
$\mathfrak{e}(s)$ fulfills
 \[
 \mathfrak{e}'(0)=
\curv t'(0) \partial_t  \ene {t}{ u(t-)}
+\langle \gder {}{\cE}{t}{u(t-)}, \curv u'(0) \rangle
=0,
 \]
since $\gder {}{\cE}{\curv t(s)}{\curv u(s)}=0$ for every $s\in I$,
and, neglecting null terms, we find
\[
\mathfrak{e}{''}(0) =
\curv t{''}(0) \partial_t  \ene {t}{u(t-)}
+  \curv t'(0)^2 \partial_t^2  \ene {t}{u(t-)}+
\curv t'(0)
\langle  \partial_t \gder {}\cE{t}{u(t-)}, \curv u'(0) \rangle
=0,
\]
as we may suppose that
$\partial_t  \ene {t}{u(t-)}=0$,
up to adding to $\cE$ a time-dependent function.
Another straightforward computation then gives
\[
\mathfrak{e}{'''}(0) =
2  \curv t{''}(0)
\langle
\partial_t \gder {}{\cE}{t}{u(t-)}, \curv u'(0) \rangle
\neq 0.
\]
Thus, the claim is proved.
In particular,
from now on we may suppose without loss of generality that
$\mathfrak{e}$ is strictly increasing in a neighborhood of $0$.
Hence, in an interval
$(-\nu,\nu)\subseteq I$ there holds 
\begin{equation}
\label{eq:flesso_e}
\mathfrak e(s) < \mathfrak e(0) < \mathfrak e(s')
\qquad\text{for every}\quad
-\nu< s < 0 < s' < \nu.
\end{equation}
Note that, from the behavior of the function $\curv t$ at $0$,
we have that, up to a smaller $\delta$,
$\curv t((-\nu,\nu))=(t-\delta, t]$, and that
every $\hat t\in(t-\delta, t)$ can be written as
\begin{equation}
\label{eq:hat_t}
\hat t=\curv t(\hat s)=\curv t(\hat s'),
\qquad\mbox{for some}\quad
\hat s\in(-\nu,0),\
\hat s'\in(0,\nu).
\end{equation}

\smallskip
\emph{\underline{Claim (B)}}. There exists a left-neighborhood
of $t$ where the limit solution $u$ jumps at most once.

\smallskip
\noindent Indeed, let $\hat t$ be as in \eqref{eq:hat_t}
and suppose it belongs to the jump set $\jumpname$, so that
\begin{equation}
\label{eq:two_possib}
(u(\hat t-),u(\hat t+))=(\curv u(\hat s),\curv u(\hat s'))
\quad\mbox{or}\quad
(u(\hat t-),u(\hat t+))=(\curv u(\hat s'),\curv u(\hat s)).
\end{equation}
Note that the first possibility can be excluded, otherwise
from \eqref{eq:flesso_e} one would get
\begin{equation*}
\ene {\hat t}{u(\hat t-)}=\mathfrak e(\hat s)
<
\mathfrak e(\hat s')=\ene {\hat t}{u(\hat t+)},
\end{equation*}
which contradicts the fact that
$\ene {\hat t}{u(\hat t-)}-\ene {\hat t}{u(\hat t+)}=
\cost {\hat t}{u(\hat t-)}{u(\hat t+)}\geq0$.
Therefore, the second identity in \eqref{eq:two_possib}
holds true. Now, suppose by contradiction that
$\jumpname \cap(\hat t, t)\neq\emptyset$, and in particular that
$\tilde t\in(\hat t, t)$ is the
first jump time following $\hat t$. Clearly, the point
$(\tilde t,u(\tilde t-))$
lies on the same branch of solutions to
$\gder {}\cE{(\cdot)}{\cdot}=0$ as $(\hat t,u(\hat t+))$,
that is
$(\tilde t,u(\tilde t-))=(\curv t(\tilde s\,),\curv u(\tilde s\,))$,
for some $\hat s<\tilde s<0$. Then, it must be
$(\tilde t,u(\tilde t+))=(\curv t(\tilde s'),\curv u(\tilde s'))$,
for some $0<\tilde s'<\hat s'$, and in turn, again from
\eqref{eq:flesso_e}, we get a contradiction.
This proves the claim.

\smallskip
The latter claim, together with the convergences in
\eqref{eq:cvg_lim_sx}, gives the desired contradiction to the
fact that the sequence of jump points $t_n$ converge s
(from the left) to $t$.
\end{description}
This concludes the proof.
\end{proof}


\section{Applications}
\label{s:8}
This section revolves around the applicability of our main results, Theorems \ref{mainth:1} \& 
 \ref{mainth:2}. In particular, we will focus on the 
 verifiability of the  conditions of  Theorem \ref{mainth:1}, i.e.,   \ass 1 and the countability of $\nototaldisc$, 
 as well as on the validity of  the rectifiability condition \eqref{eq:15} on the critical set in Theorem  \ref{mainth:2}.
 As we have demonstrated in Section \ref{sec:6}, the transversality conditions  \ref{assT1}--\ref{assT2} 
 do imply \eqref{eq:15}  and, a fortiori, 
 \ass 1 and the countability of $\nototaldisc$.
 That is why, it is 
  relevant to directly  address the verifiability of  \ref{assT1}--\ref{assT2}.
\subsection{Genericity  of the transversality conditions} 
\label{ss:9.1}
 \par
  In what follows, we will show that \ref{assT1}--\ref{assT2} are \emph{generic}. Namely, 
 along the lines of
\cite{AgRoSa15}  we will prove that there is  a ``reasonably big'' set of perturbations by which it is possible
 to modify a given functional $\calE$,  in order  to obtain  a new one
  that satisfies the  transversality conditions.
This is basically the content of Theorem \ref{thm:weak-perturb} below. 
\par
In order to state it,  we need to recall some preliminary notions:   
a set  in a topological space is said to be \emph{nowhere dense} if the interior of its closure is empty, which is equivalent to the fact that it  is contained in the complement of an open and dense set.
 A set $B$ is said to be \emph{meagre} if it is contained in the countable union of nowhere dense sets. Finally, we call the complement of a meagre set \emph{residual}: in other words, a residual set contains the intersection of a countable collection of open and dense sets.
\par
We are now in a position to give the following.
\begin{theorem}
\label{thm:weak-perturb}
In the setting of \eqref{spaces}, let $\calE \in \rmC^3([0,T]\times\Xsp)$
satisfy    \eqref{basic-hyps-transv}.
\par
Then, every open neighborhood of $V$ of the origin in $\Ysp$ contains a \emph{residual} set
$V_r$ such that for every $y \in V_r$ the functionals
\begin{equation}
\label{lo-perturbed}
\calE^{y}_t(u) : =\ene tu + \pairing{\Xsp^*}{\Xsp}{y}{u}
\end{equation}
satisfy the transversality conditions.
 Moreover, if $\calE$ satisfies the standing conditions $\mathrm{(E)}$, so does $\calE^y$ for every $y\in\Ysp$ (the perturbation being affine), and Theorem \ref{prop:7.1} then gives \eqref{eq:15} for $\calE^y$; hence, for every $y\in V_r$, \emph{all the conclusions of Theorem \ref{mainth:2} hold for $\calE^y$}. 
\end{theorem}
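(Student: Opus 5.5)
The argument is a parametric transversality argument (Sard--Smale) applied to the extended map in which the perturbation $y$ is an additional variable. Define
\[
\Phi:[0,T]\ti\opx\ti\Ysp\to\Ysp,\qquad \Phi(t,u,y):=\gder{}\cE tu + y .
\]
By \eqref{basic-hyps-transv-b}, $\Phi$ is of class $\rmC^2$, and its differential with respect to $y$ is the identity, so $\Phi$ is a submersion. Consequently $\mathcal M:=\Phi^{-1}(0)$ is a $\rmC^2$ Banach submanifold.

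\textbf{Step 1 (Sard--Smale on the projection).} Consider the projection $\pi:\mathcal M\to\Ysp$, $\pi(t,u,y)=y$. Its kernel is identified with $\NN(\rmD_{(t,u)}\Phi)$ and its cokernel with the cokernel of $\rmD_{(t,u)}\Phi=[\partial_t\gder{}\cE tu\ \ \gder2\cE tu]$. By \eqref{basic-hyps-transv-c}, $\gder2\cE tu$ is Fredholm of index $0$, so adding the one-dimensional $t$-direction makes $\pi$ Fredholm of index $1$. Since $\mathcal M$ is $\rmC^2$ and $2>\max(1,0)$, the Sard--Smale theorem applies once we have second countability, which follows from the separability of $\Xsp$ and $\Ysp$. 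Hence the regular values of $\pi$ form a residual set $V_r'$; intersecting with the open neighbourhood $V$ gives a residual subset of $V$. One technicality: Sard--Smale is usually stated for $t$ in an open interval, so I would extend $\cE$ slightly past $[0,T]$, or treat the endpoints separately.

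\textbf{Step 2 (regular value implies (T.1)--(T.2)).} For $y$ a regular value, the operator $[\partial_t\gder{}{\cE^y}tu\ \ \gder2{\cE^y}tu]$ is onto $\Ysp$ at every critical point of $\cE^y$, noting that $\gder2{\cE^y}{}{}=\gder2\cE{}{}$. At a singular point the range $R$ of the Hessian has codimension $\dim\NN\ge1$ by index $0$, and adding a single vector can raise the range by at most one dimension. Surjectivity therefore forces codimension exactly $1$, which is (T.1), and forces $\partial_t\gder{}\cE tu\notin R$. By \eqref{eq:26} and self-adjointness, $R=\NN^\perp$ in the $\Xsp^*$-$\Xsp$ pairing, so $\partial_t\gder{}\cE tu\notin R$ is exactly $\langle\partial_t\gder{}\cE tu,v\rangle\ne0$, which is (T.2). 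Making this identification of the cokernel with the kernel rigorous via \eqref{eq:26}, as in \cite{AgRoSa15}, is the delicate point.

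\textbf{Step 3 (remaining hypotheses and conclusion).} The conditions \eqref{basic-hyps-transv-a}, \eqref{basic-hyps-transv-e}, \eqref{eq:26} and the Fredholm property are stable under the affine perturbation, because the second differential is unchanged and $y\in\Ysp$. For (E), the $\lambda$-convexity and the time-derivative $\partial_t$ are unchanged. For coercivity, note that $\langle y,u\rangle\ge-\|y\|\|u\|$, while $\lambda$-convexity together with compact sublevels gives at least quadratic-minus-linear growth. I would need to check this carefully, since (E.1) is only a compactness condition; if necessary I would adjust the constant in $\widetilde\cE$ or restrict to small $y$. The power control (E.2) with $\widetilde{\cE^y}$ follows from comparability of the two energies. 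Theorem~\ref{prop:7.1} then yields \eqref{eq:15}, and Theorem~\ref{mainth:2} applies.

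I expect the main obstacle to be the Sard--Smale setup in Step 1: the boundary of $[0,T]$, the $\rmC^2$ regularity versus index $1$, and confirming that $\mathcal M$ is a genuine submanifold in the $\Xsp$-topology.
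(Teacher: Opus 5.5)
Your Steps 1--2 follow the paper's proof. The paper uses the same augmented field $F(t,u,y)=\rmD\calE_t(u)+y$ and notes that $0$ is a regular value of $F$ thanks to the $\tilde y$-direction. It then gets residuality from \cite[Thm.~1.1, Rmk.~A.1]{Saut-Temam79}, the parametric form of Sard--Smale, i.e.\ your argument on the projection $\pi$. Finally it quotes \cite[Lemma~2.4]{AgRoSa15} for the equivalence ``$0$ regular value $\Leftrightarrow$ (T.1)--(T.2)'', which you reprove in Step~2.

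The obstacles you anticipate are mild:
\begin{itemize}
\item Since the perturbation is additive, $\mathcal M$ is just the graph of $-f$ over $[0,T]\ti\opx$, and $\pi$ is $-f$ in that chart. So you are really applying Sard--Smale to the $\rmC^2$, index-$1$ Fredholm map $f=\rmD\calE$ on $(0,T)\ti\opx$, and you can take $V_r=V\cap(-\mathcal R)$, with $\mathcal R$ the residual set of its regular values.
\item The endpoints are handled by also intersecting with the residual sets of $y$ for which $-y$ is a regular value of the index-$0$ maps $f(0,\cdot)$ and $f(T,\cdot)$. This leaves no degenerate critical points at $t\in\{0,T\}$.
\item In Step~2 you only need $R\subset\{w\in\Ysp:\langle w,v\rangle=0\}$, which follows from the symmetry of $\gder{2}{\cE}{t}{u}$; equality then follows by counting codimensions.
\end{itemize}

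The genuine problem is Step~3. Your growth claim is false. $\lambda$-convexity only gives the lower bound $\ene{t}{u}\ge-\frac\lambda2\|u\|^2-C\|u\|-C$, and compact sublevels give coercivity at no prescribed rate.

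Counterexample: take $\ene{t}{u}:=(1+u^2)^{1/4}$ on $\Hilbert=\Xsp=\Ysp=\R$. It satisfies \eqref{coercivita}--\eqref{lambda-convex} and \eqref{basic-hyps-transv}, yet $\calE^y$ is unbounded below for every $y\neq0$. So \eqref{coercivita} fails for every perturbation, however small, and no recalibration of $\widetilde\calE$ or smallness of $y$ helps. Worse, for $y>0$ the gradient flows of $\calE^y$ started to the left of its leftmost critical point run off to $-\infty$ within times of order $\eps$. Hence the conclusions of Theorem~\ref{mainth:2} fail as well.

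Similarly, \eqref{basic-hyps-transv-a} and \eqref{basic-hyps-transv-e} are statements about the critical set. For $\calE^y$ that set is $\{-y\in\partial\ene{t}{u}\}$, not $\{0\in\partial\ene{t}{u}\}$, so they are not inherited just because the second differential is unchanged.

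The paper's proof does not address this part either; it rests on the parenthetical ``the perturbation being affine''. So what is missing is an extra hypothesis, not merely an argument. For instance, assume:
\begin{itemize}
\item superlinear growth $\|u\|\le\delta\,\eneb{t}{u}+C_\delta$ for every $\delta>0$, which makes $\widetilde{\calE^y}$ and $\widetilde\calE$ comparable and gives \eqref{coercivita}--\eqref{P_t} for $\calE^y$;
\item regularity and $\Xsp$-compactness of the solutions of $\rmD\calE_t(u)=-y$.
\end{itemize}
Both hold in the Allen--Cahn application, by Poincar\'e's inequality and elliptic regularity.
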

\begin{proof}
It was shown in \cite[Lemma 2.4]{AgRoSa15} that a functional $\calE$ fulfilling  
 \eqref{basic-hyps-transv}
complies with the partial transversality conditions at its singular critical points if and only if $0$ is a regular value for the field $f= \rmD \calE : [0,T] \ti  \Xsp \to \Ysp$, meaning that,
 at every $(t_0,u_0)$ with $f(t_0,u_0)=0$, the mapping  $\rmD f(t_0,u_0)$ is onto.
Let us now consider the augmented field
\begin{equation}
\label{augmented-field-s:7}
F: [0,T] \ti  \Xsp  \ti \Ysp \to \Ysp \ \text{ defined by } \ F(t,u,y):= f(t,u) +y. 
\end{equation}
We will then prove that the set
\[
\mathfrak{Y} := \{ y \in \Ysp\, : \ 0 \text{ is a regular value of the map } (t,u)\mapsto F(t,u,y) \}  \text{ is a residual set}.
\] To show this, we apply \cite[Thm.\ 1.1, Rmk.\ A.1]{Saut-Temam79}, which requires to show that $0$ is a regular value of the map $F$ in \eqref{augmented-field-s:7}. This is clear, since
at any $(t_0,u_0,y_0) \in F^{-1} (0)$
the Fr\'echet differential of $F$ is the operator given by
$\rmD F (t_0,u_0,y_0)[\tilde{t},\tilde{u},\tilde{y}]:= \tilde{t} \partial_t f(t_0,u_0) + \rmD_u f(t_0,u_0)[\tilde u] +\tilde{y}$.
\end{proof}
\par
Let us now address the 
\emph{full} set of transversality conditions, i.e.,  \ref{assT1}--$\mathrm{(T.3)}$. It turns out that  conditions  \ref{assT1}--$\mathrm{(T.3)}$ as well are generic, albeit in a weaker
sense than \ref{assT1}--\ref{assT2}. More precisely, the class of perturbations that turn a given functional $\calE$ into one complying with 
 \ref{assT1}--$\mathrm{(T.3)}$  is of higher order than that for 
 \ref{assT1}--\ref{assT2}. In fact, our next result, 
 Theorem \ref{thm:strong-perturb}, which was proved in \cite[Thm.\ 3.2, Cor.\ 3.7]{AgRoSa15}, shows that  the 
  perturbations leading  to \ref{assT1}--$\mathrm{(T.3)}$    
   additionally feature a symmetric bilinear form on $\Xsp \ti \Xsp$ of the type
\[
\mathcal{K}(u,v) = \sum_{j=1}^m \pairing{\Xsp^*}{\Xsp}{\omega_j}{u} \pairing{\Xsp^*}{\Xsp}{\omega_j}{v};
\]
we denote by
$\mathfrak{N}_{\mathrm{sym}}$ the closure in $\mathcal{L}(\Xsp \ti \Xsp;\R)$ of the set of  all such  symmetric forms.
We will continue to denote by $\mathcal{K}$ the elements in $\mathfrak{N}_{\mathrm{sym}}$.
For a further technical reason that we do not detail here, in Thm.\  \ref{thm:strong-perturb}
we have to require $\calE \in \rmC^4 ([0,T]\ti \Xsp)$ .
\begin{theorem}[\cite{AgRoSa15}]
\label{thm:strong-perturb}
In the setting of \eqref{spaces}, let $\calE$ satisfy \eqref{basic-hyps-transv}, with $\calE \in \rmC^4 ([0,T]\ti \Xsp)$,  and \eqref{basic-hyps-transv-c}.
Then, every open neighborhood of $U$ of the origin in $\Ysp \ti \mathfrak{N}_{\mathrm{sym}}$ contains a \emph{residual} set
$U_r$ such that for every $(y, \mathcal{K})  \in U_r$ the functionals
\begin{equation}
\label{lo-perturbed-2}
\calE^{y,\mathscr{K}}_t(u) : =\ene tu + \pairing{\Xsp^*}{\Xsp}{y}{u} + \frac12 \mathcal{K}(u,u)
\end{equation}
satisfy the full transversality conditions.
\end{theorem}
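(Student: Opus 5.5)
The plan is a parametric transversality argument, in the same spirit as the proof of Theorem \ref{thm:weak-perturb}, carried out on the enlarged parameter space $\Ysp\times\mathfrak N_{\mathrm{sym}}$. Set $f:=\rmD\calE$ and introduce the augmented field
\[
\Phi(t,u,y,\mathcal K):=f(t,u)+y+\mathcal K(u,\cdot).
\]
The transversality conditions (T.1)--(T.3) will be recast as regular-value conditions for suitable finite-dimensional extensions of the equation $\Phi=0$. Then the Saut--Temam Sard--Smale theorem \cite[Thm.\ 1.1]{Saut-Temam79} will yield residuality of the good parameters.

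\textbf{Step 1: (T.1)--(T.2).} By \cite[Lemma 2.4]{AgRoSa15}, (T.1)--(T.2) hold for $\calE^{y,\mathcal K}$ exactly when $0$ is a regular value of $(t,u)\mapsto\Phi(t,u,y,\mathcal K)$. The map $\Phi$ is a submersion in the joint variables, since $\rmD_y\Phi=\mathrm{Id}$. Its restriction to $(t,u)$ is Fredholm of index $1$, by \eqref{basic-hyps-transv-c} together with the compact rank-one time direction. Hence the parametric transversality theorem gives that this property holds on a residual set of $(y,\mathcal K)$.

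\textbf{Step 2: (T.3).} Near a singular point, the Lyapunov--Schmidt reduction along the one-dimensional kernel $\mathrm{span}\{v\}$ gives a reduced scalar equation $g(t,s)=0$. Condition (T.2) is $\partial_t g\neq0$, and (T.3) is $\partial_s^2 g\neq0$ at points where $g=\partial_s g=0$. The idea is to encode the degenerate set through the augmented map
\[
G(t,u,v,y,\mathcal K):=\bigl(\Phi,\ \rmD_u\Phi[v],\ \langle\rmD_u^2\Phi[v,v],v\rangle,\ \|v\|^2-1\bigr).
\]
Its zero set, for fixed parameters, consists of the points violating (T.3). One counts dimensions: the $(t,u,v)$-Fredholm index of $G$ is $1+1-1-1<0$, or equivalently the reduced system is overdetermined. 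So if $0$ is a regular value of $G$ in the $(t,u,v)$ variables for generic parameters, the zero set must be empty, which is exactly (T.3).

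The main obstacle is showing that $0$ is a regular value of $G$ in all variables, i.e.\ that the parameter derivatives reach the extra components. The $y$-derivative handles the first component. The perturbations $\mathcal K(u,\cdot)$, with $\mathcal K=\omega\otimes\omega$, contribute $\langle\omega,u\rangle\omega$ to the first component, $\langle\omega,v\rangle\omega$ to the second, and $0$ to the cubic one, because $\mathcal K$ is quadratic. Therefore (T.3) cannot be forced by quadratic perturbations through this component directly. Instead, I would first exploit the coupling through the second component and through the implicit dependence of $u$ on $(y,\mathcal K)$: varying $\mathcal K$ moves the kernel direction $v$ and the base point, and this changes the reduced cubic coefficient. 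This is precisely where the $\rmC^4$ hypothesis enters, since the cubic term must be differentiated once more. The surjectivity computation should be modelled on \cite[Thm.\ 3.2]{AgRoSa15}.

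Finally, intersecting the residual sets from Steps 1 and 2 with $U$ gives the residual set $U_r$.
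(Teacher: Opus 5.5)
Step 1 is fine. It is the argument of Theorem \ref{thm:weak-perturb}, with $\mathcal K$ carried along as an extra parameter.

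The gap is in Step 2. To apply \cite{Saut-Temam79} to $G$, you need $0$ to be a regular value of $G$ in \emph{all} variables, and this is false in general, not merely hard to prove. Since $y$ and $\mathcal K$ do not enter the cubic component, its differential at a zero of $G$ is
\[
(\tau,w,z)\longmapsto \tau\,\partial_t\rmD^3\calE_t(u)[v,v,v]+\rmD^4\calE_t(u)[v,v,v,w]+3\,\rmD^3\calE_t(u)[v,v,z].
\]
The ``coupling'' you invoke (moving $u$ and the kernel direction $v$ by varying $\mathcal K$) produces exactly these terms and nothing else, and they all vanish wherever $\calE_t$ is quadratic in $u$. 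For instance, take $\R^d$ with $\calE_t(u)=-\frac{\eps}{2}|u|^2-\langle\ell(t),u\rangle$ for $|u|<1$. Then every $(t,u,v)$ with $|u|<1$, $|v|=1$ is a zero of $G$ for $\mathcal K=\eps\,v\otimes v$ and $y=\ell(t)+\eps u-\eps\langle v,u\rangle v$. Near such points the cubic component of $G$ vanishes identically, so $\rmD G$ is not onto and no Sard--Smale-type theorem applies. The statement itself is not threatened there: for generic $\mathcal K$ the matrix $\mathcal K-\eps I$ is invertible, so there are no singular points at all; your mechanism just cannot see this. Note also that your count $1+1-1-1$ equals $0$; the correct index is $1+0-1-1=-1$.

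The missing idea is that (T.3) should not be imposed as an extra equation. It should be recovered as the regularity condition for the index-zero augmented field $\widetilde G(t,u,v):=\bigl(\Phi,\ \rmD_u\Phi[v],\ \|v\|^2-1\bigr)$. Take a zero $(t_0,u_0,v_0)$ and let $L:=\rmD_u\Phi(t_0,u_0)$, which is self-adjoint with $\NN(L)=\mathrm{span}\{v_0\}$ and $\RR(L)=\{b:\langle b,v_0\rangle=0\}$. The $(t,u,v)$-differential is
\[
(\tau,w,z)\longmapsto\Bigl(\tau\,\partial_t\rmD\calE_{t_0}(u_0)+Lw,\ \ \tau\,\partial_t\rmD^2\calE_{t_0}(u_0)[v_0]+\rmD^3\calE_{t_0}(u_0)[v_0,w,\cdot]+Lz,\ \ 2\langle v_0,z\rangle\Bigr).
\]
It is onto if and only if two conditions hold:
\begin{itemize}
\item $\langle\partial_t\rmD\calE_{t_0}(u_0),v_0\rangle\neq0$, which fixes $\tau$ from the $v_0$-component of the first entry;
\item $\rmD^3\calE_{t_0}(u_0)[v_0,v_0,v_0]\neq0$, which fixes the $v_0$-component of $w$ from that of the second entry.
\end{itemize}
That is, it is onto iff (T.2)--(T.3) hold. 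For the parametric map, the parameters then only have to reach the first two entries, and they do. The direction $\delta y$ covers the first entry. The direction $\delta\mathcal K=\omega\otimes\omega$ with $\langle\omega,v_0\rangle\neq0$ gives $\delta\mathcal K\,v_0=\langle\omega,v_0\rangle\omega$, which supplies the missing direction $v_0$ in the second. No derivative of $\calE$ beyond the third is involved. Hence $0$ is a regular value of the full parametric $\widetilde G$, and \cite{Saut-Temam79} yields a residual set of $(y,\mathcal K)$ on which (T.3) holds; intersect it with the residual set from Step 1. The paper does not prove this theorem itself but defers to \cite{AgRoSa15}. As far as I can tell, this regular-value characterization of the full conditions is the route taken there, in parallel with the paper's proof of Theorem \ref{thm:weak-perturb}.
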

 \begin{remark}[The finite-dimensional case]
\slshape
\label{better-in-finite-dimension}
In the finite-dimensional case  $\Xsp = \Ysp =\R^d$, in the statements of Thms.\ \ref{thm:weak-perturb} \& \ref{thm:strong-perturb} it is possible to replace the words
``residual set'' by ``set of full (Lebesgue) measure''. The lines of the proofs are the same (cf.\ also \cite[Rmk.\ 3.4]{AgRoSa15}), but in finite dimension it is possible to
apply Sard's Theorem, instead of the result from \cite{Saut-Temam79}. It is this that  leads to the stronger property that the set of admissible perturbations has full measure.
\end{remark}

\subsection{Examples}
\label{ss:true-9.2}

As a first, direct illustration, let us verify the finite-dimensional Corollary \ref{cex:cor-findim} anticipated in the Introduction.
\begin{proof}[Proof of Corollary \ref{cex:cor-findim}]
(1) By the Morse--Sard theorem \cite{Sard42}, for every $t\in[0,T]$
the set of critical values of $\calE_t \in \rmC^d(\R^d)$ is $\mathscr
L^1$-negligible: this is precisely condition $\mathrm{(C)}$.

(2)  In this setting, conditions \eqref{spaces} and
\eqref{basic-hyps-transv} are obviously satisfied, with
$\Tsp=\Ysp=\Hilbert=\R^d$ and $\opx=\R^d$.
We can thus apply
Theorem \ref{thm:weak-perturb} together with Remark
\ref{better-in-finite-dimension}.
\nc
\end{proof}

We conclude this section by illustrating Thms.\ \ref{thm:weak-perturb} and \ref{thm:strong-perturb} with the following infinite-dimensional example. We refer to the discussion in \cite[Ex.\ 3.8]{AgRoSa15} for all details.
\nc

\begin{example}
\label{ex:revisited}
\upshape
Let $\Omega$ be a bounded connected open set in $\R^d$,
$d\leq 3$. Consider the following triple $(\Xsp,\Hilbert,\Ysp)$:
\[
\Xsp = H^2(\Omega) \cap H_0^1 (\Omega), \qquad
\Ysp =\Hilbert = L^2(\Omega),
\]
 and the energy functional $\calE\colon [0,T]\ti \Hilbert \to ({-}\infty,{+}\infty]$ 
\begin{equation}
\label{explicit-energy}
\ene tu:=
\begin{cases}
\int_\Omega \left( \frac12 |\nabla u(x)|^2 {+}\rmW(u(x))
{-} \ell(t,x) u(x) \right)\dd x & \text{if } u \in H_0^1(\Omega),
\\
+\infty & \text{otherwise}.
\end{cases}
\end{equation}
Here, $\rmW\colon \R \to \R$ is  the  double-well potential $\mathcal{W}(u) = (u^2-1)^2/4$, while  we have 
$\ell \in\rmC^3([0,T]; L^2(\Omega))$. That is why, 
 $\cE\in\rmC^3([0,T]\ti\Xsp)$.
The Fr\'echet differential of $u\mapsto \ene tu$ is the vector field
\[
f: = \rmD\cE \colon [0,T]\ti\Xsp \to\Ysp, \quad
f(t,u) =Au + \rmW'(u) - \ell(t),
\]
with $A \colon \Xsp \to \Ysp$ the Laplacian operator with homogeneous Dirichlet boundary conditions.
Note that $A$ is Fredholm with index $0$.
It follows from \cite[Thm.\ 5.26, p.\ 238]{Kato} that also
$f(\cdot, t)$ is a Fredholm map with index $0$
for every $t \in[0,T]$
and it is easy to check that the other conditions \eqref{basic-hyps-transv} are satisfied.
\par
Let us now illustrate explicitly the perturbations from Thm.\ \ref{thm:weak-perturb}: it 
  leads to a perturbed energy of the form
\[
\calE_t^y(u):=
\int_\Omega \left( \frac12 |\nabla u|^2 +\mathcal{W}(u) {+}y u 
{-} \ell(t) u \right)\dd x \qquad  \text{with } y \in L^2(\Omega),
\]
whereas an example of perturbation of the type in  Thm.\ \ref{thm:strong-perturb} leads to an energy
\[
\calE_t^{y,z}(u):=
\int_\Omega \left( \frac12 |\nabla u|^2  {+}\mathcal{W}(u) {+} \frac12 z u^2  {+}y u  
{-} \ell(t) u \right)\dd x \qquad \text{with } z \in \mathrm{C}^0 (\overline\Omega), \ y \in L^2(\Omega). 
\]
\end{example}
\nc
\begin{proof}[Proof of Theorem \ref{cex:th-AC-short}]
Conditions \eqref{basic-hyps-transv} for $\calE$ were checked in
Example \ref{ex:revisited}, and the standing conditions $\mathrm{(E)}$
follow from $\rmW''\geq -1$ and the compactness of the embedding
$H^1_0(\Omega)\hookrightarrow L^2(\Omega)$.
By Theorem \ref{thm:weak-perturb}, for $y$ in a residual set all the conclusions of Theorem \ref{mainth:2} hold for $\calE^y$. \nc
\end{proof}
\nc

\section{A double obstacle problem in an interval}
\label{ss:9.2}
Our last example concerns the nonsmooth
double-obstacle energy in a 1D interval $(-L,L)$,
We emphasize that its purely scalar counterpart is the relay hysteresis of Example
\ref{cex:ex-relay} in the Introduction, obtained by collapsing
the space $(-L,L)$ to a point
and relaxing the homogenous Dirichlet boundary conditions.
After setting up  the example,  we will show how the three levels of our
hierarchy  of results,   namely existence and consistency, compactness, and the
Balanced Viscosity structure,
are each realized, or fail to be, depending on the length $L$ of the underlying domain.

 Let $\Hilbert=L^2(-L,L)$ and consider
\begin{equation}
\label{cex:do-energy}
\EK :=\Big\{u \in H^1_0(-L,L):\ |u|\leq 1 \Big\},\qquad \ene tu:=
\begin{cases}
\displaystyle\int_{-L}^L \Big(\frac12 |u'|^2-\frac12 u^2 -\ell(t)\, u \Big)\dd x
& \text{if } u \in \EK ,
\\
+\infty &\text{otherwise},
\end{cases}
\end{equation}
with a spatially constant and piecewise strictly monotone loading $\ell \in \rmC^1([0,T])$: this is the Dirichlet energy with the (nonsmooth,
nonconvex) double-obstacle potential
$\rmW(u)=\rmI_{[-1,1]}(u)-\frac{u^2}2$
\cite{BloweyElliott91,ChenElliott94}, whose gradient flow is a 
double-obstacle parabolic problem.
Throughout this example we assume the standing non-degeneracy condition
\begin{equation}
\label{cex:do-nondeg}
2L \notin \pi\N,
\end{equation}
excluding the 
domain lengths 
at which the classification below
 switches to a different regime.
The standing conditions $\mathrm{(E)}$ hold trivially:   for each $L$, energy  sublevels
are bounded in $H^1_0(-L, L)$, hence compact in $L^2(-L,L)$;  the mapping $u\mapsto \ene tu
+\frac12\|u\|^2$ is convex ($\lambda$-convexity holds with $\lambda=1$);
$|\partial_t \ene tu| =|\dot\ell(t)\int_{-L}^L u| \leq 2L\,|\dot\ell(t)|$
on $\EK $.
The elements of the fiber $\critset(t)$, i.e.\ the critical points of $\ene t\cdot$, are the solutions of the stationary variational inequality
\begin{equation}
\label{cex:do-VI}
u \in \EK : \qquad \int_{-L}^L u'(w{-}u)'\dd x \geq \int_{-L}^L (u+\ell(t))(w{-}u)\dd x \quad \text{for all } w \in \EK
\end{equation}
(this can be obtained by applying the characterization \eqref{l-convex-charact} of the
subdifferential to
$v=(1-s)u+sw, s\in[0,1], w\in\EK$,
then dividing by $s>0$ and letting $s\downarrow0$). 
Thus, critical points 
 enjoy $W^{2,\infty}$-regularity by standard obstacle-problem
theory 
(see, e.g., \cite{Kin_Sta}).
Denoting by $q\in \{-1,1\}$ the two obstacles,  inequality \eqref{cex:do-VI} corresponds to
\begin{equation}\label{eq:residual-signs}
  r_u:=-u''-u-\ell,\qquad 
 \begin{cases}
  r_u=0&\text{on }\{|u|<1\},\\
  q\, r_u\leq0&\text{on }\{u=q\}
 \end{cases}
\end{equation}
where, here and below, $\ell$ is a short-hand for $\ell(t)$. 
\paragraph{\bfseries Rigidity and classification of the critical points}
On every open interval $I\subset (-L,L)$ where $|u|<1$ 
(a
\emph{noncoincidence set}),
\eqref{eq:residual-signs} reduces to the linear equation
\begin{equation}\label{eq:free-equation}
  -u''-u=\ell,\quad
  \text{so that $u=-\ell+R\cos(\cdot-\varphi)$ on $I$}.
\end{equation}
There is only one smooth solution satisfying the boundary conditions
and the constraint  $|u|<1$, namely
\begin{equation}\label{eq:unique-free-profile}
 u^{\mathrm f}_\ell(x)
 =\ell\left(\frac{\cos x}{\cos L}-1\right),\quad
 |\ell|<\ell_0(L)=\min_{0\le x\le L}\left|\frac{\cos L}{\cos
     x-\cos L}\right|=
 \begin{cases}
  \dfrac{\cos L}{1-\cos L},&0<L<\dfrac\pi2,\\[3mm]
  \dfrac{|\cos L|}{1+|\cos L|},&L>\dfrac\pi2,
 \end{cases}
\end{equation}
where the superscript ``$\mathrm f$'' stands for ``free'', since this
profile does not touch either obstacle.
For $L>\pi/2$, at $|\ell|=\ell_0(L)$ the free profile grazes an
obstacle and disappears; for $\pi/2<L<\pi$ this threshold is precisely
the fold value $\ell^*(L)$ of regime (ii) ahead, at which the free
profile merges with the terminating single-phase branch.
In order to determine the other solutions touching the obstacles, we
observe that, 
since $u \in \rmC^1$, at every contact point where
$u=q\in \{-1,1\}$ we should have $u'=0$: hence the arc adjacent to a
plateau $\{u= q\}$ is the \emph{rigid} profile
\begin{equation}
  \label{eq:rigid}
  -\ell+(\ell+q)\cos(\cdot-x_c)=
  q[1+(q\ell+1)(\cos(\cdot-x_c)-1)],
  \quad
  \text{$x_c$ being the contact point.}
\end{equation}
Consequently, every critical point is either  the \emph{smooth}
solution $u^{\mathrm f}_\ell$ 
from \eqref{eq:unique-free-profile},
or a $\rmC^1$ concatenation of plateaus lying on
the obstacles $\{u=\pm1\}$ with rigid arcs joining them, of the following four types
 ($q\in\{-1,1\}$ denoting the obstacle attained by the arc):
\begin{enumerate}
\item \emph{boundary arcs}, connecting the Dirichlet endpoint
  $u=0$ to the obstacle $q$: imposing the value $0$ on the profile
  \eqref{eq:rigid} shows that their length is
  \begin{equation}\label{eq:a-alpha}
   \alpha_q(\ell):=\arccos\left(\frac{q\ell}{1+q\ell}\right),
   \quad \text{defined whenever}\quad q\ell\geq-\frac12,
  \end{equation}
  with $\alpha_q(\ell)\geq\pi/2$ precisely when $q\ell\leq0$ 
  (see Figure \ref{cex:fig-stable});
\item boundary arcs that \emph{over-swing}: 
a boundary arc need not connect the obstacle $u=q$ to the Dirichlet boundary value $u=0$ along the shortest portion of the rigid profile. Starting from a contact point with the obstacle, the profile may instead cross the level $u=0$ at an interior point, continue through a droplet-like excursion, and then return to $u=0$ on the descending branch, with this second zero occurring exactly at the boundary point $x=\pm L$. Such arcs exist only when
$-\frac12 \le q\ell \le 0$ and their length is $
2\pi-\alpha_q(\ell)
$.
Unlike interior droplets and kinks, these arcs cannot be freely translated, since their second zero is required to occur exactly at the Dirichlet boundary $x=\pm L$. Hence they give rise only to finitely many additional stationary configurations, rather than continuous families of critical points.
\item interior arcs connecting two plateaus at $q$ (``droplets'')
  of the form \eqref{eq:rigid} if $-1\le q\ell\le 0;$
  they have length exactly $2\pi$, and their position is a \emph{free
    parameter}: they can be translated at no energetic cost;
  they can appear only if $2L>3\pi$, since for negative values of
  $q\ell$ each of the two arcs reaching the boundary has length at least $\pi/2$.
\item interior arcs connecting the two obstacles (``kinks''), still
  of the form \eqref{eq:rigid}: they
  exist \emph{only} at the times where $\ell(t)=0$, have length
  exactly $\pi$, and can likewise slide freely.
  They can appear only if $L>\pi$.
\end{enumerate}

The simplest situation,
 illustrated in Figure \ref{cex:fig-stable},
is given by solutions with just two boundary
arcs: setting $L_q(\ell):=L-\alpha_q(\ell)$ and assuming that $L_q(\ell)\ge0 $, we can
define the symmetric stationary profile
\begin{equation}\label{eq:profile}
 u_{q,\ell}(x)=
 \begin{cases}
  -\ell+(\ell+q)\cos\bigl(|x|-L_q(\ell)\bigr),
      &L_q(\ell)\le |x|\le L,\\[2mm]
      q,&|x|\le  L_q(\ell).
 \end{cases}
\end{equation}
The two boundary arcs have length $\alpha_q(\ell)$ and the contact
plateau is
\begin{equation}
\label{eq:contact-set}
 C_{q,\ell}
 =[-L_q(\ell),L_q(\ell)].
\end{equation}
 On its plateau,
\begin{equation}\label{eq:plateau-residual}
 r_{u_{q,\ell}}=-q-\ell=-q(1+q\ell).
\end{equation}
\begin{figure}[ht]
\centering
\begin{tikzpicture}[xscale=0.72,yscale=0.85,baseline=(current bounding box.center)]
  \draw[->,gray] (-1.3,0) -- (4.6,0) node[below,black] {$x$};
  \draw[densely dotted] (-1.3,1) -- (4.6,1);
  \draw[densely dotted] (-1.3,-1) -- (4.6,-1);
  \node[left] at (-1.3,1) {\small $1$};
  \node[left] at (-1.3,-1) {\small $-1$};
  \draw[very thick] (-1.1,-1) -- (0,-1);
  \draw[very thick] plot[domain=0:3.14159,samples=60,variable=\x] ({\x},{-cos(\x r)});
  \draw[very thick] (3.14159,1) -- (4.4,1);
  \draw[<->] (0,-1.55) -- (3.14159,-1.55);
  \node[below] at (1.5708,-1.55) {\small $\pi$};
  \node[above] at (1.5708,1.55) {\small \emph{kink} (only at $\ell=0$)};
\end{tikzpicture}
\hfil
\begin{tikzpicture}[xscale=0.4,yscale=0.85,baseline=(current bounding box.center)]
  \draw[->,gray] (-1.3,0) -- (7.9,0) node[below,black] {$x$};
  \draw[densely dotted] (-1.3,1) -- (7.9,1);
  \draw[densely dotted] (-1.3,-1) -- (7.9,-1);
  \node[left] at (-1.3,1) {\small $1$};
  \node[left] at (-1.3,-1) {\small $-1$};
  \draw[very thick] (-1.1,-1) -- (0,-1);
  \draw[very thick] plot[domain=0:6.2832,samples=100,variable=\x] ({\x},{-0.3-0.7*cos(\x r)});
  \draw[very thick] (6.2832,-1) -- (7.7,-1);
  \draw[<->] (0,-1.55) -- (6.2832,-1.55);
  \node[below] at (3.1416,-1.55) {\small $2\pi$};
  \node[above] at (3.1416,1.55) {\small \emph{droplet} (any $\ell\in(0,1)$)};
\end{tikzpicture}
\caption{The two types of freely sliding interior arcs, both solving $-u''-u=\ell$ between two contacts with the obstacles $u=\pm1$ (dotted). Left: a \emph{kink}, of length $\pi$, joining the two obstacles -- it exists only at $\ell=0$, since for $\ell\neq0$ the arc adjacent to $u=-1$ never reaches $u=1$. Right: a \emph{droplet}, of length $2\pi$, leaving and returning to the \emph{same} obstacle $u=-1$: a bump of the phase favored by the loading (here $\ell=0.3$, peak height $1-2\ell$) nucleating within the metastable background. Both can be translated along $x$ at no energetic cost, as long as they fit in the domain without colliding with another arc or the boundary.}
\label{cex:fig-droplet-kink}
\end{figure}

\begin{figure}[ht]
\centering
\begin{tikzpicture}[xscale=0.95,yscale=1.05,baseline=(current bounding box.center)]
  \draw[->,gray] (-4.3,0) -- (4.6,0) node[below,black] {$x$};
  \draw[densely dotted] (-4.3,1) -- (4.3,1);
  \draw[densely dotted] (-4.3,-1) -- (4.3,-1);
  \node[left] at (-4.3,1) {\small $1$};
  \node[left] at (-4.3,-1) {\small $-1$};
  \draw (-4,-0.07) -- (-4,0.07); \node[above left,inner sep=1pt] at (-4,0) {\small $-L$};
  \draw (4,-0.07) -- (4,0.07); \node[above right,inner sep=1pt] at (4,0) {\small $L$};
  \draw[very thick] (-1.9863,-1) -- (1.9863,-1);
  \draw[very thick] plot[domain=1.9863:4,samples=50,variable=\x] ({\x},{-0.3-0.7*cos((\x-1.9863) r)});
  \draw[very thick] plot[domain=-4:-1.9863,samples=50,variable=\x] ({\x},{-0.3-0.7*cos((-\x-1.9863) r)});
  \draw[very thick] (-2.6621,1) -- (2.6621,1);
  \draw[very thick] plot[domain=2.6621:4,samples=50,variable=\x] ({\x},{-0.3+1.3*cos((\x-2.6621) r)});
  \draw[very thick] plot[domain=-4:-2.6621,samples=50,variable=\x] ({\x},{-0.3+1.3*cos((-\x-2.6621) r)});
  \node[below] at (0,-1.06) {\small $u_{-1,\ell}$};
  \node[above] at (0,1.06) {\small $u_{+1,\ell}$};
  \draw[<->] (1.9863,-1.5) -- (4,-1.5);
  \node[below] at (3.0,-1.5) {\small $\alpha_{-1}(\ell)>\tfrac\pi2$};
  \draw[<->] (2.6621,1.5) -- (4,1.5);
  \node[above] at (3.33,1.5) {\small $\alpha_{+1}(\ell)<\tfrac\pi2$};
\end{tikzpicture}
{
\caption{The two \emph{stable} single-phase profiles $u_{\pm1,\ell}$ of
\eqref{eq:profile}, drawn for $\ell=0.3$: a plateau on one obstacle,
joined to the Dirichlet endpoints by boundary arcs of length
$\alpha_q(\ell)$ --- shorter than $\pi/2$ on the obstacle favored by
the loading ($q\ell>0$), between $\pi/2$ and $\pi$ on the disfavored
obstacle ($q\ell\le0$). The disfavored arcs reach the length $\pi$ exactly
at $q\ell=-\tfrac12$, where the branch folds.}
\label{cex:fig-stable}}
\end{figure}

Consequently, for every $t$, the fiber $\critset(t)$ is the union of \emph{finitely many} families of critical points, each parametrized by the ordered positions of its droplets/kinks. Whenever at least one such freely sliding structure is present, the corresponding family forms a one-parameter continuum of critical points; in the absence of droplets and kinks, the family reduces to a single critical point;
each family is Lipschitz-path-connected in $\critset(t)$ and, by Lemma \ref{le:lip-connected} 
, the energy is constant along it. In particular, the image set
\begin{equation}
\label{cex:do-C}
\ene t{\critset(t)} \ \text{is finite for every } t \in [0,T]:
\qquad \text{condition } \mathrm{(C)} \text{ holds for every }
L>0,\quad
2L\not\in \pi\N.
\end{equation}

In fact, the energy of every critical point is explicitly computable.
Multiplying $r_u=-u''-u-\ell$ by $u$ and integrating over $(-L,L)$ ---
the boundary term vanishes by the Dirichlet condition --- we obtain
$\int|u'|^2\dd x=\int r_u\,u\dd x+\int(u^2+\ell u)\dd x$; since, by
\eqref{eq:residual-signs} and \eqref{eq:plateau-residual}, $r_u$
vanishes off the contact set and $r_u\,u=-(1+q\ell)$ on a plateau lying
on the obstacle $q$, we conclude that
\begin{equation}
\label{cex:do-energy-formula}
\forall\, u \in \critset(t)\,: \qquad 
\ene tu=-\frac12(1+q\ell)\,\big|C_u\big|-\frac\ell2\int_{-L}^{L}u\dd x
\qquad \text{with } C_u:=\big\{x:|u(x)|=1\big\},
\end{equation}
where $q$ is the obstacle carrying the plateaus of $u$: for $\ell\neq0$
they all lie on the same obstacle, since joining the two would require
a kink; when $\ell=0$, or when $C_u=\emptyset$, the value of $q$ is
immaterial. The coefficient $1+q\ell$ is precisely the strength of the
plateau multiplier \eqref{eq:plateau-residual}. At $\ell=0$ the energy
reduces to minus one half of the length of the contact set, the free
arcs contributing nothing. Moreover, \eqref{cex:do-energy-formula}
makes two facts evident: inserting a droplet in a plateau raises the
energy by exactly $\pi(1+q\ell)^2$ (the length $|C_u|$ decreases by $2\pi$ and
$\int u$ changes by $-2\pi(\ell+q)$), and translating droplets or
kinks neither  changes $|C_u|$ nor $\int u$, so the energy is constant
along each family, as used in \eqref{cex:do-C}.

\paragraph{\bfseries Stable and unstable critical points}
The classification just obtained has a sharp stability counterpart,
which singles out the branches that the limit evolution can actually
follow, and rests on an elementary remark: a critical point $u$
containing a free arc of length $d>\pi$ is \emph{not} a local
minimizer. Indeed, one has  $|u|<1$ along the open arc, so every perturbation
$w$ supported in its interior is admissible (in both directions) for
small amplitudes; the first variation vanishes there by
\eqref{eq:free-equation} and, the energy being quadratic, one has the energy variation 
\begin{equation}
\label{cex:do-second-var}
\ene t{u+w}-\ene tu=\frac12\int_{-L}^{L}\big(|w'|^2-|w|^2\big)\dd x,
\end{equation}
which is negative when $w$ is a sine profile supported on a subinterval
of length in $(\pi,d)$. Hence:
\begin{itemize}
\item[-] every critical point other than $u_{\pm1,\ell}$ contains a
  free arc of length $\geq\pi$ --- the free profile spans $2L>\pi$ in
  the nonconvex regimes, droplets have length $2\pi$, over-swung
  boundary arcs  have length $2\pi-\alpha_q(\ell)>\pi$, and kinks have length  exactly $\pi$, the
  borderline case, where sliding is already an energy-neutral direction
  --- and is therefore \emph{never} a strict local minimizer;
\item[-] the single-phase profile $u_{q,\ell}$, whose free arcs have
  length $\alpha_q(\ell)<\pi$, is a strict local minimizer with
  \emph{quadratic growth} whenever $q\ell>-\tfrac12$, uniformly so on
  $q\ell\geq-\tfrac12+\eta$ (Appendix \ref{s:app-do})
  .
\end{itemize}
The \emph{stable part} of  the critical set $\crit$ thus consists exactly of the two
single-phase branches $\ell\mapsto u_{\pm1,\ell}$. Each branch
terminates at a \emph{fold}: for $L>\pi$ at $q\ell=-\tfrac12$, where
the boundary arcs reach the length $\pi$ and minimality and existence
are lost together; for $\pi/2<L<\pi$ already at $q\ell=-\ell^*(L)$
with $\ell^*(L)<\tfrac12$ as in regime (ii) below, where the plateau
shrinks to a point and the branch merges with the free profile.

\paragraph{\bfseries Four regimes}
The size of the domain now discriminates the applicability of the finer parts of the theory (throughout, recall the standing non-degeneracy assumption \eqref{cex:do-nondeg}; 
 justifications   are in Appendix \ref{s:app-do}).
\begin{itemize}
\item[\emph{(i)}] $2L<\pi$: the energy is  strictly  \emph{convex} (Appendix \ref{s:app-do}): the critical point is unique and the evolution is trivial (no jumps).
\item[\emph{(ii)}] $\pi<2L<2\pi$: the energy is nonconvex but neither
  droplets
   nor kinks fit in the domain (of length $2L$). Hence the fibers are \emph{finite} for \emph{every} $t\in[0,T]$,
   the set of topologically singular times 
    $\nototaldisc$ is $\emptyset$, and $\crit$ is the union of finitely many Lipschitz branches (graphs over time intervals of the explicitly parametrized configurations): $\crit$ is countably $\mathscr H^1$-rectifiable and \textbf{the full theory applies} (namely, Theorems \ref{mainth:1} \emph{and} \ref{mainth:2}).
    Moreover the evolution is nontrivial: by the reflection symmetry of the problem, the branch of solutions with a plateau at $-1$ (the metastable phase) is centered at $x=0$, and it exists only for
    \[
    \ell(t)\leq \ell^*(L):=\frac{|{\cos L}|}{1+|{\cos L}|}\in \Big(0,\frac12\Big)
    \]
    (Appendix \ref{s:app-do}), at which threshold the plateau shrinks
    to a point and the branch \emph{terminates}: a limit evolution
    starting in the metastable phase must jump, at $t^*=\inf\{t:
    \ell(t)>\ell^*(L)\}$, to the branch with a plateau at $+1$, with a
    positive, explicitly computable, dissipation cost --- a completely
    explicit \emph{nucleation-with-hysteresis} scenario, the nonsmooth
    PDE analogue of the fold mechanism of Example \ref{cex:ex-fold}.
    A loading sweeping back and forth beyond $\pm\ell^*(L)$ thus
    produces a symmetric hysteresis loop with jumps at $\ell=\pm\ell^*(L)$,
    entirely analogous to (but simpler than) the one described below for
    regime (iv)
    (Appendix \ref{s:app-do}).
  \item[\emph{(iii)}] $2\pi<2L<3\pi$:
    droplets still do not fit, and the only continua in the fibers
    are the sliding \emph{kinks}, at the finitely many times where
    $\ell(t)=0$ (recall that $\ell$ is piecewise strictly monotone):
    these times are precisely the elements of $\nototaldisc$, which is
    thus a finite set.
    $\crit$ is still countably $\mathscr H^1$-rectifiable (the kink
    continua contribute finitely many Lipschitz curves in the time
    slices $\{\ell(t)=0\}$) and \textbf{the full theory applies} (Theorems \ref{mainth:1} \emph{and} \ref{mainth:2}).
     Since $L>\pi$, the stable branches now persist up to
    $|\ell|=\tfrac12$: the evolution displays  the same hysteresis loop as in
    regime (iv) below, with jumps at $\ell=\pm\tfrac12$.
\item[\emph{(iv)}] $2L>{3\pi}$ \emph{(sharpness of the assumptions)}:
in this regime, the domain is sufficiently long to accommodate a droplet
of length $2\pi$, together with the two boundary arcs connecting the plateau to the Dirichlet endpoints. More precisely, let
$q\in\{-1,1\}$ denote the obstacle disfavored by the loading
($q\ell<0$). A configuration containing a single droplet leaves room for
the droplet to slide whenever $2\alpha_q(\ell)+2\pi<2L$,
or, equivalently,
$
\alpha_q(\ell)<L-\pi$.
Indeed, the strict inequality leaves a positive amount of plateau length
that can be distributed on either side of the droplet. Its position can
therefore vary over a nondegenerate interval, giving rise to a
one-parameter continuum of critical points in the fiber $\critset(t)$.
The above condition holds for every
$
0<|\ell|<\frac12,
$
when $L> 2\pi,$ and for every $
0<|\ell|<\ell^*(L),$
when
$
\frac{3\pi}{2}<L<2\pi.
$
Consequently, whenever $\ell(t)$ belongs to such
window, the fiber $\critset(t)$ contains a sliding-droplet family and is
therefore not totally disconnected. Since the window is open
and nonempty and $\ell$ is piecewise strictly monotone, the set $\nototaldisc$ is \emph{uncountable}.
Therefore, $\crit$ is not is \emph{not} countably $\mathscr H^1$-rectifiable, by Lemma~\ref{le:criterion}(3).
In turn, condition $\mathrm{(C)}$ still holds by \eqref{cex:do-C}.
Hence the existence and consistency claims  (1)--(3)
of Theorem
\ref{mainth:1} apply, whereas the compactness claims (4)--(5) and
Theorem \ref{mainth:2} are not applicable \emph{as stated}: this
natural PDE separates the hierarchy of our conditions, showing that
$\mathrm{(C)}$, the countability of $\nototaldisc$, and
\eqref{rectifiable-intro} are genuinely independent requirements.
The evolution, however, preserves the same structure, as we describe next.

\end{itemize}

\paragraph{\bfseries A full hysteresis loop ($2L>3\pi$)}
We conclude by following one evolution through a complete loading cycle
in the most interesting regime (iv); as noted above, regimes
(ii)--(iii) produce the same picture with simpler justifications. Let
$2L>3\pi$ and let the (piecewise strictly monotone) loading sweep from
$\ell(0)=\ell_-<-\tfrac12$ up to $\ell(\tfrac T2)=\ell_+>\tfrac12$ and
back to $\ell(T)=\ell_-$.

\emph{The initial critical state is forced.} 
Since $\ell(0)<-\frac12$, the classification above yields
$
\critset(0)=\{u_{-1,\ell(0)}\}$.
Indeed, the $+1$ single-phase profile requires
$\ell\geq-\frac12$, droplets and over-swung boundary arcs require
$-\frac12\leq q\ell\leq0$, kinks require $\ell=0$, and the free
profile requires $|\ell|<\ell_0(L)<\frac12$. 
Consequently, if the limiting initial datum $u_0$ is required to belong to $\critset(0)$, then necessarily $u_0=u_{-1,\ell(0)}$.  
By the symmetry $(\ell,u)\mapsto(-\ell,-u)$, the evolution eventually
returns to the $-1$ phase on the decreasing part of the loading cycle.

\emph{Persistence on the lower branch up to the fold.} 
On every interval where
$\ell(t)\leq\tfrac12-\eta$ the profile $u_{-1,\ell(t)}$ is a strict
local minimizer of $\ene t\cdot$ and is \emph{uniformly separated} from
the rest of $\critset(t)$ (see Lemma~\ref{lem:do-uniform-isolation} in Appendix \ref{s:app-do}): by Proposition
\ref{prop:stable-branch} the well-prepared vanishing-viscosity solutions
track it, and letting $\eta\to0$ the tracking holds up to the fold. The
limit evolution thus rides the lower branch across the \emph{whole}
droplet window: the sliding continua are present in every fiber it
crosses but, lying at a positive distance from the branch, are never
visited.

\emph{The jump.} At the time $t^*$ with $\ell(t^*)=\tfrac12$ the
boundary arcs of $u_{-1,\ell}$ reach the length $\pi$, strict minimality is lost, and the
branch ceases to exist altogether (the fold of the stable part
described above). The solution must jump, and the
critical state left for $\ell(t)>1/2$ 
is the $+1$ phase: writing $u^\mp:=u_{\mp1,1/2}$, the jump dissipates
the explicitly computable cost
\begin{equation}\label{cex:do-loop-cost}
\mathsf c(L)=\cost{t^*}{u^-}{u^+}=\ene{t^*}{u^-}-\ene{t^*}{u^+}
\,,
\end{equation}
which \eqref{cex:do-energy-formula} renders completely explicit;
it grows linearly with $L$, since the jump converts the whole metastable
plateau into the stable phase --- an \emph{extensive} energy release. On
the way back, the branch $u_{+1,\ell}$ folds at the time $t^{**}$ with
$\ell(t^{**})=-\tfrac12$ and the solution jumps back, at the same cost.
Altogether
\begin{equation}\label{cex:do-loop-data}
\rmJ=\disc u=\{t^*,t^{**}\},\qquad
\mu=\mathsf c(L)\,\big(\delta_{t^*}+\delta_{t^{**}}\big),
\end{equation}
and the total dissipation is $\mu([0,T])=2\mathsf c(L)$ (Figure \ref{cex:fig-hysteresis}).

\begin{figure}[ht]
\centering
\begin{tikzpicture}[xscale=1,yscale=1]
  \fill[gray!14]
    (-3.000,-2.057) -- (-2.700,-2.054) -- (-2.400,-2.051) -- (-2.100,-2.048)
    -- (-1.800,-2.044) -- (-1.500,-2.040) -- (-1.200,-2.036) -- (-0.900,-2.031)
    -- (-0.600,-2.026) -- (-0.300,-2.021) -- (0.000,-2.014) -- (0.300,-2.007)
    -- (0.600,-1.999) -- (0.900,-1.990) -- (1.200,-1.979) -- (1.500,-1.966)
    -- (1.800,-1.950) -- (2.100,-1.930) -- (2.400,-1.902) -- (2.700,-1.858)
    -- (3.000,-1.704) -- (3.000,2.057) -- (2.700,2.054) -- (2.400,2.051)
    -- (2.100,2.048) -- (1.800,2.044) -- (1.500,2.040) -- (1.200,2.036)
    -- (0.900,2.031) -- (0.600,2.026) -- (0.300,2.021) -- (0.000,2.014)
    -- (-0.300,2.007) -- (-0.600,1.999) -- (-0.900,1.990) -- (-1.200,1.979)
    -- (-1.500,1.966) -- (-1.800,1.950) -- (-2.100,1.930) -- (-2.400,1.902)
    -- (-2.700,1.858) -- (-3.000,1.704) -- cycle;

  \draw[->,gray] (-4.4,0) -- (4.6,0) node[below,black] {$\ell$};
  \draw[->,gray] (0,-2.75) -- (0,2.95) node[above,black] {\small $u$};

  \draw (3,-0.07) -- (3,0.07);
  \node[below right,inner sep=1.5pt] at (3,0) {\small $\tfrac12$};

  \draw (-3,-0.07) -- (-3,0.07);
  \node[above left,inner sep=1.5pt] at (-3,0) {\small $-\tfrac12$};

  \draw[densely dashed] (-2.485,0.779) -- (2.485,-0.779);
  \node[left] at (-2.485,0.779) {\small $u^{\mathrm f}_\ell$};

  \draw[densely dashed] (0,-1.217) -- (0,1.217);
  \node[right,inner sep=2pt] at (0.02,1.45) {\small kinks};

  \begin{scope}
    \clip (0,-2.5) rectangle (3,0);
    \draw[
      line width=9pt,
      gray!45,
      opacity=.55,
      line cap=butt,
      line join=round
    ]
    plot coordinates {
      (0.000,-1.040)
      (0.120,-1.057) (0.326,-1.086)
      (0.531,-1.114) (0.737,-1.141)
      (0.943,-1.168) (1.149,-1.194)
      (1.354,-1.219) (1.560,-1.243)
      (1.766,-1.265) (1.971,-1.285)
      (2.177,-1.303) (2.383,-1.317)
      (2.589,-1.324) (2.794,-1.317)
      (3.000,-1.217)
    };
  \end{scope}
  \begin{scope}
    \clip (-3,0) rectangle (0,2.5);
    \draw[
      line width=9pt,
      gray!45,
      opacity=.55,
      line cap=butt,
      line join=round
    ]
    plot coordinates {
      (0.000,1.040)
      (-0.120,1.057) (-0.326,1.086)
      (-0.531,1.114) (-0.737,1.141)
      (-0.943,1.168) (-1.149,1.194)
      (-1.354,1.219) (-1.560,1.243)
      (-1.766,1.265) (-1.971,1.285)
      (-2.177,1.303) (-2.383,1.317)
      (-2.589,1.324) (-2.794,1.317)
      (-3.000,1.217)
    };
  \end{scope}

  \node[below] at (2.05,-1.38) {\small droplets};
  \node[above] at (-2.05,1.38) {\small droplets};

  \draw[very thick]
  plot coordinates {
    (-3.900,-2.065) (-3.612,-2.063)
    (-3.325,-2.060) (-3.038,-2.058)
    (-2.750,-2.055) (-2.463,-2.052)
    (-2.175,-2.049) (-1.888,-2.045)
    (-1.600,-2.042) (-1.313,-2.038)
    (-1.025,-2.033) (-0.738,-2.029)
    (-0.450,-2.023) (-0.163,-2.018)
    (0.125,-2.011) (0.412,-2.004)
    (0.700,-1.996) (0.987,-1.987)
    (1.275,-1.976) (1.562,-1.963)
    (1.850,-1.947) (2.137,-1.927)
    (2.425,-1.899) (2.712,-1.856)
    (3.000,-1.704)
  };

  \draw[very thick]
  plot coordinates {
    (-3.000,1.704) (-2.712,1.856)
    (-2.425,1.899) (-2.138,1.927)
    (-1.850,1.947) (-1.562,1.963)
    (-1.275,1.976) (-0.988,1.987)
    (-0.700,1.996) (-0.413,2.004)
    (-0.125,2.011) (0.163,2.018)
    (0.450,2.023) (0.737,2.029)
    (1.025,2.033) (1.312,2.038)
    (1.600,2.042) (1.887,2.045)
    (2.175,2.049) (2.462,2.052)
    (2.750,2.055) (3.037,2.058)
    (3.325,2.060) (3.612,2.063)
    (3.900,2.065)
  };

  \draw[very thick,->] (-1.025,-2.033) -- (-0.900,-2.031);
  \draw[very thick,->] (1.025,2.033) -- (0.900,2.031);

  \draw[very thick,->] (3.000,-1.704) -- (3.000,2.00);
  \draw[very thick,->] (-3.000,1.704) -- (-3.000,-2.00);

  \node[circle,fill,inner sep=1.2pt] at (3.000,-1.704) {};
  \node[circle,fill,inner sep=1.2pt] at (-3.000,1.704) {};

  \node[below] at (-2.0,-2.06) {\small $u_{-1,\ell}$};
  \node[above] at (2.0,2.06) {\small $u_{+1,\ell}$};

\end{tikzpicture}

\caption{A schematic picture of the loading cycle in regime (iv),
for $L>2\pi$ (the vertical direction symbolically represents the state
$u$). The limit evolution rides
the stable single-phase branches $u_{\pm1,\ell}$ (solid; the folds at
$q\ell=-\tfrac12$ are marked by dots) and jumps between them at
$\ell=\pm\tfrac12$ (vertical arrows), tracing a hysteresis loop.
The unstable free profile $u^{\mathrm f}_\ell$ and the sliding
kinks at $\ell=0$ (dashed) and the sliding-droplet families (bands: each fiber contains a continuum of critical points) 
are never visited by the evolution.
Away from the folds, the stable single-phase branches are uniformly
isolated from the remaining critical points; this
metric separation is not represented to scale.}
\label{cex:fig-hysteresis}
\end{figure}



 \appendix
 
 \section{Auxiliary results}
 \label{s:app-1} 
 The following results settle the properties of multivalued maps stated in Section \ref{subsec:preliminaries}.
 Recall that  \RRR $\Tsp$ is a compact interval $[a,b] \subset \R$, \EEE  and $(\Ysp,\sfd_{\Ysp})$ is a metric space.
 $\pi_{\Tsp}: \Tsp \ti \Ysp \to \Tsp$ denotes the projection onto
 $\Tsp$. 
 \begin{lemma}
 \label{l:A.1}
 Let $\mathbf{A} \subset \Tsp\ti \Ysp$ be
  compact  with $\pi_{\Tsp}(\mathbf{A}) =\Tsp$, and define the multivalued map
 \[
 \rmA: \Tsp \rightrightarrows \Ysp, \qquad  \rmA(t): = \{ x \in \Ysp\, : \ (t,x) \in \mathbf{A}\}.
 \]
 Then, $\rmA$ is upper semicontinuous.
 \end{lemma}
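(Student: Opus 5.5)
We must show that for every $t\in\Tsp$ one has $\klimsup_{s\to t}\rmA(s)\subset\rmA(t)$, and that $\rmA$ is tight, i.e.\ that all sections $\rmA(s)$ are nonempty, compact and contained in a common compact set.

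\emph{Nonemptiness, compactness and tightness.} Since $\pi_{\Tsp}(\mathbf A)=\Tsp$, every fiber $\rmA(s)$ is nonempty. Set $K:=\pi_{\Ysp}(\mathbf A)$; it is compact, being the continuous image of the compact set $\mathbf A$. Every fiber satisfies $\rmA(s)\subset K$, which gives tightness. Moreover $\{s\}\times\rmA(s)=\mathbf A\cap(\{s\}\times\Ysp)$ is a closed subset of the compact set $\mathbf A$, hence compact, so $\rmA(s)$ is compact. This shows that $\rmA$ takes values in $\mathfrak K(\Ysp)$ and is tight.

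\emph{Upper semicontinuity.} Fix $t\in\Tsp$ and $x\in\klimsup_{s\to t}\rmA(s)$. By definition \eqref{Lliminf}, extended to maps on an interval, there are sequences $s_n\to t$ and $x_n\in\rmA(s_n)$ with $\sfd_{\Ysp}(x_n,x)\to0$. It suffices to argue with such sequences, possibly taking near-minimizers of $\sfd_{\Ysp}(x,\rmA(s_n))$, which exist because each fiber is compact. Then $(s_n,x_n)\in\mathbf A$ and $(s_n,x_n)\to(t,x)$ in the product metric. Since $\mathbf A$ is compact, it is closed, so $(t,x)\in\mathbf A$, that is, $x\in\rmA(t)$. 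This is exactly \eqref{eq:9}.

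One point needs care: whether the $\klimsup$ as $s\to t$ is meant to include $s=t$ or only $s\neq t$. In both cases the inclusion holds, and the closedness of $\mathbf A$ is the only ingredient used. I expect no real obstacle here; the proof is routine once the definition of $\klimsup$ is unwound.
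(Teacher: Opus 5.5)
Your proof is correct and follows essentially the same route as the paper: unwind the Kuratowski $\limsup$ to obtain points $(s_n,x_n)\in\mathbf A$ converging to $(t,x)$, then conclude from the closedness of $\mathbf A$. Your extra check that the fibers are nonempty and compact, and that the map is tight (all fibers lie in the compact set $\pi_{\Ysp}(\mathbf A)$), makes explicit what the paper leaves implicit.
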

 \begin{proof}
 It is sufficient to show  that for every accumulation
point $\bar x\in \Tsp$  and every sequence $(x_n)_n$ with $d_{\Tsp}(x_n, x) \to 0$, every $v\in   \klimsup_{n\to\infty}\mathrm A(x_n)$ is contained in 
$ \rmA(\bar x)$. Now, by the definition of Kuratowski $\limsup$ we have that $\liminf_{n\to \infty} d_{\Ysp}(v,\mathrm A(x_n))=0$ so that, 
up to the extraction of a (not relabeled) subsequence there exists 
a sequence $(y_n)_n$ with $y_n \in \rmA(x_n)$ for every $n\in \N$ and 
$d_{\Tsp}(v,y_n) \to 0$ as $n\to\infty$. 
Since $\mathbf{A}$ is closed, we then have that $(\bar{x},v) \in \mathbf{A}$, i.e.\ $ v \in \rmA(\bar x)$ as desired.
\end{proof}
\par
Our next result relates the compactness properties of graphs to the compactness properties of the associated functions. From now on, we will assume that 
\begin{equation}
\label{both-spaces-compact}
\text{the metric space } 
(\Ysp,d_{\Ysp}) \text{ is compact.} 
\end{equation}
We also recall that a sequence of functions $\teta_n : \Omega \to \Ysp$, with $\Omega \subset \Tsp$ an open subset, 
is \emph{equicontinuous} at a point $x_0 \in \Omega $ if 
\begin{equation}
\label{equicontinuous}
\forall\, \eps>0 \ \exists\, \delta>0\, : \ \ \forall\, x,\, y \in B_\delta(x_0) \quad  
d_{\Ysp}(\teta_n(x),\teta_n(y))<\eps.
\end{equation}
\begin{lemma}
\label{l:compactness-graphs}
Suppose \eqref{both-spaces-compact}.  Let $(\teta_n)_n \subset  \mathrm{C}^0(\Tsp;\Ysp)$ and let $\mathbf{A}_n : = \mathrm{Graph}(\teta_n)$.  
\begin{enumerate}
\item The sequence $\teta_n\to \teta$ uniformly in $\Tsp$ for some $\teta \in  \mathrm{C}^0(\Tsp;\Ysp)$ if and only if 
the sequence 
$(\mathbf{A}_n)_n$ converges in the sense of Kuratowski  to $ \mathbf{A} :=  \mathrm{Graph}(\teta)$.
\item If $\mathbf{A}_n \karrow \mathbf{A}$ for some closed $ \mathbf{A} \subset \Tsp \ti \Ysp$, 
then 
there exists $\teta \in  \mathrm{C}^0(\Tsp;\Ysp)$ such that  $\teta_n\to \teta$ uniformly in $\Tsp$ and $\mathbf{A} =  \mathrm{Graph}(\teta)$.
\item If $\mathbf{A}_n \karrow \mathbf{A}$  and the sequence $(\teta_n)_n$ is equicontinuous in an open subset $\Omega \subset \Tsp$, then there exists a function 
$\teta:\Omega \to \Ysp $ such that
\begin{equation}
\label{2show-le:compactness}
\rmA(x) = \{ \teta(x)\} \quad \text{for every } x \in \Omega, \text{ and } \teta_n\to \teta \text{ uniformly on the compact subsets of } \Omega.
\end{equation}
\end{enumerate}
\end{lemma}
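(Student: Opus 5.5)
The plan is to prove the three claims in order, using throughout that $\Tsp$ and $\Ysp$ are compact, so that $\Tsp\ti\Ysp$ is compact and every sequence of closed subsets is tight. In this setting Kuratowski convergence coincides with Hausdorff convergence (\cite[Prop.\ 4.4.14]{Ambrosio-Tilli}).

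For Claim (1), direction ``uniform $\Rightarrow$ Kuratowski'' is immediate. Given $\sup_x d_\Ysp(\teta_n(x),\teta(x))\to0$, the Hausdorff distance between $\mathrm{Graph}(\teta_n)$ and $\mathrm{Graph}(\teta)$ in the max-product metric is at most this sup, hence tends to $0$.

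For the converse, suppose uniform convergence fails. Then there exist $\eps>0$, a subsequence, and points $x_n$ with $d_\Ysp(\teta_n(x_n),\teta(x_n))\ge\eps$. By compactness, along a further subsequence $x_n\to x$ and $\teta_n(x_n)\to y$. Property (1) of Kuratowski convergence gives $(x,y)\in\mathbf A$, so $y=\teta(x)$. Continuity of $\teta$ gives $\teta(x_n)\to\teta(x)$, and therefore $d(\teta_n(x_n),\teta(x_n))\to0$, a contradiction.

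Claim (2) is the main point. Given $\mathbf A_n\karrow\mathbf A$, I first note that $\pi_\Tsp\mathbf A=\Tsp$: for every $x$, any limit point of $(x,\teta_n(x))$ lies in $\mathbf A$. The crux is that every fiber $\rmA(x)$ is a singleton. I expect this step to be the obstacle. Without extra hypotheses it seems false: take $\teta_n(x)=\sin(n x)$-type oscillations, whose graphs could converge to a rectangle-shaped closed set. So the claim as literally stated requires care. I would try to prove it by exploiting that $\mathbf A$ is approximated by graphs of continuous maps (fiberwise connected), but this only yields connected fibers, not singletons. I therefore suspect the intended statement carries an implicit hypothesis, namely that $\mathbf A$ is itself the graph of a function (single-valued fibers).

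Under that hypothesis I would argue as follows. A compact set with singleton fibers is the graph of a map $\teta$ whose graph is closed. By \eqref{fromGR2FUNZ}, a closed graph into the compact space $\Ysp$ forces $\teta$ to be continuous, and Claim (1) then gives uniform convergence. So the plan for (2) is: reduce to showing $\#\rmA(x)=1$, then apply \eqref{fromGR2FUNZ} and Claim (1).

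For Claim (3), equicontinuity on $\Omega$ supplies exactly the missing singleton property. Fix $x_0\in\Omega$ and $(x_0,y),(x_0,y')\in\mathbf A$. By property (2) of Kuratowski convergence there are $x_n,x_n'\to x_0$ with $\teta_n(x_n)\to y$ and $\teta_n(x_n')\to y'$. Equicontinuity \eqref{equicontinuous} gives $d(\teta_n(x_n),\teta_n(x_n'))<\eps$ for $n$ large, hence $d(y,y')\le\eps$ for every $\eps>0$, so $y=y'$. This defines $\teta$ on $\Omega$.

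For local uniform convergence, take a compact $K\subset\Omega$ and argue as in Claim (1). A failing sequence $x_n\to x\in K$ with $\teta_n(x_n)\to y$ yields $y\in\rmA(x)=\{\teta(x)\}$. Continuity of $\teta$ on $\Omega$ follows because $\mathbf A\cap(\Omega\ti\Ysp)$ is relatively closed with singleton fibers, via \eqref{fromGR2FUNZ}. Combining these, $d(\teta_n(x_n),\teta(x_n))\to0$, which gives the contradiction.
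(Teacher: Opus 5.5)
Your proposal is correct, and on the key point it is more accurate than the paper's own proof.

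\textbf{Claim (2) is false as stated.} Your counterexample is valid. With $\mathsf Y=[-1,1]$ and $\vartheta_n(x)=\sin(nx)$, every point $(x,y)\in\mathsf T\times[-1,1]$ is, for $n$ large, within distance $2\pi/n$ of some point $(x_n,\sin(nx_n))$ of the graph. Hence $\mathbf A_n\karrow\mathsf T\times[-1,1]$, a closed set that is not a graph.

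\textbf{Where the paper's argument fails.} The paper's proof of Claim (2), and also of the converse direction in Claim (1), asserts that a sequence in $\mathrm C^0(\mathsf T;\mathsf Y)$ is equicontinuous ``since $\mathsf T$ is compact'', and then invokes Ascoli--Arzel\`a. The same example shows this assertion is false. Your argument for the converse in (1) needs no equicontinuity and is the correct proof: take a failing sequence, pass to a limit $(x,y)$ by compactness of $\mathsf T\times\mathsf Y$, then use $\klimsup_{n\to\infty}\mathbf A_n\subset\mathrm{Graph}(\vartheta)$ together with the continuity of $\vartheta$.

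\textbf{The intended repair of (2).} The hypothesis the paper actually has in mind for (2) is equicontinuity of $(\vartheta_n)$ on $\mathsf T$, not your assumption that $\mathbf A$ is a graph. This is how the lemma is announced in Section~\ref{subsec:preliminaries} (uniformly Lipschitz functions). It is also how it is used in Step~2 of the proof of Theorem~\ref{le:main}, where the curves are equi-Lipschitz. Both repairs give a true statement, and the intended one is already covered by your proof of (3) with $\Omega=\mathsf T$.

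\textbf{Comparison for (3).} The paper just applies (2) on compact subsets $K\Subset\Omega$. This tacitly requires that the restricted graphs still converge in the Kuratowski sense to $\mathbf A\cap(K\times\mathsf Y)$. That is true, but only after using equicontinuity to replace the approximating points $x_n$ by $x$ itself. Your route is self-contained:
\begin{itemize}
\item single-valued fibers follow directly from the $\kliminf$ property plus equicontinuity;
\item continuity of $\vartheta$ on $\Omega$ follows from \eqref{fromGR2FUNZ}, since the Kuratowski limit $\mathbf A$ is closed in a compact space;
\item the contradiction argument of (1), run on $K$, then gives local uniform convergence.
\end{itemize}
It also makes clear that equicontinuity is needed only to force single-valued fibers. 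The one detail to add is that each fiber $\rmA(x_0)$ is nonempty. This follows from the same limit-point argument you used to get $\pi_{\mathsf T}\mathbf A=\mathsf T$.
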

\begin{proof}
\textbf{Item (1):} Suppose that  $\teta_n\to\teta$ uniformly. Pick a sequence 
$(x_n,y_n)_n $ with 
 $(x_n,y_n) \in \mathbf{A}_n$ for every $n\in \N$. Then, any limit point $(x,y)$ of a converging subsequence 
$(x_{n_k},y_{n_k})_k $  satisfies $y_{n_k} = \teta_{n_k}(x_{n_k}) \to \teta(x)$
since $\teta_n \to \teta$ uniformly, 
 so that $y=\teta(x)$. This shows that $\mathbf{A}_n\karrow \mathrm{Graph}(\teta)$. 

\par In order to prove the converse implication, observe that, since $\Tsp$ is compact, the sequence $(\teta_n)_n \subset \mathrm{C}^0(\Tsp;\Ysp)$ is equicontinuous in $\Tsp$.
 By the Ascoli-Arzel\`a theorem it is relatively compact with respect to uniform convergence, hence any subsequence $(\teta_{n_k})_k$ admits a (not relabeled) subsequence uniformly converging to some continuous function. We will show that the limit function  does not depend on the subsequence by proving that the (whole) sequence $(\teta_n)_n$ pointwise converges to  the function $\teta$ such that $\mathbf{A} = \mathrm{Graph}(\teta)$. With this aim,
 let us fix $\eps>0$ and $x\in \Tsp$. 
 From  $\mathbf{A}_n\karrow  \mathrm{Graph}(\teta)$ we gather that there exist $(x_n, \teta_n(x_n) ) \in \mathbf{A}_n$ with $x_n\to x$ and $\teta_n(x_n)\to\teta(x)$. In particular, there exists $\bar{n}_1  \in \N$ such that 
 $d_{\Ysp}(\teta_n(x_n),\teta(x)) \leq \tfrac\eps2$ for every $n\geq \bar{n}_1$. On the other hand, by equicontinuity 
 of $(\teta_n)_n$
  there exists
 $\delta>0$ such that for every $y \in B_\delta(x)$ and every $n\in \N$  there holds  $d_{\Ysp}(\teta_n(y),\teta_n(x)) \leq \tfrac\eps2$  and then there exists $\bar{n}_2 \in \N$ such that $x_n \in B_\delta(x)$ for every $n\geq \bar{n}_2. $ All in all, for every $n\geq \bar{n}:= \max\{\bar{n}_1, \bar{n}_2\}$ we have
 \[
 d_{\Ysp}(\teta_n(x),\teta(x)) \leq   d_{\Ysp}(\teta_n(x_n),\teta_n(x))+d_{\Ysp}(\teta_n(x),\teta(x)) \leq \eps,
 \]
 which shows 
that $\teta_n(x)\to\teta(x)$ for every $x\in \Tsp$ and thus concludes the proof.
\par
\textbf{Item (2):}  Since the sequence $(\teta_n)_n \subset \mathrm{C}^0(\Tsp;\Ysp)$ is equicontinuous, 
by the Ascoli-Arzel\`a theorem it is relatively compact with respect to uniform convergence.
Any subsequence  $(\teta_{n_k})_k$ admits a (not relabeled) subsequence converging 
 uniformly to some $\teta \in  \mathrm{C}^0(\Tsp;\Ysp)$. By Item (1), $\mathbf{A}_{n_k} \karrow \mathrm{Graph}(\teta) = \mathbf{A}$. Hence, the limiting function is uniquely determined, and the \emph{whole} sequence $(\teta_n)_n$ converges to $\teta$. 
 \par
 \textbf{Item (3):} It is sufficient to apply Item (2) to every compact  subset $K\Subset \Omega$.   
\end{proof}

\par We now 
show that Kuratowski
 (or, equivalently in this context, Hausdorff) 
 convergence preserves \emph{fiberwise connectedness}.
 
 \begin{lemma}
  \label{le:connection}
  Suppose
  that
  $\mathbf
  {A}\subset \Tsp\ti \Ysp$
  is a compact set with $\pi_{\Tsp}(\mathbf A)=\Tsp$.
  \begin{enumerate}[label=\arabic*.]
  \item If every section $\rmA(t)$, $t\in \Tsp$, is connected then
    $\mathbf A\cap (I\times \Ysp)$ is 
       connected for every interval
       $I\subset \Tsp$. In particular, $\mathbf A$ is connected and, thus, fiberwise connected.
     \item If $\mathbf A\cap(I\times \Ysp)$ is connected for every
       open interval $I$, then $\mathbf A$ is fiberwise connected.
 \item If $\mathbf A$ is the Hausdorff limit of a sequence of 
   fiberwise connected compact graphs $(\mathbf A_n)_n$
   in $\mathfrak K(\Tsp\times \Ysp)$
   (in particular, of graphs of
   continuous maps $a_n:\Tsp\to \Ysp$),
   then $\mathbf A$ is fiberwise connected.
  \end{enumerate}
\end{lemma}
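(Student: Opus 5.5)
For Claim 1 we argue by contradiction. Fix an interval $I\subset\Tsp$ and set $\mathbf A_I:=\mathbf A\cap(I\times\Ysp)$. Suppose $\mathbf A_I=P\cup Q$ with $P,Q$ nonempty, disjoint and relatively closed in $\mathbf A_I$. Each section $\rmA(t)$ with $t\in I$ is connected, so it lies entirely in $P$ or entirely in $Q$. This splits $I$ into the disjoint sets $I_P:=\{t\in I:\rmA(t)\subset P\}$ and $I_Q:=\{t\in I:\rmA(t)\subset Q\}$, both nonempty, with $I=I_P\cup I_Q$.

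We then show that $I_P$ and $I_Q$ are relatively closed in $I$, which contradicts the connectedness of $I$. Take $t_n\in I_P$ with $t_n\to t\in I$, and pick $x_n\in\rmA(t_n)$. Since $\mathbf A$ is compact, a subsequence gives $(t_n,x_n)\to(t,x)\in\mathbf A_I$. Because $P$ is closed in $\mathbf A_I$, we get $(t,x)\in P$, hence $\rmA(t)\subset P$ and $t\in I_P$. The same argument applies to $I_Q$. Taking $I=\Tsp$ shows that $\mathbf A$ is connected, and together with the connected sections this is fiberwise connectedness.

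For Claim 2 it only remains to show that each section $\rmA(t)$ is connected. Fix $t$ and let $I_k$ be open intervals shrinking to $\{t\}$, for instance $I_k=(t-1/k,t+1/k)\cap\Tsp$, which is relatively open. The closures $\overline{\mathbf A\cap(I_k\times\Ysp)}$ are compact and connected (the closure of a connected set is connected), and they decrease. Their intersection is connected, by the standard fact that a decreasing intersection of compact connected sets (continua) is connected. Every point of the intersection lies in $\mathbf A\cap([t-1/k,t+1/k]\times\Ysp)$ for all $k$, so the intersection equals $\{t\}\times\rmA(t)$, and $\rmA(t)$ is connected.

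For Claim 3 we first note that $\pi_\Tsp(\mathbf A)=\Tsp$, since the projection is continuous with respect to $\sfd_\Hausdorff$. By Claim 2 it suffices to prove that $\mathbf A\cap(I\times\Ysp)$ is connected for every open interval $I$, and this is the main obstacle. The plan is to shrink $I$ slightly. For a compact interval $J=[\alpha,\beta]\subset I$, Claim 1 applied to $\mathbf A_n$ gives that $\mathbf A_n\cap(J\times\Ysp)$ is compact and connected. A subsequence of these converges in the Hausdorff sense to some compact connected set $K_J$ (connectedness is preserved by Hausdorff limits, \cite[Thm.\ 4.4.17]{Ambrosio-Tilli}). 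One checks that $K_J\subset\mathbf A\cap(J\times\Ysp)$ and $K_J\supset\mathbf A\cap((\alpha,\beta)\times\Ysp)$; the second inclusion holds because each point of $\mathbf A$ with time in $(\alpha,\beta)$ is a limit of points of $\mathbf A_n$ with times eventually inside $J$.

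Finally we exhaust $I$ by an increasing sequence of such intervals $J_k$ and let $S_k:=\mathbf A\cap(\mathrm{int}J_k\times\Ysp)$. Then $S_k\subset K_{J_k}\subset\overline{S_{k+1}}$, where the second inclusion uses $K_{J_k}\subset\mathbf A\cap(J_k\times\Ysp)$ and $J_k\subset\mathrm{int}J_{k+1}$. Hence $\mathbf A\cap(I\times\Ysp)=\bigcup_k K_{J_k}$ is an increasing union of connected sets, and is therefore connected. This completes Claim 3; the case of graphs of continuous maps is included, since such graphs are fiberwise connected by Claim 1.
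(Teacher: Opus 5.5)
Your proof is correct, and it departs from the paper's in Claims 2 and 3. In Claim 1 you use the paper's mechanism: project onto the connected interval and find a time whose section would be split. Your formulation through the relatively closed sets $I_P=\pi_{\mathsf T}(P)$ and $I_Q=\pi_{\mathsf T}(Q)$ treats every interval at once, with no exhaustion by compact intervals. It is also tighter than the paper's version, which projects open neighbourhoods $\mathbf G_i$ of the two pieces and concludes from $t\in\pi_{\mathsf T}(\mathbf G_1)\cap\pi_{\mathsf T}(\mathbf G_2)$ that $\{t\}\times\rmA(t)$ meets both $\mathbf G_i$; that step needs exactly your closed-projection argument.

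For Claim 2 the paper argues by contradiction. It separates a disconnected section $\rmA(t)$ by disjoint open sets $G_1,G_2\subset\mathsf Y$, then uses upper semicontinuity of the section map to trap a whole slab $\mathbf A\cap(I\times\mathsf Y)$, with $I\ni t$, inside $I\times(G_1\cup G_2)$. You instead write $\{t\}\times\rmA(t)$ as a decreasing intersection of continua.

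For Claim 3 the paper does not go through Claim 2. It reruns the trapping argument on the slabs $\mathbf A_n\cap(J\times\mathsf Y)$: for $n$ large these lie in $I\times(G_1\cup G_2)$ and meet both pieces, contradicting their connectedness from Claim 1. You instead sandwich $\mathbf A\cap(\mathrm{int}J\times\mathsf Y)\subset K_J\subset\mathbf A\cap(J\times\mathsf Y)$ between Hausdorff limits of connected slabs of $\mathbf A_n$, exhaust $I$, and then invoke Claim 2.

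The paper's route is local and self-contained; it only needs a compactness margin to keep the $\mathbf A_n$-slabs inside the open set. Yours is more modular and rests on two standard facts about continua: nested intersections, and preservation of connectedness under Hausdorff limits \cite[Thm.\ 4.4.17]{Ambrosio-Tilli}.

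Two cosmetic points. First, $K_{J_k}\subset\mathbf A\cap(J_k\times\mathsf Y)\subset S_{k+1}$ holds without the closure. Second, your Claim 2 uses the relatively open intervals $(t-1/k,t+1/k)\cap\mathsf T$, so in Claim 3 the sets $\mathrm{int}J$ and $(\alpha,\beta)$ should be read relative to $\mathsf T$ (or let $J$ protrude beyond $\mathsf T$). Otherwise the fibre at an endpoint of $\mathsf T$ would be lost.
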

\begin{proof}
  $\vartriangleright$ 
  1. Let us first assume $I=[a,b]$ compact.
  
  We argue by contradiction and we suppose that $\mathbf A_I
  :=\mathbf A\cap (I\times \Ysp)$ is
  disconnected. We can find  two disjoint, nonempty, closed subsets
  $\mathbf F_1,\mathbf F_2$ of $\mathbf A_I$ such that
  $\mathbf A_I=\mathbf F_1\cup\mathbf F_2$.
  Since $\mathbf A_I$ is compact, the sets  $\mathbf F_i$ are compact.
  Hence there exist two disjoint open sets $\mathbf G_i$, $i=1,2$,
  such that $\mathbf F_i\subset \mathbf G_i$.
  The sets $G_i:=\pi_{\Tsp}(\mathbf G_i)$ are open, nonempty, and cover
  $I$ since $\pi_{\Tsp}(\mathbf A_I)=I$.
  Since $I$ is connected, we must have $G_1\cap G_2\neq\emptyset$.
  Pick a point $t\in G_1\cap G_2$.
  It follows that
  $\mathbf G_i\cap(\{t\}\times \rmA(t))\neq \emptyset$,
  a contradiction since the sets
  $\mathbf G_i$ are open,
  disjoint, and their union contains the connected set
  $\{t\}\times \rmA(t)$.
  
  In the general case, we can represent $I$ as an increasing union
  $\bigcup_{n\in \N}I_n$ of
  compact intervals $(I_n)_n$, so that $\mathbf A\cap (I\times \Ysp)$
  is the increasing union of the connected sets
  $(\mathbf A\cap (I_n\times \Ysp))_n$ and therefore it is
  connected as well.

  \par
  \par $\vartriangleright$  2.
  The argument is similar: we argue by contradiction
  and
  we suppose that there is a point $t\in \Tsp$
  such that $\rmA(t)$ is not connected. By the inner approximation argument in the 
  above lines,
  we may also assume that $\mathbf A_I$ is connected for every
  relatively open interval in $\Tsp$.

  We can decompose $\rmA(t)=F_1\cup F_2$ as the union of two nonempty, closed
  (thus compact), and disjoint 
  sets of $\Ysp$.
  We can find
  two disjoint open sets $G_i\subset \Ysp$
  such that $F_i\subset G_i$.

  Since $\mathbf A$ is compact the section map $\rmA$ is upper
  semicontinuous;
  since $G_1\cup G_2\supset \rmA(t)$, there exists 
  an interval $I$ relatively open in $\Tsp$ and containing $t$
  such that $\rmA(s)\subset G_1\cup G_2$ for every $s\in I$,
  i.e.~$\mathbf A\cap (I\times \Ysp)\subset I\times (G_1\cup G_2)$.
  But $\mathbf A\cap (I\times \Ysp)$ is connected
  and it has nonempty intersection with each open set
  $I\times G_i$, a contradiction.

  \par $\vartriangleright$  3.
  As in the previous claim, 
  we suppose that there is a point $t\in \Tsp$
  such that $\rmA(t)$ is not connected.
  We can find
  two disjoint open sets $G_i\subset \Ysp$, two elements $x_i\in \rmA(t)$,
  and a relatively open interval $I\ni t$ of $\Tsp$
  such that
  $\rmA(t)\cap G_i\ni x_i$ and
  $\mathbf A\cap (I\times \Ysp)\subset I\times (G_1\cup G_2)$.
  
  Let $J\ni t$ be a relatively open interval whose closure is contained in $I$;
  since 
  $\mathbf{A}$  is the Hausdorff (and thus Kuratowski) limit of the sets
  $(\mathbf{A}_n)_n$,
  we know that the sets
  $\mathbf B_n:=\mathbf A_n\cap (J\ti \Ysp)$ are contained in 
  $I\times (G_1\cup G_2)$ for $n$ sufficiently big.
  On the other hand, since
  $(t,x_i)\in \mathbf A$,
  by the definition of Kuratovski limit we know that $\mathbf B_n\cap
  (J\ti G_i)$
  is non-empty for sufficiently large $n$: this is impossible, since
  $\mathbf B_n$ is connected thanks to Claim 1.
\par
\end{proof}
 \par

\section{The double-obstacle energy: rigidity and classification}
\label{s:app-do}
This appendix collects the computations supporting  the classification of
Section  \ref{ss:9.2}
(Setup, Rigidity and classification, Three regimes), for the energy \eqref{cex:do-energy} on $(-L,L)$.

\paragraph{\bfseries Convexity threshold (regime (i))}
The first Dirichlet eigenvalue of $-\dd^2/\dd x^2$ on $(-L,L)$ is $(\pi/(2L))^2$, with (even) eigenfunction $\cos(\pi x/(2L))$; hence, for every $u\in H^1_0(-L,L)$,
\[
\Big(\frac{\pi}{2L}\Big)^2\int_{-L}^L u^2\,\dd x \ \leq\ \int_{-L}^L |u'|^2\,\dd x,
\]
with equality approached along the first eigenfunction. Consequently
\[
\int_{-L}^L\Big(\frac12|u'|^2-\frac12u^2\Big)\dd x \ \geq\ \frac12\Big(\Big(\frac{\pi}{2L}\Big)^2-1\Big)\int_{-L}^Lu^2\,\dd x,
\]
and the right-hand side is a (strictly) positive multiple of $\int u^2$ if and only if $2L<\pi$, i.e.\ $L<\pi/2$: in this case $\ene t\cdot$ is (strictly) convex on $\EK $, hence has a unique minimizer, which is the only critical point.

\paragraph{\bfseries The threshold $\ell^*(L)$ (regime (ii))}
Let $\pi/2<L<\pi$. The single-plateau branch $u_{-1,\ell}$ of \eqref{eq:profile} exists as long as its two boundary arcs fit inside the domain, that is $L_{-1}(\ell)=L-\alpha_{-1}(\ell)\geq0$, where
\[
\alpha_{-1}(\ell)=\arccos\Big(\frac{-\ell}{1-\ell}\Big)\qquad\big(\ell<\tfrac12\big)
\]
is the boundary-arc length \eqref{eq:a-alpha} for $q=-1$: a formula valid for \emph{every} $\ell<\tfrac12$, negative loadings included --- where the $-1$ phase is the favored one and $\alpha_{-1}(\ell)<\pi/2$ --- and increasing from $\alpha_{-1}(0)=\pi/2$ to $\pi$ as $\ell\to\tfrac12^-$. The plateau $C_{-1,\ell}=[-L_{-1}(\ell),L_{-1}(\ell)]$ shrinks to a point exactly when $L_{-1}(\ell)=0$, i.e.\ $\alpha_{-1}(\ell)=L$, i.e.\ $\cos L=-\ell/(1-\ell)$; since $\cos L\in(-1,0)$ for $\pi/2<L<\pi$, this has the unique solution
\begin{equation}
\label{cex:do-l-star}
\ell^*(L)\ =\ \frac{-\cos L}{1-\cos L}\ =\ \frac{|\cos L|}{1+|\cos L|}\ \in\ \Big(0,\tfrac12\Big),
\end{equation}
increasing from $0^+$ (as $L\to(\pi/2)^+$) to $\tfrac12^-$ (as $L\to\pi^-$). This is exactly the plateau-extinction fold of Proposition \ref{prop:strict}: at $\ell=\ell^*(L)$ the existence condition $L_{-1}(\ell)>0$ of \eqref{cex:do-exist} fails, the branch loses its plateau and merges with the free profile \eqref{eq:unique-free-profile}. By the symmetry $(\ell,u)\mapsto(-\ell,-u)$ the $+1$-branch folds symmetrically at $-\ell^*(L)$.
\par
\paragraph{\bfseries Stability and cost} The computation above produces this one-parameter branch of critical points and the value of $\ell^*(L)$; its quadratic-growth minimality while the plateau persists, the folds terminating it, and the dissipation cost of the ensuing jumps, for general $L$, are established  at the end of  this appendix.

\paragraph{\bfseries Stability of the critical points}
The classification of Section \ref{ss:9.2} can be sharpened from stationarity to genuine \emph{quadratic-growth} local minimality
 (see Prop.\ \ref{prop:strict}  for a precise statement) 
 --- the property a slowly driven gradient flow actually feels, and the one that rules out premature jumps. Throughout this subsection we write $\calE_\ell$ for the energy \eqref{cex:do-energy} with frozen loading value $\ell$, so that $\ene t\cdot=\calE_{\ell(t)}$, and we consider the single-phase profiles $u_{q,\ell}$ of \eqref{eq:profile}, assuming that they exist with a nontrivial plateau:
\begin{equation}
\label{cex:do-exist}
q\ell>-\tfrac12
\qquad\text{and}\qquad
L_q(\ell)=L-\alpha_q(\ell)>0 .
\end{equation}
Let $v\in\EK$ and $w:=v-u_{q,\ell}$. Since $\calE_\ell$ is quadratic, its Taylor expansion at $u_{q,\ell}$ has no remainder; evaluating the first variation via \eqref{eq:residual-signs} and observing that feasibility imposes $qw\leq0$ on the plateau $C_{q,\ell}$, where the residual is \eqref{eq:plateau-residual}, we obtain the \emph{exact} identity
\begin{equation}
\label{cex:do-exact-id}
\calE_\ell(v)-\calE_\ell(u_{q,\ell})
=(1+q\ell)\int_{C_{q,\ell}}|w|\dd x+\frac12\,Q(w),
\qquad
Q(w):=\int_{-L}^{L}\big(|w'|^2-|w|^2\big)\dd x,
\end{equation}
valid for \emph{every} $v\in\EK$. It contains the whole stability mechanism. The first term, carrying the strictly positive multiplier $1+q\ell>\tfrac12$, penalizes at first order \emph{any} excursion off the obstacle; the quadratic form $Q$, in turn, is coercive on the perturbations that \emph{vanish} on the plateau, since these reduce on each boundary arc to an element of $H^1_0$ of an interval of length $\alpha_q(\ell)<\pi$, so that Poincar\'e's inequality (sharp constant $(\alpha_q(\ell)/\pi)^2$, as in the convexity computation opening this appendix) gives
\begin{equation}
\label{cex:do-cone-coercive}
Q(w)\ \geq\ \Big(1-\big(\alpha_q(\ell)/\pi\big)^2\Big)\int_{-L}^{L}|w'|^2\dd x
\qquad\text{whenever } w=0\ \text{a.e.\ on } C_{q,\ell}.
\end{equation}
These two effects --- a first-order penalty off the obstacle, coercivity along the plateau --- already \emph{suggest} local minimality, but they cannot simply be superposed: a feasible perturbation is constrained on the plateau only by the \emph{one-sided} inequality $qw\leq0$, not by $w=0$, so the split into a linear ``critical cone'' plus a transverse direction is only heuristic and the two regimes must be made to interact. We do so directly, through a blow-up that couples the obstacle term of \eqref{cex:do-exact-id} with the coercivity \eqref{cex:do-cone-coercive}, and we measure minimality in the $L^2$ norm --- the metric of the gradient flow, and the one the tracking argument below actually uses.

\begin{proposition}[Strict local stability with quadratic coercivity estimate]
\label{prop:strict}
Assume \eqref{cex:do-exist}. Then $u_{q,\ell}$ is a strict local minimizer of $\calE_\ell$ in $\EK$, with \emph{quadratic coercivity}: there exist constants $\kappa,\rho>0$ such that
\begin{equation}
\label{cex:do-stability}
\calE_\ell(v)\ \geq\ \calE_\ell(u_{q,\ell})+\kappa\,\|v-u_{q,\ell}\|_{L^2}^2
\qquad\text{for every } v\in\EK \text{ with } \|v-u_{q,\ell}\|_{L^2}\leq\rho .
\end{equation}
\end{proposition}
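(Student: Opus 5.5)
We argue by contradiction through a blow-up, starting from the exact identity \eqref{cex:do-exact-id}. Suppose \eqref{cex:do-stability} fails for every choice of $\kappa,\rho$. Then there is a sequence $v_n\in\EK$ with $w_n:=v_n-u_{q,\ell}\neq0$, $\|w_n\|_{L^2}=:\sigma_n\to0$, and
\[
(1+q\ell)\int_{C_{q,\ell}}|w_n|\dd x+\tfrac12 Q(w_n)\ \le\ \tfrac1n\,\sigma_n^2 .
\]
We normalize by setting $z_n:=w_n/\sigma_n$, so that $\|z_n\|_{L^2}=1$. The inequality becomes
\[
\frac{1+q\ell}{\sigma_n}\int_{C}|z_n|+\tfrac12\int(|z_n'|^2-|z_n|^2)\le \tfrac1n .
\]
The first term is nonnegative, so $\int|z_n'|^2\le 1+2/n$. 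Hence $(z_n)$ is bounded in $H^1_0$, and up to subsequences $z_n\weakto z$ in $H^1_0$ and $z_n\to z$ in $L^2$ and uniformly, with $\|z\|_{L^2}=1$. We also get $\int_C|z_n|\le C\sigma_n\to0$, so that $z=0$ on the plateau $C=C_{q,\ell}$.

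Passing to the limit by weak lower semicontinuity of $\int|z'|^2$ and strong convergence of $\int|z|^2$ gives $Q(z)\le 2\liminf\big(\tfrac1n-\tfrac{1+q\ell}{\sigma_n}\int_C|z_n|\big)\le0$. On the other hand $z$ vanishes on $C$, so the cone coercivity \eqref{cex:do-cone-coercive} gives
\[
Q(z)\ \ge\ \big(1-(\alpha_q(\ell)/\pi)^2\big)\int|z'|^2 .
\]
The coefficient is positive because $\alpha_q(\ell)<\pi$, which holds since $q\ell>-\tfrac12$. Therefore $\int|z'|^2=0$, so $z=0$ by the Dirichlet condition, contradicting $\|z\|_{L^2}=1$. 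This yields $\kappa,\rho$. Strictness of the minimizer follows immediately from \eqref{cex:do-stability}.

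The delicate points are the following. First, \eqref{cex:do-cone-coercive} is needed in the form used here: on each boundary arc $z$ lies in $H^1_0$ of an interval of length $\alpha_q(\ell)$, with zero at the Dirichlet endpoint and at the plateau endpoint. We apply Poincaré there with constant $(\alpha_q/\pi)^2$, and note that on $C$ both $z'$ and $z$ vanish. Second, we must check that \eqref{cex:do-exact-id} is genuinely exact, i.e.\ that it holds for every $v\in\EK$. This uses the fact that the first variation equals $\int_C r_u w$, with $r_u w=(1+q\ell)|w|$ because $qw\le0$ on $C$, and that the free arcs contribute nothing by \eqref{eq:residual-signs}; since $\calE_\ell$ is quadratic, the second-order remainder is exactly $\tfrac12Q(w)$. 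The step I expect to need most care is the lower semicontinuity passage: we must keep the nonnegative obstacle term separate so that it only helps, and make sure the normalization does not lose the constraint $z=0$ on $C$.
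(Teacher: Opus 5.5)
Your proof is correct and follows the paper's argument essentially step by step. Both proofs start from the exact identity \eqref{cex:do-exact-id} and use the same blow-up $z_n=w_n/\sigma_n$. Discarding the obstacle term gives the $H^1$ bound, and the bound $\int_{C_{q,\ell}}|z_n|\dd x\le C\sigma_n$ forces $z=0$ on the plateau. Lower semicontinuity then yields $Q(z)\le 0$, which contradicts the cone coercivity \eqref{cex:do-cone-coercive} since $\alpha_q(\ell)<\pi$. You also correctly check that $\alpha_q(\ell)<\pi$ follows from $q\ell>-\tfrac12$.

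One point you use only implicitly should be made explicit: the hypothesis $L_q(\ell)>0$ in \eqref{cex:do-exist} is what makes $C_{q,\ell}$ a nondegenerate interval. Only then do $\int_{C_{q,\ell}}|z|\dd x=0$ and continuity pin $z$ to zero on all of $C_{q,\ell}$, including its endpoints, as the Poincar\'e step on the boundary arcs requires.
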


\begin{proof}
Fix $\ell$ and suppose, by contradiction, that no pair
$(\kappa,\rho)$ works:
then there
are 
$v_n=u_{q,\ell_n}+w_n\in\EK$ with
\begin{equation}
\sigma_n:=\|w_n\|_{L^2}>0,\quad 
  \sigma_n\downarrow 0,
\quad\text{and}\qquad
(1+q\ell)\int_{C_{q,\ell}}|w_n|\dd
x+\frac12\,Q(w_n)=\calE_{\ell}(v_n)-\calE_{\ell}(u_{q,\ell})\leq\tfrac1n\,\sigma_n^2.\label{eq:55}
\end{equation}
Dropping the nonnegative obstacle term of \eqref{eq:55} and rearranging,
\[
\tfrac12\!\int_{-L}^{L}\!|w_n'|^2\dd x
\ \leq\ \big[\calE_{\ell}(v_n)-\calE_{\ell_n}(u_{q,\ell})\big]+\tfrac12\!\int_{-L}^{L}\!|w_n|^2\dd x
\ \leq\ \big(\tfrac1n+\tfrac12\big)\sigma_n^2,
\]
so $\|w_n\|_{H^1}^2\leq(2+o(1))\,\sigma_n^2$.
Hence $z_n:=w_n/\sigma_n$ has $\|z_n\|_{L^2}=1$ and is bounded in
$H^1$: up to a further subsequence, $z_n\rightharpoonup z$ in $H^1$,
hence $z_n\to z$ uniformly and in $L^2$, so $\|z\|_{L^2}=1$ and
$z\neq0$.

Using
$Q(w_n)\geq-\sigma_n^2$ and $1+q\ell\geq\tfrac12$ in
\eqref{cex:do-exact-id},
the same energy bound gives $$\int_{C_{q,\ell}}|w_n|\dd
x\leq3\sigma_n^2,
\qquad \int_{C_{q,\ell}}|z_n|\dd x\leq3\sigma_n\to0.$$
With the uniform convergence of $z_n$ this forces $z=0$ a.e.\ on
$C_{q,\ell}$.

Finally, dividing the energy inequality in \eqref{eq:55} by
$\sigma_n^2$ and discarding the obstacle term
gives
$$\limsup_nQ(z_n)\leq0;$$
since $\|z_n\|_{L^2}^2=1\to\|z\|_{L^2}^2$ and the gradient term is weakly lower semicontinuous, $Q(z)\leq\liminf_nQ(z_n)\leq0$. But $z$ vanishes on $C_{q,\ell}$, hence lies in $H^1_0$ of the two boundary arcs of length $\alpha_q(\ell)<\pi$, so \eqref{cex:do-cone-coercive} gives $Q(z)\geq\big(1-(\alpha_q(\ell)/\pi)^2\big)\int|z'|^2\dd x>0$ (as $z\neq0$ in $H^1_0$ of those arcs): a contradiction.
\end{proof}

\par
\paragraph{\bfseries The folds.} Both conditions in \eqref{cex:do-exist} are sharp,
and their failures are exactly the folds described in Section
\ref{ss:9.2}. For $\pi/2<L<\pi$ the branch ends at $\ell=\ell^*(L)$ with
$L_q(\ell)\downarrow0$: the plateau shrinks to a point and
$u_{q,\ell}$ merges with the free profile $u^{\mathrm f}_\ell$ of
\eqref{eq:unique-free-profile}, cf.\ \eqref{cex:do-l-star}; beyond,
neither survives. For $L>\pi$ the branch instead reaches
$q\ell=-\tfrac12$, where the boundary arcs attain the length
$\alpha_q(\ell)=\pi$, the coercivity \eqref{cex:do-cone-coercive}
degenerates, and the branch ceases to exist beyond (the angle
\eqref{eq:a-alpha} is no longer defined).

\begin{lemma}[Uniform isolation of the single-phase branches]
\label{lem:do-uniform-isolation}
Let $q\in\{-1,1\}$ and let $\eta>0$. Let $I\subset[0,T]$ be a
compact interval such that
$q\ell(t)\geq-\frac12+\eta$ for every $t\in I$.
Then there exists $\varrho=\varrho(\eta)>0$ such that
\[
\critset(t)\cap
B_{\varrho}\bigl(u_{q,\ell(t)}\bigr)
=
\{u_{q,\ell(t)}\}
\qquad\text{for every }t\in I.
\]
\end{lemma}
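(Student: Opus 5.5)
My plan is to argue by contradiction and compactness, using the exact identity \eqref{cex:do-exact-id} and the uniform version of Proposition \ref{prop:strict}. Suppose there are $t_n\in I$ and critical points $v_n\in\critset(t_n)$ with $v_n\neq u_n:=u_{q,\ell_n}$, $\ell_n:=\ell(t_n)$, and $\|v_n-u_n\|_{L^2}\to0$. Up to a subsequence $t_n\to t\in I$ and $\ell_n\to\bar\ell$ with $q\bar\ell\ge-\tfrac12+\eta$. The profiles $u_{q,\ell}$ depend continuously (in $H^1$ and uniformly) on $\ell$ in this range, because $\alpha_q(\ell)$ is smooth for $q\ell>-\tfrac12$. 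Two preliminary points need care. First, the existence condition $L_q(\ell)>0$ of \eqref{cex:do-exist} must hold on $I$; I take it as implicit in the statement, which presupposes that the branch exists, i.e.\ we are in the regime $L>\pi$ or away from the plateau-extinction threshold. Second, I need a \emph{uniform} plateau length: $L_q(\ell)\ge c>0$ and $\alpha_q(\ell)\le\pi-c'$ on $I$, and both follow from compactness of $I$ and continuity.

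The key step is to exploit criticality of \emph{both} $u_n$ and $v_n$ through \eqref{cex:do-exact-id}, applied twice with the roles exchanged. With $w_n:=v_n-u_n$, the identity centered at $u_n$ gives
\[
\calE_{\ell_n}(v_n)-\calE_{\ell_n}(u_n)=(1+q\ell_n)\int_{C_{q,\ell_n}}|w_n|\dd x+\tfrac12 Q(w_n).
\]
Since $v_n$ is also critical, $\lambda$-convexity with $\lambda=1$, i.e.\ \eqref{l-convex-charact} with $\xi=0$ at $v_n$ tested at $u_n$, gives $\calE_{\ell_n}(u_n)-\calE_{\ell_n}(v_n)\ge-\tfrac12\|w_n\|_{L^2}^2$. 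Summing the two relations yields
\[
(1+q\ell_n)\int_{C_{q,\ell_n}}|w_n|\dd x+\tfrac12\int_{-L}^{L}|w_n'|^2\dd x\le\|w_n\|_{L^2}^2 .
\]
To see this, add the first-order characterization at $v_n$ to the identity: the quadratic part then carries $-\tfrac12\int|w|^2$ twice. This is the same inequality that is exploited in the proof of Proposition \ref{prop:strict}, but with $\tfrac1n\sigma_n^2$ replaced by a bound of order $\sigma_n^2$.

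This bound is not yet contradictory, so I refine it. The function $v_n$ is itself a critical point of the classification in Section \ref{ss:9.2}, and the main obstacle is to exclude nearby nontrivial critical points directly. I would use the rigidity instead of energy. Since $\|w_n\|_{H^1}\lesssim\sigma_n\to0$, we get $v_n\to u_{q,\bar\ell}$ uniformly, so for large $n$ we have $v_n<0$ (resp.\ $>0$) near the middle of the plateau, and therefore $v_n$ cannot touch the opposite obstacle. By the classification, $v_n$ is then one of the following: the free profile, a configuration with a droplet (arc length $2\pi$), an over-swung boundary arc (length $2\pi-\alpha>\pi$), or a profile of type \eqref{eq:profile} on the same obstacle. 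Profiles of type \eqref{eq:profile} are unique for given $\ell_n$, so in that case $v_n=u_n$. Every other type contains a free arc along which $|v_n|<1$, of length at least $\pi$ or spanning the whole domain. On the central part of the plateau, of length $\ge c$, uniform convergence forces $|v_n|\to1$ but not $=1$. Here I use the rigid form \eqref{eq:rigid}: on a free arc $v_n=-\ell_n+R_n\cos(\cdot-\varphi_n)$, and it would have to stay within $o(1)$ of $q$ on an interval of length $c$. Since $\ell_n+q$ is bounded away from $0$ (because $q\ell_n\ge-\tfrac12$), this forces $R_n\to|\ell_n+q|$ with phase aligned, and then near-contact inside an arc contradicts $|v_n|<1$ unless the arc has a contact point at distance $\le o(1)$ from the plateau edges; but then it is a boundary arc of length close to $\alpha_q(\bar\ell)<\pi$, which is incompatible with the arc lengths $\ge\pi$ of the non-single-phase types. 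This arc-length comparison is where I expect the real work, so I would carry it out carefully and keep the energy inequality above only as the source of uniform convergence.
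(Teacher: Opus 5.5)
Your argument is correct in substance, but it follows a different route from the paper. The paper's proof is a short appeal to the explicit classification. The opposite single-phase profile, the droplet families and the kinks stay at positive distance from $u_{q,\ell}$. The only configurations that can come close (over-swung arcs and, in regime (ii), the free profile) merge with the branch only at the fold. Compactness of the loading range away from the fold then yields a uniform lower bound.

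You instead argue by contradiction, in two steps.
\begin{enumerate}
\item The identity \eqref{cex:do-exact-id} at $u_{q,\ell_n}$, combined with \eqref{l-convex-charact} at the critical point $v_n$, upgrades $L^2$-closeness to $H^1$-closeness and hence uniform closeness. Your inequality is correct. Alternatively, critical points satisfy $|v''|\le 1+|\ell|$ a.e., so $L^2$ convergence gives $C^1$ convergence directly.
\item A rigidity argument on free arcs then identifies $v_n$ with $u_{q,\ell_n}$.
\end{enumerate}
What this buys is that you never compute distances between the various families, nor prove their continuity in $\ell$; the paper asserts that uniform lower bound without proof. Your route also makes explicit where each hypothesis enters: $\eta>0$ gives $|q+\ell|\ge\frac12+\eta$ and $\alpha_q(\ell)\le\pi-c'$, while existence of the branch gives a plateau of length at least $2c$. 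You are right that $q\ell\ge-\frac12+\eta$ alone does not guarantee existence in regime (ii). So $L_q(\ell(t))>0$ on $I$ must be read into the statement; this is the plateau-extinction fold that the paper's proof implicitly invokes.

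The rigidity step needs to be stated differently. A free-arc sinusoid $-\ell_n+R_n\cos(\cdot-\varphi_n)$ staying $o(1)$-close to $q$ on a plateau interval does not ``force $R_n\to|\ell_n+q|$ with phase aligned''. On an interval of fixed length it is simply impossible. The functions $1,\cos,\sin$ are linearly independent on $[0,\delta]$ and their span is translation invariant. Hence $\sup_J|v_n-q|\ge k_\delta|q+\ell_n|\ge k_\delta(\frac12+\eta)$ for every interval $J$ of length $\delta$ contained in a free arc. Once $\|v_n-u_{q,\bar\ell}\|_\infty<k_\delta(\frac12+\eta)$, every free arc of $v_n$ therefore meets the plateau $C_{q,\bar\ell}$ in a set of length less than $\delta$. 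Being an interval, such an arc cannot cross the plateau, so it lies in a boundary layer of length at most $\alpha_q(\bar\ell)+\delta<\pi$. This excludes at once:
\begin{itemize}
\item the free profile, whose free arc covers the plateau;
\item droplets, of length $2\pi$;
\item over-swung arcs, of length $2\pi-\alpha_q(\ell_n)>\pi$.
\end{itemize}
Only $u_{q,\ell_n}$ remains. Likewise, rule out contact with the opposite obstacle using $q\,u_{q,\bar\ell}\ge0$ on all of $(-L,L)$, rather than the sign of $v_n$ near the centre.
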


\begin{proof}
It is enough to consider the branch corresponding to $q=-1$, since the argument for the $+1$ phase is completely analogous. By the classification of the critical points, the opposite single-phase profile, the sliding droplets, and the kinks remain at a positive distance from the branch $u_{-1,\ell}$. The only critical configurations that may approach this branch are the over-swung boundary arcs and, in regime (ii), the free profile $u^{\mathrm f}_\ell$. These configurations merge with $u_{-1,\ell}$ only at the corresponding fold. Since the loading values under consideration range in a compact set bounded away from the fold, their distance from $u_{-1,\ell}$ admits a strictly positive uniform lower bound. This yields the claimed radius $\varrho>0$.
\end{proof}

\paragraph{\bfseries Tracking of well-prepared data.} The evolution scenario of
Section \ref{ss:9.2} is an instance of the general Proposition
\ref{prop:stable-branch}. Fix $\eta>0$ and restrict to a sub-interval 
of the increasing part of the loading cycle,
where $\ell(t)\leq\tfrac12-\eta$ (resp.\ $\ell(t)\leq\ell^*(L)-\eta$ in
regime (ii)). There the branch $v(t):=u_{-1,\ell(t)}$ is a \emph{strict
local minimizer} of $\calE_{\ell(t)}$ 
by
Proposition \ref{prop:strict})
and, by Lemma~\ref{lem:do-uniform-isolation}, 
is \emph{uniformly separated} from the rest of the critical set.
Proposition
\ref{prop:stable-branch} then gives $u_{\eps_n}\to v$ uniformly on the
sub-interval, with a \DSolution\ equal to $v$ there; letting
$\eta\to0$, the limit evolution coincides with $u_{-1,\ell(t)}$ up to
the fold time $t^*$, and never rests on the sliding-droplet continua,
which lie off the trajectory. \emph{Only the pointwise strict
minimality of the branch and the uniformity of its separation enter
here}.

At $t^*$ the branch loses minimality and ceases to exist (the fold
above), and the outer limit switches to the other single-phase branch,
the value at the single time $t^*$ being immaterial.


\bibliographystyle{siam}
\bibliography{ricky_lit.bib}

 \end{document}